\newcommand{\good}{\textit{Good}}
\newcommand{\lobby}{\mathcal{E}^2}
\newcommand{\stool}{\textit{OK}}
\newcommand{\dining}{\textit{Mid}}
\newcommand{\well}{\textit{W}}
\newcommand{\eps}{\varepsilon}
\renewcommand{\emptyset}{\varnothing}
\newcommand{\E}{\mathbf{E}}
\renewcommand{\P}{\mathbf{P}}
\newcommand{\leb}{\mathbf{Leb}}
\newcommand{\pre}{\text{Prediction}}
\renewcommand{\L}{\mathcal L}
\renewcommand{\emptyset}{\varnothing}
\newcommand{\dir}{\textup{Dir}}
\newcommand{\arc}{\textup{Arc}}
\newcommand{\Z}{\mathbb Z}
\newcommand{\R}{\mathbb R}
\newcommand{\net}{\mathcal{F}}
\definecolor{dark_green}{RGB}{1, 180, 1}
\tikzset{vert/.style={circle,fill,inner sep=0,
    minimum size=0.15cm,draw}, nerve/.style={circle,inner sep=0,
    minimum size=0.15cm,draw},
    rightnerve/.style={circle,inner sep=0,
    minimum size=0.3cm,draw, fill}}
\newcommand{\ball}{\textup{Ball}}
\newcommand{\Ec}{\mathcal E}
\newcommand{\geo}{{\textup{Geo} }}
\newcommand{\shiftv}{\sigma_v }
\newcommand{\Geo}{\mathrm{Geo}}
\theoremstyle{plain}
\newtheorem{thm}{Theorem}[section]
\newtheorem{theorem}[thm]{Theorem}
\newtheorem{lemma}[thm]{Lemma}
\newtheorem{prop}[thm]{Proposition}
\newtheorem{define}[thm]{Definition}
\newtheorem{question}{Question}
\newtheorem{claim}[thm]{Claim}
\theoremstyle{remark}
\newtheorem{remark}[thm]{Remark}
\tikzset{every tree node/.style={align=center}}
\def\tree{{\mathscr T}}
\def\path{{\mathscr P}}
\def\labels{{\mathscr L}}
\def\double{{\mathscr M}}
\def\support{{\mathscr S}}
\def\functionals{{\mathscr C}}
\def\rcg{{\mathscr G}}
\begin{document}

\title[Random Coalescing Geodesics]{Random coalescing geodesics in\\ first-passage percolation}

\author{Daniel Ahlberg}
\address{IMPA, Rio de Janeiro, and Department of Mathematics, Uppsala University. Current address: Department of Mathematics, Stockholm University}
\email{daniel.ahlberg@math.su.se}

\author{Christopher Hoffman}
\address{Department of Mathematics, University of Washington}
\email{hoffman@math.washington.edu}

\thanks{The authors are very grateful to Vladas Sidoravicius who encouraged them to pursue this work and made valuable contributions in some of their discussions.
The authors also thank IMPA and University of Washington for their hospitality during visits related to this work.
This research was supported by grants 637-2013-7302 and 2016-04442 from the Swedish Research Council (DA)
and by grant DMS-1308645 from the NSF (CH)}

\begin{abstract}
We continue the study of infinite geodesics in planar first-passage percolation, pioneered by Newman in the mid 1990s. Building on more recent work of Hoffman, and Damron and Hanson, we develop an ergodic theory for infinite geodesics via the study of what we shall call \emph{random coalescing geodesics}.
Random coalescing geodesics have a range of nice asymptotic properties, such as asymptotic directions and linear Busemann functions. We show that random coalescing geodesics are (in some sense) dense in the space of geodesics. This allows us to extrapolate properties from random coalescing geodesics to obtain statements on all infinite geodesics.
As an application of this theory we solve the `midpoint problem' of Benjamini, Kalai and Schramm and address a question of Furstenberg on the existence of bigeodesics.
\end{abstract}

\maketitle

\section{Introduction}

In first-passage percolation the edges (or sites) of the $\Z^2$ nearest neighbor lattice are equipped with non-negative random weights, thus giving rise to a random metric space. Since the first works on spatial growth by Eden~\cite{eden61} and Hammersley and Welsh~\cite{hamwel65}, first-passage percolation has attracted vast attention from mathematicians and physicists alike, see e.g.~\cite{kesten86,kruspo91,aufdamhan17}, aiming to understand the large-scale behavior of distances, balls and geodesics in this random metric space. Despite its success in inspiring powerful theories, first-passage percolation has proven to be one of the more difficult statistical physics models to analyze, and many of the most important conjectures remain unsettled.

The study of first-passage percolation has led to the development of powerful mathematical tools, like a rigorous theory for subadditive ergodic processes by Kingman~\cite{kingman68,kingman73}, as well as one of the most important avenues of study in mathematical physics, based on the predictions originating from the work of Kardar, Parisi and Zhang~\cite{karparzha86}.
The behavior of finite geodesics is an integral component in KPZ-theory. The theory predicts the existence of exponents $\chi$ and $\xi$, known as the \emph{fluctuation} and \emph{wandering} exponents, such that, with high probability, the distance between $(0,0)$ and $(n,0)$ deviates from its mean by $n^{\chi+o(1)}$ and the vertical displacement of the geodesic between $(0,0)$ and $(n,0)$ scales as $n^{\xi+o(1)}$. In two dimensions the two exponents $\chi$ and $\xi$ should equal $1/3$ and $2/3$ respectively, and thus be related through the equation $\chi=2\xi-1$. While these values are believed to differ in higher dimensions, the relation $\chi=2\xi-1$ is expected to prevail; see~\cite{kruspo91}. Even the existence of these exponents remains a mystery in first-passage percolation. However, if they do exist then the scaling relation $\chi=2\xi-1$ should also hold; see~\cite{chatterjee13,aufdam14}. There are closely related so-called `exactly solvable' models for which such a behavior has been rigorously 
established~\cite{baideijoh99,johansson00}. See for instance~\cite{corwin16}, and references therein, for an exposition of this development.

The theme of the present paper is geodesics, and our aim is to develop an ergodic theory describing infinite geodesics in first-passage percolation. The study of infinite geodesics was pioneered in the mid 1990s by Newman and collaborators~\cite{newman95,newpiz95,licnew96}. The program rolled out by Newman has evolved around two main themes. One of these has aimed to establish the existence of, and the scaling relation between, the exponents $\chi$ and $\xi$. The notion of Busemann functions was first explored in~\cite{newman95} for this purpose. In rough terms, Busemann functions measure locally the difference in distance ``to infinity'' in a given direction, and are globally expected to be described by linear functionals. The other theme concerns the existence of bi-infinite distance minimizing paths, also known as bigeodesics.

In Euclidean space the geodesic between two points is given by a line segment, and each line segment can be extended to a bi-infinite distance minimizing curve -- the straight line. Having uniform zero curvature, Euclidean geometry is critical in this aspect, and one could expect that bi-infinite distance minimizing curves should disappear in a random perturbation of this geometry.
First-passage percolation can be thought of such a perturbation, and bi-infinite geodesics have indeed been conjectured not to exist in that setting.
Kesten~\cite[p.\ 258]{kesten86} attributes the question of existence of bigeodesics in first-passage percolation to Hillel Furstenberg, and the question has since gained fame through its connection to the existence of non-trivial ground states of the two-dimensional Ising ferromagnet with random exchange constants; see~\cite{licnew96,wehr97}. Newman has given a convincing heuristic argument, based in part on the scaling behavior predicted by KPZ-theory, ruling out the existence of bigeodesics. This argument has been reproduced in~\cite[Section~4.5]{aufdamhan17}. In higher dimensions the existence of bigeodesics is further related to work by Kesten~\cite{kesten87b} on hypersurfaces with minimal random weight.

The structure of infinite geodesics has further been found to exhibit intriguing connections with other important probabilistic models. For instance, geodesics in first-passage percolation was closely linked to solutions of the Burgers equation in work of Bakhtin, Cator and Khanin~\cite{bakcatkha14}. Inspired by the study of geodesics in a Euclidean version of first-passage percolation by Howard and Newman~\cite{hownew01} and the construction of stationary measures in a last-passage percolation model by Cator and Pimentel~\cite{catpim12}, Bakhtin, Cator and Khanin incorporated these ideas to study the space of solutions to the Burgers equation.
They constructed space-time stationary solutions of the one-dimensional Burgers equation with random forcing in the absence of periodicity or compactness assumptions. More precisely, they showed that there is a unique global solution to the Burgers equation with any prescribed average velocity under a model where the forcing is given by a homogeneous Poissonian point field in space-time.
These solutions are based on Busemann functions.

We shall in this paper continue the study of infinite geodesics in first-passage percolation initiated by Newman, and continued by Hoffman~\cite{hoffman08} and Damron and Hanson~\cite{damhan14}.
Via the study of what we shall call \emph{random coalescing geodesics}, we build an ergodic theory for the study of infinite geodesics, incorporating elements like coalescence, Busemann functions and subsequential limiting procedures present also in previous work. Random coalescing geodesics have a range of nice asymptotic properties.
Although random coalescing geodesics do not (necessarily) account for all geodesics, we show that they are sufficiently dense in the space of infinite geodesics so that we may extrapolate certain properties of random coalescing geodesics to obtain global statements about geodesics.
We shall show that for almost every realization of the weight configuration the space of infinite geodesics has the same asymptotic behavior when it comes to cardinality, directions and Busemann functions.
While the existence of scaling exponents and bigeodesics remain open problems, we shall as a consequence of the theory we develop provide partial results in their direction.

We shall throughout the paper work under a stationary and ergodic assumption on the weight distribution, which enables us to obtain a very general theory. However, in this general setting different models of first-passage percolation are known to behave very differently. Consequently, our results will all be qualitative and not quantitative. In order to obtain quantitative estimates in a specific setting, say for independent continuous edge weights, one would have to incorporate the independence assumption in some fundamental way.

\section{Statement of results}

In this paper we consider a large number of models of first-passage percolation on $\Z^2$, including those with independent edge weights from a common continuous distribution with finite mean; see Section~\ref{sec: assumptions} below for a precise description. Let $\lobby$ denote the set of edges of the $\Z^2$ nearest-neighbor lattice. For each $\omega\in\Omega_1:=[0,\infty)^{\lobby}$ we define a metric on $\Z^2$ via\footnote{By \emph{path} we shall refer to a nearest-neighbor path in $\Z^2$. We will interchangeably think of a path as a sequence of vertices $(v_0,v_1,\ldots)$ or a sequence of edges $(e_0,e_1,\ldots)$, as suitable for each situation. For a finite path $\pi$ we shall below write $T(\pi)$ for the sum of its edge weights.}
\begin{equation}\label{Tdef}
T(x,y):=\inf\Big\{\sum_{e\in\pi}\omega_e:\pi\text{ a path connecting $x$ and $y$}\Big\}.
\end{equation}
A path attaining the infimum in~\eqref{Tdef} is referred to as a (finite) {\bf geodesic}.
In each of the first-passage models that we shall work with there will \emph{(i)}~exist a unique geodesic between any two points $x$ and $y$ -- we shall denote this path by $\geo(x,y)$; and \emph{(ii)}~exist a compact and convex set $\ball\subseteq\R^2$ with non-empty interior such that $\frac{1}{t}\{x:T(0,x)\le t\}$ approaches $\ball$ as $t$ increases. Again, we refer to Section~\ref{sec: assumptions} below for a precise statement.

The focus of this paper lies on infinite geodesics. More precisely, we shall develop an ergodic theory around what we shall call random coalescing geodesics, which we define shortly. Our reasons for this are two-fold. First, we aim to describe the set of infinite geodesics originating at the origin, relating the number of geodesics and their directions to the asymptotic shape $\ball$. Second, we address questions related to the existence of bigeodesics and scaling exponents. Random coalescing geodesics have, indeed, nice properties such as coalescence, asymptotic directions and asymptotically linear Busemann functions. By showing that random coalescing geodesics are in some sense `typical' we obtain our results.

The first of our results relates to both scaling exponents and bigeodesics. Based on the predictions of KPZ-theory it is widely believed that the probability that the geodesic between $(-n,0)$ and $(n,0)$ visits the origin should scale like $n^{-\xi+o(1)}$, where again $\xi=2/3$. Our result takes a modest first step in this direction, and answers a longstanding open question of Benjamini, Kalai and Schramm~\cite{benkalsch03}.

\begin{theorem} \label{hall of mirrors}
For any sequences $(u_k)_{k\ge1}$ and $(v_k)_{k\ge1}$ in $\Z^2$ so that $|u_k|,|v_k| \to \infty$ we have
$$
\P\big(0 \in \geo(u_k,v_k)\big) \to 0.
$$
\end{theorem}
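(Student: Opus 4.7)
The plan is to argue by contradiction, reducing the midpoint event to a statement about the density of bigeodesics and then using the coalescence structure of random coalescing geodesics (RCGs) to rule out such positive density.

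Suppose, contrary to the claim, that $\P(0 \in \geo(-v_k, v_k)) \ge 3\delta$ along some subsequence, with $\delta > 0$. By compactness, refine to a subsequence with $v_k/|v_k| \to \theta^* \in S^1$. On the event $\{0 \in \geo(-v_k, v_k)\}$, the segments $\geo(0, v_k)$ and $\geo(0, -v_k)$ are geodesics from the origin; tightness of the space of nearest-neighbor paths (in the edge-by-edge topology) and a diagonal extraction produce a further subsequence along which they converge to infinite geodesic rays $\gamma^+, \gamma^-$ from $0$ with asymptotic directions $\pm\theta^*$, whose union is a bigeodesic through $0$. Hence, with probability at least $\delta$, the origin lies on a bigeodesic whose halves head in directions $\pm\theta^*$. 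By translation invariance and the ergodic theorem, the (random) set $S^* \subseteq \Z^2$ of such bigeodesic midpoints then has density at least $\delta$ almost surely.

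I would next invoke the density of RCGs in the space of infinite geodesics, advertised in the abstract as a main feature of the theory. For each $x \in S^*$, the half-ray $\gamma^+(x)$ must eventually coincide with the $\theta^*$-RCG $\Gamma^+_x$, and $\gamma^-(x)$ with the $(-\theta^*)$-RCG $\Gamma^-_x$. Coalescence forces, for every $y$ sufficiently far along $\gamma^-(x)$, the ray $\Gamma^+_y$ to pass through $x$; equivalently, $x$ has infinitely many ``descendants'' in the translation-invariant forest $\mathcal{T}^+$ whose edges send each vertex to the next step of its $\theta^*$-RCG. The final step is then to argue that in such a coalescing, bounded-out-degree forest on $\Z^2$, the set of vertices with infinitely many descendants has density zero, contradicting the lower bound $\delta > 0$.

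This final step is the main obstacle. The cleanest route is a Burton--Keane-style trifurcation argument: since $\mathcal{T}^+$ has all its forward rays coalescing at a single end in direction $\theta^*$, any additional ends correspond to infinite descending paths, and a positive density of such requires infinitely many ends with a positive density of trifurcation points, contradicting the usual boundary-counting lemma in $\Z^2$. An alternative is a mass-transport argument through the $(-\theta^*)$-RCG forest $\mathcal{T}^-$: each $x \in S^*$ ships unit mass along its descent path in $\mathcal{T}^+$, and the coalescing structure of $\mathcal{T}^-$ caps the incoming mass at any vertex in expectation. A further subtlety is that $\theta^*$ may not lie in the support of the RCG-direction spectrum; this is handled either by refining the subsequence so that its limit lies in the (dense) support, or by appealing to a version of the density theorem robust to directional approximation along the boundary of the asymptotic shape $\ball$.
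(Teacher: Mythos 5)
Your plan takes a genuinely different route from the paper's, and as written it contains gaps that cannot be closed without essentially reproducing the paper's machinery. The first stage is sound: on the event $\limsup_k\{0\in\geo(-v_k,v_k)\}$ (which has probability at least $\delta$ by reverse Fatou), a diagonal subsequence of $\geo(0,v_k)$ and $\geo(-v_k,0)$ converges to a bigeodesic through $0$. But both of the next steps fail. First, $v_k/|v_k|\to\theta^*$ only controls the endpoints of the finite segments, not the asymptotic direction of the limit ray $\gamma^+$: the shape theorem confines the direction to the arc cut out by a supporting line of $\ball$, which is a nondegenerate interval if $\partial\ball$ is flat near $\theta^*$. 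Second, and fatally, the assertion that $\gamma^+(x)$ ``must eventually coincide with the $\theta^*$-RCG'' misreads the density statement and is false in general. The paper proves (Theorem~\ref{promotion}, Lemma~\ref{neighboring}) that the labels of RCGs form a closed set and that every geodesic's Busemann function is asymptotically linear to some supported functional; it does \emph{not} say that every infinite geodesic coincides, even eventually, with an RCG. At a corner of $\ball$ there may be many geodesics in $\tree_0$ directed at that corner, of which at most two are RCGs. And by the paper's own Claim~\ref{v is a corner}, a midpoint probability bounded away from zero forces $\partial\ball$ to have a corner in direction $\theta^*$, so your argument lives in precisely the regime where the identification of $\gamma^+$ with an RCG breaks down. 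Without that identification, your final step cannot invoke Proposition~\ref{temecula} (backwards finiteness), which is proven only for random coalescing geodesics, and neither the trifurcation-counting nor the mass-transport argument has an object to apply to.

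The paper's actual proof avoids bigeodesics altogether, and avoids ever asserting that the midpoint geodesic equals an RCG. The key combinatorial observation is that the intersection of two finite geodesics is connected (unique passage times), so two midpoint geodesics through distinct midpoints cannot touch on both sides of the midpoint axis; this is used to thin the set of ``good'' midpoints to a positive-density family of pairwise disjoint finite geodesics (Lemmas~\ref{frosted} and~\ref{vashti}, with the RCGs of Lemma~\ref{cunningham} used only as fences to control direction). Encoding these as a shift-invariant sequence of measures and passing to a weak limit produces a translation-invariant measure on non-crossing geodesic families with infinitely many coalescing classes (Lemmas~\ref{have a cigar} and~\ref{dazed and confused}), contradicting Theorem~\ref{sparse measure}. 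If you want to salvage a bigeodesic-based argument, you would have to prove that the extracted bigeodesic halves form a sparse random non-crossing geodesic and then apply Theorem~\ref{sparse} to upgrade them to an RCG before invoking backwards finiteness; showing the non-crossing property for these bigeodesic halves is not obvious and is precisely the kind of work the paper replaces with the thinning-and-weak-limit construction.
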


As a corollary of the above result one obtains that the expected number of intersections between $\geo(0,v)$ and the straight line through $0$ and $v$ grows sublinearly in $|v|$.

A semi-infinite path $(v_k)_{k\ge1}$ will be referred to as a (semi-infinite) {\bf geodesic} if each finite segment is a geodesic, and a bi-infinite path $(v_k)_{k\in\Z}$ with the same property will be referred to as a bi-infinite geodesic, or a {\bf bigeodesic}. It has been conjectured that there are almost surely no bigeodesics. We are able to show that in each fixed direction (except for an at most countable set determined by $\ball$) this is true. This is closely related to the fact that multiple geodesics do not occur in directions of differentiability of the asymptotic shape.

For any geodesic $g$ (semi- or bi-infinite) we define the {\bf direction} $\dir(g)$ of $g$ as the set of limit points of the set $\{v/|v|:v\in g\}$. When $u\in\dir(g)$ we say that $u$ is {\bf a direction} of $g$. Note that $\dir(g)$ is an arc on the unit circle $S^1$ when $g$ is semi-infinite, and the union of two arcs when $g$ is a bi-infinite.
 
\begin{theorem} \label{versailles}
Let $u\in S^1$ be a direction of differentiability of $\partial\ball$. Then,
\begin{enumerate}[\quad (a)]
\item $\P\big(\exists \text{ two disjoint infinite geodesics with $u$ as a direction}\big)=0$;
\item $\P\big(\exists \text{ a bigeodesic with $u$ as a direction}\big)=0$.
\end{enumerate}
\end{theorem} 

Let $\tree_0=\tree_0(\omega)$ denote the set of infinite geodesics originating at the origin. Formally we may think of $\tree_0$ as the random element $\tree_0:\Omega_1\to\Omega_2$, where $\Omega_2:=\{0,1\}^{\lobby}$, that encodes whether or not an edge is traversed by an infinite geodesic starting at the origin. Our standing assumption on unique passage times assures that the graph induced by the edges in $\tree_0$ is a tree, almost surely.

That $\tree_0$ is non-empty follows easily from compactness. Under the additional assumption that the asymptotic shape $\ball$ is uniformly curved, Licea and Newman~\cite{newman95,licnew96} have shown that every infinite geodesic has a well-defined asymptotic direction, and that for every direction there is a geodesic almost surely. However, while uniform curvature seems plausible, there is no model of first-passage percolation with i.i.d.\ weights for which it has been verified. As will become clear throughout this paper, the lack of knowledge about $\ball$ is the major factor limiting our understanding of $\tree_0$.

Inspired by the work of Newman and his collaborators, later work has aimed at obtaining rigorous results, without further assumptions on the limiting shape. Proving that
$$
\P\big(\tree_0\text{ has size at least }2\big)=1
$$
is already non-trivial, and was established through a series of papers~\cite{hagpem98,garmar05,hoffman05}. Hoffman~\cite{hoffman08} used Busemann functions to show that $\tree_0$  contain at least one geodesic for each side\footnote{The \emph{sides} corresponds to tangent lines, of which there are $n$ if $\partial\ball$ is an $n$-gon and $\infty$ otherwise.} of $\partial\ball$, almost surely, and thus showed that
$$
\P\big(\tree_0\text{ has size at least }4\big)=1.
$$
Damron and Hanson~\cite{damhan14} strengthened these results to show that these geodesics are coalescing and asymptotically directed in the intersection of $\partial\ball$ and one of its tangent lines.
Recent work by Georgiou, Rassoul-Agha and Sepp\"al\"ainen~\cite{georassep17a,georassep17b} parallels this development in the related setting of last-passage percolation.
One of the main goals of this paper is to show that for many of the properties described in~\cite{damhan14} not only some, but all geodesics have these properties. Among these properties we find asymptotic `generalized' directions and linear Busemann function, and for other properties such as coalescence, we will show that the behavior described in these papers is in some sense typical.

Since comparisons between geodesics will recur throughout the paper, we provide here a small glossary on infinite geodesics. We shall by $\tree_v=\tree_v(\omega)$ denote the set of infinite geodesics originating from the vertex $v\in\Z^2$. Two geodesics $g\in\tree_u$ and $g'\in\tree_v$, starting from different vertices, will be said to {\bf intersect} if $g$ and $g'$ both visit some vertex $z$, and to {\bf coalesce} if the symmetric difference $g\Delta g'$ is finite. They are said to be {\bf non-crossing} if they either coalesce or are disjoint, and are said to {\bf cross} if they are not non-crossing.

We shall below give a flavor of the consequences of the theory developed in this paper. The fundamental object that we study is a random coalescing geodesic. As before, we formally think of an infinite geodesic as encoded by an element of $\Omega_2=\{0,1\}^{\lobby}$. Below, $\sigma_v$ will denote the usual shift that acts coordinate-wise by sending $z$ to $z-v$.

\begin{define}
We say that a measurable map $G:\Omega_1\to\Omega_2$ is a {\bf random coalescing geodesic} if for almost every $\omega\in\Omega_1$ and every $v\in\Z^2$ we have $G(\omega)\in\tree_0(\omega)$, i.e.\ $G(\omega)$ is an infinite geodesic starting at the origin, and $G(\omega)$ and $\sigma_{-v}(G(\sigma_v\omega))$ coalesce.
We shall below write $G(v)$ as a shorthand for the map $\sigma_{-v}\circ G\circ\sigma_v$.
\end{define}

A random coalescing geodesic is a nice object as it allows for an efficient use of the ergodic theorem. We shall explore these implications in detail in Section~\ref{properties}. We remark that the study of stationary random walks, carried out in parallel by Chaika and Krishnan~\cite{chakri}, is closely related to our concept of a random coalescing geodesic. Their results show that many stationary ways of generating non-crossing paths in the plane will give rise to paths with similar properties as a random coalescing geodesic.

Our study of random coalescing geodesics has two major goals. The first is to classify all random coalescing geodesics. The second is to use this classification to make statements about all geodesics in $\tree_0$. In order to accomplish the second goal it would be nice if every $g \in \tree_0$ was the image of some random coalescing geodesic. However, widely believed conjectures (for example that there is a geodesic in every direction) imply that this is not the case.\footnote{For each $z\in\Z^2$ for which a branch of $\tree_0$ splits in two, the clockwise-most geodesic using the counterclockwise-most edge out of $z$ and the counterclockwise-most geodesic using the clockwise-most edge out of $z$, either have the same direction or there is an interval of directions where there is no geodesic.\label{prediction}}
Instead we shall show that random coalescing geodesics are sufficiently dense in $\tree_0$ that we can still use our understanding of random coalescing geodesics to make statements about every geodesic in $\tree_0$. Together, the results to be stated in the remainder of this section provide an ergodic theory for the set of infinite geodesics. They show that for almost every realization of the weight configuration $\omega\in\Omega_1$ the set $\tree_0(\omega)$ has the same asymptotic properties when it comes to cardinality, directions and Busemann functions.

We first address the number of topological ends of the tree encoded by $\tree_0$. Recall that the cardinality of a set is either some finite number, countably infinite, or uncountably infinite.

\begin{theorem}\label{geo cardinality}
The cardinality of the set $\tree_0$ is almost surely constant.
\end{theorem}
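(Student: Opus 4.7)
The plan is to establish that $N(\omega) := |\tree_0(\omega)|$ is invariant under the shift action of $\Z^2$ on $\Omega$, and then invoke ergodicity of the environment to conclude that $N$ is almost surely constant. For any $v \in \Z^2$, one has $N(\sigma_v\omega) = |\tree_0(\sigma_v\omega)| = |\tree_v(\omega)|$, where $\tree_v(\omega)$ denotes the set of infinite geodesics starting at $v$ in the configuration $\omega$. Thus the shift invariance of $N$ reduces to the almost sure equality $|\tree_0(\omega)| = |\tree_v(\omega)|$ for each fixed $v \in \Z^2$.

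To establish this equality I would construct a bijection $\Phi_v : \tree_0(\omega) \to \tree_v(\omega)$ defined almost everywhere. Given $g = (0, v_1, v_2, \ldots) \in \tree_0$, consider the finite geodesics $\pi_k := \geo(v, v_k)$. Each $\pi_k$ is a path in the geodesic tree rooted at $v$, and together these paths form an infinite, locally finite subtree containing all the $v_k$. By K\"onig's lemma this subtree contains an infinite ray from $v$, which is a member of $\tree_v$; define $\Phi_v(g)$ to be such a ray. Injectivity should follow from the fact that distinct $g, g' \in \tree_0$ diverge at some finite vertex, forcing $\Phi_v(g)$ and $\Phi_v(g')$ to diverge as well, and surjectivity from the symmetric construction with the roles of $0$ and $v$ interchanged.

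The main obstacle is showing that the ray $\Phi_v(g)$ is uniquely determined by $g$, i.e.\ that all but finitely many $v_k$ lie on a single infinite branch of $\tree_v$. I expect this to follow from unique passage times: the two finite geodesics $\geo(0, v_k) = (0, v_1, \ldots, v_k)$ and $\geo(v, v_k) = \pi_k$ both end at $v_k$, so if $c_k$ denotes the last vertex before $v_k$ at which $\pi_k$ first meets $g$, then $\pi_k$ must follow $g$ from $c_k$ to $v_k$. Proving that $c_k \to \infty$ along $g$ will likely require careful use of the uniqueness of geodesics together with a compactness argument exploiting that the tails of $g$ are themselves geodesics.

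Once the bijection is in place one has $|\tree_0| = |\tree_v|$ almost surely, shift invariance of $N$ is immediate, and ergodicity of $\P$ under the $\Z^2$-action then yields that $N$ is almost surely equal to some deterministic element of $\{1, 2, \ldots\} \cup \{\infty\}$.
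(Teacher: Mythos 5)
Your approach is fundamentally different from the paper's, which deduces the cardinality statement from the full theory of random coalescing geodesics: the deterministic label set (Theorem~\ref{thm:labels}) together with Lemma~\ref{neighboring} (between neighboring random coalescing geodesics there are a.s.\ no other geodesics, and between non-neighboring ones there is another one) directly pin down $|\tree_0|$. You instead try to establish shift-invariance of $N(\omega)=|\tree_0(\omega)|$ by an explicit a.e.\ bijection $\Phi_v:\tree_0\to\tree_v$. The difficulty is that this bijection is essentially the coalescence structure in disguise, and your construction does not establish it.

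Two concrete gaps. First, $\Phi_v$ is not shown to be well-defined: the subtree of $\tree_v$ traced out by $\{\geo(v,v_k)\}_{k\ge1}$ may have more than one end, and K\"onig's lemma only gives \emph{existence} of a ray, not a canonical one. You acknowledge this, but the proposed remedy --- showing the meeting point $c_k$ tends to infinity along $g$ --- is the wrong goal: if $c_k\to\infty$, the paths $\geo(v,v_k)$ agree with $g$ only in a far-out window, and nothing forces their initial segments at $v$ to stabilize; you would rather want $c_k$ to stabilize so that $\geo(v,v_k)$ eventually coalesce with $g$, which is precisely the hard part of the paper. Second, injectivity does not ``follow'' from divergence of $g$ and $g'$: two distinct geodesics in $\tree_0$ with the same asymptotic direction (which happens exactly at corners of $\ball$) can a priori have their approximating geodesics $\geo(v,v_k)$ and $\geo(v,v_k')$ converge to the same ray in $\tree_v$, since these geodesics track $g$ and $g'$ only from $c_k$ onward and may agree near $v$. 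Ruling this out requires a form of the uniqueness statements (Proposition~\ref{bordeaux}, Lemma~\ref{letter}, Theorem~\ref{unique support}) that the paper builds up over several sections. In short, ``$|\tree_0(\omega)|=|\tree_v(\omega)|$ a.s.'' is essentially equivalent to the theorem itself, and your outline assumes rather than proves the coalescing input needed to make the bijection work.
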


In order to state our remaining theorems precisely we introduce a few definitions.
Busemann functions, named after the work of Herbert Busemann on metric spaces~\cite{busemann55}, were introduced to first-passage percolation through the works of Newman \cite{newman95} and Hoffman~\cite{hoffman05,hoffman08}. Given a geodesic $g \in \tree_0$ we define the {\bf Busemann function} 
$B_g:\Z^2\times\Z^2\to\R$ of $g=(0,v_1,v_2,\ldots)$ as the limit
\begin{equation}\label{gBusemann}
B_g(x,y):=\lim_{k \to \infty}\big[T(x,v_k)-T(y,v_k)\big].
\end{equation}
It is proved in~\cite{hoffman05} that this limit exists for all $g \in \tree_0$ and $x,y \in \Z^2$ almost surely; see Lemma~\ref{rioja} below. We say that a Busemann function is {\bf asymptotically linear} if there exists a linear functional $\rho:\R^2 \to \R$ such that 
$$
\limsup_{|y| \to \infty} \frac{1}{|y|}\big|B_g(0,y)-\rho(y)\big|=0.
$$

The Busemann function of a geodesic $g$ should be thought of as a measure of the difference in distance to infinity along the geodesic $g$. From the linearity of a Busemann function it is possible to obtain information on the direction of the geodesics used to define it, and hence to distinguish geodesics from one another. An exposition of this will be given in Section~\ref{properties}.

We shall call a linear functional $\rho:\R^2\to\R$ {\bf supporting} to $\ball$ if $\{x\in\R^2:\rho(x)=1\}$ is a supporting line for $\partial\ball$ at some point in $\partial\ball$, and {\bf tangent} to $\ball$ if $\{x:\rho(x)=1\}$ is the unique supporting line -- the tangent line -- to $\partial\ball$ at some point in $\partial\ball$. 
It is well known that $\ball$ can be expressed as $\{x\in\R^2:\mu(x)\le1\}$ for some norm $\mu:\R^2\to\R$. Consequently, the intersection of $\partial\ball$ and a supporting line of the form $\{x\in\R^2:\rho(x)=1\}$ can thus be represented by the arc $\{x\in S^1:\mu(x)=\rho(x)\}$.
The set of functionals supporting to $\ball$ is naturally parametrized by the direction of their gradients. The set of functionals supporting to $\ball$, which we denote henceforth by $\support$, thus inherits the topology of $S^1$.

\begin{theorem}\label{all geos}
With probability one there exists for each $g\in\tree_0$ a linear functional $\rho\in\support$ such that $B_g$ is asymptotically linear to $\rho$ and $\dir(g)$ is a subset of $\{x\in S^1:\mu(x)=\rho(x)\}$.
\end{theorem}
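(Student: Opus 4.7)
The plan is to transfer the asymptotic linearity already established for random coalescing geodesics (RCGs) to every $g\in\tree_0$, via a planar sandwich argument. The first ingredient, which one expects from the classification of RCGs developed earlier in the paper, is that for every $\rho\in\support$ there is an RCG $G_\rho$ whose Busemann function $B_{G_\rho}$ is asymptotically linear to $\rho$ and whose direction lies in the arc $\{x\in S^1:\mu(x)=\rho(x)\}$. These RCGs cover every face of $\partial\ball$, and two RCGs $G_{\rho_1},G_{\rho_2}$ with $\rho_1,\rho_2$ close in $\support$ coalesce on a large set by the coalescence defining property of RCGs.

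Fix $g=(v_0,v_1,v_2,\ldots)\in\tree_0$. Since $g$ is self-avoiding in the plane, $\Z^2\setminus g$ has two components $R^+$ and $R^-$, and I would select sequences of RCGs $G_n^\pm$ whose trajectories eventually lie in $R^\pm$ and whose supporting functionals $\rho_n^\pm$ accompany the ``tangent slope'' of $g$ from each side; the density of RCGs in $\tree_0$ ensures such sequences exist. Any path from $0$ to a vertex $w\in\Z^2$ must cross from one side of $g$ to the other only through vertices of $g$, so routing through the point where $g$ meets $G_n^\pm$ yields, for some finite random constants $C_n^\pm$,
\begin{equation*}
B_{G_n^-}(0,y)-C_n^- \;\le\; B_g(0,y) \;\le\; B_{G_n^+}(0,y)+C_n^+,\qquad y\in\Z^2,
\end{equation*}
which is a consequence of subadditivity of $T$ together with the definition~\eqref{gBusemann}. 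Since $\rho_n^\pm\to\rho$ for some common $\rho\in\support$ as the accompanying RCGs approach $g$, both bounding Busemann functions are asymptotically linear to $\rho$, forcing the same of $B_g$.

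To conclude $\dir(g)\subseteq\{x\in S^1:\mu(x)=\rho(x)\}$, note that $B_g(0,v_k)=T(0,v_k)$ for $v_k\in g$; dividing by $|v_k|$ and passing to a subsequence along which $v_k/|v_k|\to\hat u$, the shape theorem gives $T(0,v_k)/|v_k|\to\mu(\hat u)$, while asymptotic linearity of $B_g$ gives $B_g(0,v_k)/|v_k|\to\rho(\hat u)$, so $\mu(\hat u)=\rho(\hat u)$. The main obstacle is the sandwich step: one must simultaneously exhibit RCGs on both sides of $g$ whose supporting functionals converge to the \emph{same} $\rho\in\support$. This is where the density of RCGs in $\tree_0$ has to be deployed quantitatively, and where extra care is needed at the at-most-countable set of corner directions of $\partial\ball$, at which multiple $\rho\in\support$ share a contact point with $\ball$ and one must verify that the two sides of $g$ select a consistent functional.
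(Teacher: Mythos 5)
Your overall strategy — squeezing $B_g$ between Busemann functions of random coalescing geodesics (RCGs) lying on either side of $g$ and then using that RCGs are dense — is the same one the paper uses (Theorem~\ref{promotion}). But there are three places where the execution, as written, has genuine gaps rather than just unfinished bookkeeping.

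First, the sandwich inequality itself. The inequality
$B_{G_n^-}(0,y)-C_n^-\le B_g(0,y)\le B_{G_n^+}(0,y)+C_n^+$ is claimed for all $y\in\Z^2$ via ``routing,'' but this is not how the bound works. The correct statement (the paper's Lemma~\ref{podcast}) is a two-sided estimate on $T(0,z)-T(y,z)$ for $z$ inside the cone bounded by $G^1$ and $G^2$, valid only under geometric hypotheses on $y$ (that $y$ is outside the cone, that $0$ is outside the translated cone at $y$, and a crossing condition), and with no additive constants; moreover the direction of the inequality \emph{flips} depending on which side of the two cones the crossing occurs, which is why the paper states two cases. The estimate then passes to $B_g$ by sending $z\to\infty$ along $g$. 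A routing argument through ``the point where $g$ meets $G_n^\pm$'' is too crude to produce this: one must carefully compare four path-stubs (see Figure~\ref{fig:conseq1}), and the cancellations are nontrivial.

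Second, the obstacle you flag at the end — exhibiting RCGs on both sides of $g$ whose functionals converge to the \emph{same} $\rho$ — is not a loose end but the heart of the proof, and it is not clear your setup can even get started on it. If $g$ happens to be an RCG that is (say) ccw-isolated, there is no sequence of distinct RCGs accumulating on $g$ from the ccw side, so $G_n^+$ cannot be chosen to converge to $g$. The paper resolves this via Lemma~\ref{neighboring}(b): when two RCGs are neighbors, geodesics strictly between them that diverge early are rare, and Lemma~\ref{promo} then controls $B_g$ using only $\|\rho_{G^1}-\rho_{G^2}\|$ and the width of $\dir(G^1)\cup\dir(G^2)$, without needing either bound to collapse onto $g$.

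Third, and most fundamentally, the statement is ``with probability one, for every $g\in\tree_0$\ldots,'' and $\tree_0$ can be uncountable. Choosing $G_n^\pm$ adapted to $g$ gives a different exceptional null set for each $g$, and you cannot union uncountably many of them. The paper avoids this by building, once and for all, a nested sequence of \emph{finite} families $\mathcal{F}_n$ of RCGs that is $1/(6Kn)$-dense in $\functionals_\star$, and showing that for each consecutive pair $G^1<G^2$ in $\mathcal{F}_n$, \emph{all} $g$ between them satisfy $|B_g(0,y)-\rho_{G^1}(y)|<|y|/n$ for large $|y|$, with a single null set per pair. This is the mechanism that turns a per-$g$ sandwich into a simultaneous almost-sure statement, and it is missing from your proposal.
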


The above theorem provides a simultaneous description of the asymptotic properties of all infinite geodesics, but does not address existence and uniqueness of a specific functional. Since for almost every realization all geodesics have linear Busemann functions we shall proceed and describe the set of linear functionals associated to geodesics in $\tree_0$. Let
$$
\functionals(\omega):=\big\{\rho\in\support:\exists\text{ a geodesic in $\tree_0(\omega)$ with Busemann function linear to }\rho\big\}.
$$
The next result gives a more precise description of the ergodic properties of $\tree_0$ by addressing the topological properties of the random set $\functionals=\functionals(\omega)$.

\begin{theorem}\label{unique support}
There exists a closed set $\functionals_\star\subseteq\support$, containing all linear functionals tangent to $\ball$, such that $\P(\functionals=\functionals_\star)=1$.
Moreover, for every functional $\rho\in\functionals_\star$ we have
$$
\P\big(\exists\text{ two geodesics in $\tree_0$ with Busemann function linear to }\rho\big)=0.
$$
\end{theorem}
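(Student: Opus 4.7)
Viewing $\functionals$ as a random closed subset of the compact Polish space $\support$, I would argue that it is invariant under the $\Z^2$-shifts and then invoke ergodicity to conclude it equals a deterministic closed set $\functionals_\star$ almost surely. Given $\rho\in\functionals(\omega)$ witnessed by some $g\in\tree_0(\omega)$, the random coalescing geodesic associated to $\rho$ (produced in the earlier sections of the paper) provides, from every $v\in\Z^2$, a geodesic coalescing with the tail of $g$ and hence having Busemann function linear to the same $\rho$. Translating to $\sigma_{-v}\omega$ transports this witness to give $\rho\in\functionals(\sigma_{-v}\omega)$, so $\functionals$ is shift-invariant.

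\textbf{Closedness and tangent functionals.} Closedness of $\functionals_\star$ follows from compactness: given $\rho_n\to\rho$ with witnesses $g_n\in\tree_0$, local finiteness of $\Z^2$ yields a subsequence converging edgewise to a limit $g\in\tree_0$, and continuity of Busemann functions under this mode of convergence (as developed in Section~\ref{properties}) shows $g$ witnesses $\rho\in\functionals_\star$. That $\functionals_\star$ contains every tangent functional $\rho$ follows by combining Hoffman's existence result quoted in the introduction (a geodesic in $\tree_0$ asymptotically directed into each side of $\partial\ball$) with Theorem~\ref{all geos}: this geodesic's associated linear functional is supporting at a point where the supporting line is unique, hence must equal the tangent $\rho$.

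\textbf{Uniqueness.} Fix $\rho\in\functionals_\star$ and suppose for contradiction that with positive probability there exist distinct $g_1,g_2\in\tree_0$ whose Busemann functions are both asymptotically linear to $\rho$. The random coalescing geodesic $G_\rho$ associated to $\rho$ a.s.\ coalesces with all its shifts, and the density of random coalescing geodesics in $\tree_0$ should force any geodesic in $\tree_0$ with Busemann function linear to $\rho$ to eventually coincide with $G_\rho(\omega)$. Since $g_1$, $g_2$, and $G_\rho(\omega)$ all start at the origin and eventually share a common tail, planarity together with uniqueness of finite geodesics forces $g_1=g_2=G_\rho(\omega)$, a contradiction. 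I expect the main obstacle to be this density step: pinning down the precise sense in which every $g\in\tree_0$ with a given linear Busemann function must ultimately merge with the corresponding random coalescing geodesic.
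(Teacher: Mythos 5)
Your plan diverges from the paper's route at every step, and each divergence opens a gap.

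\textbf{Determinism.} Your shift-invariance argument is circular. To show $\rho\in\functionals(\omega)$ implies $\rho\in\functionals(\sigma_v\omega)$ you invoke ``the random coalescing geodesic associated to $\rho$,'' but the existence of an RCG with $\rho_G=\rho$ is precisely what forces $\rho\in\functionals_\star$; a priori the witnessing geodesic $g\in\tree_0(\omega)$ need not be the image of any RCG, and nothing yet guarantees $g$ coalesces with, or shares its Busemann function with, a geodesic emanating from $v$. The paper does not establish shift-invariance of $\functionals$ directly; the first half of Theorem~\ref{unique support} is instead obtained as a corollary of Theorem~\ref{promotion}, which runs an explicit approximation scheme with nested finite families $\mathcal{F}_n$ of RCGs and the $\eps$-linearity estimate of Lemma~\ref{promo} to conclude simultaneously that \emph{every} $g\in\tree_0$ has $B_g$ linear to some $\rho_g\in\functionals_\star$ \emph{and} that every $\rho\in\functionals_\star$ is realized. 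The uncountability of $\functionals_\star$ (expected in the i.i.d.\ case) is exactly why the naive ``for each fixed $\rho$, $\P(\rho\in\functionals)\in\{0,1\}$'' does not yield $\P(\functionals=\functionals_\star)=1$ without the simultaneous statement that Theorem~\ref{promotion} provides.

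\textbf{Closedness and uniqueness.} The closedness argument you sketch relies on ``continuity of Busemann functions under edgewise convergence of geodesics,'' which is not a fact in Section~\ref{properties} and is not obviously true: two geodesics can agree on an arbitrarily long initial segment and still have drastically different tails, hence different Busemann functions. The paper's closedness argument (Theorem~\ref{deportivo}) instead takes a monotone sequence $\rho_k\to\rho$ in $\functionals_\star$, observes the corresponding RCGs $G_k$ are monotone with a limiting RCG $G$, and passes to the limit through $\E[B_{G_k}(0,z)]\to\E[B_G(0,z)]$, which is a statement about $L^1$ convergence, not edgewise path convergence. As for uniqueness, you correctly locate the obstruction (``pinning down the precise sense in which every $g\in\tree_0$ with a given linear Busemann function must ultimately merge with the corresponding random coalescing geodesic''), but the paper's proof does \emph{not} establish such a merging fact. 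Instead it argues by contradiction and pigeonhole: if with probability $\delta$ two geodesics had Busemann function linear to $\rho_G$, one of them must lie strictly outside a small cone $(G^-,G^+)$ around $G$; covering the remaining circle by pairs of consecutive RCGs eventually directed in half-planes, one applies the sandwich inequality $B_{G^1}(0,y_k)\le B_g(0,y_k)\le B_{G^2}(0,y_k)$ of Lemma~\ref{podcast} along a suitable sequence $(y_k)$ to show $B_g$ cannot be linear to $\rho_G$ there. The ``merging'' heuristic you rely on is neither proved nor used in the paper, and it is unclear how you would establish it without essentially reproving the theorem.
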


We remark that, in particular, if $\partial\ball$ is differentiable then $\functionals(\omega)=\support$ almost surely.


The remainder of this paper is organized as follows.
We first, in Section~\ref{sec: assumptions}, review the relevant background on first-passage percolation, and describe in detail the class of models with which we shall work.
In Section~\ref{properties} we describe some fundamental properties of random coalescing geodesics, and at the same time illustrate the role of coalescence and Busemann functions in the construction of an ergodic theory.
The existence of at least four random coalescing geodesics is derived in Section~\ref{dh}, based on previous work of Damron and Hanson~\cite{damhan14}.

In Sections~\ref{cola}--\ref{sec:coalescence} we then aim to characterize the set of random coalescing geodesics. In Section~\ref{cola} we introduce a shift invariant labeling of geodesics which is consistent with some natural ordering among geodesics. This gives us a way to identify geodesics by referring to their labels.
In Section~\ref{geometric} we present a central geometric argument that will be crucial in order to develop our theory without further assumptions on the asymptotic shape.
The set of labels obtained by the labeling procedure of Section~\ref{cola} is in Section~\ref{label of sections} shown to be a deterministic closed set, and that geodesics with the same label tend to form non-crossing families of geodesics. This allows us to talk about a \emph{random} non-crossing geodesic with a given label.
These random non-crossing geodesics are in Section~\ref{sec:coalescence} proven to be coalescing, via the adaptation of an argument due to Licea and Newman~\cite{licnew96}. We further show that there are no random coalescing geodesics apart from these ones, and hence obtain a classification of random coalescing geodesics in terms of the labeling.

We end the paper by exploring several consequences of the theory we develop and, in particular, prove the theorems stated above. In Section~\ref{other} we explore ergodic properties of infinite geodesics by linking labels to Busemann functions. In Section~\ref{midpoint} we resolve the midpoint problem from~\cite{benkalsch03}. These last two sections can be read independently of one another.
Finally, we state some open problems in Section~\ref{sec: open}.

\section{Background on model and assumptions} \label{sec: assumptions}

We shall work under the standard assumptions on passage time distributions outlined by earlier work of Hoffman~\cite{hoffman08} and Damron and Hanson~\cite{damhan14}. As above, we will denote by $\Omega_1=[0,\infty)^{\lobby}$ our state space, equipped with the product Borel sigma-algebra, completed with respect to $\P$. $\P$ will throughout the paper be a shift invariant probability measure on $\Omega_1$ satisfying either of the following two sets of conditions:
\begin{enumerate}
\item[{\bf A1}] $\P$ is a product measure whose common marginal distribution is continuous with
$$
\E\big[\min\{\omega_{e_1},\omega_{e_2},\omega_{e_3},\omega_{e_4}\}^2\big]<\infty,
$$
where $e_1, \ldots, e_4$ denote the four edges incident to the origin.
\item[{\bf A2}] $\P$ is ergodic with respect to translations of $\Z^2$ and has the following properties:
\begin{enumerate}[(i)]
\item $\P$ has all the symmetries of $\mathbb{Z}^2$;
\item $\E[\omega_e^{2+\eps}]<\infty$ for some $\eps>0$;
\item the asymptotic shape $\ball$ is bounded;
\item for any two finite paths $\pi$ and $\pi'$ that differ for at least one edge we have
$$
\P\big(T(\pi)=T(\pi')\big)=0;
$$
\item for any $e\in\lobby$ and $t>0$ such that $\P(\omega_e>t)>0$ we have almost surely
$$
\P\big(\omega_e>t\,\big|\,\{\omega_f:f\in\lobby,f\neq e\}\big)>0;
$$
\item there exists $L<\infty$ such that 
$$
\P\big(|\geo(x,y)|\le Ln\text{ for all }x,y\in[-n,n]^2\big)=1-o(1);
$$
\item 
there exists $t,\delta,\gamma>0$ such that
\begin{enumerate}[(a)]
\item for any edge $e\in\lobby$ we have almost surely
$$
\P\big(\omega_e<t-\delta \,\big|\, \{\omega_f:f\in\lobby,f\neq e\}\big)>\delta;
$$
\item with probability $1-o(1)$ we have for all $x\in\partial[-n,n]^2$ and $y\in\partial[-2n,2n]^2$
$$
\big|\big\{e\in\geo(x,y):\omega_e>t\big\}\big|>\gamma|\geo(x,y)|.
$$
\end{enumerate}
\end{enumerate}
\end{enumerate}

Conditions {\bf A1} and {\bf A2}~(i)-(iv) are (essentially) the conditions of~\cite{hoffman08}, and have been specified so to make sure the conditions of the shape theorem (see below) are satisfied, and that for each pair of points $x$ and $y$ there is a unique geodesic.
Condition~(v), known as the \emph{upward finite energy} condition, was added in~\cite{damhan14} to allow for local modifications of an edge configuration; see e.g.~\cite{hagjon06} for a further account on its relevance in the statistical mechanics literature.
Condition~(vi) implies that the number of edges in geodesics between faraway points is comparable to the Euclidean distance between them.
The first part of condition~(vii) is a uniform version of the \emph{downward finite energy} condition, and introduced so that if we resample any edge where the passage time is not too low then, with probability uniformly bounded away from 0, the weight will decrease by a fixed amount. The second part of condition~(vii) makes sure that there is a positive fraction of edges along a geodesic whose passage time is not too low.

%

\begin{remark}
The finite energy conditions of {\bf A2} clearly hold for any measure satisfying {\bf A1}. Under the conditions of {\bf A1} condition~(vi) of {\bf A2} can be obtained as a corollary of a theorem of Kesten~\cite{kesten80b}, while
the second part of condition~(vii) follows from a standard percolation argument. It will therefore suffice to work with the conditions of {\bf A2}.
\end{remark}

\begin{remark}
Most of our arguments require only conditions (i)-(v) of {\bf A2}. It is only the proof of Lemma~\ref{brassica}
that require the additional conditions~(vi) and~(vii). We have made no effort to optimize these conditions. 
\end{remark}

\begin{remark}
While the total ergodicity condition (i) in {\bf A2} makes many arguments easier, we do not believe that it is essential. With some extra effort we believe that it can be replaced by $\P$ is ergodic in any of our arguments or those in~\cite{damhan14}.
\end{remark}

\subsection{The shape theorem}

One of the most celebrated results in first-passage percolation is known as the shape theorem, and originates from Kingman's~\cite{kingman68,kingman73} ergodic theory for subadditive processes. Under either of the conditions {\bf A1} or {\bf A2} Kingman's theorem shows that for any $z\in\Z^2$ we have
\begin{equation}\label{time constant}
\exists\mu(z)\,:=\,\lim_{n\to\infty}\frac{T(0,nz)}{n}\,=\,\inf_{n\ge1}\frac{\E[T(0,nz)]}{n}\quad\text{almost surely and in }L^1.
\end{equation}
Richardson~\cite{richardson73}, and later Cox and Durrett~\cite{coxdur81} and Boivin~\cite{boivin90}, extended the radial converge in~\eqref{time constant} to obtain simultaneous convergence in all directions. Their results show that under either of the assumptions {\bf A1} or {\bf A2} we have
\begin{equation}\label{shape theorem}
\limsup_{|z|\to\infty}\frac{1}{|z|}\big|T(0,z)-\mu(z)\big|=0\quad\text{almost surely}.
\end{equation}
The function $\mu:\Z^2\to\R$ extends to a function $\mu$ on $\R^2$ through homogeneity, and inherits the properties of a norm. The unit ball $\ball:=\{x\in\R^2:\mu(x)\le1\}$ in this norm is a good approximation of a rescaled version of a large ball $\{z\in\Z^2:T(0,z)\le t\}$ in the first-passage metric. In these terms the result in~\eqref{shape theorem} is known as the {\bf shape theorem} as takes the familiar form
\begin{equation} \label{surgery}
\mathbb{P}\Big( (1-\epsilon)\ball \subset \frac1t \ball(t) \subset (1+\epsilon)\ball \text{ for all large }t \Big) = 1,
\end{equation}
for all $\eps>0$, where $\ball(t):=\{z\in\Z^2:T(0,z)\le t\}+[-1/2,1/2]^2$. It is straightforward to show that the properties of a norm implies that $\ball$ is compact, convex and has non-empty interior. The assumptions on $\P$ further imposes that $\ball$ necessarily has all the symmetries of $\Z^2$.

\subsection{Shapes and geodesics in ergodic first-passage percolation} \label{bolt}

The shape theorem gives a first-order approximation of large balls in the first-passage metric $T$ with a compact and convex shape $\ball$, but does not provide further insight to the topological properties of that shape. The results of~\cite{hoffman08,damhan14} relate existence and properties of infinite geodesics to the number of sides of $\ball$, of which there are at least four. It turns out that these results are sharp under the general ergodic assumption, but most likely not for independent models. 

In the ergodic setting H{\"a}ggstr{\"o}m and Meester~\cite{hagmee95} have shown that for any compact and convex shape $S \subset \R^2$ with the symmetries of $\Z^2$ there is a model of ergodic first-passage percolation with $\ball=S$. That is, the asymptotic shape can have as few sides as four, in the case that $S$ is either a square or a diamond.

Alexander and Berger~\cite{aleber18} have constructed a model of ergodic first-passage percolation where the shape is an octagon, with corners on the axes and the main diagonals, and where all geodesics (at least two for each axis direction) are directed along axes. In addition, Brito and Hoffman~\cite{brihof} have constructed a model that almost surely has exactly four infinite geodesics. These geodesics have directions that span an angle of $\pi/2$ each. These results suggest that the results of~\cite{hoffman08,damhan14} are sharp.

Very little is known about the asymptotic shape for edge weights that are independent. In particular, it is unknown whether for some edge distribution $\partial\ball$ may equal a circle or a square. Simulations indicate that for exponential edge weights the shape is very close, but not equal to, a circle~\cite{almdei15}. Getting better results about geodesics in independent first-passage percolation will require new techniques for the shape or geodesics which make use of independence in some fundamental way.

While our focus in this paper is strictly on first-passage percolation on the $\Z^2$ nearest-neighbor lattice, we mention that it has been observed by Benjamini and Tessera~\cite{bentes17} that bigeodesics may exist on graphs with different geometries.

\subsection{An extended shape theorem}

As emphasized above, there is a close connection between the asymptotic shape and infinite geodesics. As such, the shape theorem provides a convenient tool to obtain certain control on the location of geodesics. The following `extended version' of the usual shape theorem will be useful to us as several occasions. Loosely speaking, it says that the shape theorem has `kicked in' around a point at a time scale proportional to its distance from the origin, for all points sufficiently far from the origin.

\begin{prop}\label{extended shape}
For every $\eps>0$ there exists an almost surely finite $N\ge1$ such that for all $|z|\ge N$ and all $y\in\Z^2$ we have
$$
\big|T(z,z+y)-\mu(y)\big|\le\eps\max\{|z|,|y|\}.
$$
\end{prop}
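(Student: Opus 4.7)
The proposition is a uniform-in-starting-point strengthening of Boivin's shape theorem~\eqref{shape theorem}; I plan to reduce it via translation invariance to a Borel--Cantelli estimate whose summability input comes from the moment hypotheses of~\textbf{A1}/\textbf{A2}.

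Set $h(\omega, n) := \sup_{y \in \Z^2} |T(0, y) - \mu(y)|/\max\{n, |y|\}$. Since $T(z, z+y)$ equals the first-passage time between $0$ and $y$ in the translated configuration $\sigma_z\omega$, the proposition is precisely the statement that, almost surely, $h(\sigma_z \omega, |z|) \le \eps$ for all but finitely many $z \in \Z^2$. As a preliminary, \eqref{shape theorem} implies $h(\omega, n) \to 0$ almost surely as $n \to \infty$: the portion of the supremum over $|y| \ge n$ equals $\sup_{|y| \ge n}|T(0,y)-\mu(y)|/|y|$, which is handled by~\eqref{shape theorem}; and the portion over $|y| < n$ is at most $\eps'' + K(\omega)/n$, where $\eps''$ is the shape-theorem accuracy beyond some a.s.-finite threshold $M(\omega)$ and $K(\omega) := \sup_{|y| < M(\omega)}|T(0,y)-\mu(y)|$ is finite and independent of $n$. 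As $n \to \infty$ this is at most $\eps''$, which can be taken arbitrarily small.

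Translation invariance of $\prob$ gives $h(\sigma_z \omega, |z|) \stackrel{d}{=} h(\omega, |z|)$. Writing $p_n := \prob(h(\omega, n) > \eps)$, one thus has $p_n \to 0$; and the failure event $\{\exists z, |z| \ge N: h(\sigma_z \omega, |z|) > \eps\}$ has probability at most $\sum_{|z| \ge N} p_{|z|}$, which by a union bound over the $O(n)$ lattice points at distance $n$ is comparable to $\sum_{n \ge N} n\, p_n$. The proposition follows once $\sum_n n\, p_n < \infty$, i.e.\ once one has a polynomial tail bound $p_n \le Cn^{-2-\delta}$ for some $\delta > 0$. Such a bound is obtained via classical concentration estimates for first-passage times of Alexander--Kesten type, under the $L^2$ (respectively $L^{2+\eps}$) moment hypothesis of~\textbf{A1} (respectively~\textbf{A2}). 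That quantitative input---which upgrades the qualitative shape theorem to polynomial tail control on $h$---is the main obstacle of the proof; once it is supplied, the remainder is the routine Borel--Cantelli reduction via translation invariance outlined above.
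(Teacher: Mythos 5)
Your Borel--Cantelli reduction is clean, but it hinges entirely on the claimed polynomial tail bound $p_n \le Cn^{-2-\delta}$, and this is where the argument breaks down. The proposition must hold under assumption {\bf A2}, i.e.\ for a \emph{general} stationary, ergodic $\P$ with a bounded shape and finite $(2+\eps)$-moment. Alexander--Kesten-type concentration estimates are proved for product measures ({\bf A1}) and are emphatically \emph{not} available in the ergodic setting: one can cook up ergodic $\P$'s with arbitrarily slow mixing, for which $T(0,ne_1)/n \to \mu(e_1)$ at no prescribed rate and for which $\P(h(\omega,n)>\eps)$ need not decay polynomially, or at all in a summable way. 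Indeed the paper explicitly flags this point in the introduction (``our results will all be qualitative and not quantitative''). So the crucial summability input you label as ``the main obstacle'' is not merely left unsupplied --- in the generality required it is genuinely unavailable, and the Borel--Cantelli route cannot close.

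The paper's own proof avoids quantitative tail control entirely. It sets $C(\eps,z) := \{|T(z,z+y)-\mu(y)|\le \eps\max\{|z|,|y|\}\ \forall y\}$, shows $\P(C(\eps,z))\to 1$ as $|z|\to\infty$ (a soft consequence of the shape theorem, essentially your observation that $h(\omega,n)\to 0$ a.s.), and then invokes the \emph{spatial ergodic theorem} to conclude that for every $\delta>0$ there is an a.s.\ finite $N$ beyond which the density of ``bad'' sites near $z$ is below $\delta$. The remaining step is a deterministic triangle-inequality patch: if $C(\eps/100,z)$ fails, one can still find a nearby good $x$ (within $\sqrt\delta\,|z|$ of $z$) and transfer its estimate to $z$ at an $O(\sqrt\delta\,|z|)$ cost. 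No concentration or tail rate is used anywhere. If you want to rescue your proof you would have to replace the Borel--Cantelli step by exactly this kind of density-plus-patching argument; as written, step~5 of your plan is a gap that cannot be filled under {\bf A2}.
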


\begin{proof}
Given $\eps>0$ and $z\in\Z^2$, let $C(\eps,z)$ denote the event
$$
C(\eps,z):=\big\{|T(z,z+y)-\mu(y)|\le\eps\max\{|z|,|y|\}\text{ for all }y\in\Z^2\big\}.
$$
We first argue that $\P(C(\eps,z))\to1$ as $|z|\to\infty$. This is an immediate consequence of the shape theorem: For every $\eps>0$ we may find $M\ge1$ such that
$$
\P\Big(|T(z,z+y)-\mu(y)|\le\eps|y|\text{ for all }|y|\ge M\Big)\ge1-\eps,
$$
and
$$
\P\Big(\max_{|y|\le M}|T(z,z+y)-\mu(y)|\le\eps|z|\Big)\ge1-\eps
$$
for $|z|$ large. Hence, for every $\eps>0$ we find $N$ so that $\P(C(\eps,z))>1-2\eps$ when $|z|\ge M'$.

Relying on the ergodic theorem we may for any $\delta>0$ find an almost surely finite $N\ge1$ such that for every $n\ge N$ the density of $z$ within distance $n$ from the origin for which $C(\eps/100,z)$ fails is at most $\delta$. So, for $z$ with $|z|\ge N$ either $C(\eps/100,z)$ occurs, or we may find $x$ within distance $2\sqrt{\delta}|z|$ of $z$ for which $C(\eps/100,x)$ occurs. (The contrary would give $4\delta|z|^2$ points within distance $|z|$ of the origin for which $C(100/\eps,z)$ fails.) In the latter case we have for these $z$ and $x$, and any $y$, that 
\begin{equation*}
\begin{aligned}
\big|T(z,z+y)-\mu(y)\big|\,&\le\,\big|T(x,z+y)-\mu(z+y-x)\big|+T(x,z)+\big|\mu(z+y-x)-\mu(y)\big|\\
&\le\,\frac{\eps}{100}\max\{|x|,|z+y-x|\}+\frac{\eps}{100}\max\{|x|,|z-x|\}+2\mu(z-x),
\end{aligned}
\end{equation*}
where we in the second step have used the triangle inequality once and the fact that $C(\eps/100,x)$ occurs twice. Using the fact that $\mu$ is comparable to Euclidean distance and that $|z-x|\le2\sqrt{\delta}|z|$, we obtain a constant $c$ and the further upper bound
$$
\big|T(z,z+y)-\mu(y)\big|\le\frac{\eps}{100}|y|+2\frac{\eps}{100}(1+\sqrt{\delta})|z|+2c\sqrt{\delta}|z|,
$$
valid for $|z|\ge N$ and all $y$.
In particular, if $\delta>0$ was chosen small enough this is all bounded by $\eps\max\{|z|,|y|\}$ and $C(\eps,z)$ holds, for all $z$ at distance at least $N$ from the origin, as required.
\end{proof}

\subsection{A note on measurability}

We have defined a random coalescing geodesic as a measurable map $G:\Omega_1\to\Omega_2$, where both $\Omega_1=[0,\infty)^{\lobby}$ and $\Omega_2=\{0,1\}^{\lobby}$ are equipped with the product topology and Borel sigma-algebra. Hence, events of the form $\{v\in G\}$ and $\{G\cap V\neq\emptyset\}$ are measurable. Moreover, since events like $\{B_G(y,z)>x\}$ can be written as countable unions and intersections of events involving $G$ and $T$, they are also measurable. So for $y,z\in\Z^2$ the Busemann function of $G$ is measurable as a function $B_G(y,z):\Omega_1\to\R$. These observations will allow us to rely on the ergodic theorem as we address various properties of $G$.

We also consider a number of set-valued functions throughout this paper. Examples of these are the set of directions $\dir(G)\subseteq S^1$ of a random coalescing geodesic, the set $\functionals\subseteq\support$ of functionals linear to the Busemann function of some geodesic, and the set $\labels\subseteq[0,1]$ of values appearing as the label of some geodesic in $\tree_0$. Events of the form $\{\dir(G)\cap I\neq\emptyset\}$, $\{\functionals\cap I\neq\emptyset\}$ and $\{\labels\cap I\neq\emptyset\}$, where $I$ is some interval, are easily seen to be measurable. It is less obvious that these sets are measurable as set-valued functions on $\Omega_1$. However, we show that these sets are constant outside of some null set, and thus automatically measurable when completing the Borel sigma-algebra with respect to the measure $\P$.

\section{Properties of random coalescing geodesics}
\label{properties}

In this section we collect some fundamental properties of random coalescing geodesics and simultaneously highlight the usefulness of Busemann functions and coalescence. First we give the statement of all the theorems and then we provide the proofs. Although random coalescing geodesics is a new concept, we mention that many of the arguments used to prove the results of this section has previously appeared elsewhere in the literature.

Recall that the Busemann function $B_g:\Z^2\times\Z^2\to\R$ of a geodesic $g=(v_0,v_1,v_2,\ldots)$ is defined as the almost sure limit
\begin{equation}\label{newBusemann}
B_g(x,y)=\lim_{k \to \infty}\big[T(x,v_k)-T(y,v_k)\big].
\end{equation}

\begin{lemma}\label{rioja}
With probability one the limit in~\eqref{newBusemann} exists for all $g\in\tree_0$ and satisfies
\begin{enumerate}[\quad (a)]
\item $B_g(x,z)=B_g(x,y)+B_g(y,z)$ for all $x,y,z\in\Z^2$;
\item $|B_g(x,y)|\le T(x,y)$ for all $x,y\in\Z^2$;
\item $B_g(x,y)=T(x,y)$ for all $x,y\in g$ such that $x\in\geo(0,y)$.
\end{enumerate}
\end{lemma}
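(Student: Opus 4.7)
The plan is to reduce everything to the coalescence property of $G$. By hypothesis, for every $x \in \Z^2$ the path $G(x) = \sigma_{-x}(G(\sigma_x\omega))$ is (almost surely) an infinite geodesic starting at $x$ whose symmetric difference with $G$ is finite. Consequently $G(x)$ and $G$ coincide from some (random) vertex $w(x)$ onwards, and in particular every sufficiently far-out vertex $v_k$ of $G$ lies on $G(x)$. Since $G(x)$ is a geodesic and the geodesic between two prescribed endpoints is unique, this gives the key identity
\[
T(x,v_k) \;=\; T(x,w(x)) + T(w(x),v_k) \qquad \text{for all $k$ large.}
\]

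With this identity in hand I would first establish existence of the limit in \eqref{GBusemann}. Fix $x,y \in \Z^2$ and choose $k$ large enough that both $w(x)$ and $w(y)$ lie on the common tail of $G$, $G(x)$, $G(y)$ shared with $v_k$. Without loss of generality assume $w(x)$ comes before $w(y)$ along $G$; then $T(w(x),v_k) = T(w(x),w(y)) + T(w(y),v_k)$, and the identity above yields
\[
T(x,v_k) - T(y,v_k) \;=\; T(x,w(x)) + T(w(x),w(y)) - T(y,w(y)),
\]
which is independent of $k$ once $k$ is large enough. Thus the limit exists and equals this eventually-constant value; in particular $B_G(x,y)$ is well-defined.

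For (a) I would simply write $T(x,v_k) - T(z,v_k) = [T(x,v_k)-T(y,v_k)] + [T(y,v_k)-T(z,v_k)]$ and take $k \to \infty$, using the existence established above for each pair. For (b) the triangle inequality for $T$ gives $-T(x,y) \le T(x,v_k) - T(y,v_k) \le T(x,y)$, and passing to the limit yields the claim. For (c), if $y \in G(x)$ then the portion of $G(x)$ from $x$ to $y$ is the unique geodesic between these points, so $T(x,y)$ equals its $\omega$-length; for $k$ large enough that $v_k$ lies beyond $y$ on $G(x)$, uniqueness of geodesics gives $T(x,v_k) = T(x,y) + T(y,v_k)$, so $T(x,v_k)-T(y,v_k) = T(x,y)$ for all such $k$, and the limit equals $T(x,y)$.

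The only real content is the first step, where the coalescence property of a random coalescing geodesic is converted into the eventual-constancy of $T(x,v_k) - T(y,v_k)$; the remaining parts are then immediate passages to the limit. The statement is essentially a translation of the finiteness of $G \setminus G(x)$ into the analytic language of Busemann functions, and no further input (such as the shape theorem or ergodic theorem) is needed here.
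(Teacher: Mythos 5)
Your proof is correct, but it takes a genuinely different route from the paper's. The paper establishes existence of the limit via a monotonicity argument: writing $T(x,v_k)-T(y,v_k)=[T(x,v_k)-T(v_0,v_k)]-[T(y,v_k)-T(v_0,v_k)]$ and observing that each bracket is decreasing in $k$ (because $(v_0,\ldots,v_k)$ is a geodesic segment) and bounded below by the triangle inequality. That argument never invokes coalescence, and in fact the paper points out immediately after the lemma that ``the above properties indeed hold for any geodesic in $\tree_0$''---which is consistent with their proof, since it only uses that $G$ is an infinite geodesic from the origin. Coalescence enters the paper's setup separately, to show the limit is independent of the representative of $G$. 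By contrast, you use coalescence up front to show that $T(x,v_k)-T(y,v_k)$ is \emph{eventually constant} in $k$, which is a cleaner and stronger conclusion than mere convergence of a bounded monotone sequence. The trade-off is exactly the one you observe: your approach really needs $G$ to be coalescing, whereas the paper's approach proves the analogous statement (the existence part and parts (a)--(c)) for arbitrary $g\in\tree_0$ as in Lemma~\ref{rioja}'s precursor~\eqref{gBusemann}, which the paper also uses. Since the lemma as stated only concerns random coalescing geodesics, your proof suffices; just be aware it does not extend to general elements of $\tree_0$, while the paper's does.

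One small point of hygiene: the definition of a random coalescing geodesic asserts that $G(\omega)\setminus\sigma_{-v}(G(\sigma_v\omega))$ is finite, not directly that the symmetric difference is finite. You implicitly upgrade this to symmetric-difference finiteness (i.e.\ coalescence in the paper's sense). That upgrade is valid here because any two geodesics intersect in a connected set (by uniqueness of finite geodesics), so if $G$ eventually lies on $G(x)$ they must share a common tail; but it is worth spelling out rather than asserting it as ``by hypothesis.''
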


The Busemann function of a random coalescing geodesic $G$ has especially nice properties. Since $G$ is coalescing, $B_G$ does not depend on the representative of $G$, and hence $B_G=B_{G(v)}$ for all $v\in\Z^2$. This leads to the equality
\begin{equation}\label{eq:trans}
B_G(x+v,y+v)=B_{G(v)}(x+v,y+v)=B_G(x,y)\circ\sigma_v.
\end{equation}
In particular, we have $B_G(x,y)$ and $B_G(x+v,y+v)$ equal in law. Together with additivity these observations imply that $B_G$ is asymptotically linear to some linear functional $\rho_G$.

\begin{prop}\label{sonoma}
Let $G$ be a random coalescing geodesic. There exists a linear functional $\rho_G:\R^2\to\R$ satisfying $\rho_G(z)=\E[B_G(0,z)]$ for all $z\in\Z^2$, and
$$
\P\Big(\limsup_{|z|\to\infty}\frac{1}{|z|}\big|B_G(0,z)-\rho_G(z)\big|=0\Big)=1.
$$
\end{prop}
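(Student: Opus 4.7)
The plan is to recognize $B_G(0,\cdot)$ as an additive cocycle that is stationary along $\Z^2$, establish that its expectation is linear in $z$, and then promote pointwise ergodic convergence to simultaneous uniform convergence by comparison with the passage times $T$ via the shape theorem.

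First I would verify that $\rho_G(z) := \E[B_G(0,z)]$ is well defined and linear on $\Z^2$. Integrability follows from Lemma~\ref{rioja}(b), since $|B_G(0,z)| \le T(0,z)$ and $T(0,z) \in L^1$ under assumptions {\bf A1}/{\bf A2}. Additivity along $\Z^2$ comes from combining Lemma~\ref{rioja}(a) with the stationarity built into the definition of a random coalescing geodesic: coalescence guarantees that $B_G(x,y)$ has the same distribution as $B_G(x+z,y+z)$ (since the limit in~\eqref{GBusemann} is insensitive to the tail, and $G(z)$ differs from $G$ in finitely many vertices). Thus
$$\rho_G(z+w)=\E[B_G(0,z)]+\E[B_G(z,z+w)]=\rho_G(z)+\rho_G(w),$$
so $\rho_G$ extends uniquely to a linear functional on $\R^2$.

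Next I would obtain a.s. convergence along each lattice ray. Fix $z \in \Z^2\setminus\{0\}$. The increments $X_k:=B_G(kz,(k+1)z)$ form a stationary (and, under {\bf A2}(i)/{\bf A1}, ergodic along the $\Z$-subgroup generated by $z$) sequence in $L^1$. By Birkhoff's ergodic theorem together with Lemma~\ref{rioja}(a),
$$\frac{1}{n}B_G(0,nz)=\frac{1}{n}\sum_{k=0}^{n-1}X_k\longrightarrow\E[X_0]=\rho_G(z)\quad\text{a.s.}$$
Doing this simultaneously over a countable dense set $D \subseteq \Z^2$ of directions yields an almost sure event on which $B_G(0,nz)/n \to \rho_G(z)$ for every $z\in D$.

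The main obstacle is upgrading this radial a.s.\ convergence to the claimed uniform $o(|z|)$ statement. The tool is the Lipschitz estimate $|B_G(0,y)-B_G(0,y')|=|B_G(y',y)|\le T(y,y')$ from Lemma~\ref{rioja}(b), combined with Proposition~\ref{extended shape}. Given $\eps>0$, choose a finite collection of lattice directions $z_1,\dots,z_m \in D$ whose rational multiples form an $\eps$-net of the unit circle in the norm $\mu$. For any $y$ with $|y|$ large, pick a multiple $n z_i$ with $\mu(nz_i-y)\le\eps|y|$; then
$$|B_G(0,y)-\rho_G(y)|\le|B_G(0,nz_i)-\rho_G(nz_i)|+T(y,nz_i)+|\rho_G(y-nz_i)|,$$
and Proposition~\ref{extended shape} shows $T(y,nz_i)\le(\mu(y-nz_i)+\eps|y|)$, while linearity plus boundedness of $\rho_G$ on the unit $\mu$-ball gives $|\rho_G(y-nz_i)|\le C\mu(y-nz_i)$. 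The first term on the right is eventually $\le\eps|nz_i|\le C'\eps|y|$ by the ray-wise ergodic theorem applied to each $z_i$. Choosing $\eps$ arbitrarily small then yields $\limsup_{|y|\to\infty}|y|^{-1}|B_G(0,y)-\rho_G(y)|=0$ almost surely, as required.
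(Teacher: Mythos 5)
Your proposal is correct and follows essentially the same route as the paper: define $\rho_G$ via $\E[B_G(0,\cdot)]$ using additivity and stationarity, obtain radial almost sure convergence by Birkhoff along lattice rays, and then promote to simultaneous convergence via a finite net of directions, the Lipschitz bound $|B_G(y',y)|\le T(y',y)$, and Proposition~\ref{extended shape}. The paper's argument is the same triangle-inequality decomposition with the same ingredients, so there is nothing substantive to add.
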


Examining the proof of Proposition~\ref{extended shape} we note that Proposition~\ref{sonoma} extends in the same manner to the following stronger statement (proof omitted): For every $\eps>0$ we have
\begin{equation}\label{extended Busemann}
\P\Big(\big|B_G(z,z+y)-\rho_G(y)\big|\le\eps\max\{|z|,|y|\}\text{ for all $y$ and $|z|$ large}\Big)=1.
\end{equation}

Linearity of the Busemann function and the shape theorem together provide a bound on the asymptotic direction of $G$: It is confined by the intersection of the line $\{\rho_G(x)=1\}$ and the asymptotic shape $\partial\ball=\{\mu(x)=1\}$. Recall that the set of directions of $G$ is defined as the set of $x\in S^1$ such that $v_k/|v_k|\to x$ for some subsequence $(v_k)_{k\ge1}$ of $G$.

\begin{prop}\label{mendoza}
Let $G$ be a random coalescing geodesic. The line $\{x\in\R:\rho_G(x)=1\}$ is a supporting line for $\partial\ball$, and the set of directions $\dir(G)$ is a deterministic subset of $\arc(\rho_G):=\{x\in S^1:\mu(x)=\rho_G(x)\}$.
\end{prop}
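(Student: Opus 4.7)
\textbf{Proof plan for Proposition \ref{mendoza}.}
The strategy is to combine three inputs: the bound $|B_G|\le T$ from Lemma \ref{rioja}(b), the identity $B_G(x,y)=T(x,y)$ along $G$ from Lemma \ref{rioja}(c), and the asymptotic linearity $B_G(0,z)=\rho_G(z)+o(|z|)$ from Proposition \ref{sonoma}, together with the shape theorem $T(0,z)=\mu(z)+o(|z|)$.

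First I would show $\rho_G\le\mu$. Fix $z\in\Z^2\setminus\{0\}$ and apply Lemma \ref{rioja}(b) along the ray $(nz)_{n\ge1}$; dividing by $n$ and sending $n\to\infty$, the right-hand side $T(0,nz)/n$ converges to $\mu(z)$ by the shape theorem, while the left-hand side $|B_G(0,nz)|/n$ converges to $|\rho_G(z)|$ by Proposition \ref{sonoma} combined with the homogeneity of $\rho_G$. This yields $|\rho_G(z)|\le\mu(z)$, extended to all of $\R^2$ by linearity and continuity.

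Next, I would pick any subsequential limit direction $x_0\in\dir(G)$, which exists since $G=(v_0,v_1,\ldots)$ is self-avoiding and so $|v_k|\to\infty$. Writing $v_{k_j}/|v_{k_j}|\to x_0$, Lemma \ref{rioja}(c) gives $B_G(0,v_{k_j})=T(0,v_{k_j})$; dividing by $|v_{k_j}|$ and passing to the limit yields $\rho_G(x_0)=\mu(x_0)$, via Proposition \ref{sonoma} and the shape theorem. Hence $\dir(G)\subseteq\arc(G)$. Since $\mu(x_0)>0$, the point $x_0/\mu(x_0)$ lies on $\partial\ball\cap\{\rho_G=1\}$; combined with $\rho_G\le\mu\le1$ on $\ball$, this shows $\{x\in\R^2:\rho_G(x)=1\}$ is a supporting line to $\ball$.

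The final assertion is that $\dir(G)$ is deterministic, and for this I would invoke ergodicity. The random-coalescing property ensures that, for each $v\in\Z^2$, the paths $G(\omega)$ and $\sigma_{-v}(G(\sigma_v\omega))$ agree from some vertex onward (once two geodesics share a vertex, the uniqueness of passage times guaranteed by \textbf{A1} or \textbf{A2}(iv) forces their tails to coincide), and therefore share the same set of asymptotic directions; since translation preserves directions, this gives $\dir(G(\sigma_v\omega))=\dir(G(\omega))$ almost surely. For every open arc $U\subseteq S^1$, the event $\{\dir(G)\cap U\neq\varnothing\}$ is then shift-invariant and by ergodicity has probability $0$ or $1$. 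Taking $U$ through a countable basis of $S^1$ pins down the closed set $\dir(G)$ up to a null event, so $\dir(G)$ is deterministic. The step requiring the most care is this last one: one must make sure that the finite symmetric difference supplied by the definition of a random coalescing geodesic really does imply coalescence of the two tails (not merely bounded distance), so that the limit set $\dir(G)$ is genuinely a shift-invariant functional of $\omega$ rather than a shift-almost-invariant one.
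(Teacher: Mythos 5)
Your proof is correct and follows essentially the same route as the paper: both establish $\rho_G\le\mu$ from Lemma \ref{rioja}(b), obtain equality on $\dir(G)$ from Lemma \ref{rioja}(c), and use coalescence plus ergodicity to make $\dir(G)$ deterministic. The only superficial variations are that the paper derives $\rho_G\le\mu$ by taking expectations rather than applying Proposition \ref{sonoma} and the shape theorem almost surely, and it phrases the 0--1 step by observing that if $x\in\dir(G)$ with positive probability then, by the ergodic theorem, some $G(y)$ a.s.\ has $x$ as a limiting direction and coalescence transfers this to $G(0)$ --- which is the same shift-invariance argument you give via a countable basis of arcs; your closing worry about one-sided versus symmetric finiteness is resolved exactly as you suggest, by unique passage times.
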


We say that a random coalescing geodesic $G$ {\bf eventually moves into a half-plane} $H$ if for all parallel half-planes $H'\subseteq H$ and all $v\in\Z^2$ we have
$$
\P\big(|G(v)\cap H'^c|<\infty\big)=1.
$$
Let $H_i$, for $i=0,1,2,\ldots,7$, denote the half-plane\footnote{Here and below `$\cdot$' will denote inner product.}
$$
H_i:=\big\{x\in\R^2:x\cdot(\cos(i\pi/4),\sin(i\pi/4))\ge0\big\}.
$$
Due to convexity and symmetry we have that $\arc(\rho_G)$ has width at most $\pi/2$. It may thus contain $i\pi/4$ for at most three values of $i$. Consequently $\arc(\rho_G)$ is contained in the interior of some $H_i$, and it follows that any random coalescing geodesic $G$ eventually moves into one of the eight half-planes $H_i$.

Our next proposition says, in particular, that for any two distinct random coalescing geodesics $G$ and $G'$ we have that $\arc(\rho_G)$ and $\arc(\rho_{G'})$ share at most one point.

\begin{prop}\label{bordeaux}
Let $G$ and $G'$ be random coalescing geodesics such that $\rho_G=\rho_{G'}$. Then, $G=G'$ almost surely.
\end{prop}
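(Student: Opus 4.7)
The plan is to reduce the conclusion $G = G'$ to the equality of Busemann functions $B_G \equiv B_{G'}$, and then to prove that equality by a cocycle argument combined with an ergodic theorem along the random path $G'$.

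\emph{Reduction to $B_G \equiv B_{G'}$.} Combining parts (b) and (c) of Lemma~\ref{rioja} with the uniqueness of passage-time minimizers guaranteed by the standing assumptions, the first edge of $G(v)$ is characterized as the unique neighbor $w$ of $v$ for which $B_G(v,w) = \omega_{vw}$, while every other neighbor $w'$ of $v$ satisfies $B_G(v,w') < \omega_{vw'}$ strictly. The analogous characterization holds for $G'$. Hence $B_G \equiv B_{G'}$ would force $G(v) = G'(v)$ for every $v$, and in particular $G = G'$.

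\emph{Cocycle setup.} Set $\Delta(v) := B_G(0,v) - B_{G'}(0,v)$. By Lemma~\ref{rioja}(a), $\Delta$ is path-independent, hence a cocycle on the lattice. The shift-covariance built into the definition of a random coalescing geodesic yields stationary increments in law: $\Delta(v+w) - \Delta(v) \overset{d}{=} \Delta(w)$. Applying Proposition~\ref{sonoma} to both $G$ and $G'$ and invoking $\rho_G = \rho_{G'}$ gives $\Delta(v)/|v| \to 0$ almost surely. Furthermore, Lemma~\ref{rioja}(b,c) implies that the increment of $\Delta$ across any directed edge of $G'$ is non-positive, and is strictly negative on any edge of $G'$ that is not also an edge of the $G$-tree (and dually along $G$).

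\emph{Forcing equality.} Suppose for contradiction that $G \ne G'$ with positive probability. Each random coalescing geodesic induces a spanning tree with exactly one outgoing edge per vertex, and the unique infinite path through the root $0$ in the $G$-tree (resp.\ the $G'$-tree) is $G$ itself (resp.\ $G'$). Thus the event $G \ne G'$ forces the two trees to disagree, and in particular along $G'$ there must be edges that are not in the $G$-tree. Coalescence of $G'$ with its translates equips $G'$ with a shift-stationary structure, and an application of the ergodic theorem in its Palm-intensity form gives that the density of such $G'$-only edges along $G'$ is strictly positive. Summing the (uniformly) negative increments along $G'$ then yields $\Delta(v'_n) \le -cn$ for some constant $c > 0$ and all large $n$; combined with $|v'_n| = O(n)$ from the shape theorem, this contradicts the sublinear bound of Proposition~\ref{sonoma}.

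\emph{Main obstacle.} The delicate step is the rigorous application of the ergodic theorem along the \emph{random} path $G'$; this is where the hypothesis that $G'$ is a random coalescing geodesic (rather than a generic infinite geodesic) is essential, since coalescence with translates is precisely the translation-invariant structure on which ergodic averaging rests. A parallel route that I would keep in mind as a fallback is to prove directly that $G$ and $G'$ coalesce as geodesics from $0$, using planarity and the non-crossing property of geodesics together with the direction bound $\dir(G),\dir(G') \subseteq \arc(G)$ from Proposition~\ref{mendoza}; once coalescence is established, uniqueness of finite geodesics between any two common vertices forces $G = G'$.
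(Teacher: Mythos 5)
Your reduction to $B_G\equiv B_{G'}$ (via the characterization of the first edge of $G(v)$ as the unique neighbor $w$ with $B_G(v,w)=\omega_{vw}$), and the observation that $\Delta := B_G - B_{G'}$ is a cocycle that is non-increasing along $G'$, strictly decreasing precisely on edges of $G'$ not in the $G$-tree, are both correct and in the spirit of what the paper does. The genuine gap is the step producing $\Delta(v'_n)\le -cn$. The increments of $\Delta$ along the random path $G'(0)=(v'_0,v'_1,\ldots)$ do not form a stationary sequence in any immediate sense, since $v'_n$ is itself a random function of the environment, so the ergodic theorem does not apply directly; constructing the Palm structure you invoke for the marked geodesic is precisely the difficulty, and it is not carried out. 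Moreover, even granting positive density of strictly negative increments, those increments carry no uniform lower bound, so positive density alone does not yield linear decay. Worse still, the telescoped identity $\Delta(v'_n)=B_G(0,v'_n)-T(0,v'_n)$ together with Proposition~\ref{sonoma} and the shape theorem already forces $\Delta(v'_n)/n\to0$ unconditionally; any Palm argument would therefore have to give expected increment exactly zero, and you would again be relying on the (unbuilt) Palm stationarity to conclude that a nonpositive stationary variable of mean zero vanishes.

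The paper circumvents exactly this obstacle by passing to half-plane geodesics $G_H,G_H'$. The half-plane Busemann function $B_G^H$ is translation-invariant along the \emph{deterministic} line $\partial H$, so the ergodic theorem along $\partial H$ yields the exact identity $\E[\Delta_H(0,{\bf e}_2)]=0$ (Lemma~\ref{valpolicella}) with no Palm construction. A short geometric argument then gives the sign: coalescence of $G_H$ and of $G_H'$, respectively, from $k{\bf e}_2$ and $\ell{\bf e}_2$, together with the half-plane topology, forces $G_H(k{\bf e}_2)$ and $G_H'(\ell{\bf e}_2)$ to intersect, and comparing the two paths through the intersection point shows $\Delta_H(k{\bf e}_2,\ell{\bf e}_2)\ge0$. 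A nonnegative variable with zero mean vanishes, giving $\Delta_H\equiv0$, and the final step $\Delta_H\equiv 0\Rightarrow G_H=G_H'$ is obtained by producing two distinct equal-weight paths between boundary points, contradicting unique passage times. The moral is to locate a deterministic reference set on which shift invariance holds exactly, rather than attempt ergodic averaging along the random geodesic itself; your fallback route (proving coalescence of $G(0)$ and $G'(0)$ directly by planarity) faces the same issue in a different guise and is likewise not developed to the point where it would close the argument.
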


Finally, we show that a given random coalescing geodesic cannot be part of a bigeodesic. We say that a random coalescing geodesic $G$ is {\bf backwards finite} if for every $x\in\Z^2$ we have $x\in G(y)$ for at most finitely many $y\in\Z^2$.

\begin{prop}\label{temecula}
A random coalescing geodesic is almost surely backwards finite.
\end{prop}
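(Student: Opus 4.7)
The plan is by contradiction. Suppose $p := \P(N(0)=\infty) > 0$, where $N(v) := |\{w \in \Z^2 : v \in G(w)\}|$ counts the descendants of $v$ in the tree $\mathcal T := \bigcup_{v \in \Z^2} G(v)$, each vertex having its unique forward neighbor $G_1(v)$ (the first step along $G(v)$). Shift-invariance gives $\P(v \in D)=p$ for every $v$, where $D := \{v : N(v)=\infty\}$. Applying the defining coalescing property of $G$ pairwise and taking a countable union, almost surely $G(u) \triangle G(v)$ is finite for all $u,v$, so $\mathcal T$ has a unique end.

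I would first record two closure properties of $D$. If $v \in D$ then $G_1(v) \in D$, because the descendants of $G_1(v)$ contain those of $v$. And each $v \in D$ has at least one child in $D$: this follows from the recursion $N(v) = 1 + \sum_{u : G_1(u)=v} N(u)$ together with the bound of $3$ on the in-degree in $\mathcal T$. Let $d(v) \in \{1,2,3\}$ count, for $v \in D$, the children of $v$ that lie in $D$. The heart of the argument is the mass-transport identity
$$
\E\big[d(0)\ind{0\in D}\big] = \sum_{u \in \Z^2} \P\big(G_1(u)=0,\,u \in D\big) = \sum_{u \in \Z^2} \P\big(G_1(0)=-u,\,0 \in D\big) = p,
$$
where the middle equality shifts each term by $\sigma_u$ (using that $u \in D$ automatically places $0 = G_1(u)$ in $D$) and the last sum collapses since $G_1(0)$ is single-valued almost surely. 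Combined with $d(0) \ge 1$ on $\{0\in D\}$, this forces $d(0)=1$ almost surely on that event.

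It follows that on $D$ every vertex has exactly one $D$-parent and exactly one $D$-child, so, since $\mathcal T$ is acyclic, $D$ decomposes as a disjoint union of bi-infinite paths of $\mathcal T$. To finish I would use coalescence to rule out more than one such component: if $L_1 \neq L_2$ were two disjoint components, picking $u_i \in L_i$ the forward rays $G(u_i) \subseteq L_i$ would share all but finitely many vertices, so $L_1 \cap L_2 \neq \emptyset$, a contradiction. Hence $D$ contains at most one bi-infinite path, which has density zero, whereas the ergodic theorem gives $D$ density $p$; so $p=0$. Shift-invariance plus a countable union over $v \in \Z^2$ upgrades $N(0)<\infty$ a.s.\ to the simultaneous a.s.\ statement that $N(v)<\infty$ for every $v$. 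I expect the main obstacle to be executing the mass-transport calculation carefully; once $d(0)=1$ is established, the geometric closing step is short.
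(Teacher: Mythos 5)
Your mass-transport approach is a genuinely different route from the paper's terse Burton--Keane sketch, and for most of its length it is correct and arguably more transparent. The identity $\E[d(0)\ind{0\in D}]=p$, together with $d(0)\ge 1$ on $\{0\in D\}$ and $\P(0\in D)=p$, does force $d(0)=1$ a.s.\ on $\{0\in D\}$; combined with the fact that $D$ is forward-closed and with coalescence, this shows $D$ is (at most) a single bi-infinite path of $\tree$. It is worth noting that this conclusion actually shows that $\tree$ has \emph{no} trifurcation points at all, whereas the paper's one-sentence argument asserts a positive density of them; so the two-ended case you have isolated is precisely the case the paper's sketch glosses over, and your argument is a real improvement in that it exposes it.

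The gap is the final clause, ``which has density zero.'' A single bi-infinite self-avoiding path in $\Z^2$ need \emph{not} have density zero: bi-infinite Hamiltonian paths of $\Z^2$ exist, so nothing about the combinatorial path structure, or about shift-invariance by itself, forces density $0$. To close the argument you must use that $D$ is a bigeodesic. One repair: by Lemma~\ref{rioja}(c), $B_G(v_0,v_n)=T(v_0,v_n)$ along the forward ray and $B_G(v_0,v_n)=-T(v_n,v_0)$ along the backward ray, so by Proposition~\ref{extended shape} the passage time $T(v_0,v_n)$ grows like $|v_n-v_0|$; combined with the standard fact that a geodesic from $v_0$ to $z$ uses $O(|z-v_0|)$ edges (immediate under {\bf A1} from Kesten's argument since $\P(\omega_e=0)=0<p_c$; it needs a separate check under {\bf A2}), this gives $|D\cap B_n|=O(n)$, contradicting the ergodic theorem. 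Without some such geometric input the proof is incomplete, and I would flag the {\bf A2} case of the linear-length bound as a point worth pinning down carefully.
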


Lemma~\ref{rioja} has its origins in~\cite{hoffman05}. Results similar to Propostions~\ref{sonoma},~\ref{mendoza} and~\ref{temecula} have previously appeared in~\cite{damhan14}. A result similar to Proposition~\ref{bordeaux} has previously been obtained in~\cite{georassep17a,georassep17b} and~\cite{damhan17}. The proofs presented below differ from these in some details.

\subsection{Proof of Lemma~\ref{rioja}}

Let $g=(v_0,v_1,v_2,\ldots)$ be a geodesic and note that
$$
T(x,v_k)-T(y,v_k)\,=\,[T(x,v_k)-T(v_0,v_k)]-[T(y,v_k)-T(v_0,v_k)].
$$
The two expressions on the right-hand side are decreasing in $k$ and bounded from below by $-T(x,v_0)$ and $-T(y,v_0)$. Hence, the limit as $k\to\infty$ exists for each $g\in\tree_0$ almost surely.
The remaining properties are easy consequences of the definition and subadditivity of $T$.

\subsection{Proof of Proposition~\ref{sonoma}}

Let $\rho_G:\R^2\to\R$ be the linear functional defined by\footnote{Here, and throughout the paper, ${\bf e}_1$ and ${\bf e}_2$ will denote the coordinate vectors $(1,0)$ and $(0,1)$.} $x\mapsto\big(\E[B_G(0,{\bf e}_1)],\E[B_G(0,{\bf e}_2)]\big)\cdot x$. Using translation invariance and additivity of $B_G$, we find that for $v=(v_1,v_2)$ with non-negative coordinates that
$$
\E[B_G(0,v)]=\E\Big[\sum_{i=1}^{v_1}B_G((i-1){\bf e}_1,i{\bf e}_1)+\sum_{j=1}^{v_2}B_G((j-1){\bf e}_2,j{\bf e}_2)\Big]=\rho_G(v),
$$
and similarly for any other point $v$. Using the ergodic theorem we have by additivity and iterated use of~\eqref{eq:trans} that, almost surely,
\begin{equation}\label{eq:sonoma1}
\lim_{n\to\infty}\frac{1}{n}B_G(0,nv)\,=\,\lim_{n\to\infty}\frac1n\sum_{k=0}^{n-1}B_G(0,v)\circ\sigma_v^k\,=\,\rho_G(v)\quad\text{for all }v\in\Z^2.
\end{equation}

We wish to strengthen the radial convergence stated in~\eqref{eq:sonoma1} to simultaneous convergence in all directions. Given $\eps>0$, pick $v_1,v_2,\ldots,v_m$ such that for every $z\in\Z^2$ we have $|z-nv_k|<\eps|z|$ for some $n$ and $k$; write $v_z$ for the point of the form $nv_k$ that minimizes $|z-nv_k|$. By~\eqref{eq:sonoma1} we may choose $N<\infty$ large so that
$$
|B_G(0,v_z)-\rho_G(v_z)|\le\eps|v_z|\quad\text{for all }|z|\ge N.
$$
By additivity of $B_G$ and the triangle inequality, we find that
$$
|B_G(0,z)-\rho_G(z)|\le|B_G(0,v_z)-\rho_G(v_z)|+|B_G(v_z,z)|+|\rho_G(v_z-z)|.
$$
Since $|v_z-z|<\eps|z|$ and $|B_G(v_z,z)|\le T(v_z,z)$, we obtain for some constant $C<\infty$
$$
|B_G(0,z)-\rho_G(z)|\le T(v_z,z)+C\eps|z|.
$$
However, by Proposition~\ref{extended shape} we find $M<\infty$ such that for all $|z|\ge M$
$$
|T(z,z+y)-\mu(y)|\le\eps\max\{|z|,|y|\}\quad\text{for all }y\in\Z^2.
$$
Together with the above, we obtain for large $|z|$ that
$$
|B_G(0,z)-\rho_G(z)|\le \big(C+\max_{|x|=1}\mu(x)+1\big)\eps|z|.
$$
Since $\eps>0$ was arbitrary, this completes the proof.

\subsection{Proof of Proposition~\ref{mendoza}}

By the properties of $B_G$, we have for any sequence $(x_n)_{n\ge1}$ such that $|x_n|\to\infty$ and $x_n/|x_n|\to x$, that
$$
\rho_G(x_n/|x_n|)=\frac{1}{|x_n|}\E[B_G(0,x_n)]\le\frac{1}{|x_n|}\E[T(0,x_n)].
$$
Taking limits leaves $\rho_G(x)\le\mu(x)$.

If $x\in S^1$ is a direction of $G$, then there exists a subsequence $(x_n)_{n\ge1}$ of points on $G$ such that $x_n/|x_n|\to x$, and
$$
\frac{1}{|x_n|}B_G(0,x_n)=\frac{1}{|x_n|}T(0,x_n).
$$
Due to Proposition~\ref{sonoma} and the shape theorem, taking limits leaves us with $\rho_G(x)=\mu(x)$. It follows that $\{\rho_G(x)=1\}$ is a supporting line for $\partial\ball=\{\mu(x)=1\}$, and that every point in $\dir(G)$ is contained in $\arc(\rho_G)$.

It remains to conclude that the set of limiting directions of $G$ is almost surely constant. Since the set of directions is a closed interval, it suffices to show that the endpoints of this interval are deterministic. We first claim that for any interval $I$ we have
\begin{equation}\label{mendoza-implication}
\P\big(\dir(G)\cap I\neq\emptyset\big)>0\quad\Rightarrow\quad\P\big(\dir(G)\cap I\neq\emptyset\big)=1.
\end{equation}
To see this, note that the left-hand side and the ergodic theorem gives a density of points for which $\dir(G(v))$ intersects $I$. Since $G(v)$ and $G(0)$ coalesce, the implication follows.

Now, either $\arc(\rho_G)$ is a single point, in which case there is nothing to prove, or we can find a nested deterministic sequence of closed intervals $I_0\supseteq I_1\supseteq I_2\supseteq\ldots$ as follows: Let $I_0=\arc(\rho_G)$, and in each step we obtain $I_k$ from $I_{k-1}$ by splitting it in half and choosing the counterclockwise-most interval which has non-empty intersection with $\dir(G)$ with positive probability. By~\eqref{mendoza-implication}, these sets intersect $\dir(G)$ with probability one. The intersection of these intervals contains a unique point which is the counterclockwise-most point in $\arc(\rho_G)$ to be contained in $\dir(G)$. By an analogous argument we find also the clockwise-most element of $\dir(G)$.

\subsection{Interlude on half-plane geodesics}

Let $G$ be a random coalescing geodesic and assume that $G$ eventually moves into one of the eight half-planes $H=H_i$. Let $T_H$ denote the restriction of the first-passage metric to $H$ (that is, set $\omega_e=\infty$ if $e$ has some endpoint outside $H$). We define $G_H:\Omega_1\to\Omega_2$ as the limit
\begin{equation}\label{eq:GH}
G_H:=\lim_{k\to\infty}\geo_H(0,v_k),
\end{equation}
where $\geo_H(x,y)$ denotes the geodesic with respect to $T_H$ and $G=(0,v_1,v_2,\ldots)$.

\begin{lemma}\label{valpolicella}
Let $G$ be a random coalescing geodesic that eventually moves into the half-plane $H$. Then the limit in~\eqref{eq:GH} exists and $G_H$ and $G$ coalesce almost surely. Moreover,
$$
B_G^H(0,z):=\lim_{k\to\infty}[T_H(0,v_k)-T_H(z,v_k)]
$$
exists almost surely, and for $z$ on the boundary of $H$ it equals $\rho_G(z)$ in expectation.
\end{lemma}

\begin{proof}
We assume that $H$ is the right half-plane as the remaining cases are similar. We then observe that as $G$ eventually moves into $H$, there will be a density of points $z$ on the boundary of $H$ for which $G(z)$ is entirely contained in $H$. We may thus find $m<0<n$ for which this occurs, and note that $\geo_H(0,v_k)$ is sandwiched between $G(m{\bf e}_2)$ and $G(n{\bf e}_2)$ for all $k$. Since $G$ is coalescing the limit as $k\to\infty$ exists and coalesces with $G$.

That $B_G^H$ exists, is additive and invariant with respect to translations along the boundary of $H$ follow as before. Additivity and the ergodic theorem further give that
$$
\lim_{n\to\infty}\frac{1}{n}B_G^H(0,n{\bf e}_2)\,=\,\E[B_G^H(0,{\bf e}_2)]\quad\text{almost surely}.
$$
Since $B_G^H(m{\bf e}_2,n{\bf e}_2)=B_G(m{\bf e}_2,n{\bf e}_2)$ whenever $G(m{\bf e}_2)=G_H(m{\bf e}_2)$ and $G(n{\bf e}_2)=G_H(n{\bf e}_2)$, the limit can only equal $\rho_G({\bf e}_2)$. In particular, $\E[B_G^H(0,{\bf e}_2)]=\rho_G({\bf e}_2)$.
\end{proof}

\subsection{Proof of Proposition~\ref{bordeaux}}

Assume that $\rho_G=\rho_{G'}$. By Proposition~\ref{mendoza} the two geodesics are confined to the same sector of width at most $\pi/2$. There is thus a half-plane $H=H_i$ for which both $G$ and $G'$ eventually move into. We assume that $H$ is the right half-plane; the remaining cases being similar.

It follows from Lemma~\ref{valpolicella} that $G=G'$ if and only if $G_H=G_H'$, and shall therefore work with half-plane geodesics. Aiming for a contradiction we assume that $G_H\neq G_H'$ with positive probability. Since $G_H$ and $G_H'$ are coalescing, this will then have to hold with probability one. For the same reason, either $G_H$ lies asymptotically above (or counterclockwise of) $G_H'$ with probability one, or the other way around. We shall assume that $G_H$ lies asymptotically above $G_H'$ almost surely.

For $k,\ell\in\Z$, let
$$
\Delta_H(k,\ell):=B_G^H(k{\bf e}_2,\ell{\bf e}_2)-B_{G'}^H(k{\bf e}_2,\ell{\bf e}_2).
$$
By Lemma~\ref{valpolicella} and the assumption that $\rho_G=\rho_{G'}$ we have $\E[\Delta_H(k,\ell)]=0$. We will aim to show that $\Delta_H(k,\ell)=0$ almost surely.

Given $k<\ell$, define three intersection points $x,y,z$ as follows: Since $G_H(k{\bf e}_2)$ and $G_H(\ell {\bf e}_2)$, and $G_H'(k{\bf e}_2)$ and $G_H'(\ell{\bf e}_2)$, coalesce we have that $G_H(k{\bf e}_2)$ and $G_H'(\ell{\bf e}_2)$ must intersect at some point $z$. Next, take $x$ on $G_H'(k{\bf e}_2)\cap G_H'(\ell{\bf e}_2)$ beyond $z$, and $y$ on $G_H(k{\bf e}_2)\cap G_H(\ell{\bf e}_2)$ beyond $z$; see Figure~\ref{fig:rcg1}.
\begin{figure}[htbp]
\begin{center}
\includegraphics{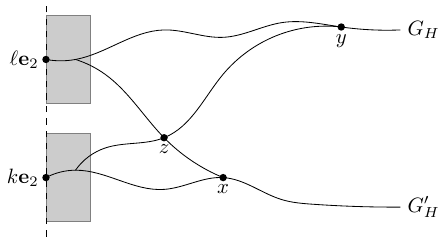}
\end{center}
\caption{The paths $G_H$ and $G_H'$ diverge before leaving the shaded regions.}
\label{fig:rcg1}
\end{figure}

By exploiting the intersection point $z$ we may construct two paths from $k{\bf e}_2$ to $x$: One being the segment of $G_H'(k{\bf e}_2)$ and one being the concatenation of the segments of $G_H(k{\bf e}_2)$ from $k{\bf e}_2$ to $z$ and of $G_H'(\ell{\bf e}_2)$ from $z$ to $x$. Denote this latter path by $\pi_x$. Similarly, we can construct two paths from $\ell{\bf e}_2$ to $y$: One being a segment of $G_H(\ell{\bf e}_2)$ and one being the concatenation of paths to $z$, which we denote by $\pi_y$. We see that
\begin{equation}\label{eq:some eq}
\begin{aligned}
\Delta_H(k,\ell)&=[T_H(k{\bf e}_2,y)-T_H(\ell{\bf e}_2,y)]-[T_H(k{\bf e}_2,x)-T_H(\ell{\bf e}_2,x)]\\
&=[T_H(\pi_y)-T_H(\ell{\bf e}_2,y)]+[T_H(\pi_x)-T_H(k{\bf e}_2,x)],
\end{aligned}
\end{equation}
which is non-negative. Since $\E[\Delta_H(k,\ell)]=0$ we conclude that $\Delta_H(k,\ell)=0$ almost surely.

Let $A_k$ denote the event that $|G_H(k{\bf e}_2)\cap G_H'(k{\bf e}_2)|\le m$. For large $m$ the event $A_k$ occurs with positive probability. Using the ergodic theorem we may find $k$ and $\ell>k+2m$ such that $A_k\cap A_\ell$ occurs. Maintaining the notation from before, the occurrence of $A_k\cap A_\ell$ implies that $z$ cannot lie on neither $G_H(\ell{\bf e}_2)$ nor $G_H'(k{\bf e}_2)$ (again, see Figure~\ref{fig:rcg1}). By~\eqref{eq:some eq}, since $\Delta_H(k,\ell)=0$, we have both $T_H(\pi_y)=T_H(\ell{\bf e}_2,y)$ and $T_H(\pi_x)=T_H(k{\bf e}_2,x)$, which is a contradiction to the assumption of unique passage times.

\subsection{Proof of Proposition~\ref{temecula}}

Consider the subgraph of the $\Z^2$ lattice containing all edges crossed by $G(v)$ for some $v\in\Z^2$. The resulting graph is connected, due to the coalescence of $G$, and does not contain any cycles, due to unique passage times. Since there is positive probability that neither $G(u)$ nor $G(v)$ contains the other, the ergodic theorem gives that a density of sites in this graph has degree at least three.

Assume that $G(0)$ is backwards infinite with positive probability. By the ergodic theorem there will be a density of sites $v\in\Z^2$ for which $G(v)$ is backwards infinite, and consequently a density of points where two backwards infinite points coalesce. These points are so-called \emph{trifurcation} points, meaning that if such a point would be removed it would disconnect the graph into (at least) three connected components. Via an induction argument one then shows that removing any $k$ trifurcation points disconnects the graph into at least $k+2$ infinite connected components. Consequently, the number of trifurcation points in a box of side-length $n$ cannot be larger than the number of points on the boundary, which contradicts the assumption that there is a positive density of trifurcation points.


\section{Existence of random coalescing geodesics}
\label{dh}

This section will contain two parts. We first recap some of the main results from~\cite{damhan14}, which we then use to establish the existence of random coalescing geodesics.

\subsection{Damron-Hanson geodesic measures}

In order to prove existence of geodesics with certain properties, Damron and Hanson~\cite{damhan14} worked on an enlarged probability space, on which they construct a family of limiting geodesics and keep track of Busemann functions at the same time. We will describe their procedure in some detail below.

Let $\rho:\R^2\to\R$ be any linear functional tangent to the asymptotic shape $\ball$. Let $\ell_\alpha:=\{x\in\R^2:\rho(x)\ge\alpha\}$ and consider the family $\mathcal{F}_\alpha=\{\geo(z,\ell_\alpha):z\in\Z^2\}$ of finite geodesics\footnote{Point-to-set geodesics are defined analogously as point-to-point geodesics; $\geo(z,V)$ is defined as the minimum weight path connecting $z$ to some point $v\in V$.} from points in $\Z^2$ to the half-plane $\ell_\alpha$. The goal will be to obtain a family of infinite geodesics by sending $\alpha$ to infinity. The family $\mathcal{F}_\alpha$ may be encoded as an element $\eta_\alpha\in\{0,1\}^{\bar{\mathcal{E}}^2}$, where $\bar{\mathcal{E}}^2$ denotes the set of \emph{directed} edges in $\Z^2$, as follows:
\begin{equation*}
\eta_\alpha(\bar e):=\left\{
\begin{aligned}
& 1 && \text{if $\bar e=(x,y)$ is traversed by the geodesic from $x$ to }\ell_\alpha,\\
& 0 && \text{otherwise}.
\end{aligned}
\right.
\end{equation*}
By considering ordered edges we may keep track of the direction in which an edge was traversed on its way to infinity.

In order to get their hands on the limiting object, Damron and Hanson encode alongside the finite geodesics their associated Busemann differences. Define for each $z\in\Z^2$ an element $\theta_\alpha(z)\in\R^2$ as follows:
$$
\theta_\alpha(z):=\big(T(z,\ell_\alpha)-T(z+{\bf e}_1,\ell_\alpha),T(z,\ell_\alpha)-T(z+{\bf e}_2,\ell_\alpha)\big).
$$
Encoded in $\theta_\alpha=(\theta_\alpha(z))_{z\in\Z^2}$ we find the difference in distance between any two points and the line $\ell_\alpha$, and thus serves as a Busemann function for the finite geodesics in $\mathcal{F}_\alpha$. Due to unique passage times, every site $z\not\in\ell_\alpha$ has out-degree one in the directed graph encoded by $\eta_\alpha$, and seen as an undirected graph $\eta_\alpha$ has no cycles.

Let $\Omega_1=[0,\infty)^{\lobby}$, $\Omega_2'=\{0,1\}^{\bar{\mathcal{E}}^2}$ and $\Omega_3=(\R^2)^{\Z^2}$. For each $\alpha\ge0$ we obtain a measurable map $\Psi_\alpha:\Omega_1\to\Omega_1\times\Omega_2'\times\Omega_3$ via $\omega\mapsto(\omega,\eta_\alpha,\theta_\alpha)$. Damron and Hanson use this map to push forward the measure $\P$ and obtain a measure $\nu_\alpha$ on $\widetilde\Omega:=\Omega_1\times\Omega_2'\times\Omega_3$, equipped with the product topology and Borel sigma-algebra. In order to obtain a limiting measure which is invariant with respect to translations, they consider the averages
$$
\nu_n^\ast(\cdot):=\frac1n\int_0^n\nu_\alpha(\cdot)\,d\alpha.
$$
From the observation that $\theta_\alpha(z)\le\big(\omega(z,z+{\bf e}_1),\omega(z,z+{\bf e}_2)\big)$ it follows that the sequence of measures $(\nu_n^\ast)_{n\ge1}$ is tight. Prokhorov's theorem then implies that $(\nu_n^\ast)_{n\ge1}$ has a weakly convergent subsequence. Damon and Hanson move on to show that every subsequential limit $\nu$ of the sequence $(\nu_n^\ast)_{n\ge1}$ is invariant with respect to translations, and since having out-degree one and no cycles are closed properties, these properties survive in the limit. Damon and Hanson prove in~\cite{damhan14}, among other things, the following:

\begin{theorem}\label{thm:DH}
Let $\rho:\R^2\to\R$ be a linear functional tangent to $\ball$. Every subsequential limit $\nu$ is invariant with respect to translations and satisfies the following properties: For $\nu$-almost every $(\omega,\eta,\theta)\in\widetilde\Omega$ and all $y,z\in\Z^2$ we have
\begin{enumerate}[\quad (a)]
\item a unique infinite forwards path $\gamma_z$ in $\eta$ which is a geodesic;
\item the geodesics $\gamma_y$ and $\gamma_z$ coalesce;
\item $\dir(\gamma_z)\subseteq\arc(\rho)=\{x\in S^1:\mu(x)=\rho(x)\}$;
\item the Busemann function of $\gamma_z$ is asymptotically linear to $\rho$.
\end{enumerate}
\end{theorem}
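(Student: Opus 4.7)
The plan is to verify translation invariance and the three $\nu$-almost sure properties (a)--(c) in turn, using the auxiliary Busemann-type field $\theta_\alpha$ built into $\nu_\alpha$ as a bookkeeping device that survives weak limits. I would first establish translation invariance. The measure $\nu_\alpha$ itself is not $\Z^2$-invariant, but a direct computation tracking how shifts act on the half-plane $\ell_\alpha$ gives the identity $\sigma_v\nu_\alpha=\nu_{\alpha-\rho(v)}$. Plugging into the Cesaro average, $\sigma_v\nu_n^\ast-\nu_n^\ast$ telescopes to a signed sum over two boundary windows of $\alpha$-length $|\rho(v)|$, each of total mass $O(1/n)$ in the tight family, so every weak subsequential limit $\nu$ is $\sigma_v$-invariant. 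This is the Krylov--Bogolyubov argument adapted to a continuous average.

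For the unique forward path in (a), observe that in the prelimit each vertex $z\notin\ell_\alpha$ has $\eta_\alpha$-out-degree exactly one, because it lies on the unique finite geodesic $\geo(z,\ell_\alpha)$. Having out-degree one is a cylinder condition and passes to the weak limit, so following directed edges yields a well-defined forward path $\gamma_z$. To certify that $\gamma_z$ is a geodesic for $\omega$, I would use the local identity $\theta_\alpha(z)\cdot{\bf e}_i=\omega_{(z,z+{\bf e}_i)}$ on the chosen outgoing edge---the one-step optimality condition for the finite geodesic rewritten via $\theta$---which is cylindrical and survives the limit; telescoping along $\gamma_z$ then yields a distance-minimizing path. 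For the direction statement (b), I would mimic Proposition~\ref{mendoza}: translation invariance and additivity make $x\mapsto \E_\nu[\theta(0)\cdot x]$ a linear functional bounded by $\mu$, and the construction of $\theta_\alpha$ from distances to $\ell_\alpha$ combined with the tangency of $\rho$ forces it to equal $\rho$. Since $\theta$ acts as a linear Busemann function along $\gamma_z$, the shape theorem then pins any subsequential direction of $\gamma_z$ into the contact set $\{\mu=\rho\}$.

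The main obstacle is coalescence (c). In the prelimit, any two geodesics $\geo(y,\ell_\alpha)$ and $\geo(z,\ell_\alpha)$ coalesce automatically, since they share a terminal half-plane, but coalescence is not local and is not a priori preserved by weak limits. My approach is a contradiction argument in the spirit of Proposition~\ref{temecula}: suppose $\gamma_y$ and $\gamma_z$ fail to coalesce with positive $\nu$-probability; the translation invariance already established then promotes this to a positive-density event. Non-crossing of the finite geodesics is cylindrical, so under $\nu$ the random forest $\{\gamma_z:z\in\Z^2\}$ still has no crossings, and the failure of coalescence therefore produces a positive density of trifurcation-type sites---whose genuine existence one justifies using the upward finite energy condition for local modifications together with the unique-passage-time hypothesis to forbid spurious ties. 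A Burton--Keane count then forces a typical $n\times n$ box to contain order $n^2$ disjoint doubly infinite branches, contradicting the $O(n)$ boundary upper bound.
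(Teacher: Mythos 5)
The paper does not prove Theorem~\ref{thm:DH}: it is stated as a cited result of Damron and Hanson~\cite{damhan14} (``Damron and Hanson prove in~\cite{damhan14}, among other things, the following''), and the paper's own contribution at this point begins with Theorem~\ref{yahoo}, which \emph{uses} Theorem~\ref{thm:DH} as input. So there is no in-paper proof to compare against; what you have written is a reconstruction of the Damron--Hanson argument. For translation invariance, part~(a), and part~(b) your sketch matches their strategy and is essentially right: the identity $\tilde\sigma_v\circ\Psi_\alpha=\Psi_{\alpha-\rho(v)}\circ\sigma_v$ (since $\ell_\alpha-v=\ell_{\alpha-\rho(v)}$) gives $\sigma_v\nu_\alpha=\nu_{\alpha-\rho(v)}$, whence the Cesaro average telescopes to a boundary term of mass $O(1/n)$; out-degree one and the one-step optimality identity $\theta_\alpha(z)\cdot {\bf e}_i=\omega_e$ on the chosen outgoing edge are cylindrical and survive the limit; and the Busemann-linearity argument pinning $\E_\nu[\theta(0)\cdot x]$ to $\rho$ via tangency and the shape theorem is the correct mechanism, though you compress the step where one must show the prelimit Busemann increments are uniformly integrable so that expectations pass to the limit, and the step showing that tangency (as opposed to mere support) is what singles out $\rho$.

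Part~(c) is where the gap is real, and it is the technical heart of~\cite{damhan14}. ``Failure of coalescence produces a positive density of trifurcations'' is not automatic, for two reasons your sketch does not address. First, the Licea--Newman/Burton--Keane scheme needs a linear ordering of disjoint geodesics, usually coming from a half-plane that every $\gamma_z$ eventually enters; that ordering has to be extracted from (b), and one must justify that the directed cone of $\{\mu=\rho\}$ has width strictly less than $\pi$ (this is where tangency enters again). Second, and more seriously, the trifurcations have to be manufactured by a local modification of $\omega$, but $\gamma_z$ is a function of the enlarged variable $(\omega,\theta,\eta)$ sampled from $\nu$, not of $\omega$ alone. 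One must argue that $\nu$ disintegrates over $\omega$ in a way compatible with resampling a finite edge set, and that after the modification the trifurcation event retains positive $\nu$-mass. This is precisely the kind of work the present paper has to carry out independently for Proposition~\ref{picnic table} via Lemmas~\ref{modifier} and~\ref{lakes}; invoking upward finite energy and unique passage times without this machinery leaves a genuine hole.

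Finally, a small caveat on (b): the conclusion $\dir(\gamma_z)\subseteq\{x:\mu(x)=\rho(x)\}$ requires not just that the limiting Busemann functional is linear with slope $\rho$, but that it controls $\gamma_z$ in the way $B_G$ controls $G$ in Proposition~\ref{mendoza}; one must verify the analogue of $B(x,y)=T(x,y)$ for $y$ on $\gamma_x$ in the limiting measure, which is the content of your cylindrical one-step identity telescoped along $\gamma_z$. You gesture at this but should say it.
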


Parts~\emph{(a)--(c)} in the above theorem is essentially Theorem~1.11 in~\cite{damhan14}. Part~\emph{(d)} is not explicitly stated in~\cite{damhan14}, but is a consequence of Theorem~4.3, Corollary~4.7 and Proposition~5.1.
We shall next construct a set of (at least) four random coalescing geodesics based on Theorem~\ref{thm:DH}.

\subsection{From geodesic measures to random coalescing geodesics}

The existence of random coalescing geodesics is certainly hinted at from the geodesic measures considered by Damron and Hanson, but to construct them remains a non-trivial task even from their work. The goal of this section is to prove their existence.

\begin{theorem} \label{yahoo}
Let $\rho:\R^2\to\R^2$ be a linear functional tangent to the asymptotic shape $\ball$. Then, there exists a random coalescing geodesic geodesic $G$ with Busemann function asymptotically linear to $\rho$ and $\dir(G)\subseteq\arc(\rho)$.
\end{theorem}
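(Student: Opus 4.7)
The plan is to extract a random coalescing geodesic from a Damron--Hanson subsequential limit $\nu$ on $\widetilde\Omega$ as furnished by Theorem~\ref{thm:DH}. Such a $\nu$ is translation invariant, has first marginal $\P$, and is concentrated on configurations that encode a coalescing family $\{\gamma_z\}_{z\in\Z^2}$ of infinite geodesics with direction contained in $\{x\in S^1:\mu(x)=\rho(x)\}$. The task is to descend from this measure on the enlarged space to a deterministic measurable map $G:\Omega\to\path$, which is the sole content of the theorem beyond what is already in Theorem~\ref{thm:DH}.

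I would first disintegrate $\nu$ over its first marginal to obtain a regular conditional probability kernel $\omega\mapsto\nu_\omega$ on $\Omega_2\times\Omega_3$. The key step is to show that, for $\P$-a.e.\ $\omega$, the distribution of the geodesic $\gamma_0$ under $\nu_\omega$ is a point mass; setting $G(\omega)$ equal to this path then gives the desired map, and measurability of $G$ follows from that of the kernel. To establish conditional uniqueness of $\gamma_0$, I would argue by contradiction in the style of Proposition~\ref{bordeaux}: if, with positive $\P$-probability, two distinct paths $\gamma_0$ and $\gamma'_0$ lie in the support of $\nu_\omega$, then under the product coupling $\nu_\omega\otimes\nu_\omega$ two coalescing geodesic families coexist in the same environment $\omega$. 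Both are asymptotically confined to the common arc $\{\mu=\rho\}$, which has width at most $\pi/2$, so they eventually enter a common half-plane $H$. The rerouting step at the end of the proof of Proposition~\ref{bordeaux} (choose $k\mathbf{e}_2$ and $\ell\mathbf{e}_2$ far apart, locate the coalescence points in each family, and splice together prefixes and suffixes through a common intersection point) then produces two distinct lattice paths between the same pair of points with identical passage time, contradicting unique passage times in assumption A1 or A2(iv).

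With $G$ in hand, the random coalescing geodesic axioms follow directly: $G(\omega)\in\tree_0$ and the direction containment $\dir(G)\subseteq\{x\in S^1:\mu(x)=\rho(x)\}$ come from parts (a) and (b) of Theorem~\ref{thm:DH}, while the coalescence condition $|G(\omega)\setminus\sigma_{-v}(G(\sigma_v\omega))|<\infty$ reduces, via translation invariance of $\nu$ and the uniqueness just established, to the statement that $\gamma_0$ and $\gamma_v$ coalesce in the common family, which is part (c). I expect the main obstacle to be the conditional-uniqueness step: the rerouting argument of Proposition~\ref{bordeaux} was devised for two ready-made random coalescing geodesics on $\Omega$, and transferring it to two coupled samples from $\nu_\omega$ on the enlarged space requires careful bookkeeping, in particular to ensure that the relevant coalescence events and translation-invariance statements hold simultaneously on the product coupling rather than only marginally under $\nu$.
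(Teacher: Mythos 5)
Your proposal takes a genuinely different route from the paper. The paper does \emph{not} attempt to show that the conditional measure $\hat\nu(\omega)$ on geodesic families is a point mass. Instead it observes that $\tree_0^\rho$ is totally ordered, and extracts $G_\beta$ as the $\beta$-quantile of $\hat\nu(\omega)$ with respect to that order. The heart of the paper's proof (the lemma establishing~\eqref{rsvp} and~\eqref{rsvp2}) shows that the quantile map is invariant under shifts because $\hat\nu$ is supported on coalescing families, so $\hat\nu(\tree_0^{\le h})=\hat\nu(\tree_v^{\le h'})$ for coalescing $h,h'$. Then Lemma~\ref{bibi} shows that, for Lebesgue-a.e.\ $\beta$, $G_\beta$ is coalescing. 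The construction therefore works regardless of whether $\hat\nu(\omega)$ is concentrated on a single family, and in fact the paper only obtains a posteriori, via Proposition~\ref{bordeaux} applied to the $G_\beta$ it has already constructed, that all the $G_\beta$ coincide.

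The gap in your proposal is the conditional-uniqueness step, and it is more than a bookkeeping issue. When you run the argument of Proposition~\ref{bordeaux} on two independent samples $\gamma,\gamma'$ from $\nu_\omega$, the decisive step is that $\Delta_H$ has \emph{a.s.\ constant sign}. In the paper's proof this follows because, for two random coalescing geodesics $G,G'$ that are measurable functions of $\omega$, the event ``$G_H$ lies asymptotically above $G_H'$'' is shift-invariant, hence a $0$--$1$ event by ergodicity of $\P$. For your coupling $\P\otimes\nu_\omega\otimes\nu_\omega$ there is no such $0$--$1$ law: the coupling is exchangeable in $(\gamma,\gamma')$, so the event $\{\gamma>\gamma'\}$ and its reflection have equal probability; and the relative product need not be ergodic even when $\P$ is (this is the standard phenomenon that $\P\times\P$ requires weak mixing, not mere ergodicity). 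Without a.s.\ constant ordering, the rerouting argument only yields $\E[\Delta_H\,\ind{\gamma>\gamma'}]\ge0$ and $\E[\Delta_H\,\ind{\gamma'>\gamma}]\le0$, which sum to $\E[\Delta_H]=0$ but could both be nonzero and of opposite signs, so no contradiction with unique passage times is reached. The quantile construction is precisely the device the paper introduces to avoid having to prove conditional concentration of $\nu_\omega$; your approach needs a separate argument to close the $0$--$1$ gap before it can be made rigorous.
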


Since the asymptotic shape has at least four sides, Theorem~\ref{yahoo} proves the existence of at least four random coalescing geodesics.
These four random coalescing geodesics are related to one another via rotations of the plane by right angles.

Recall that $\tree_0=\tree_0(\omega)$ denotes the set of one-sided geodesics starting at the origin, or more formally, the random element $\tree_0:\Omega_1\to\Omega_2$ encoding this set. Given a linear functional $\rho:\R^2\to\R$ tangent to $\ball$, let $\tree_0^\rho=\tree_0^\rho(\omega)$ denote the set of geodesics in $\tree_0$ whose set of directions has non-empty intersection with $\arc(\rho)$. Hence, also $\tree_0^\rho$ can be thought of as a measurable map from $\Omega_1$ to $\Omega_2$. We similarly define $\tree_v^\rho$ as a subset of $\tree_v$.

\begin{lemma}
For any linear functional $\rho$ tangent to $\ball$ the set $\tree_0^\rho$ is almost surely non-empty and totally ordered.
\end{lemma}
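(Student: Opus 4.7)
The lemma makes two claims, which I would address separately.

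For non-emptiness, I would invoke Theorem~\ref{thm:DH} applied to the given tangent functional $\rho$. That theorem produces a shift-invariant measure $\nu$ on the extended space $\widetilde\Omega$ whose first marginal is $\P$ and such that for $\nu$-almost every $(\omega,\theta,\eta)$ the forward path $\gamma_0$ is an infinite geodesic from the origin with $\dir(\gamma_0)\subseteq\{x\in S^1:\mu(x)=\rho(x)\}$. Since the event $\{\tree_0^\rho\neq\emptyset\}$ depends only on $\omega$, the marginal property transfers the $\nu$-almost sure statement to give $\P(\tree_0^\rho\neq\emptyset)=1$.

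For the total ordering, I would use the planar tree structure of $\tree_0$. Unique passage times make $\tree_0$ a tree almost surely, so any two distinct geodesics $g,g'\in\tree_0$ share an initial segment from the origin to some first divergence vertex $v$, after which they exit $v$ via distinct edges. Fixing a reference direction $d\in S^1$ not aligned with any lattice direction, the counterclockwise cyclic order at each vertex starting from $d$ provides a strict linear order on the four incident edges. I then declare $g\prec g'$ when $g$'s exiting edge at $v$ precedes $g'$'s in this linear order. This provides a comparison between any two distinct elements of $\tree_0$, and in particular of $\tree_0^\rho$.

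The main step is verifying transitivity, via tree case analysis. Given three distinct geodesics $g_1,g_2,g_3\in\tree_0$, exactly one of the pairwise divergence vertices $v_{12},v_{13},v_{23}$ is deepest in the tree. If $v_{12}$ is deepest, then $g_3$ branches off from $g_1=g_2$ at the shallower shared vertex $u=v_{13}=v_{23}$, and the comparison $g_2\prec g_3$ performed at $u$ transfers verbatim to $g_1\prec g_3$ (since $g_1$ and $g_2$ take the same exit edge at $u$); the case when $v_{23}$ is deepest is symmetric. The remaining possibility, $v_{13}$ deepest, would force $g_1$ and $g_3$ to share their exit edge at $u=v_{12}=v_{23}$, in which case $g_1\prec g_2\prec g_3$ would require the edge of $g_2$ at $u$ to be simultaneously greater than and less than the common edge of $g_1,g_3$, contradicting antisymmetry of the linear order at $u$. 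Hence transitivity holds, $\prec$ is a strict total order on $\tree_0$, and $\tree_0^\rho$ inherits this total order almost surely.
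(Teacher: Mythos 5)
Your proof is correct. The non-emptiness part coincides with the paper's, which also invokes Theorem~\ref{thm:DH}. For the total ordering you take a genuinely different route. The paper invokes Theorem~\ref{thm:DH} a second time to produce a geodesic $h\in\tree_0\setminus\tree_0^\rho$, and then cuts the planar cyclic order on $\tree_0$ at $h$: it declares $g\le g'$ if one can move counterclockwise from $g$ to $g'$ while remaining in $\tree_0^\rho$, and leaves the verification that this is a total order as an easy check. You instead build a purely combinatorial total order on the whole of $\tree_0$: fix a non-lattice direction $d$, use it to turn the cyclic order of the four edge directions into a fixed linear order (the same at every vertex), and compare two geodesics via their exit edges at the first divergence vertex, verifying transitivity by a case analysis on which pairwise divergence vertex is deepest. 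One small omission in that analysis: you should also treat the degenerate case in which all three geodesics diverge at the same vertex, so that $v_{12}=v_{13}=v_{23}$; there transitivity reduces to the linear order on three distinct exit edges at that vertex and is immediate. Your construction avoids a second appeal to Theorem~\ref{thm:DH} and applies to any set of rays in a rooted tree, whereas the paper's construction is the one the later sections actually build on -- in Section~\ref{cola} the cut is replaced by a coalescing reference geodesic $\Gamma_\star$, precisely so that the resulting order can be compared consistently across geodesics starting from different vertices, which a fixed per-vertex reference direction does not directly provide.
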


\begin{proof}
The fact that $\tree_0^\rho$ is non-empty follows from Theorem~\ref{thm:DH}. By the same theorem we also have that there exists a geodesic in $\tree_0\setminus\tree_0^\rho$. Due to the tree structure of $\tree_0$ any two geodesics will share at most a finite number of edges, after which they diverge, never to intersect again. Hence, for any two geodesics $g$ and $g'$ in $\tree_0^\rho$, one is attained via a counterclockwise motion from the other. Since both $\tree_0^\rho$ and $\tree_0\setminus\tree_0^\rho$ are non-empty, precisely one of the regions counterclockwise between $g$ and $g'$, or between $g'$ and $g$, will contain a geodesic of the complement. We say $g\le g'$ if we can move counterclockwise from $g$ to $g'$ staying in $\tree_0^\rho$. It follows that for any $g$ and $g'$ we have either $g \leq g'$ or $g' \leq g$. Finally, if both $g\leq g'$ and $g' \leq g$ then $g=g'$, so this is a total ordering.
\end{proof}

In order to construct a random coalescing geodesic we will use the Damron-Hanson geodesic measures to put a measure on the totally ordered set $\tree_0^\rho$. More precisely, given a linear functional $\rho:\R^2\to\R$ tangent to $\ball$ and the Damron-Hanson geodesic measure $\nu$ associated to $\rho$ we obtain, for $\P$-almost every $\omega\in\Omega_1$, a probability measure $\hat\nu=\hat\nu(\omega)$ on $\Omega_2$ through projection and conditional expectation. Since $\nu$ is supported on (coalescing) families of geodesics directed in $\arc(\rho)$, the same holds for $\hat\nu$ almost surely. We will interchangeably think of $\hat\nu$ as a measure on $\Omega_2$ and as a measure on $\tree_0^\rho$.

We will next exhibit a function $\psi:\tree_0^\rho \to [0,1] $  such that Lesbesgue measure on $[0,1]$, which we write as $\leb$, is the pushforward  of $\hat \nu$ by $\psi$.
For every $g \in \tree_0^\rho$ we define $\tree_0^{<g}\subseteq \tree_0^\rho$ as the subtree consisting of all geodesics $g'$ in $\tree_0^\rho$ such that $g' < g$. We similarly define $\tree_0^{\le g}$ with $<$ replaced by $\le$. For $\beta \in [0,1]$, let
$$
G_{\beta}:=\sup\big\{g \in \tree_0^\rho: \hat \nu (\tree_0^{<g})<\beta\big\}.
$$
We will see that $G_\beta:\Omega_1\to\Omega_2$ is a random coalescing geodesic for Lebesgue almost every~$\beta$.

\begin{lemma}\label{google}
The map $G_\beta:\Omega_1\times[0,1]\to\Omega_2$ is measurable.
\end{lemma}

\begin{proof}
We show that $(\omega,\beta)\mapsto\{g \in \tree_0^\rho(\omega): \hat \nu (\tree_0^{<g})(\omega)<\beta\}$ defines a measurable map $\varphi:\Omega_1\times[0,1]\to\Omega_2$. Due to the product structure of $\Omega_2$ it will suffice to show that $\{e\in\varphi\}$ is a measurable event. Let $h_e$ denote the least (that is clockwise-most) geodesic in $\tree_0^\rho$ that goes through the edge $e$, should such a geodesic exist. Then
$$
\{e\in\varphi\}=\{e\in g\text{ for some } g\in\tree_0^\rho\}\cap\{\hat\nu(\tree_0^{<h_e})<\beta\}.
$$
The former of these events is measurable since $\tree_0^\rho:\Omega_1\to\Omega_2$ is measurable. The latter can be written as the countable union
$$
\bigcup_{q\in[0,1]\cap\mathbb{Q}}\{\omega\in\Omega_1:\hat\nu(\tree_0^{<h_e})<q\}\cap\{\beta\in[0,1]:q<\beta\}
$$
of measurable events, and is thus measurable.
\end{proof}

\begin{lemma} 
For $\P$-almost every $\omega\in\Omega_1$ we have for all $g \in \tree_0^\rho(\omega)$ that
\begin{equation} \label{rsvp}
\big\{\beta\in[0,1]: G_\beta \in \tree_0^{\le g}\big\}=\big[0,\hat \nu(\tree_0^{\le g})\big];
\end{equation}
and for any $v\in\Z^2$ and any coalescing pair of geodesics $g\in\tree_0^\rho$ and $g'\in\tree_v^\rho$ that
\begin{equation}\label{rsvp2}
\big\{\beta\in[0,1]:G_\beta\in\tree_0^{\le g}\big\}=\big\{\beta\in[0,1]:G_\beta(v)\in\tree_v^{\le g'}\big\}.
\end{equation}
\end{lemma}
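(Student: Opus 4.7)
My plan for part (\ref{rsvp}) is a standard quantile-function argument. Set $F(h) := \hat\nu(\tree_0^{\le h})$ and $F^-(h) := \hat\nu(\tree_0^{<h})$; monotonicity of the map $g \mapsto \tree_0^{<g}$ in the total order on $\tree_0^\rho$ makes $F^-$ non-decreasing. For the inclusion $[0,F(h)] \subseteq \{\beta : G_\beta \le h\}$, if $\beta \le F(h)$ and $g > h$ then $\tree_0^{<g} \supseteq \tree_0^{\le h}$, so $F^-(g) \ge F(h) \ge \beta$, excluding $g$ from the defining set of $G_\beta$; hence $G_\beta \le h$. For the reverse inclusion I argue contrapositively: if $\beta > F(h)$, I produce some $g > h$ with $F^-(g) < \beta$, which forces $G_\beta \ge g > h$. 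Either $h$ has an immediate successor $g^*$ in the order, in which case $\tree_0^{<g^*} = \tree_0^{\le h}$ and $F^-(g^*) = F(h) < \beta$; or else I pick a sequence $g_n \downarrow h$ with $\bigcap_n \tree_0^{<g_n} = \tree_0^{\le h}$, so countable additivity of $\hat\nu$ gives $F^-(g_n) \to F(h) < \beta$.

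For part (\ref{rsvp2}), my engine is the shift invariance of the Damron--Hanson geodesic measure $\nu$, which transfers to a covariance of the conditional distributions: almost surely, $\hat\nu(\sigma_v \omega)$ is the pushforward of $\hat\nu(\omega)$ under translation by $v$. Combined with the definition $G_\beta(v)(\omega) = \sigma_{-v}(G_\beta(\sigma_v \omega))$, this lets me apply part (\ref{rsvp}) to the shifted configuration and obtain
$$
\{\beta : G_\beta(v)(\omega) \le h'\} = \bigl[0,\hat\nu(\omega)(\{\gamma_v \le h'\})\bigr],
$$
where $\{\gamma_v \le h'\}$ denotes the event (under $\hat\nu$) that the distinguished geodesic from $v$ is at most $h'$ in the counterclockwise order on $\tree_v^\rho$. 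Part (\ref{rsvp}) itself also gives $\{\beta : G_\beta(\omega) \le h\} = [0,\hat\nu(\omega)(\{\gamma_0 \le h\})]$, so the proof of (\ref{rsvp2}) reduces to the mass identity $\hat\nu(\omega)(\{\gamma_0 \le h\}) = \hat\nu(\omega)(\{\gamma_v \le h'\})$.

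I will deduce this mass identity from a pointwise equality of the two events under $\hat\nu$. Theorem~\ref{thm:DH}(c) guarantees that in any $\hat\nu$-sample the family $\{\gamma_z\}$ is coalescing, so $\gamma_0$ and $\gamma_v$ share a common asymptotic tail; by hypothesis so do $h$ and $h'$. The comparisons $\gamma_0 \le h$ in $\tree_0^\rho$ and $\gamma_v \le h'$ in $\tree_v^\rho$ should therefore be determined by the same counterclockwise comparison of these two asymptotic tails, and so coincide pointwise. I expect the main obstacle to be verifying precisely this last geometric fact: that the counterclockwise total order on $\tree_u^\rho$ factors through the asymptotic tail of the geodesic and is consistent across basepoints for coalescing families. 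Once that is established the rest of the argument is bookkeeping; the underlying geometric point will be developed more fully by the labeling machinery of Sections \ref{cola}--\ref{label of sections}.
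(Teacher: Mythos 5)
Your proof is correct and follows essentially the same route as the paper's. Part~\eqref{rsvp} is the same two-sided quantile argument, with the same case split into immediate successor versus limit from above; part~\eqref{rsvp2} reduces, just as the paper does, to the mass identity $\hat\nu(\tree_0^{\le h})=\hat\nu(\tree_v^{\le h'})$, which both you and the paper ultimately derive from the $\hat\nu$-a.s.\ coalescence in Theorem~\ref{thm:DH}. The one place you go slightly beyond the paper's wording is in spelling out the shift-covariance of $\hat\nu$ needed to legitimately apply~\eqref{rsvp} at the shifted basepoint — a worthwhile clarification. The ``main obstacle'' you flag (consistency of the counterclockwise comparison across coalescing basepoints) is exactly the content of the paper's one-line assertion that a failure of the mass identity would put positive $\hat\nu$-mass on a non-coalescing family; it is no more than what you already sketch, so there is no real gap relative to the paper's own level of detail.
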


\begin{proof}
We first observe that $\{\beta: G_\beta \in \tree_0^{\le g}\}=\{\beta:G_\beta\le g\}$ by definition of $\tree_0^{\le g}$. We first set out to prove~\eqref{rsvp} and need for this to establish two claims.

We first claim that if $\beta>\hat \nu(\tree_0^{\leq g})$, then $G_\beta >g$. 
To see this, consider the following two cases: Either there is a least element $g'\in\tree_0^\rho$ strictly larger than $g$, or there is a decreasing sequence $(g_k)_{k\ge1}$ approaching $g$. In the former case we have
$$
\hat \nu(\tree_0^{<g'})=\hat \nu(\tree_0^{\leq g})<\beta,
$$
which implies that $G_\beta\ge g'>g$. In the latter case, by continuity of measure, we have that
$$
\hat\nu(\tree_0^{<g_k})\searrow\hat\nu(\tree_0^{\le g})<\beta.
$$
Hence, for some $g_k>g$ we have $\hat\nu(\tree_0^{<g_k})<\beta$, and thus that $G_\beta\ge g_k>g$. This settles the first claim.

Second, we claim that if $\beta \le \hat \nu(\tree_0^{\le g})$, then $G_\beta \le g$. To see this, take $g'>g$. Then
$$
\hat\nu(\tree_0^{<g'})\ge\hat\nu(\tree_0^{\le g})\ge\beta.
$$
That is, no $g'>g$ is contained in the set $\{h\in\tree_0^\rho:\hat\nu(\tree_0^{<h})<\beta\}$, and so $G_\beta\le g$.

The two claims imply~\eqref{rsvp}. Note that it holds for $\P$-almost every $\omega\in\Omega_1$ because $\hat \nu$ is a conditional expectation and is only defined almost surely.

We now turn to~\eqref{rsvp2}. First observe that
$$
\big\{\beta:G_\beta(v)\in\tree_v^{\le g'}(\omega)\big\}=\big\{\beta:G_\beta(\sigma_v\omega)\in\tree_0^{\le\sigma_vg'}(\sigma_v\omega)\big\},
$$
which by~\eqref{rsvp} equals $[0,\hat\nu(\tree_0^{\sigma_vg'})(\sigma_v\omega)]$ almost surely. Since $\nu$ is translation invariant, $\hat\nu(\tree_0^{\sigma_vg'})(\sigma_v\omega)=\hat\nu(\tree_v^{\le g'})(\omega)$ almost surely. To prove~\eqref{rsvp2} it will therefore suffice to show that $\hat\nu(\tree_0^{\le g})=\hat\nu(\tree_v^{\le g'})$ for almost every $\omega$ and all coalescing pairs $g$ and $g'$.

Assume, for a contradiction, that $\hat\nu(\tree_0^{\le g})<\hat\nu(\tree_0^{\le g'})$ for some pair $g$ and $g'$ with positive probability. In this case, due to the total ordering, we must have
$$
\hat\nu\big(\tree_v^{\le g'}\cap[\tree_0^\rho\setminus\tree_0^{\le g}]\big)>0
$$
with positive probability. In this case $\nu$ puts positive mass on a non-coalescing family of geodesics. However, by Theorem~\ref{thm:DH} we know that this can only happen on a null set.
\end{proof}

\begin{lemma} \label{bibi}
For Lebesgue-almost every $\beta\in[0,1]$ the geodesic $G_\beta$ is coalescing, has Busemann function linear to $\rho$ and has its directions contained in $\arc(\rho)$, almost surely.
\end{lemma}

\begin{proof}
We start by putting two measures on $\Omega_1\times\Omega_2$ and show that they are the same. The first measure is the projection of $\nu$. The second is the pushforward of $\P\times\leb$ via the map $\Psi:\Omega_1\times[0,1]\to\Omega_1\times\Omega_2$ given by $(\omega,\beta)\mapsto (\omega,G_\beta(\omega))$.

Observe first that by taking Lebesgue measure on both sides in~\eqref{rsvp} we obtain
$$
\leb(\{\beta:G_\beta\in\tree_0^{\le g}\})=\hat\nu(\tree_0^{\le g})\quad\text{for $\P$-almost every }\omega\in\Omega_1.
$$
As $\tree_0^\rho$ is totally ordered, any measure on $\tree_0^\rho$ is defined by its value for sets of this form. Hence, for any Borel set $B\subseteq\Omega_1\times\Omega_2$, using Fubini's theorem, we have
\begin{equation*}
\nu(B)\,=\,\int_{\Omega_1}\hat\nu(B(\omega))\,d\P\,=\,\int_{\Omega_1}\leb(\{\beta:G_\beta\in B(\omega)\})\, d\P\,=\,(\P\times\leb)(\Psi^{-1}B).
\end{equation*}
In particular, for any event $B$ that $\nu$ assigns full measure, $\P(G_\beta\in B)=1$ for Lebesgue-almost every $\beta\in[0,1]$. So, outside of a null set, $G_\beta$ is a geodesic with Busemann function asymptotically linear to $\rho$ and $\dir(G)\subseteq\arc(\rho)$, almost surely.

It remains to show that $G_\beta$ is coalescing for almost every $\beta$. Assume that $G_\beta(0)$ and $G_\beta(v)$ do not coalesce with positive probability. As $G_\beta$ is supported on geodesics in coalescing families of geodesics (since $\nu$ is) there would then, with positive probability, exist a pair of coalescing geodesics $g\in\tree_0^\rho$ and $g'\in\tree_v^\rho$ such that either $G_\beta\in\tree_0^{\le g}$ but $G_\beta(v)\not\in\tree_v^{\le g'}$, or $G_\beta\not\in\tree_0^{\le g}$ while $G_\beta(v)\in\tree_v^{\le g'}$.
This would contradict~\eqref{rsvp2}, and thus only happen on a null set.
\end{proof}

Theorem~\ref{yahoo} is an immediate consequence of Lemmas~\ref{google} and~\ref{bibi}.

\section{A shift invariant labeling of geodesics}
\label{cola}

In this section we define a flow on the tree $\tree_0=\tree_0(\omega)$ of one-sided geodesics emanating from the origin, and use the resulting flow to label geodesics in a systematic way. For the labeling to be useful it will have to be consistent with some natural notion of order among geodesics. Loosely speaking, we will work with an ordering in which $g\le g'$ if to reach $g'$ from some reference geodesic $g_\star$, in a counterclockwise motion, we first cross $g$.

Since the asymptotic shape has at least four sides, Theorem~\ref{yahoo} grants the existence of at least four random coalescing geodesics. Let $\Gamma_0$ denote one of these. From $\Gamma_0$ we obtain an additional three distinct random coalescing geodesics via right-angle rotation. This again gives a set of four random coalescing geodesics, whose asymptotic properties are related via right-angle rotation. For the rest of this paper we will denote by $\Gamma_0$, $\Gamma_1$, $\Gamma_2$ and $\Gamma_3$ the four random coalescing geodesics obtained in this fashion by right angle rotation. We will further fix one of these four geodesics as our reference geodesic; call this geodesic $\Gamma_\star$. As before, we denote by $\Gamma_i(v)$ the translate of $\Gamma_i$ along the vector $v\in\Z^2$.

\subsection{A total ordering of geodesics}

Recall that $\tree_v$ denotes the tree of one-sided geodesics emanating from the vertex $v\in\Z^2$. There is a natural total ordering among geodesics in $\tree_v$, where $g\le g'$ if we can reach $g'$ from $g$ in a counterclockwise motion without crossing $\Gamma_\star(v)$. Since $\Gamma_\star$ is coalescing, the total ordering on $\tree_v$ extends to a total ordering among all geodesics in $\{g\in\tree_v:v\in\Z^2\}$: Any two geodesics $g$ and $g'$ that intersect either coalesce or intersect in a finite connected set of edges. Given $g\in\tree_u$ and $g'\in\tree_v$ let $S\subset\Z^2$ be finite and connected with the property that $\Gamma_\star(u)$ and $\Gamma_\star(v)$ agree outside $S$ and $g$ and $g'$ either agree or are disjoint outside $S$. Indeed, if some set $S$ has this property, then every set $S'$ containing $S$ has this property too. We say that $g\le g'$ if we can reach $g'$ from $g$ in a counterclockwise motion along the boundary of $S$ without crossing $\Gamma_\star$.
This gives a well-defined total ordering with probability one. Note that if $g$ and $g'$ coalesce, then they are considered equal in the above ordering, and we write $g<g'$ if $g\le g'$ but not $g'\le g$. It is natural to think of the ordering as cyclic, in which $\Gamma_\star$ is not only the minimal, but also the maximal element of the ordering.

Based on the above ordering, we shall say that a geodesic $g\in\tree_v$ is {\bf cw-dense} if there exists a sequence of geodesics $g_1<g_2<g_3 < \cdots$ in $\tree_v$ such that $\lim_{k\to\infty}g_k=g$. Otherwise we say that $g$ is {\bf cw-isolated}. The terms {\bf ccw-dense} and {\bf ccw-isolated} are defined analogously. We note that if a geodesic $g=(v_0,v_1,\dots)$ is cw-dense, then so is the geodesic $g^{n}=(v_n,v_{n+1},\dots)$. Two geodesics $g<g'$ in $\tree_v$ are called {\bf neighbors} if every $g^*\in\tree_v$ satisfies either $g^*\le g$ or $g^*\ge g'$.

\subsection{A single source flow}

In a first step, we define, for every vertex $v\in\Z^2$, a flow from $v$ to $\infty$ along $\tree_v$. This flow has a source of magnitude one at $v$ and no sinks. We define the flow inductively starting at the root. Suppose we have defined the flow into a vertex $w$. The flow splits the mass flowing into $w$ equally among all edges in $\tree_v$ that emanate from $w$. We denote by $M_v(\Gamma_\star,g]$ the mass that flows out along geodesics in $\tree_v$ that lie strictly above $\Gamma_\star$ and below and including $g$, in the counterclockwise ordering. This assigns to each geodesic in $\tree_v$ a value between $0$ and $1$. We make the convention to interpret $M_v(\Gamma_\star,\Gamma_\star]$ as 0, but in consistence with the cyclic ordering, where $\Gamma_\star$ is considered both minimal and maximal, we identify the values 0 and 1.

The cumulative flow $M_v(\Gamma_\star,g]$ will not (necessarily) provide a labeling of geodesics consistent with the ordering. That is, even for $g\in\tree_u$ and $g'\in\tree_v$ that coalesce, and hence are equal as far as the ordering concerns, we may have $M_u(\Gamma_\star,g]\neq M_v(\Gamma_\star,g']$. To obtain a labeling consistent with the ordering we will employ an averaging procedure over equivalence classes of $\Z^2$ and work with subsequential limits.

\subsection{Averaging over equivalence classes}

Let $\{\xi_z\}_{z\in\Z^2}$ be independent $[0,1]$-uniform random variables. For each $i=1,2,\ldots$ let $\{V_i(z):z\in\Z^2\}$ be the partition of $\Z^2$ obtained as follows: Let $S_i$ denote the set of all points in $\Z^2$ with $\xi_z\le1/4^i$ and define $f_i:\Z^2\to S_i$ by mapping each point to the one in $S_i$ at least $\ell_1$-distance. (Choose, say, the one with minimal $\xi$-value in case of a tie.) This induces an equivalence relation on $\Z^2$ in which two sites are equivalent in case they map to the same site in $S_i$. Let $\{V_i(z):z\in\Z^2\}$ be the collection of equivalence classes of this equivalence relation.

\begin{lemma}\label{lma:classes}
For every pair $u,v\in\Z^2$ we have
$$\P\big(V_i(u)=V_i(v)\text{ for all $i$ sufficiently large}\big)=1.$$
\end{lemma}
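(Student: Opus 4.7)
The plan is to apply the first Borel--Cantelli lemma to the events $E_i := \{f_i(u) \ne f_i(v)\}$. Writing $n := |u-v|_1$, $D_i := \min\{|u-z|_1 : z \in S_i\}$ for the $\ell_1$-distance from $u$ to its nearest point in $S_i$, and $z^\star := f_i(u)$ for an attaining site, it suffices to show $\sum_i \P(E_i) < \infty$.

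The geometric heart of the argument is as follows. If $E_i$ occurs, then there is a competitor $y \in S_i \setminus \{z^\star\}$ with $|v-y|_1 \le |v-z^\star|_1 \le D_i + n$. The triangle inequality combined with the minimality $|u-y|_1 \ge D_i$ forces $y$ into the annulus
$$\mathcal R_i := \big\{y \in \Z^2 : D_i \le |u-y|_1 \le D_i + 2n\big\},$$
which contains $O(n(D_i + n))$ lattice points because $\ell_1$-balls in $\Z^2$ have size $\Theta(r^2)$. Conditional on $D_i$ and on the identity of $z^\star$, the variables $\xi_y$ for $y \notin B_1(u, D_i)$ remain independent uniforms, while on the boundary circle $\partial B_1(u, D_i)$ the conditional distribution of $\xi_y$ is stochastically dominated by the uniform on $[0,1]$; in either case $\P(y \in S_i \mid D_i, z^\star) \le 4^{-i}$. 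A union bound over $\mathcal R_i$ then yields $\P(E_i \mid D_i) \le C\, n(D_i + n)\cdot 4^{-i}$.

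To conclude, I would observe that $\P(D_i > r) = (1 - 4^{-i})^{|B_1(u,r)|}$ with $|B_1(u,r)| \asymp r^2$, which gives $\E[D_i] = O(2^i)$. Taking expectations produces
$$\P(E_i) \le C' n \cdot 2^{-i} + C' n^2 \cdot 4^{-i},$$
which is summable in $i$, as required. The only delicate bookkeeping is the tie-breaking rule in the definition of $f_i$: ties among $\xi$-values occur with probability zero by continuity of the common law, and ties in $\ell_1$-distance on the boundary circle are absorbed into the annulus union bound via the stochastic-domination remark above. The whole argument uses only pointwise independence of the $\xi_z$'s; no independence among the $E_i$'s is required, which is precisely what Borel--Cantelli demands.
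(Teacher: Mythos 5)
Your proof is correct and follows the same route as the paper: reduce $\{f_i(u)\ne f_i(v)\}$ to the existence of a competitor $y\in S_i$ in an $\ell_1$-annulus about $u$ of inner radius $D_i$ and width $2|u-v|_1$, bound the expected number of such competitors via $\E[D_i]=O(2^i)$ and the per-site probability $4^{-i}$, and apply Borel--Cantelli. The only cosmetic differences are that you locate the competitor directly as $f_i(v)$ rather than by contradiction, and you integrate the conditional bound over $D_i$ instead of truncating at $D_i<3^i$; your explicit treatment of the conditioning and tie-breaking on $\partial B_1(u,D_i)$ is a bit more careful than the paper's terse version.
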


\begin{proof}
In case $u$ and $v$ belong to different equivalence classes, then there exists $z\in S_i\setminus\{f_i(u)\}$ such that $\|z-u\|\le\|f_i(u)-u\|+2\|u-v\|$. To see this assume the contrary, in which case the triangle inequality, for any $z\in S_i\setminus\{f_i(u)\}$, gives that
$$
\|z-v\|\ge\|z-u\|-\|u-v\|>\|f_i(u)-u\|+\|u-v\|\ge\|f_i(u)-v\|,
$$
and hence $V_i(u)=V_i(v)$.

For each $i\ge1$ the expected distance from $u$ to $S_i$ is of order $2^i$, so with high probability we have $\|f_i(u)-u\|<3^i$. Since $\|y-u\|\ge\|f_i(u)-u\|$ for all $y\in S_i$, there are order $3^i\|u-v\|$ possible choices for the point $z$. Since each has probability $1/4^i$ to belong to $S_i$ we conclude that $V_i(u)\neq V_i(v)$ with probability of order $(3/4)^i\|u-v\|$. The result then follows from Borel-Cantelli.
\end{proof}

We next use the equivalence classes above defined to obtain a labeling of geodesics which is consistent with the ordering among sites within the same equivalence class. Based on the total ordering we may define $M_u(\Gamma_\star,g]$ for any geodesic $g$, not necessarily in $\tree_u$, as the mass (under the flow from $u$) along all geodesics between $\Gamma_\star$ and $g$. That is, let
$$
M_u(\Gamma_\star,g]:=\sup\big\{M_u(\Gamma_\star,g']:g'\in\tree_u\text{ and }g'\le g\big\}.
$$
For each $i=1,2,\ldots$ and $v\in\Z^2$ we define for $g\in\tree_v$
$$
M_v^i(\Gamma_\star,g]:=\frac{1}{|V_i(v)|}\sum_{u\in V_i(v)}M_u(\Gamma_\star,g].
$$
It is straightforward to verify that the (pre-)labeling generated by the averaged cumulative flow $M_v^i$ is consistent with the ordering of geodesics originating from the points in the same equivalence class; we save the details for the proof of Lemma~\ref{lma:label_measure} below.

\subsection{Subsequential weak limits}

The labels produced by $M_v^i:\tree_v\to[0,1]$ can be encoded as an element in $[0,1]^{\Ec^2}$ in the following manner, where $\Ec^2$ denotes the set of (undirected) edges of the square lattice. For each $v\in\Z^2$ and $e\in\Ec^2$ define
$$
\varphi_i(v,e):=\sup\big\{M_v^i(\Gamma_\star,g]:g\in\tree_v\text{ and }e\in g\big\}.
$$
(Supremum of the empty set is interpreted as zero.) This defines, for each $v\in\Z^2$ and $i\ge1$, an element $\varphi_i(v,\cdot)\in[0,1]^{\Ec^2}$. Note further that if $g=(e_1,e_2,\ldots)$ is a ccw-isolated geodesic in $\tree_v$, then we can recover the value of $M_v^i(\Gamma_\star,g]$ from $\varphi_i$ as the limit
\begin{equation}\label{eq:encoding_id}
M_v^i(\Gamma_\star,g]=\lim_{n\to\infty}\varphi_i(v,e_n)=\inf_{e\in g}\varphi_i(v,e),
\end{equation}
as $\varphi_i(v,e_n)$ is decreasing (more precisely, non-increasing) in $n$.

Let $\Omega_1=[0,\infty)^{\Ec^2}$, $\Omega_4=[0,1]^{\Z^2}$ and $\Omega_5=[0,1]^{\Z^2\times\Ec^2}$. For each $i\ge1$ we can exhibit a (measurable) map $\Psi_i:\Omega_1\times\Omega_4\to\Omega_1\times\Omega_5$ as $(\omega,\xi)\mapsto(\omega,\varphi_i)$. The measure $\P\times\leb$ may be pushed forward through the mapping $\Psi_i$ to give a measure $\nu_i$ on $\Omega_1\times\Omega_5$. Via a compactness argument and Prokhorov's theorem we conclude that $(\nu_i)_{i\ge1}$ has a weakly converging subsequence. The next couple of lemmas show that the limit of the converging subsequence is well behaved, and consistent with the total ordering of geodesics.

Given $v\in\Z^2$, let $\tilde\sigma_v$ denote the shift operator on $\Omega_1\times\Omega_5$ for which
$$
[\tilde\sigma_v(\omega,\varphi)](e,(z,f))=(\omega_{e-v},\varphi(z-v,f-v)).
$$

\begin{lemma}
Every subsequential limit $\nu$ of $(\nu_i)_{i\ge1}$ is invariant with respect to $\tilde\sigma_v$.
\end{lemma}

\begin{proof}
Let $\nu$ be a subsequential limit of $(\nu_i)_{i\ge1}$. We first show that $\int f\,d\nu_i=\int f\,d\nu_i\circ\tilde\sigma_v$ for all bounded continuous functions $f:\Omega_1\times\Omega_5\to\R$, and thus that $\nu_i=\nu_i\circ\tilde\sigma_v$. Indeed, this is a straightforward consequence of the product structure of $\P\times\leb$. More precisely, if $\tilde{\tilde\sigma}_v$ denotes the operator on $\Omega_1\times\Omega_4$ for which $[\tilde{\tilde\sigma}_v(\omega,\xi)](e,z)=(\omega_{e-v},\xi_{z-v})$, then
$$
\int f\,d\nu_i\circ\tilde\sigma_v=\int f\circ\tilde\sigma_v\circ\Psi_i\,d(\P\times\leb)=\int f\circ\Psi_i\circ\tilde{\tilde\sigma}_v\,d(\P\times\leb)=\int f\,d\nu_i
$$
for each bounded continuous function $f$, since $\tilde\sigma_v\circ\Psi_i=\Psi_i\circ\tilde{\tilde\sigma}_v$ and $\P\times\leb$ is invariant with respect to $\tilde{\tilde\sigma}_v$. Hence, $\nu_i=\nu_i\circ\tilde\sigma_v$ for every $i\ge1$, and by continuity of $\tilde\sigma_v$ it follows that $\nu=\nu\circ\tilde\sigma_v$ by taking limits.
\end{proof}

\begin{lemma}\label{lma:label_measure}
Every subsequential limit $\nu$ of $(\nu_i)_{i\ge1}$ has the property that for $\nu$-almost every $(\omega,\varphi)\in\Omega_1\times \Omega_5$ and every $u,v\in\Z^2$ we have that
\begin{enumerate}[\quad (a)]
\item $\varphi(u,e_n)$ is decreasing for every geodesic $g=(e_1,e_2,\ldots)$ in $\tree_u$;
\item for any two ccw-isolated  geodesics $g=(e_1,e_2,\ldots)$ in $\tree_u$ and $g'=(e_1',e_2',\ldots)$ in $\tree_v$ with $g\le g'$   we have
$$
\lim_{n\to\infty}\varphi(u,e_n)\le\lim_{n\to\infty}\varphi(v,e_n').
$$
\end{enumerate}
\end{lemma}

\begin{proof}
Since $\Z^2$ is countable it will suffice to prove each of the statements for a fixed pair of vertices $u,v\in\Z^2$. We start with part~\emph{(a)}, and note that it will further suffice to show that $\varphi(u,e)$ is decreasing along the edges of $\geo(u,z)$ for every $z\in\Z^2$.
Let $\gamma=(e_1,e_2,\ldots,e_m)$ be a finite path between $u$ and $z$. Denote by $A_\gamma$ the event that $\gamma$ is a geodesic, and by $B_\gamma$ the event that $\varphi(u,e_n)$ is decreasing. By construction we have $\nu_i((A_\gamma\cap B_\gamma)\cup A_\gamma^c)=1$ and $\nu_i(A_\gamma^c)=\nu(A_\gamma^c)$. Both $A_\gamma$ and $B_\gamma$ are closed events, so the Portmanteau theorem\footnote{The Portmanteau theorem says that $\nu_i\to\nu$ weakly iff $\displaystyle\limsup_{i\to\infty}\nu_i(C)\le\nu(C)$ for any closed event $C$.} gives that
$$
1=\nu_i(A_\gamma\cap B_\gamma)+\nu_i(A_\gamma^c)\le\nu(A_\gamma\cap B_\gamma)+\nu(A_\gamma^c)=\nu\big((A_\gamma\cap B_\gamma)\cup A_\gamma^c\big).
$$
Since the number of finite paths between $u$ and $z$ is countable, this proves part~\emph{(a)}.

We proceed with part~\emph{(b)}, and let $A_{u,v}$ denote the event that
$$
\lim_{n\to\infty}\varphi(u,e_n)\le\lim_{n\to\infty}\varphi(v,e_n')
$$
for every pair of ccw-isolated $g=(e_1,e_2,\ldots)$ in $\tree_u$ and $g'=(e_1',e_2',\ldots)$ in $\tree_v$ such that $g\le g'$. Let $B_{u,v}$ be the event that $\Gamma_\star(u)$ and $\Gamma_\star(v)$ coalesce and $V_i(u)=V_i(v)$. According to the averaging procedure over equivalence classes we have on the event $B_{u,v}$ that
$$
M_u^i(\Gamma_\star,g]=\frac{1}{|V_i(u)|}\sum_{w\in V_i(u)}M_w(\Gamma_\star(u),g]\le\frac{1}{|V_i(u)|}\sum_{w\in V_i(u)}M_w(\Gamma_\star(u),g']=M_v^i(\Gamma_\star,g'].
$$
It follows from the identity in~\eqref{eq:encoding_id} that
$$
\nu_i(A_{u,v})\ge(\P\times\leb)(\{(\omega,\xi):V_i(u)=V_i(v)\}),
$$
which by Lemma~\ref{lma:classes} tends to 1 as $i\to\infty$.

Let $\tree_u^n$ denote the set of ccw-isolated geodesics in $\tree_u$ that diverges from its ccw-neighbor within $n$ steps. Informally speaking, this is the set of geodesics that can be distinguished by observing the first $n$ steps of each geodesic in $\tree_u$. Let $E_{u}(n,m)$  be the set of edges with the property that they are the first edge in some $g\in\tree_u^n$ that connects the box $[-m,m]^2$ to its complement. Define a counterclockwise order on edges that connect $[-m,m]^2$ to its complement by choosing the least element to be the edge in $E_0(n,m)$ that belongs to $\Gamma_\star(0)$.

Let $A'_{u,v}(n,m)$ be the event that for all $\ell\ge m$ and all $e\in E_u(n,\ell)$ and $e'\in E_v(n,\ell)$ such that $e\le e'$ we have
$$
\varphi(u,e)\le\varphi(v,e').
$$
Moreover, let $C_{u,v}(n,m)$ be the event that $\Gamma_\star(u)$ and $\Gamma_\star(v)$ coincide outside $[-m,m]^2$ and that the geodesics of $\tree_u^n$ and $\tree_v^n$ leave $[-m,m]^2$ in their respective order. On $C_{u,v}(n,m)$ we have $A_{u,v}\subseteq A_{u,v}'(n,m)$. Hence, for each $\eps>0$ we have for all large $i$ and $m$ that
$$
\nu_i(A'_{u,v}(n,m))>1-\eps.
$$

We now argue that $A'_{u,v}(n,m)$ is closed. Let $\gamma$ and $\gamma'$ be two ordered sequences of edges connecting $[-\ell,\ell]^2$ to its complement. Let $D_{\gamma,\gamma'}(n,\ell)$ be the event that $E_u(n,\ell)=\gamma$ and $E_v(n,\ell)=\gamma'$. The event $D_{\gamma,\gamma'}(n,\ell)$ is closed, and $A'_{u,v}(n,m)$ can be written as
$$
A'_{u,v}(n,m)=\bigcap_{\ell\ge m}\bigcup_{\gamma,\gamma'}\{\varphi(u,\gamma_i)\le\varphi(v,\gamma'_j)\text{ for }\gamma_i\le\gamma'_j\}\cap D_{\gamma,\gamma'}(n,\ell).
$$
Since finite unions and arbitrary intersections of closed sets is closed, $A'_{u,v}(n,m)$ is closed.

The Portmanteau theorem now implies that for every $\eps>0$ there is $m$ so that
$$
\nu(A'_{u,v}(n,m))>1-\eps.
$$
By continuity of measure it follows that $A'_{u,v}(n,m)$ occurs for some $m\ge1$ with $\nu$-probability one. This concludes part~\emph{(b)} for ccw-isolated geodesics that diverge from their ccw-neighbor within the first $n$ steps. Since any ccw-isolated geodesic is of this form, for some $n$, the result follows.
\end{proof}

\subsection{Global labeling of geodesics}

Finally, given a subsequential limit $\nu$ of $(\nu_i)_{i\ge1}$ we give each geodesic in the plane a label $\alpha\in[0,1]$ through the reconstructed cumulative flow obtained through $\nu$. For each $\omega\in\Omega_1$ we obtain a probability measure $\hat\nu=\hat\nu(\omega)$ on $\Omega_5$ through conditional expectation. For each $\omega\in\Omega_1$ and $v\in\Z^2$, define for each ccw-isolated geodesic $g\in\tree_v(\omega)$ a label through averaging:
\begin{equation}\label{eq:Fdef}
F(g):=\inf\Big\{\lim_{n\to\infty}\int\varphi(v,e_n)\,d\hat\nu(\omega):g'=(e_1,e_2,\ldots)\text{ is ccw-isolated and }g'\ge g\Big\},
\end{equation}
where the infimum is taken over all ccw-isolated geodesics $g'\ge g$ starting anywhere in the plane. Before proceeding, we record a few simple observations regarding the labeling. The first implying that it is well-defined.


\begin{lemma}\label{label pre-props}
For $\P$-almost every $\omega\in\Omega_1$ and every geodesic $g=(e_1,e_2,\ldots)$ in $\tree_v$ the limit
$$
\Phi(g):=\lim_{n\to\infty}\int\varphi(v,e_n)\,d\hat\nu(\omega)
$$
exists and satisfies $F(g)\le\Phi(g)$ in general, with equality whenever $g$ is ccw-isolated.
\end{lemma}

\begin{proof}
Existence of the limit is immediate from part~\emph{(a)} of Lemma~\ref{lma:label_measure} and monotone convergence. For the inequality it will suffice to consider $g$ such that $\Gamma_0\le g<\Gamma_1$. Let $g_n$ denote the counterclockwise-most geodesic in $\tree_v$ containing $e_n$. Then $g_n$ is ccw-isolated and for large $n$ we have $\Gamma_1>g_n\ge g$, so~\eqref{eq:Fdef} and part~\emph{(a)} of Lemma~\ref{lma:label_measure} give
$$
F(g)\le\Phi(g_n)\le\int\varphi(v,e_n)\,d\hat\nu(\omega).
$$
Taking limits yields $F(g)\le\Phi(g)$, whereas equality, for $g$ ccw-isolated, holds since for any ccw-isolated geodesic $g'\ge g$ part~\emph{(b)} of Lemma~\ref{lma:label_measure} gives $\Phi(g')\ge\Phi(g)$.
\end{proof}

We next observe, crucially, that the labeling is consistent with the ordering of geodesics.

\begin{prop}\label{label props}
For $\P$-almost every $\omega\in\Omega_1$ we have for all $g\in\tree_u$ and $g'\in\tree_v$ that
\begin{enumerate}[\quad (a)]
\item if $g\le g'$, then $F(g)\le F(g')$;
\item if $g$ and $g'$ coalesce, then $F(g)=F(g')$.
\end{enumerate}
\end{prop}

\begin{proof}

Part~\emph{(a)} is immediate from~\eqref{eq:Fdef}, since if $g\le g'$ then $F(g)$ is the infimum over a larger set that $F(g')$. Part~\emph{(b)} follows from~\emph{(a)} since if $g$ and $g'$ coalesce, then we have both $g\le g'$ and $g'\le g$.
\end{proof}

\begin{lemma}\label{label post-props}
For $\P$-almost every $\omega\in\Omega_1$ and $v\in\Z^2$ we may for every $g\in\tree_v(\omega)$ such that
\begin{equation}\label{eq:strict}
F(g)<\inf\big\{F(g'):g\in\tree_v\text{ is ccw-isolated and }g'\ge g\big\}
\end{equation}
find $u\in\Z^2$ and a ccw-isolated geodesic $g^\ast\in\tree_u$ such that for any decreasing sequence $(g_k)_{k\ge1}$ in $\tree_v$ of ccw-isolated geodesics converging to $g$ we have $g\le g^\ast<g_k$ for all $k$.
\end{lemma}

\begin{proof}
Suppose the contrary, that with positive probability there exists $g\in\tree_v$ satisfying~\eqref{eq:strict}, and that for every ccw-isolated geodesic $g^\ast\ge g$ there exists a ccw-isolated geodesic $g'\in\tree_v$ such that $g\le g'\le g^\ast$. By Lemmas~\ref{lma:label_measure}\emph{(b)} and~\ref{label pre-props} it follows that with positive probability there exists $g\in\tree_v$ satisfying~\eqref{eq:strict}, and that for every ccw-isolated geodesic $g^\ast\ge g$ there exists a ccw-isolated geodesic $g'\in\tree_v$ such that $\Phi(g^\ast)\ge\Phi(g')=F(g')$, contradicting~\eqref{eq:Fdef}.
\end{proof}

\subsection{Labels of random coalescing geodesics}

Formally we may think of the labeling as a measurable map $F:\Omega_1\to\Omega_5$. The labeling is invariant with respect to translations since $\nu$ is. Together with the coalescence property it follows that for any random coalescing geodesic $G$ and any finite set $V\subseteq\Z^2$ we have
$$
F(G)(\omega)\,=\,\frac{1}{|V|}\sum_{v\in V}F(G(v))(\omega)\,=\,\frac{1}{|V|}\sum_{v\in V}F(G)(\sigma_v\omega).
$$
By the ergodic theorem we then have $F(G)=\E[F(G)]$ almost surely. That is, every random coalescing geodesic has an almost surely constant label. This is true in a strong sense, as we explore in the next couple of lemmas.

\begin{lemma}\label{lma:rcg_label1}
Let $G$ be a random coalescing geodesic. For $\nu$-almost every $(\omega,\varphi)\in\Omega_1\times\Omega_5$, if $(e_1,e_2,\ldots)$ is an enumeration of the edges in $G(\omega)$, then we have
$$
\lim_{n\to\infty}\varphi(0,e_n)=\E\big[M_0(\Gamma_\star,G]\big].
$$
\end{lemma}

\begin{proof}
Let $(e_1,e_2,\ldots)$ is an enumeration of the edges in $G$, and let $(g_k)_{k\ge1}$ be any decreasing sequence of ccw-isolated geodesics in $\tree_0$ converging to $G$. Then, almost surely,
\begin{equation}\label{eq:alpro}
\lim_{n\to\infty}\varphi_i(0,e_n)=\lim_{k\to\infty}M_0^i(\Gamma_\star,g_k]=M_0^i(\Gamma_\star,G],
\end{equation}
where the former equality follows from the definition of $\varphi_i$ and the latter since $G$ is coalescing. We further note that
$$
M_0^i(\Gamma_\star,G](\omega)=\frac{1}{|V_i(0)|}\sum_{v\in V_i(0)}M_v(\Gamma_\star,G(v)](\omega)=\frac{1}{|V_i(0)|}\sum_{v\in V_i(0)}M_0(\Gamma_\star,G](\sigma_v\omega),
$$
which by the ergodic theorem approaches $\E\big[M_0(\Gamma_\star,G]\big]$ as $i\to\infty$ almost surely.

Let $E_m$ denote the set of edges connecting the set $[-m,m]^2$ to its complement, and let $\hat e_m$ denote the first edge in $E_m$ used by $G(\omega)$. For $n\ge1$ and $\eps>0$ let $A(n,\eps)$ denote the event that $\E\big[M_0(\Gamma_\star,G]\big]-\eps\le\varphi(0,\hat e_m)\le\E\big[M_0(\Gamma_\star,G]\big]+\eps$ for all $m\ge n$. That $\nu_i(A(n,\eps))\ge1-\eps$ for all large $i$ and $n$ is immediate from~\eqref{eq:alpro} and the fact that $M_0^i(\Gamma_\star,G]$ approaches $\E\big[M_0(\Gamma_\star,G]\big]$. The event $A(n,\eps)$ is closed, as it can be expressed as
$$
A(n,\eps)=\bigcap_{m\ge n}\bigcup_{e\in E_m}\Big\{\E\big[M_0(\Gamma_\star,G]\big]-\eps\le\varphi(0,e)\le\E\big[M_0(\Gamma_\star,G]\big]+\eps\Big\}\cap\{\hat e_m=e\},
$$
and since finite unions and arbitrary intersections of closed events are closed. Hence, the Portmanteau theorem implies that for every $\eps>0$ there is $n$ so that $\nu(A(n,\eps))\ge1-\eps$. By continuity of measure, for every $\eps>0$ the event $A(n,\eps)$ occurs for some $n$ with $\nu$-probability one. Since $\varphi(0,e)$ is decreasing along geodesics, by Lemma~\ref{lma:label_measure}, the lemma follows.
\end{proof}

\begin{lemma}\label{lma:rcg_label2}
Let $G$ be a random coalescing geodesic. For $\nu$-almost every $(\omega,\varphi)\in\Omega_1\times\Omega_5$ we have for every ccw-isolated geodesic $g=(e_1,e_2,\ldots)$ in $\tree_0(\omega)$ that
\begin{itemize}
\item $\lim_{n\to\infty}\varphi(0,e_n)\ge\E\big[M_0(\Gamma_\star,G]\big]$, if $g\ge G(\omega)$;
\item $\lim_{n\to\infty}\varphi(0,e_n)\le\E\big[M_0(\Gamma_\star,G]\big]$, if $g\le G(\omega)$.
\end{itemize}
Moreover, $F(G)=\E\big[M_0(\Gamma_\star,G]\big]$ almost surely.
\end{lemma}

\begin{proof}
Let $E_m$ denote the set of edges connecting $[-m,m]^2$ to its complement. Let $\tree_0^n$ denote the set of ccw-isolated geodesics in $\tree_0$ that diverge from their ccw-neighbor within $n$ steps. Let $E_m(n)$ denote the subset of $E_m$ of edges with the property that they are the first edge in $E_m$ used by some $g\in\tree_0^n$. We define a counterclockwise order on $E_m$ by declaring the first edge used by $\Gamma_\star$ as the minimal element. Finally, let $\hat e_m$ denote the first edge in $E_m$ used by $G(\omega)$.

For $\eps>0$, let $A(m,n,\eps)$ be the event that for every $\ell\ge m$ and $e\in E_\ell(n)$ we have
\begin{itemize}
\item $\varphi(0,e)\ge\E\big[M_0(\Gamma_\star,G]\big]-\eps$, if $e\ge\hat e_\ell$;
\item $\varphi(0,e)\le\E\big[M_0(\Gamma_\star,G]\big]+\eps$, if $e\le\hat e_\ell$.
\end{itemize}
First note that, $\P$-almost surely, $\varphi_i(0,e)\ge M_0^i(\Gamma_\star,G]$ for all $e\in E_\ell(n)$ such that $e\ge\hat e_\ell$, and that the reversed inequality holds for $e<\hat e_\ell$. For $e=\hat e_\ell$ it follows by~\eqref{eq:alpro} that $\varphi_i(0,e)\le M_0^i(\Gamma_\star,G]+\eps$ for $\ell$ large. Since $M_0^i(\Gamma_\star,G]$ approaches $\E\big[M_0(\Gamma_\star,G]\big]$ as $i\to\infty$, almost surely, we conclude that for all large $i$ and $m$ we indeed have
$$
\nu_i(A(m,n,\eps))\ge1-\eps.
$$

That $A(m,n,\eps)$ is closed is easily verified in a similar fashion as in the proofs of Lemmas~\ref{lma:label_measure} and~\ref{lma:rcg_label1}. By the Portmanteau theorem we therefore conclude that for all large $m$ we have $\nu(A(m,n,\eps))\ge1-\eps$. By continuity of measure it follows that for every $n\ge1$ and $\eps>0$ the event $A(m,n,\eps)$ occurs for some $m$. The first of the two statements of the lemma then follows by monotonicity of $\varphi(0,e)$ along geodesics in $\tree_0$, i.e.\ Lemma~\ref{lma:label_measure}.

We finally argue that $F(G)=\E\big[M_0(\Gamma_\star,G]\big]$ almost surely. That $F(G)$ is at least $\E\big[M_0(\Gamma_\star,G]\big]$ is immediate from the definition of the labeling and the first part of the lemma, since $G$ is coalescing. That $F(G)$ is at most $\E\big[M_0(\Gamma_\star,G]\big]$ follows from Lemma~\ref{lma:rcg_label1}, as it shows that for every $\eps>0$ there are ccw-isolated geodesics counterclockwise of $G$ (possibly $G$ itself) that have label at most $\E\big[M_0(\Gamma_\star,G]\big]+\eps$ with $\nu$-probability one.
\end{proof}

We remark, on the side, that since $F(\Gamma_i)=\E\big[M_0(\Gamma_\star,\Gamma_i]\big]$ for $i=0,1,2,3$, it follows by symmetry that the four $\Gamma_i$-geodesics have labels $0$, $1/4$, $1/2$ and $3/4$.

\subsection{Multiplicity of labels}

We end this section by showing, in a couple of lemmas, that the labeling does a good job of distinguishing distinct geodesics.

\begin{lemma}\label{lma:nu_mult}
Let $\alpha\in[0,1]$ be fixed. For $\nu$-almost every $(\omega,\varphi)\in\Omega_1\times\Omega_5$ there are no two ccw-isolated geodesics $g=(e_1,e_2,\ldots)$ and $g'=(e_1',e_2',\ldots)$ in $\tree_0(\omega)$ for which
$$
\lim_{n\to\infty}\varphi(0,e_n)=\lim_{n\to\infty}\varphi(0,e_n')=\alpha.
$$
\end{lemma}

\begin{proof}
Below, we shall call a geodesic $g\in\tree_v$ \emph{$k$-good} if it is ccw-isolated and splits from its ccw-neighbor within $k$ steps. Given an open interval $I\subset[0,1]$ and integer $k\ge1$ let $A_i(I,k)$ be the set of vertices $v\in\Z^2$ for which there are two $k$-good geodesics $g$ and $g'$ in $\tree_v$ with $M_v^i(\Gamma_\star,g]$ and $M_v^i(\Gamma_\star,g']$ in $I$.

\begin{claim}
For every $k\ge1$ and $i\ge1$ we have
$$
\P\big(|A_i(I,k)\cap V_i(0)|<|I|4^k|V_i(0)|\text{ for all }I\subseteq[0,1]\big)=1.
$$
\end{claim}

\begin{proof}[Proof of claim]
For every $v\in A_i$ let $a_v$ and $b_v$ denote the clockwise- and counterclockwise-most of the two $k$-good geodesics, and let $a_i$ and $b_i$ denote the least and largest elements among all $k$-good geodesics in $A_i\cap V_i(0)$ (in the total ordering). We then observe that for each $v$ mass of at least $4^{-k}$ escapes along $b_v$. Consequently, we obtain
$$
|I|\,>\,M_v^i(\Gamma_\star,b_i]-M_v^i(\Gamma_\star,a_i]\,>\,4^{-k}\frac{|A_i\cap V_i|}{|V_i|},
$$
or that $|A_i\cap V_i|<|I|4^k|V_i|$.
\end{proof}

From the claim we conclude that for each fixed interval $I$ we have
\begin{equation}\label{to be referred to below}
\P\big(0\in A_i(I,k)\big)<|I|4^k.
\end{equation}
Let $B(I,k)$ denote the event that there are edges $e_k,e_k'$ in $\tree_0(\omega)$ at distance $k$ from the origin for which both $\varphi(0,e_k)$ and $\varphi(0,e_k')$ are in $I$. We note that~\eqref{to be referred to below} implies that
$$
\nu_i(B(I,k))<|I|4^k.
$$
Given a finite set of edges $\gamma$ we let $C_\gamma$ be the event that $\varphi(0,e)\in I$ for at least two $e\in\gamma$, and let $D_\gamma$ be the event that $\gamma$ are precisely the edges of $\tree_0$ at distance $k$ from the origin. The set $C_\gamma$ is open and one can show that $D_\gamma$ is a $\nu$-continuity set. Hence, the Portmanteau theorem gives that
$$
\liminf_{i\to\infty}\nu_i(B(I,k))\,=\,\liminf_{i\to\infty}\sum_\gamma\nu_i(C_\gamma\cap D_\gamma)\,\ge\,\sum_\gamma\nu(C_\gamma\cap D_\gamma)\,=\,\nu(B(I,k)),
$$
and hence that $\nu(B(I,k))\le |I|4^k$. Since $I\subseteq[0,1]$ was arbitrary, then lemma follows.
\end{proof}

As a consequence of the next lemma, if there exists a random coalescing geodesic with label $\alpha$, then there are at most two geodesics in $\tree_0$ with label $\alpha$ almost surely.

\begin{lemma}\label{lma:multiplicity}
Let $G$ be a random coalescing geodesic with label $\alpha$. Then,
$$
\P\big(\exists\text{ two ccw-isolated geodesics in $\tree_0$ with label }\alpha\big)=0.
$$
\end{lemma}

\begin{proof}
Assume, for a contradiction, that with positive probability we may find two ccw-isolated geodesics in $\tree_0$ with label $\alpha$. That is, suppose that with positive probability there are two ccw-isolated geodesics $g=(e_1,e_2,\ldots)$ and $g'=(e_1',e_2',\ldots)$ in $\tree_0$ for which
$$
\lim_{n\to\infty}\int\varphi(0,e_n)\,d\hat\nu(\omega)=\lim_{n\to\infty}\int\varphi(0,e_n')\,d\hat\nu(\omega)=\alpha.
$$
According to Lemma~\ref{lma:rcg_label2}, almost surely, for every ccw-isolated geodesic in $\tree_0(\omega)$ the limit $\lim_{n\to\infty}\varphi(0,e_n)$ is supported on either $[0,\alpha]$ or $[\alpha,1]$ with $\hat\nu(\omega)$-probability one, depending on its relation to $G$. Consequently, if with positive probability we may find two ccw-isolated geodesics in $\tree_0$ with label $\alpha$, then with positive probability we find ccw-isolated geodesics $g$ and $g'$ such that
$$
\hat\nu(\omega)\Big(\lim_{n\to\infty}\varphi(0,e_n)=\lim_{n\to\infty}\varphi(0,e_n')=\alpha\Big)=1.
$$
This contradicts Lemma~\ref{lma:nu_mult}.
\end{proof}

\section{A central geometric argument}
\label{geometric}

In this section we present a central geometric argument. This argument will effectively function as a 0-1 law, and will be used repeatedly for constructing geodesics that starts at some vertex $v$ and have certain desired properties. Recall that a random coalescing geodesic has an almost surely constant label due to the coalescence property. We demonstrate the use of our geometric argument below and show that random non-crossing geodesics, defined next, have constant label.

\begin{define} 
A measurable map $G:\Omega_1\to\Omega_2$ is a {\bf random non-crossing geodesic} if, almost surely, $G(\omega)\in\tree_0(\omega)$ and for all pairs of points $G(u)$ and $G(v)$ are non-crossing.
\end{define}

Recall that we since Section~\ref{cola} have fixed a set of four random coalescing geodesics $\Gamma_i$, for $i=0,1,2,3$, obtained from one another through right-angle rotation. The random non-crossing geodesics $G$ that we shall encounter will almost surely be contained counterclockwise between $\Gamma_{i}$ and $\Gamma_{i+1}$, meaning that $\Gamma_i\le G\le\Gamma_{i+1}$, for some $i$. By relabeling the geodesics $\Gamma_i$ we may assume that $G$ lies counterclockwise between $\Gamma_0$ and $\Gamma_1$ almost surely.

We first illustrate the use of the geometric argument. We start with the cone determined by moving counterclockwise from $\Gamma_{0}(0)$ to $\Gamma_{1}(0).$ Then we find two geodesics $g\in\tree_u$ and $g'\in\tree_v$ such that (see Figure~\ref{fig:geom1})
\begin{itemize}
\item $u \in \Gamma_{0}$ and $g$ is in the cone counterclockwise between $\Gamma_{0}$ and $\Gamma_{1}$;
\item $v \in \Gamma_{1}$ and $g'$ is in the cone counterclockwise between $\Gamma_{0}$ and $\Gamma_{1}$;
\item $g'$ is counterclockwise of $g$.
\end{itemize}
\begin{figure}[htbp]
\begin{center}
\includegraphics{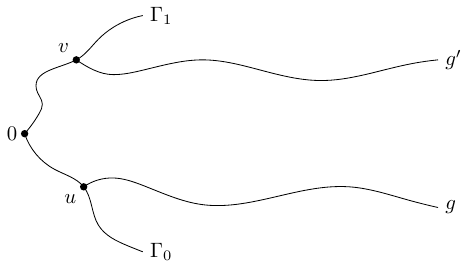}
\end{center}
\caption{Description of the central geometric argument.}
\label{fig:geom1}
\end{figure}
In this setup we are ensured that there exists a geodesic $h\in\tree_0$ which is contained counterclockwise between $g$ and $g'$: Let $(u,u_1,u_2,\ldots)$ be an enumeration of the vertices in $g$ and consider the limit of $\geo(0,u_k)$ as $k$ increases. Since $\geo(0,u_k)$ has to lie within the region confined by $\Gamma_0$, $\Gamma_1$ and $g$, $g'$, due to unique passage times, and since $\geo(0,u_{k+1})$ has to lie counterclockwise of $\geo(0,u_k)$, this limit exists and is shielded off by $g$ and $g'$. In particular, the label of the limiting geodesic $h$ is then contained between those of $g$ and $g'$, and if $g$ and $g'$ coalesce then all three labels coincide.

We will next formulate a statement which will allow us to draw the above picture. We will apply this proposition a number of times in settings which differ somewhat one from another. In order to provide a result that encompasses these different settings we will phrase the statement in terms of translation invariant subfamilies $\mathscr{F}$ of geodesics contained between $\Gamma_0$ and $\Gamma_1$, and ask for the existence of a geodesic with a given asymptotic geodesic property. Below, an {\bf asymptotic geodesic property} is a property such that if $g=(v_0,v_1,v_2,\ldots)$ has the property, then $(v_k,v_{k+1},\ldots)$ does too for all $k\ge1$. Having a certain label, or being cw-/ccw-dense, are examples of asymptotic geodesic properties.


\begin{prop} \label{cycle}
Let $\mathscr{F}(v)$ denote a translation invariant subfamily of geodesics in $\tree_v$ contained almost surely counterclockwise between $\Gamma_0(v)$ and $\Gamma_1(v)$. Let $I$ be an asymptotic geodesic property and let $A(v)$ denote the event that $\mathscr{F}(v)$ contains a geodesic with property $I$. Let $C(v)$ denote the event that $\mathscr{F}(v)$ is contained entirely in the cone counterclockwise between $\Gamma_0(0)$ and $\Gamma_1(0)$.
If $\P\big(A(0)\big)>0$, then, almost surely,
\begin{itemize}
\item there exists $u \in \Gamma_{0}$ for which $A(u)\cap C(u)$ occurs, and
\item there exists $v \in \Gamma_{1}$ for which $A(v)\cap C(v)$ occurs.
\end{itemize}
\end{prop}

We remark that by applying the above proposition twice, we may obtain $u\in\Gamma_0$ and $v\in\Gamma_1$ and geodesics starting at $u$ and $v$ with different properties.

\subsection{Almost sure properties of random geodesics}

Before presenting a proof we give a few typical application of Proposition~\ref{cycle}; in Section~\ref{label of sections} we shall see several more.

\begin{lemma} \label{constant}
Let $G$ be a random non-crossing geodesic which with probability one is contained counterclockwise between $\Gamma_0$ and $\Gamma_1$. Then, $F(G)$ is almost surely constant.
\end{lemma}

\begin{proof}
Suppose the lemma is not true. Then there are two disjoint intervals $[a,b]$ and $[c,d]$  such that $b<c$ and the probability that $F(G)$ is in either of those intervals is positive. We let $A(v)$ be the event that $G(v)$ has label in $[c,d]$ and $B(v)$ be that $G(v)$ has label in $[a,b]$.

Then by Proposition~\ref{cycle} we get a $u$ on $\Gamma_{0}$ such that $A(u)$ occurs. We also get a $v$ on $\Gamma_{1}$ such that $B(v)$ occurs. As the label of $G(u)$ is greater than the label of $G(v)$ we must have that $G(v)<G(u)$. But this means that $G(u)$ and $G(v)$ must cross. This is a contradiction.
\end{proof}

\begin{lemma}\label{rng direction}
Let $G$ be a random non-crossing geodesic almost surely contained counterclockwise between $\Gamma_0$ and $\Gamma_1$. Then there is a set $D$ such that $\dir(G)=D$ almost surely.
\end{lemma}

\begin{proof}
We first show that for any interval $I$ we have $\P\big(\dir(G)\cap I\neq\emptyset\big)\in\{0,1\}$. Assume $\dir(G)\cap I\neq\emptyset$ occurs with positive probability. Applying Proposition~\ref{cycle} we find $u\in\Gamma_0$ and $v\in\Gamma_1$ for which $\dir(G(u))$ and $\dir(G(v))$ have nonempty intersection with $I$. Since $G$ is non-crossing, we have $G(u)\le G(0)\le G(v)$, so via a sandwiching argument it follows that $\dir(G(0))$ has non-empty intersection with $I$.

Now, let $I_0$ be an interval of length $\pi$ that almost surely contains $\dir(\Gamma_0)$ and $\dir(\Gamma_1)$, and hence $\dir(G)$. Define a nested sequence of closed intervals $I_0\supseteq I_1\supseteq I_2\supseteq\ldots$ as follows: In each step split $I_{k-1}$ in half and let $I_k$ denote the cw-most of the two intervals that has nonempty intersection with $\dir(G)$ with positive probability. Since nonempty intersection is a 0-1 event, it follows that $\dir(G)\cap I_k\neq\emptyset$ with probability one, for all $k\ge1$. The intersection $\bigcap_{k\ge1}I_k$ contains a unique point $a$, which is the almost sure infimum of $\dir(G)$. Since $\dir(G)$ is closed it contains $a$. We similarly find a (deterministic) point $b$ so that $\dir(G)=[a,b]$ almost surely.
\end{proof}

\begin{lemma} \label{ccw-dense}
For a random coalescing geodesic $G$, being cw- or ccw-dense are 0-1 events. As a consequence, at least one of the following statements occur with probability one:
\begin{itemize}
\item $G$ is the cw-most geodesic with label $F(G)$;
\item $G$ is the ccw-most geodesic with label $F(G)$.
\end{itemize}
\end{lemma}

\begin{proof}
We focus on the ccw-property here as the other is treated analogously. Let $G$ be a random coalescing geodesic. By relabeling the geodesics if necessary, we may assume that $\Gamma_0\le G<\Gamma_1$.\footnote{Note that for any two random coalescing geodesics $G$ and $G'$, that the events $\{G=G'\}$ and $\{G<G'\}$ are 0-1 events follows from the coalescence property and the ergodic theorem.} Assume further that $G$ is ccw-dense with positive probability. We may then apply Proposition~\ref{cycle} to find $v\in\Gamma_1$ such that $G(v)$ is ccw-dense. Let $g=(v,v_1,v_2,\ldots)$ be a geodesic counterclockwise of $G(v)$ which coincides with $G(v)$ until some point $v_k$ on $G(v)\cap G(0)$. We notice that for $\ell\ge k$ the concatenation of $\geo(0,v_k)$ and $(v_k,v_{k+1},\ldots,v_\ell)$ is a geodesic. Hence, the concatenation of $\geo(0,v_k)$ and $(v_k,v_{k+1},\ldots)$ is an infinite geodesic, and we conclude that $G(0)$ is ccw-dense.

For the second statement of the corollary, recall that $\alpha:=F(G)$ is almost surely constant. By Lemma~\ref{lma:multiplicity} there are at most two geodesics in $\tree_0$ with label $\alpha$ almost surely. In case there is only one, then $G$ is both the cw- and ccw-most geodesic in $\tree_0$ with label $\alpha$. If there are two, then the cw-most has to be ccw-isolated and the ccw-most has to be ccw-dense. Which is the case for $G$ depends on whether it is almost surely ccw-dense or not.
\end{proof}

\subsection{Proof of Proposition~\ref{cycle}}

We will divide the proof into two cases, depending on whether the set of directions $\dir(\Gamma_i)$ for the four geodesics $\Gamma_i$ has width $\pi/2$ or not. The width cannot be larger than $\pi/2$, and if it indeed is as large as $\pi/2$ then the asymptotic shape is necessarily either a square or a diamond. The case when the width is strictly smaller than $\pi/2$ is easier as we in this case can find a half-plane $H$ which both $\Gamma_0$ and $\Gamma_1$ eventually move into. In the remaining case we do not know that this is true, and we will require some additional arguments.\\

\emph{Case 1: Width less than $\pi/2$}.
Consider first the case that the width is strictly smaller than $\pi/2$. We may in this case find two half-planes $H$ and $H'$, both containing the origin as a boundary point, and such that $\Gamma_0$ and $\Gamma_1$ visits the complement of $H\cap H'$ at most finitely many times almost surely. Fix $m\ge1$ and let $B_m(u)$ be the event that $\Gamma_0(u)$ and $\Gamma_1(u)$ visits at most $m$ points in $(H\cap H')^c+u$.\footnote{Here and below $S+u$ denotes the translate of the set $S$ along the vector $u$.} We may make the probability of $B_m(u)$ as close to 1 as we wish by increasing $m$ if necessary. In particular, there exist $\eps>0$ and $m\ge1$ such that
$$
\P\big(A(u)\cap B_m(u)\big)>\eps.
$$

According to the ergodic theorem we may find a density of sites in the symmetric difference $H\Delta H'$ for which $A(u)\cap B_m(u)$ occurs. Since $\Gamma_0$ and $\Gamma_1$ visits $(H\cap H')^c$ at most finitely many times, almost surely, we may find $u$ and $v$ in $H\Delta H'$, sufficiently far from the origin, such that (see Figure~\ref{fig:geom2}):
\begin{itemize}
\item $\Gamma_0(u)$ and $\Gamma_1(u)$ intersect $\Gamma_0(0)$ and does not contain the origin ccw between them;
\item $\Gamma_0(v)$ and $\Gamma_1(v)$ intersect $\Gamma_1(0)$ and does not contain the origin ccw between them;
\item there exists a geodesic $g\in\mathscr{F}(u)$ ccw between $\Gamma_0(u)$ and $\Gamma_1(u)$ with property $I$;
\item there exists a geodesic $g'\in\mathscr{F}(v)$ ccw between $\Gamma_0(v)$ and $\Gamma_1(v)$ with property $I$.
\end{itemize}
\begin{figure}[htbp]
\begin{center}
\includegraphics{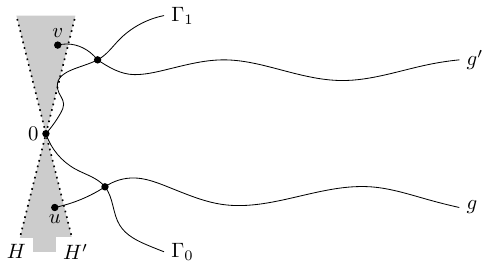}
\end{center}
\caption{$\Gamma_0$ and $\Gamma_1$ leave $H\cap H'$ at most finitely many times. The symmetric difference $H\Delta H'$ has been shaded.}
\label{fig:geom2}
\end{figure}
The geodesics $g$ and $g'$ also intersect $\Gamma_0$ and $\Gamma_1$ respectively, and their subpaths, from these intersections and onwards, are contained between $\Gamma_0$ and $\Gamma_1$. We have thus found $u'\in\Gamma_0$ and $v'\in\Gamma_1$ and geodesics in $\tree_u$ and $\tree_v$ with the required properties.\\

\emph{Case 2: Width equal to $\pi/2$}.
We proceed with the proof in the second case, where the width of the sets of directions of $\Gamma_0$ and $\Gamma_1$ span an angle $\pi/2$. In this case the asymptotic shape is necessarily either a square or a diamond. In either case the proof is the same, so we assume in the following that the asymptotic shape is a diamond, and hence strictly convex in the coordinate directions. By rotational invariance, we may further assume that $\dir(\Gamma_0)$ and $\dir(\Gamma_1)$ intersect at ${\bf e}_1$.

The difficulty that arises in the case the set of directions has width $\pi/2$ is that we cannot guarantee that $\Gamma_0$ and $\Gamma_1$ are both contained in a half-plane. We may nevertheless pick two half-planes $H$ and $H'$ such that $\Gamma_0$ and $\Gamma_1$ visits the complement of $H\cup H'$ at most finitely many times with probability 1. Moreover, we may assume that the half-planes are chosen so that their intersection is a sector symmetric around the first coordinate axis and spans an angle at least $3\pi/4$.

If we choose $m\ge1$ large and let $B_m(u)$ denote the event that $\Gamma_0(u)$ and $\Gamma_1(u)$ visits $(H\cup H')^c+u$ at most $m$ times, then we may again appeal to the ergodic theorem to obtain a density of points in $H\cap H'$ for which $A(u)\cap B_m(u)$ occurs.
By the choice of the half-planes, if $A(u)\cap B_m(u)$ occurs and $u$ is at distance at least $m$ from the origin, then the origin is not contained in the region counterclockwise between $\Gamma_0(u)$ to $\Gamma_1(u)$; compare with Figure~\ref{fig:geom2}. In order to conclude that there exist $u$ and $v$ such that $\Gamma_0(u)$ and $\Gamma_1(u)$, and $\Gamma_0(v)$ and $\Gamma_1(v)$, intersect $\Gamma_0$ and $\Gamma_1$ respectively, 
we need to control the structure of $\Gamma_0$ and $\Gamma_1$ further.

\begin{claim}\label{claim2}
Assume that the asymptotic shape is not flat in the first coordinate direction. Then, there exists an almost surely finite $N\ge1$ such that if $\Gamma_i$, for some $i$, visits $[-\eps n,\eps n]^2+n{\bf e}_1$ for some $n\ge N$, then it does not visit the intersection of $(H\cap H')^c+\eps n{\bf e}_1$ and $[n/2,n/2]^2\setminus[-10\eps n,10\eps n]^2$.
\end{claim}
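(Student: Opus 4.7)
The plan is to combine the extended shape theorem (Proposition~\ref{extended shape}) with the strict convexity of $\partial\ball$ at $\mathbf{e}_1$ guaranteed by the hypothesis, in order to show that a geodesic $\Gamma_i$ reaching the small box near $n\mathbf{e}_1$ cannot afford a macroscopic detour through a point $w$ whose distance from the segment $[0, n\mathbf{e}_1]$ is of order $\eps n$. The structure is: (i)~use the shape theorem to translate the geodesic-additivity of $T$ along $\Gamma_i$ into an upper bound on a $\mu$-triangle-inequality defect; (ii)~use strict convexity at $\mathbf{e}_1$ to produce a matching lower bound on the same defect, yielding a contradiction for appropriate parameters.

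Concretely, I would first fix a small $\delta > 0$ (calibrated below to a modulus of strict convexity of $\mu$ at $\mathbf{e}_1$) and apply Proposition~\ref{extended shape} with parameter $\delta$ to obtain an almost surely finite $N \geq 1$ such that $|T(u, u+y) - \mu(y)| \leq \delta\max(|u|, |y|)$ whenever $|u| \geq N$ and $y \in \Z^2$. Suppose, for contradiction, that for some $n \geq N/\eps$ the geodesic $\Gamma_i$ visits both a point $z \in [-\eps n, \eps n]^2 + n\mathbf{e}_1$ and a point $w$ in the prescribed set (reading the inner box as the typo-corrected $[-n/2, n/2]^2$). Since $\Gamma_i$ is a geodesic from the origin, $T$ is additive along any two subsegments; reading off the order in which $0$, $w$, $z$ appear on $\Gamma_i$ and combining that additivity with three applications of the shape/extended-shape estimate yields
\begin{equation*}
\mu(w) + \mu(z-w) - \mu(z) \;\leq\; C\delta n
\end{equation*}
for a constant $C$ depending only on the Euclidean-to-$\mu$ comparison.

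The main step is the matching lower bound $\mu(w) + \mu(z-w) - \mu(z) \geq c\eps n$ for some $c > 0$. This expresses that $\mathbf{e}_1$ is an extreme point of $\ball$: the hypothesis that $\partial\ball$ has no flat segment at $\mathbf{e}_1$ means the $\mu$-triangle-inequality defect grows at least linearly in the Euclidean distance from $w$ to the segment $[0, z]$. The annulus constraint $\|w\|_\infty \in [10\eps n, n/2]$ together with the shifted half-plane constraint $w - \eps n\mathbf{e}_1 \in (H \cap H')^c$ forces $w$ to lie at Euclidean distance of order $\eps n$ from $[0, z]$: the half-planes $H$ and $H'$ were chosen precisely so that $H \cap H'$ surrounds a forward cone along the $\mathbf{e}_1$-axis, so after the $\eps n\mathbf{e}_1$ shift any $w$ outside $H \cap H'$ but inside the annulus must be either $\Omega(\eps n)$ above or below the $\mathbf{e}_1$-axis, or else beyond $n\mathbf{e}_1$ by $\Omega(\eps n)$. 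Choosing $\delta$ small enough that $C\delta < c\eps$ then contradicts the upper bound. The step I expect to be most delicate is the quantitative strict-convexity estimate: promoting the qualitative hypothesis on $\partial\ball$ at $\mathbf{e}_1$ to a linear-in-$\eps n$ lower bound on $\mu(w) + \mu(z-w) - \mu(z)$ will require a careful case analysis of the position of $w$ relative to $[0, z]$, using the explicit form of $H$ and $H'$ from the proof of Proposition~\ref{cycle}.
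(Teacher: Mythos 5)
Your proposal is essentially the paper's own proof: both invoke Proposition~\ref{extended shape} (via the event $C(\delta,z)$) and exploit strict convexity of $\partial\ball$ at $\mathbf{e}_1$ — the meaning of the ``not flat'' hypothesis — to show a geodesic from the origin that reaches the far box near $n\mathbf{e}_1$ cannot afford a detour through the shifted, annular bad region. The cast of characters (shape-theorem error $\delta n$, detour cost $\gtrsim c\eps n$, and the geometry of $H$, $H'$) is identical, and you are right that the paper's inner box $[n/2,n/2]^2$ is a typo for $[-n/2,n/2]^2$.

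One point to tighten: your single displayed inequality
$\mu(w) + \mu(z-w) - \mu(z) \le C\delta n$
is only what geodesic additivity gives you in the ordering $0,w,z$ along $\Gamma_i$. You write ``reading off the order in which $0,w,z$ appear'' but then treat only one case. If instead $z$ precedes $w$, additivity gives $\mu(z) + \mu(z-w) - \mu(w) \le C\delta n$, a different expression. Fortunately this case is easier: since $|z| \ge (1-\eps)n$ while $\|w\|_\infty \le n/2$, one has $\mu(z)-\mu(w) \gtrsim n$ (no strict convexity needed), so the left side is already $\Omega(n)$. The paper sidesteps the split by stating two separate strict $T$-inequalities, $T(z,0)<T(z,y')$ for $y'$ in the (slightly enlarged) bad region and $T(0,y)<T(z,y)$, which between them yield the contradiction $T(0,y)<0$ or $T(0,z)<0$ in the two orderings; but the underlying convexity input is the same. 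Adding one sentence dispatching the reversed ordering would make your argument complete.
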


\begin{proof}[Proof of claim]
Let $C(\eps,z)$ be the event from the proof of Proposition~\ref{extended shape}. Given $\delta>0$, let $N\ge1$ be large so that both $|T(0,z)-\mu(z)|\le\delta|z|$ and $C(\delta,z)$ hold for $|z|\ge N$. If $\delta>0$ is small enough, then for $n\ge N$ we have
$$
T(z,0)<T\big(z,[(H\cap H')^c+\eps n{\bf e}_1]\setminus[-\eps n,\eps n]^2\big)\quad\text{for all }z\in [-\eps n,\eps n]^2+n{\bf e}_1,
$$
and $T(0,y)<T(z,y)$ for all $y$ in the intersection of $(H\cap H')^c+\eps n{\bf e}_1$ and $[n/2,n/2]^2\setminus[-10\eps n,10\eps n]^2$. In particular, no geodesic in $\tree_0$ that visits $[-\eps n,\eps n]^2+n{\bf e}_1$ can also visit the intersection of $(H\cap H')^c+\eps n{\bf e}_1$ and $[n/2,n/2]^2\setminus[-10\eps n,10\eps n]^2$, when $n\ge N$.
\end{proof}

By assumption, there will be arbitrarily large $n$ for which $\Gamma_0$ will visit $[-\eps n,\eps n]^2+n{\bf e}_1$, and similarly for $\Gamma_1$. For these values of $n$ (except for possibly finitely many) Claim~\ref{claim2} says that $\Gamma_0$ cannot visit the intersection of $(H\cap H')^c+\eps n{\bf e}_1$ and $[n/2,n/2]^2\setminus[-10\eps n,10\eps n]^2$. According to the ergodic theorem, each such region will have to contain a density of sites $u$ for which $A(u)\cap B_m(u)$ occurs. That is, Claim~\ref{claim2} and the ergodic theorem together give the existence of points $u$ and $v$ in $H\cap H'$ at distance at least $m$ from the origin, that are not contained counterclockwise between $\Gamma_0$ and $\Gamma_1$, for which $A(u)\cap B_m(u)$ and
$A(v)\cap B_m(v)$ occur. We then find geodesics $g\in\tree_u$ and $g'\in\tree_v$ with property $I$, intersecting $\Gamma_0$ and $\Gamma_1$ respectively. This completes the proof of Proposition~\ref{cycle}.

\section{Labels and non-crossing geodesics}
\label{label of sections}

We examine in this section the ergodic properties of the shift invariant labeling constructed in Section~\ref{cola}. We shall also pay special interest in the clockwise- and counterclockwise-most geodesics with a given label. The reason for this is that every random coalescing geodesic is of this form (recall Lemma~\ref{ccw-dense}), which makes makes them natural candidates for constructing coalescing geodesics. We introduce the notation
\begin{equation*}
\begin{aligned}
\labels&:=\{\alpha\in[0,1]:F(g)=\alpha\text{ for some }g\in\tree_0\},\\
\labels_\star&:=\{\alpha\in[0,1]:\P(\alpha\in\labels)=1\}.
\end{aligned}
\end{equation*}
Since random coalescing geodesics exist and have constant label, the set $\labels_\star$ is non-empty.

\begin{theorem}\label{thm:labels}
The set $\labels_\star$ is closed and $\P(\labels=\labels_\star)=1$.
Moreover, for every $\alpha\in\labels_\star$
\begin{equation*}
G_\alpha^{cw}:=\inf\{g\in\tree_0:F(g)\ge\alpha\}\quad\text{and}\quad G_\alpha^{ccw}:=\sup\{g\in\tree_0:F(g)\le\alpha\}
\end{equation*}
define random non-crossing geodesics whose labels almost surely equal $\alpha$.
\end{theorem}

The proof of the theorem will to a large extent exploit the monotonicity of the labeling together with the geometric argument described in the previous section to imply the existence of geodesics with certain properties. In addition we shall require a result, described next, that excludes the occurrence of certain configurations of geodesics.

\subsection{A weight continuity argument}\label{sec:lowering}

Below we present a key result that will be used to rule out certain configurations of geodesics that correspond to events in the probability space with empty interior. For instance, we shall apply the result in order to prevent geodesics from crossing. The statement is more general than that, however, and is phrased in terms of the order of limits along certain geodesics.

\begin{prop} \label{neymar}
For almost every $\omega\in\Omega_1$ we have for every $v\in\Z^2$ and every geodesic $g=(v_1,v_2,\ldots)$ in $\tree_v$ for which the limit $g^*:=\lim_{k\to\infty}\Geo(0,v_k)$ exists that
\begin{itemize}
\item if $g$ is cw-isolated, then $g^*\le g$;
\item if $g$ is ccw-isolated, then $g^*\ge g$.
\end{itemize}
\end{prop}

Before we proceed to the proof of the proposition, let us describe a typical setting in which it will be applied. The proposition shows that, almost surely, for any pair of neighboring geodesics $g'<g''$ in $\tree_0$ we cannot find a vertex $v$ on either of the two and a geodesic in $\tree_v$ that lies strictly counterclockwise between $g'$ and $g''$. If we could, for $v\in g'$ say, then we could find a cw-isolated geodesic $g=(v_1,v_2,\ldots)$ of this kind, for which $\lim_{k\to\infty}\Geo(0,v_k)$ has to equal $g''$, contradicting Proposition~\ref{neymar}.

Moving on to the proof, we note that the two statements in the proposition follow from one another due to symmetry. It will therefore suffice to prove the former. We mention that an argument similar to the one we present below has in parallel been devised by Nakajima~\cite{nakajima}. While our argument will exploit the planarity of $\Z^2$, the argument in~\cite{nakajima} applies also in higher dimensions.

Given $v\in\Z^2$, let $P_1$ be a finite path starting at $v$, let $P_2$ be a finite path starting at the origin which is disjoint from $P_1$, and let $P_3$ be a finite path connecting some point $x\in P_2$ to some point $y\in P_1$. Subject to their existence, we shall let $g_1=(v_1,v_2,\ldots)$ denote the clockwise-most geodesic in $\tree_v$ that contains $P_1$, and let $g_2$ denote the limit $\lim_{k\to\infty}\Geo(0,v_k)$. We also define a (possibly suboptimal) path to far out vertices on $g_1$ as follows: Let $\gamma_k$ be the concatenation of the segment of $P_2$ from the origin to $x$, $P_3$ from $x$ to $y$ and the segment of $g_1$ from $y$ to $v_k$.
For any $\delta>0$ let $A=A(v,P_1,P_2,P_3,\delta)$ denote the event that (see Figure~\ref{fig:rng}, left)
\begin{itemize}
\item there exists $g\in\tree_v$ that contains $P_1$, and hence $g_1=(v_1,v_2,\ldots)$ is well-defined;
\item the limit $g_2:=\lim_{k\to\infty}\Geo(0,v_k)$ exists, contains $P_2$ and satisfies $g_2>g_1$;
\item $T(\gamma_k)-T(0,v_k)<\delta/2$ for all large $k$.
\end{itemize}
\begin{figure}[htbp]
\begin{center}
\includegraphics{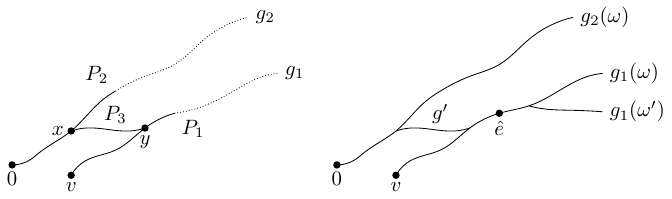}
\end{center}
\caption{Display of the event $A$ (left) and the premise described in the proof of Lemma~\ref{lma:macklemore} (right).}
\label{fig:rng}
\end{figure}
As we shall see next, if Proposition~\ref{neymar} were false, then $A$ would occur with positive probability for some choice of parameters.

\begin{lemma}\label{pre-plie}
Assume that with positive probability there exists a cw-isolated geodesic $g=(v_1,v_2,\ldots)$ such that the limit $g^*:=\lim_{k\to\infty}\Geo(0,v_k)$ exists and satisfies $g^*>g$. Then, for every $\delta>0$, there exist $v$ and $P_1$, $P_2$, $P_3$ such that
$$\P\big(A(v,P_1,P_2,P_3,\delta)\big)>0.$$
\end{lemma}

\begin{proof}
Denote by $A_0$ the event that there exists a cw-isolated geodesic $g=(v_1,v_2,\ldots)$ for which the limit $g^*:=\lim_{k\to\infty}\Geo(0,v_k)$ exists and lies strictly counterclockwise of $g$, and assume that $\P(A_0)>0$. On the event $A_0$, let $\gamma'_k$ denote the path obtained by concatenating $\Geo(0,v)$ and the segment of $g$ from $v$ to $v_k$. Since $g$ is a geodesic
\begin{equation}\label{eq:gamma_alpha}
T(\gamma_k')-T(0,v_k)
\end{equation}
is non-decreasing. It is bounded from above by $T(0,v)$, and hence convergent. Thus for any $\delta>0$ we can find $k$ such that~\eqref{eq:gamma_alpha} is within $\delta/2$ of its limit. It follows, in particular, that for all $\ell\ge k$ we have
$$
T(0,v_k)+T(v_k,v_\ell)-T(0,v_\ell)<\delta/2.
$$
Since $g$ is cw-isolated, countable additivity has that there exists a choice of $v$ and $P_1$, $P_2$, $P_3$ such that with positive probability $A_0$ occurs and $g$ is the cw-most geodesic in $\tree_v$ that contains $P_1$, $g^*$ contains $P_2$, and $\Geo(0,v_k)$ coincides with the path obtained by concatenating a segment of $P_2$ (from the origin to its intersection with $P_3$) with $P_3$. That is, $A(v,P_1,P_2,P_3,\delta)$ has positive probability to occur.
\end{proof}

The rest of the argument will consist in showing that by lowering the weight of an edge along $g_1$ by $\delta$, we can make the event $A$ go from occurring to not occurring. Since we will be able to do this regardless of the state of the weights in any finite box around the origin, the Lebesgue density theorem will imply that $A$ occurs with probability zero.

\begin{lemma} \label{plie}
Suppose that $\hat e$ is an edge and $\delta>0$. Then, for any $\omega,\omega' \in \Omega_1$ such that
\begin{itemize}
\item $\omega'_e=\omega_e$ for all edges $e \neq \hat e$; and
\item $\omega'_{\hat e}< \omega_{\hat e}-\delta$;
\end{itemize}
we have that any (finite or infinite) geodesic in $\omega'$ is either a geodesic in $\omega$ or it contains $\hat e$. 
\end{lemma}

\begin{proof} 
Fix vertices $x$ and $y$. Break up all paths from $x$ to $y$ into those that contain $\hat e$ and those that don't. If the geodesic from $x$ to $y$ in $\omega'$ falls in the first category, then it contains $\hat e$. If not, then it must be a geodesic in $\omega$ too.
\end{proof}

\begin{lemma}\label{lma:macklemore}
Fix $v$, $P_1$, $P_2$, $P_3$ and $\delta$.
Suppose $\omega \in  A(v,P_1,P_2,P_3,\delta)$
and that $\hat e$ is an edge in $g_1 \setminus P_1$. Then for any
point $\omega'$ such that
\begin{itemize}
\item $\omega'_e=\omega_e$ for all edges $e \neq \hat e$; and
\item $\omega'_{\hat e}< \omega_{\hat e}-\delta$;
\end{itemize}
we have $\omega' \not \in A(v,P_1,P_2,P_3,\delta)$.
\end{lemma}

\begin{proof}
The proof is by contradiction. So, suppose both $\omega$ and $\omega'$ belong to $A$. The configurations $\omega$ and $\omega'$ differ only at the edge $\hat e$, where the latter of the two is lower. Hence, since $\hat e$ is contained in $g_1(\omega)$, $g_1(\omega)$ is a geodesic also in $\omega'$ (although possibly different from $g_1(\omega')$). Let $(v_1,v_2,\ldots)$ be an enumeration of the vertices in $g_1(\omega)$ and consider, in $\omega'$, the finite geodesics $\Geo(0,v_k)$. By Lemma~\ref{plie}, either $\Geo(0,v_k)$ contains $\hat e$, or it does not but coincides with the geodesic from the origin to $v_k$ in $\omega$. For large $k$ the latter cannot happen, since $\gamma_k$ would be a faster path to $v_k$. It follows that the limit $g':=\lim_{k\to\infty}\Geo(0,v_k)$ exists (in $\omega'$), contains $\hat e$, and thus coalesces with $g_1(\omega)$ (see Figure~\ref{fig:rng}, right).

There is no restriction to assume that $g_1(\omega')$ is contained counterclockwise between $\Gamma_0(\omega')$ and $\Gamma_1(\omega')$, and hence that $g_1(\omega')\le g_1(\omega)\le g'$. Since $g_2(\omega')$ and $g'$ are obtained as limits from the origin to far out vertices of $g_1(\omega')$ and $g_1(\omega)$ respectively, and since $g_1(\omega')\le g_1(\omega)$, we must have $g_2(\omega')\le g'$. Consequently, since $\omega'\in A$ and thus $g_2(\omega')>g_1(\omega')$, we have $g_2(\omega')$ sandwiched counterclockwise between $g_1(\omega')$ and $g'$. Since both $g_1(\omega')$ and $g'$ contain $\hat e$ we also have that $g_2(\omega')$ contains $\hat e$. Then $g_2(\omega')$ and $g_1(\omega')$ coalesce, contradicting the assumption that $g_2(\omega')>g_1(\omega')$.
\end{proof}

\begin{lemma} \label{brassica}
Let $\mathcal{F}_n$ denote the sigma-algebra generated by the restriction of $\omega$ to edges in $[-n,n]^2$. There exists $\eps,\delta>0$ such that for any $v$, $P_1$, $P_2$ and $P_3$ we have for all large $n$
$$
\P\Big(\P\big(A(v,P_1,P_2,P_3,\delta) \big| \mathcal{F}_n\big)>1-\eps\Big)\le\frac12\P\big(A(v,P_1,P_2,P_3,\delta)\big).
$$
\end{lemma}

\begin{proof}
We may assume that $v$ and $P_1$, $P_2$, $P_3$ are all contained in $[-n,n]^2$. Let $L$, $t$, $\delta$ and $\gamma$ be as defined in assumption {\bf A2} (vi)-(vii). As usual we write $A=A(v,P_1,P_2,P_3,\delta)$.
We introduce the events
\begin{align*}
C_1&:=\big\{|\geo(x,y)|\le Ln\text{ for all }x,y\in[-2n,2n]^2\big\},\\
C_2&:=\big\{|\{e\in\geo(x,y):\omega_e> t\}|>\gamma|\geo(x,y)|\text{ for all }x\in\partial[-n,n]^2,y\in\partial[-2n,2n]^2\big\}.
\end{align*}
Moreover, let $C_1'$ denote the event obtained from $C_1$ by replacing the constant $L$ by $3L$. Note that by {\bf A2} (vi)-(vii) the probability of the events above tends to 1 as $n\to\infty$. (Recall that (vi)-(vii) hold also for any measure satisfying {\bf A1}.)

Let $E_n$ denote the set of edges with an endpoint in $[-2n,2n]^2\setminus[-n,n]^2$. We claim that for every $e\in E_n$ we have almost surely
\begin{equation}\label{eq:twodistr}
\delta\,\P\big(A\cap C_1\cap\{\omega_e\ge t\}\cap\{e\in g_1\}\big|\mathcal{F}_n\big)\le\P(A^c\cap C_1'\cap\{e\in g_1\}|\mathcal{F}_n).
\end{equation}
To see this, let $(\omega,\omega')$ be obtained by first sampling $\omega=\omega'$ on $\{e\}^c=\mathcal{E}^2\setminus\{e\}$, and then sampling $\omega_e$ and $\omega'_e$ independently according to $\P(\,\cdot\,|\,\omega_{\{e\}^c})$. Suppose that $\omega'_e<\omega_e-\delta$. Then, if $\omega\in C_1$ and that there exist $x$ and $y$ such that the geodesic between $x$ and $y$ differ in $\omega$ and $\omega'$, by Lemma~\ref{plie} the geodesic in $\omega'$ must contain $e$. Since the segment from $x$ to $e$, and the segment from $e$ to $y$, has length at most $Ln$, we conclude that $\omega'\in C_1'$. Moreover, on the event that $\omega\in A$, $\omega_e'<\omega_e-\delta$ and $e\in g_1(\omega)$, $g_1(\omega)$ is a geodesic also in $\omega'$. Hence $g_1(\omega')$ exists, and by Lemma~\ref{plie} has to contain $e$. Thus, using Lemma~\ref{lma:macklemore}, we obtain
$$
\{\omega\in A\cap C_1\}\cap\{\omega_e\ge t\}\cap\{e\in g_1(\omega)\}\cap\{\omega'_e<t-\delta\}\subseteq\{\omega'\in A^c\cap C'_1\}\cap\{e\in g_1(\omega')\}.
$$
Due to the relation between $\omega$ and $\omega'$ it follows that
$$
\P\big(A\cap C_1\cap\{\omega_e\ge t\}\cap\{e\in g_1\}\big|\,\omega_{\{e\}^c}\big)\P\big(\omega_e<t-\delta\big|\,\omega_{\{e\}^c}\big)\le\P\big(A^c\cap C_1'\cap\{e\in g_1\}\big|\,\omega_{\{e\}^c}\big),
$$
which by uniform downward finite energy gives~\eqref{eq:twodistr}.

On the event $C_2$, the number of edges in $g_1\cap E_n$ with weight at least $t$ is at least $\gamma n$, so
\begin{equation*}
\begin{aligned}
\P\big(A\cap C_1\cap C_2\big|\mathcal{F}_n\big)\,&\le\,\frac{1}{\gamma n}\E\big[|\{e\in E_n:\omega_e\ge t,e\in g_1\}|{\bf 1}_{A\cap C_1}\big|\mathcal{F}_n\big]\\
&=\,\frac{1}{\gamma n}\sum_{e\in E_n}\P\big(A\cap C_1\cap\{\omega_e\ge t\}\cap\{e\in g_1\}\big|\mathcal{F}_n\big).
\end{aligned}
\end{equation*}
By~\eqref{eq:twodistr} and the fact that on $C_1'$ the geodesic $g_1$ visits at most $3Ln$ edges in $E_n$, we obtain the further upper bound
$$
\frac{1}{\delta\gamma n}\sum_{e\in E_n}\P\big(A^c\cap C_1'\cap\{e\in g_1\}\big|\mathcal{F}_n\big)\le\frac{3L}{\delta\gamma}\P(A^c|\mathcal{F}_n).
$$

In conclusion, setting $M=3L/\delta\gamma$ we get that with probability one
$$
\P(A|\mathcal{F}_n)\le\P(C_1^c|\mathcal{F}_n)+\P(C_2^c|\mathcal{F}_n)+M\P(A^c|\mathcal{F}_n),
$$
implying that
$$
\P(A|\mathcal{F}_n)\le\frac{M+\P(C_1^c|\mathcal{F}_n)+\P(C_2^c|\mathcal{F}_n)}{M+1}.
$$
If $\P(A)=0$ the lemma is trivial, and if $\P(A)>0$ we obtain for large $n$ that $\P(C_1^c|\mathcal{F}_n)<1/4$ and $\P(C_2^c|\mathcal{F}_n)<1/4$ with probability at least $1-\P(A)/2$, as required.
\end{proof}

\begin{proof}[Proof of Proposition~\ref{neymar}]
Proof by contradiction, so suppose that with positive probability there exists a cw-isolated geodesic $g=(v_1,v_2,\ldots)$ such that the limit $g^*:=\lim_{k\to\infty}\Geo(0,v_k)$ exists and satisfies $g^*>g$. Let $\eps,\delta>0$ be as in Lemma~\ref{brassica}. By Lemma~\ref{pre-plie} there exist $v$ and $P_1$, $P_2$, $P_3$ such that 
$$
\P\big(A(v,P_1,P_2,P_3,\delta)\big)>0.
$$
By the Lebesgue density theorem there exist arbitrarily large $n$ for which
$$
\P\big(A(v,P_1,P_2,P_3,\delta) \big| \mathcal{F}_n\big)>1-\eps
$$
with probability at least $\frac23\P(A(v,P_1,P_2,P_3,\delta))$. This contradicts Lemma~\ref{brassica}.
\end{proof}

\subsection{Label lemmata}

In order to prove Theorem~\ref{thm:labels} we proceed with a series of lemmas, several of which are based on combinations of Propositions~\ref{cycle} and~\ref{neymar}.

\begin{lemma} \label{copa}
For any $\alpha \in [0,1]$ we have $\P(\alpha \in \labels)\in \{0,1\}$.
\end{lemma}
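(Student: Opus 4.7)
The plan is to prove the statement by a contrapositive/amplification argument: assume $\P(\alpha \in \labels) > 0$ and upgrade this to a probability one statement by constructing, almost surely, a geodesic in $\tree_0$ of label $\alpha$. First I would dispose of the easy case in which $\alpha \in \{F(\Gamma_0), F(\Gamma_1), F(\Gamma_2), F(\Gamma_3)\}$: since each $\Gamma_i$ is a random coalescing geodesic starting at the origin, its label is almost surely constant (as noted after Proposition~\ref{label props}), so any $\alpha$ of this form trivially lies in $\labels$ with probability one. Assume therefore that $\alpha$ is distinct from all four of these constants.

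By Proposition~\ref{label props}(a) the labeling $F$ is monotone along the cyclic total ordering, so any geodesic with label $\alpha$ must lie strictly inside one of the four open sectors bounded by consecutive $\Gamma_i$'s. Since $\P(\alpha \in \labels) > 0$, at least one of these four sectors receives a label-$\alpha$ geodesic with positive probability, and by relabeling the $\Gamma_i$ (they are obtained from one another by right-angle rotation) we may assume it is the sector between $\Gamma_0$ and $\Gamma_1$. Define the translation-invariant subfamily $\mathscr{G}_\alpha(v) \subseteq \tree_v$ of label-$\alpha$ geodesics lying strictly between $\Gamma_0(v)$ and $\Gamma_1(v)$, and let $A(v) = \{\mathscr{G}_\alpha(v) \neq \varnothing\}$, so that $\P(A(0)) > 0$.

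Now I would apply Proposition~\ref{cycle} with this $\mathscr{G}$ and the degenerate interval $I = \{\alpha\}$. This yields, almost surely, vertices $u \in \Gamma_0$ and $v \in \Gamma_1$ on which $A(u)$ and $A(v)$ occur. Pick geodesics $g \in \mathscr{G}_\alpha(u)$ and $g' \in \mathscr{G}_\alpha(v)$; concretely, take $g$ to be the \emph{clockwise-most} label-$\alpha$ geodesic in $\tree_u$ and $g'$ to be the \emph{counterclockwise-most} label-$\alpha$ geodesic in $\tree_v$ (both exist by Lemma~\ref{letter}, which limits the multiplicity of labels). With these extremal choices $g$ sits just above $\Gamma_0(u)$ and $g'$ just below $\Gamma_1(v)$ in the ordering, so $g \leq g'$. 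Running the sandwich construction from Section~\ref{geometric}---taking a subsequential limit of $\geo(0, w_k)$ along points $w_k$ escaping on $g$---produces a geodesic $h \in \tree_0$ confined between $g$ and $g'$, i.e.\ $g \leq h \leq g'$. Monotonicity of $F$ (Proposition~\ref{label props}(a)) then forces $F(g) \leq F(h) \leq F(g')$, so $F(h) = \alpha$ and $\alpha \in \labels$ almost surely.

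The main obstacle I expect is justifying that the extremal choice of $g$ and $g'$ does put them in the order $g \leq g'$, and that the sandwich geodesic $h$ genuinely remains within the sector between $\Gamma_0$ and $\Gamma_1$ rather than escaping through one of them. Both points reduce to the geometric shielding furnished by Proposition~\ref{cycle}: the choices of $u$ and $v$ produced by that proposition are made so that $\Gamma_0(u), \Gamma_1(u)$ and $\Gamma_0(v), \Gamma_1(v)$ intersect the reference geodesics $\Gamma_0, \Gamma_1$ cleanly, and this is exactly what confines the limiting geodesic $h$ to the desired wedge. Once this geometric confinement is in place, the monotonicity of $F$ does the rest.
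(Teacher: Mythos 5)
Your overall strategy matches the paper's: assume $\P(\alpha\in\labels)>0$, dispose of the $\Gamma_i$ case, reduce to one sector, apply Proposition~\ref{cycle} to find label-$\alpha$ geodesics $g\in\tree_u$ ($u\in\Gamma_0$) and $g'\in\tree_v$ ($v\in\Gamma_1$), and sandwich a limit $h\in\tree_0$ between them. But there is a genuine gap: the sandwich step requires knowing that $g$ and $g'$ do not cross, and nothing in your argument establishes this. Your ``extremal choices'' (cw-most at $u$, ccw-most at $v$) yield $g\le g'$ in the total order, but ordering comparability does not preclude crossing --- two geodesics can share a vertex, diverge with infinite symmetric difference, and still be comparable. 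If $g$ and $g'$ cross at some vertex $z$, then beyond $z$ the wedge ``ccw between $g$ and $g'$'' is no longer a clean shielding region, and the limit $h=\lim_k\geo(0,u_k)$ need not have $g\le h\le g'$ anymore, so the conclusion $F(h)=\alpha$ does not follow.

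This is precisely why the paper's proof splits into two cases. If $\alpha\notin\double_\star$, then almost surely no vertex supports two distinct label-$\alpha$ geodesics; but a crossing of $g$ and $g'$ at $z$ would create exactly such a pair in $\tree_z$ (both tails have label $\alpha$), so non-crossing follows and your argument then works verbatim. If $\alpha\in\double_\star$, two label-$\alpha$ geodesics genuinely can cross, and the paper instead observes (via Lemma~\ref{letter}(d) and Proposition~\ref{label props}(c)) that the ccw-most label-$\alpha$ geodesic is a cw-limit, forcing $\P\big(\labels\cap(\alpha,\alpha+\eps)\neq\varnothing\big)>0$ for every $\eps>0$. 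Applying Proposition~\ref{cycle} twice then produces $g$ at $u$ with label $\alpha$ and $g'$ at $v$ with label in $(\alpha,\alpha+\eps)$; the strict label inequality forces $g$ to remain cw of $g'$ (a side-swap would violate monotonicity of $F$), so the sandwich yields $F(h)\in[\alpha,\alpha+\eps]$, and letting $\eps\to0$ gives $F(h)=\alpha$. This second case is what your proposal omits, and it cannot be recovered merely by making extremal choices of $g$ and $g'$.
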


\begin{proof}
First of all we recall that a random coalescing geodesic has constant label, so if $\alpha$ coincides with the label of one of the $\Gamma_i$-geodesics, then there is nothing more to prove. It will thus suffice to consider the case that $\P(\alpha\in\labels)>0$ for some $\alpha\in(F(\Gamma_0),F(\Gamma_1))$.

By Proposition~\ref{cycle} we may find $u\in\Gamma_0$ and a geodesic $g\in\tree_u$ with label $\alpha$. Let
\begin{equation}\label{g_minus}
g_{\alpha-}:=\sup\{g\in\tree_0:F(g)<\alpha\}.
\end{equation}
Another application of Proposition~\ref{cycle} finds $v\in\Gamma_1$ and a geodesic in $\tree_v$ with label $\alpha$. Since $g_{\alpha-}$ lies clockwise of this geodesic, it follows that $g_{\alpha-}$ has label at most $\alpha$. Then, either $g_{\alpha-}$ has label $\alpha$, in which case we are done, or, due to the consistency between the labeling and the ordering, $g_{\alpha-}$ and $g$ cross. Suppose the latter occurs and let $v$ denote their last common point. Let $e$ denote the edge out of $v$ used by $g$, and let $g'$ denote the clockwise-most geodesic in $\tree_v$ containing $e$. Let $(v_1,v_2,\ldots)$ be an enumeration of the sites on $g'$. Since $v$ is a common point of $g'$ and $g_{\alpha-}$ the limit $g^*:=\lim_{k\to\infty}\Geo(0,v_k)$ exists and satisfies $g^*\ge g'>g_{\alpha-}$. Since $g^*>g_{\alpha-}$ it has label at least $\alpha$. If its label exceeds $\alpha$, then $g^*>g'$. The latter contradicts Proposition~\ref{neymar}, so we conclude that there exists a geodesic in $\tree_0$ with label $\alpha$, almost surely.
\end{proof}

\begin{lemma}\label{lma:exists_label}
Let $I\subseteq[0,1]$ be an (open or closed) interval. Then 
$$\P(\labels\cap I\neq\emptyset)\in\{0,1\}.$$
\end{lemma}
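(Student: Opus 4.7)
The plan is to emulate the proof of Lemma~\ref{copa} with the interval $I$ in place of a single label. Assume $\P(\labels \cap I \neq \emptyset) > 0$; we aim to upgrade this to probability one. If $I$ contains the (almost surely constant) label $F(\Gamma_i)$ of some $\Gamma_i$, then $F(\Gamma_i) \in \labels$ almost surely and we are done. Otherwise the four values $F(\Gamma_0), \ldots, F(\Gamma_3)$ partition $[0,1]$ cyclically into four open arcs and $I$ lies inside one of them; after relabeling we may take $I \subseteq (F(\Gamma_0), F(\Gamma_1))$.

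Next, I would apply Proposition~\ref{cycle} to the translation-invariant family $\mathscr{G}(v) := \{g \in \tree_v : F(g) \in I\}$. By Proposition~\ref{label props}(a), every $g \in \mathscr{G}(v)$ satisfies $\Gamma_0(v) < g < \Gamma_1(v)$ in the total ordering, so $\mathscr{G}(v)$ sits counterclockwise between $\Gamma_0(v)$ and $\Gamma_1(v)$. The event $A(v)$ of Proposition~\ref{cycle} is precisely $\{\labels(v) \cap I \neq \emptyset\}$, so our hypothesis reads $\P(A(0)) > 0$; the proposition then produces, almost surely, a vertex $u \in \Gamma_0$ with some $g \in \tree_u$ of label in $I$, together with a vertex $v \in \Gamma_1$ with some $g' \in \tree_v$ of label in $I$.

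Finally, writing $g = (u, u_1, u_2, \ldots)$, I would extract along a subsequence a limit $h = \lim_k \geo(0, u_k) \in \tree_0$. As in Case~2 of Lemma~\ref{copa}, the geometric setup of Figure~\ref{fig:geom1}, in which $g$ and $g'$ bound a wedge trapping the origin, sandwiches $h$ between $g$ and $g'$; hence $F(h)$ lies between $F(g)$ and $F(g')$. Since $I$ is an interval containing both endpoints, $F(h) \in I$, which forces $\P(\labels \cap I \neq \emptyset) = 1$.

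The main obstacle is exactly the sandwich step: ensuring that the subsequential limit $h$ does not escape the region bounded by $g$ and $g'$ in the total ordering. This is the same technical point already invoked in Lemma~\ref{copa}, and once granted there, the interval version follows immediately from convexity of $I$: any value squeezed between $F(g)$ and $F(g')$ in $I$ is again in $I$.
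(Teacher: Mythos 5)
Your plan follows the paper's template, but it has a genuine gap at the sandwich step --- one you flag but do not close. The shielding picture of Figure~\ref{fig:geom1} requires that $g$ and $g'$ be non-crossing: only then do they, together with arcs of $\Gamma_0(0)$ and $\Gamma_1(0)$, enclose the origin so that $h=\lim_k\geo(0,u_k)$ is forced to lie counterclockwise between $g$ and $g'$ and hence to have label in $[F(g),F(g')]$. Because you apply Proposition~\ref{cycle} with the same event (label in $I$) at both ends, nothing prevents $F(g)=F(g')=\alpha$ for some $\alpha\in\double_\star$; in that case $g$ and $g'$ may cross, the wedge is destroyed, and the subsequential limit $h$ is not sandwiched, so $F(h)\in I$ no longer follows. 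Writing that this is ``the same technical point already invoked in Lemma~\ref{copa}, and once granted there\,\ldots'' understates the issue: Lemma~\ref{copa} does real work precisely to exclude this crossing. In its first case ($\alpha\notin\double_\star$) it uses almost-sure uniqueness of the $\alpha$-labeled geodesic at every vertex to get non-crossing directly; in its second case ($\alpha\in\double_\star$) it deliberately produces $g'$ with label in $(\alpha,\alpha+\eps)$ strictly larger than $F(g)=\alpha$, so that $g<g'$ strictly in the total order and the wedge survives. You cannot ``grant'' the sandwich for two geodesics with merely unordered labels in $I$.

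The paper's actual proof is structured to inherit exactly this control. It first observes that if $\P(\alpha\in\labels)>0$ for some $\alpha\in I$, then Lemma~\ref{copa} yields $\P(\alpha\in\labels)=1$ and the conclusion is immediate. Having dispensed with that case, it assumes $\P(\alpha\in\labels)=0$ for all $\alpha\in I$; this is exactly what makes it possible to split $I$ into two disjoint subintervals $I_1<I_2$ each of which meets $\labels$ with positive probability. Running Proposition~\ref{cycle} once with $I_1$ to obtain $u\in\Gamma_0$ and once with $I_2$ to obtain $v\in\Gamma_1$ then produces $g$ and $g'$ with $F(\Gamma_0)<F(g)<F(g')<F(\Gamma_1)$ strictly, whence $g<g'$, the wedge holds, and $F(h)\in[F(g),F(g')]\subseteq I$. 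To repair your proposal you need both ingredients: the initial reduction through Lemma~\ref{copa}, and the strict separation of labels obtained by splitting $I$ rather than invoking the geometric argument with the same interval on both sides.
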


\begin{proof}
If $I$ contains $F(\Gamma_i)$ for some $i$, then there is nothing more to prove. We may therefore assume that $I$ is a subset of $(F(\Gamma_0),F(\Gamma_1))$.
If for some $\alpha \in I$ we have
$$\P(\alpha \in \labels)>0,$$
then by Lemma~\ref{copa} we are done. If not we may split $I$ into two disjoint intervals $I_1$ and $I_2$ (with $I_1<I_2$) such that there is positive probability to find a geodesic with label in either of the two. We apply Proposition~\ref{cycle} to find a site $u\in\Gamma_0$ and a geodesic $g\in\tree_u$ with label in $I_1$, and a site $v\in\Gamma_1$ and a geodesic $g'\in\tree_v$ with label in $I_2$. We then have
$$
F(\Gamma_0)<F(g)< F(g')<F(\Gamma_1),
$$
and hence recover the picture in Figure~\ref{fig:geom1}. Let $(u,u_1,u_2,\ldots)$ be an enumeration of the vertices in $g$. We may take the limit of the sequence of finite geodesics $\geo(0,u_k)$ to obtain a geodesic in $\tree_0$ shielded of by $g$ and $g'$. This geodesic must have a label in $[F(g),F(g')]$, which is a subset of $I$, as required.
\end{proof}

\begin{lemma}\label{lma:exists_label_wp1}
Let $I\subseteq[0,1]$ be an (open or closed) interval. If $\P(\labels\cap I\neq\emptyset)=1$, then there exists $\alpha\in I$ such that $\P(\alpha\in\labels)=1$.
\end{lemma}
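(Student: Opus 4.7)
The plan is to construct, by repeated bisection, a nested sequence of closed subintervals of $I$ each of which meets $\labels$ with probability one, and then to invoke Lemma~\ref{lma:limit_label} at their common intersection point.

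First I would reduce to the case where $I$ is closed. If $I$ is open or half-open, I would write $I = \bigcup_{n \ge 1} J_n$ as an increasing union of closed subintervals $J_n \subseteq I$. Then
$$
\{\labels \cap I \neq \emptyset\} = \bigcup_{n \ge 1} \{\labels \cap J_n \neq \emptyset\},
$$
so by continuity of measure some $J_n$ satisfies $\P(\labels \cap J_n \neq \emptyset) > 0$. Lemma~\ref{lma:exists_label} then promotes this probability to $1$, and after replacing $I$ with $J_n$ we may assume $I$ is closed.

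Next I would inductively build closed intervals $I = I_0 \supseteq I_1 \supseteq I_2 \supseteq \cdots$ of lengths $|I_k| = 2^{-k}|I|$ with $\P(\labels \cap I_k \neq \emptyset) = 1$ at each step. Given such an $I_k$, I bisect it into its closed left and right halves $I_k^-$ and $I_k^+$ sharing their midpoint. Since $I_k^- \cup I_k^+ = I_k$, the event $\{\labels \cap I_k \neq \emptyset\}$ is contained in the union of $\{\labels \cap I_k^\pm \neq \emptyset\}$, so at least one of these two has positive probability. By Lemma~\ref{lma:exists_label} that probability is actually $1$, and I take the corresponding half as $I_{k+1}$.

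The intersection $\bigcap_{k \ge 0} I_k$ then consists of a single point $\alpha \in I$, and applying Lemma~\ref{lma:limit_label} to the nested sequence $(I_k)_{k \ge 0}$ immediately gives $\P(\alpha \in \labels) = 1$, as required. The only subtlety in the argument is the initial reduction to the closed case: without it the limit point $\alpha$ could land on the boundary of $I$, and so might fail to lie in $I$ itself. Once that reduction is in place the rest is a straightforward compactness/bisection argument combined with the $0$--$1$ dichotomy of Lemma~\ref{lma:exists_label} and the continuity statement of Lemma~\ref{lma:limit_label}.
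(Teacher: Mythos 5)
Your proof is correct and follows essentially the same route as the paper's: reduce to a closed interval, bisect repeatedly using the $0$--$1$ dichotomy of Lemma~\ref{lma:exists_label}, and apply Lemma~\ref{lma:limit_label} at the unique intersection point. The only material difference is that you spell out the reduction to the closed case via an exhaustion $I=\bigcup_n J_n$ and continuity of measure, which the paper treats as immediate; this is a welcome bit of explicitness but not a different argument.
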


\begin{proof}
Observe first that if $I$ is open, then we may pick a closed subinterval $I'\subseteq I$ for which $\P(\labels\cap I'\neq\emptyset)>0$. According to Lemma~\ref{lma:exists_label} this probability is then 1, so it will suffice to prove the lemma for $I$ closed.
Assume that $I=[a,b]$. Either $\P(b\in\labels)>0$, or not. In the former case Lemma~\ref{copa} completes the proof. In the latter let
$$
c:=\sup\big\{\alpha\in I:\P(\labels\cap(\alpha,b]\neq\emptyset)>0\big\}
$$
and note that $\P(\labels\cap(c-\eps,c]\neq\emptyset)=1$ for every $\eps>0$. Since $c\in[a,b]$, either $\P(c\in\labels)>0$, or we can find $c'\in[a,c)$ such that
$$
\P(\labels\cap(c'-\eps,c')\neq\emptyset)=1
$$
for all $\eps>0$. In the latter case, with probability one, the geodesic $g_{c'-}$ defined in~\eqref{g_minus} is cw-dense, has label at least $c'$, and every geodesic in $\tree_0$ clockwise of $g_{c'-}$ has label strictly below $c'$. It follows that $g_{c'-}$ is non-crossing, since a crossing (i.e., a non-coalescing intersection) of two representatives would contradict the monotonicity of the labeling. By Lemma~\ref{constant} the label of $g_{c'-}$ is almost surely constant. Since the label is contained in $[c',c]\subseteq I$, we are done.
\end{proof}

\begin{lemma}\label{L_star}
The set $\labels_\star$ is closed.
\end{lemma}

\begin{proof}
We shall prove that the limit of every (strictly) monotone sequence of elements in $\labels_\star$ is contained in $\labels_\star$. Assume first that $(\alpha_k)_{k\ge1}$ is a strictly decreasing sequence of elements in $\labels_\star$ and let $\alpha$ be its limit. Then, almost surely, there is a decreasing sequence $(g_k)_{k\ge1}$ of geodesics in $\tree_0$ with labels strictly larger than, but converging to, $\alpha$. We may without restriction assume that each $g_k$ is ccw-isolated. Let
$$
g_{\alpha+}:=\{g\in\tree_0:F(g)>\alpha\}.
$$
By assumption, $g_{\alpha+}$ is almost surely the limit of a decreasing sequence of geodesics with labels strictly larger than $\alpha$. Since $g_{\alpha+}$ has label at most $\alpha$ it is non-crossing. Then, by Lemma~\ref{constant}, the label of $g_{\alpha+}$ is almost surely constant.

The label of $g_{\alpha+}$ either equals $\alpha$, in which case there is nothing left to prove, or it is strictly smaller than $\alpha$, almost surely. Suppose the latter is true. Since $g_{\alpha+}$ is ccw-dense, it follows by Lemma~\ref{label post-props} that there exists $v\in\Z^2$ and a ccw-isolated geodesic $g\in\tree_v$ such that $g_{\alpha+}\le g<g_k$ for all $k\ge1$. As a consequence, the limit $\lim_{\ell\to\infty}\Geo(0,v_\ell)$, where $(v_1,v_2,\ldots)$ is an enumeration of $g$, exists and equals $g_{\alpha+}$. By Proposition~\ref{neymar}, $g$ and $g_{\alpha+}$ coalesce. Since also $g_{\alpha+}(v)$ is ccw-dense we can find $u\in\Z^2$ and a ccw-isolated geodesic $g'\in\tree_u$ such that $g_{\alpha+}(v)\le g'<g_k$ for all $k$. By Proposition~\ref{neymar} we have again that $g'$ and $g_{\alpha+}$ coalesce, contradicting the fact that $g_{\alpha+}\le g<g_{\alpha+}(v)\le g'$.

Assume next that $(\alpha_k)_{k\ge1}$ instead is a strictly increasing sequence of elements of $\labels_\star$, and denote by $\alpha$ its limit. This time, almost surely, there is an increasing sequence $(g_k)_{k\ge1}$ of geodesics in $\tree_0$ with labels strictly below but converging to $\alpha$. We may without restriction assume they are ccw-isolated. The geodesic $g_{\alpha-}$, defined in~\eqref{g_minus}, is by assumption cw-dense and has label at least $\alpha$. It is then non-crossing, so by Lemma~\ref{constant} the label of $g_{\alpha-}$ is almost surely constant.

Suppose the label of $g_{\alpha-}$ is strictly larger than $\alpha$. Then a positive amount of mass in the flow on $\tree_v$ has to escape along geodesics $g\in\tree_v$ satisfying $g_{\alpha-}\ge g>g_k$ for all $k$, for a density of sites $v\in\Z^2$. For each such $v$ there is at least one such cw-isolated geodesic $g$ in $\tree_v$. The limit from the origin to the tail of that geodesic exists and equals $g_{\alpha-}$. By Proposition~\ref{neymar}, $g$ has to coalesce with $g_{\alpha-}$. Similarly we find a vertex $u$ and a cw-isolated geodesic $g'\in\tree_u$ coalescing with $g_{\alpha-}(v)$. In particular $g_{\alpha-}>g'>g_k$ for all $k$, contradicting Proposition~\ref{neymar}.
\end{proof}

\begin{lemma}
$G_\alpha^{cw}$ and $G_\alpha^{ccw}$, for every $\alpha\in\labels_\star$, define random non-crossing geodesics.
\end{lemma}

\begin{proof}
Since the proofs for $G_\alpha^{cw}$ and $G_\alpha^{ccw}$ are analogous, we consider only the former.
Assume the contrary, that there exists $v\in\Z^2$ for which $G_\alpha^{cw}$ and $G_\alpha^{cw}(v)$ cross with positive probability. On the event of a crossing, let $z$ denote the last common vertex in the intersection, and assume that $G_\alpha^{cw}(v)<G_\alpha^{cw}$. Let $e$ denote the edge out of $z$ used by $G_\alpha^{cw}(v)$, and note that $\tree_z$ contains at least one geodesic $g$ with label $\alpha$ containing $e$ (possibly $G_\alpha^{cw}(v)$ itself). We note that $g$ may be assumed ccw-isolated, since either $G_\alpha^{cw}(v)$ is ccw-isolated with label $\alpha$, or $G_\alpha^{cw}(v)$ is ccw-dense and a decreasing limit of geodesics labeled $\alpha$. Let $(v_1,v_2,\ldots)$ be an enumeration of the vertices in $g$. Since $z$ is a vertex on both $g$ and $G_\alpha^{cw}$, the limit $g'$ of $\Geo(0,v_k)$ as $k\to\infty$ exists and satisfies $g'\le g<G_\alpha^{cw}$. Since $g$ has label $\alpha$ we have to have $g'<g$. This contradicts Proposition~\ref{neymar}.
\end{proof}

\begin{lemma}\label{cw/ccw-most label}
For every $\alpha\in\labels_\star$ the labels of $G_\alpha^{cw}$ and $G_\alpha^{ccw}$ almost surely equal $\alpha$.
\end{lemma}

\begin{proof}
Since the proofs for $G_\alpha^{cw}$ and $G_\alpha^{ccw}$ are similar, we consider only the former.
We argue in a similar fashion as in the proof of Lemma~\ref{L_star}.
Either $G_\alpha^{cw}$ is ccw-isolated with positive probability, or it is almost surely ccw-dense. In the former case $G_\alpha^{cw}$ is the least geodesic with label $\alpha$, with positive probability, and since the label of $G_\alpha^{cw}$ is almost surely constant it has to equal $\alpha$. In the latter case $G_\alpha^{cw}$ is the limit of a decreasing sequence $(g_k)_{k\ge1}$ of ccw-isolated geodesics with labels larger than or equal to $\alpha$. Suppose that $F(G_\alpha^{cw})<\alpha$. Then there exists $v\in\Z^2$ and a ccw-isolated geodesic $g\in\tree_v$ such that $G_\alpha^{cw}\le g<g_k$ for all $k\ge1$. Consequently, the limit from the origin along $g$ exists and equals $G_\alpha^{cw}$. By Proposition~\ref{neymar} the two coalesce. By an analogous argument we can find $u\in\Z^2$ and a ccw-isolated geodesic $g'\in\tree_u$ coalescing with $G_\alpha^{cw}(v)$. Since $g<G_\alpha^{cw}(v)$ we find that $G_\alpha^{cw}<g'<g_k$ for all $k$, contradicting Proposition~\ref{neymar}.
\end{proof}

\subsection{Proof of Theorem~\ref{thm:labels}}


Due to Lemmas~\ref{L_star}-\ref{cw/ccw-most label}, it remains only to show that $\labels=\labels_\star$ almost surely. Since $\labels_\star$ is a closed subset of $[0,1]$, by Lemma~\ref{L_star}, its complement $[0,1]\setminus \labels_\star$ is the union of at most countably many disjoint open intervals $(a_m,b_m)$, for $m\ge1$. For each $m$ it follows by Lemma~\ref{lma:exists_label_wp1} that $\P(\labels\cap (a_m,b_m)\neq\emptyset)=0$. Since there are at most countably many such sets, we conclude that $\P(\labels\subseteq \labels_\star)=1$. Below we shall call the (at most countably many) points $a_m$ and $b_m$ \emph{exposed} points of $\labels_\star$, and the remaining points of $\labels_\star$ we call \emph{constrained} points. 

Next, let $L_n$ be the subset of $\labels_\star$ obtained as follows: Split $[0,1]$ into $2^n$ closed intervals $I_k$ of length $2^{-n}$, and include in $L_n$ the maximal element of $\labels_\star\cap I_k$ for each $k$. The set $L_\infty=\bigcup_{n\ge1}L_n$ is then a countable and dense subset of $\labels_\star$. In particular $\P(L_\infty\subseteq\labels)=1$.

Assume that $\labels_\star\setminus\labels\neq\emptyset$ with positive probability. Since there are at most countably many exposed points, this would then have to occur at a constrained point in $\labels_\star$. That is, there would be $\eps>0$ such that with positive probability we can find an constrained point $\alpha\in\labels_\star$ and an increasing sequence $(g_k)_{k\ge1}$ with labels converging to $\alpha$, whereas $F(\lim_{k\to\infty}g_k)>\alpha+\eps$. In this case no label in $[\alpha,\alpha+\eps]$ is represented in $\labels$, which contradicts the fact that $L_\infty$ is contained in $\labels$ with probability one.

\subsection{Continuity of the labeling}

While we have already seen that the cw- and ccw-most geodesics with a given label are non-crossing, and that a the limit of a sequence of geodesics with labels converging to a given label has to have that label, we obtain as a corollary to Theorem~\ref{thm:labels} the following stronger statement showing that these properties hold simultaneously for all $\alpha\in\labels_\star$.

\begin{lemma}\label{continuity_labels}
With probability one we have
\begin{enumerate}[\quad (a)]
\item for all $\alpha\in\labels_\star$ and $u,v\in\Z^2$ that $G_\alpha^{cw}(u)$ and $G_\alpha^{cw}(v)$ are non-crossing;
\item for all $\alpha\in\labels_\star$ and $u,v\in\Z^2$ that $G_\alpha^{ccw}(u)$ and $G_\alpha^{ccw}(v)$ are non-crossing;
\item for every monotone sequence $(g_k)_{k\ge1}$ in $\tree_0$ that
$$
\lim_{k\to\infty}F(g_k)=F\Big(\lim_{k\to\infty}g_k\Big).
$$
\end{enumerate}
\end{lemma}

\begin{proof}
By Theorem~\ref{thm:labels} $G_\alpha^{cw}$ is almost surely non-crossing and $\labels_\star$ have at most countably many exposed points. It follows that $G_\alpha^{cw}$ are non-crossing for all exposed points of $\labels_\star$ almost surely. Moreover, since $\labels=\labels_\star$, almost surely, it follows that for every $\alpha$ which is a constrained point of $\labels_\star$ that $G_\alpha^{cw}$ is cw-dense. Consequently, with probability one, there cannot be $\alpha$ and $u,v\in\Z^2$ for which $G_\alpha^{cw}(u)$ and $G_\alpha^{cw}(v)$ cross, since that would contradict the monotonicity of the labeling. This proves part~\emph{(a)}, and since~\emph{(b)} is analogous, we proceed with~\emph{(c)}.

Assume that with positive probability there exists a monotone (decreasing, say) sequence of geodesics $(g_k)_{k\ge1}$ whose limit $g$ satisfies
$$
F(g)<\lim_{k\to\infty}F(g_k).
$$
Since $\labels=\labels_\star$ almost surely, $F(g)$ cannot with positive probability attain an constrained point of $\labels_\star$. Instead, there would have to exist exposed points $\beta<\alpha$ in $\labels_\star$ such that $F(g)=\beta$ and $F(g_k)=\alpha$ for all large $k$. In that case $g=G_\alpha^{cw}$, so that $F(G_\alpha^{cw})=\beta$. Since there are at most countably many exposed points of $\labels_\star$, this is inconsistent with the fact that $G_\alpha^{cw}$ has label $\alpha$ almost surely.
\end{proof}

\section{Coalescence}
\label{sec:coalescence}

Let $\rcg$ denote the class of random geodesics of the form $G_\alpha^{ccw}$ and $G_\alpha^{cw}$ for $\alpha\in\labels_\star$. In the previous section we saw that these geodesics are non-crossing. In this section we prove that they are coalescing, and that there are no random coalescing geodesics outside of this class.

\begin{theorem}\label{thm:coalescence}
Every member of $\rcg$ is a random coalescing geodesic, and for every random coalescing geodesic $G$ there exists $G'\in\rcg$ such that $G=G'$ almost surely. Moreover, for every $\alpha\in\labels_\star$ we have
\begin{enumerate}[\quad (a)]
\item $\P\big(\exists\text{ at least two geodesics in $\tree_0$ with label }\alpha\big)\in\{0,1\}$;
\item $\P\big(\exists\text{ at least three geodesics in $\tree_0$ with label }\alpha\big)=0$.
\end{enumerate}
\end{theorem}

In order to prove Theorem~\ref{thm:coalescence} we primarily need to show that each member of $\rcg$ is coalescing. The outline of our proof goes as follows.
We first show that every non-crossing geodesic eventually moves into a half-plane. We next show that if $\omega'$ is an alteration of $\omega$ on finitely many edges, and if $(v_1,v_2,\ldots)$ is an enumeration of $G(\omega)$, then $G(v_k)(\omega')=(v_k,v_{k+1},\ldots)$ for all large $k$. Then we let $R$ be any shift invariant order on $\Z^2$.
We show by the mass-transport principle that for any coalescence class of $G$ there is no least element according to the order $R$.  However, by the upward finite energy property we show that if $G$ is not coalescing then there exists a coalescence class with a least element. This is a contradiction and $G$ has to be coalescing.


Just as for coalescing geodesics, we say that a random non-crossing geodesic $G$ {\bf eventually moves into a half-plane} $H$ if for every parallel half-plane $H' \subseteq H$ and $v\in\Z^2$ we have
$$
\P\big(|G(v) \cap H'^c|<\infty\big)=1.
$$
Moreover, a random non-crossing geodesic $G$ defines an equivalence relation on $\Z^2$ where $u\sim v$ if $G(u)$ and $G(v)$ coalesce. The equivalence classes of this relation will be referred to as the {\bf coalescence classes} of $G$. If $G$ almost surely has a unique coalescence class then $G$ is coalescing.

\subsection{Non-crossing geodesics eventually move into a half-plane}

Recall that by $H_i$, for $i=0,1,\ldots,7$, we denote the eight half-planes with normal vector in direction $i\pi/4$.
We shall prove that every geodesic in the class $\rcg$ eventually moves into one of the eight half-planes $H_i$.

\begin{prop}\label{riders on the storm}
For every random non-crossing geodesic $G$ of the form $G_\alpha^{cw}$ or $G_\alpha^{ccw}$, for some $\alpha\in\labels_\star$, there exists $i$ such that $G$ eventually moves into $H_i$.
\end{prop}

The argument is easy in the case that the limiting shape is neither a square nor a diamond. In this case the shape has at least eight extreme points: two corresponding to angles in $[0,\pi/2)$, and another six are produced by rotation by right-angles. For each consecutive pair of extreme points there is, by Theorem~\ref{yahoo}, a random coalescing geodesic directed in between. This gives a set of eight random coalescing geodesics, and each consecutive pair of geodesics are together directed in a sector of width at most $\pi/2$. Since this sector contains at most three angles of the form $i\pi/4$, we can produce a $H_i$ half-plane with five of the remaining points. Since every other geodesic is contained in between two consecutive geodesics of the above form, each geodesic 
will 
eventually move into one of the eight half-planes $H_i$.

In the general case, where we make no assumption on the limiting shape, we will argue via a subsequential limiting argument of Busemann measures, similar to that of Damron and Hanson~\cite{damhan14}.


\begin{proof}[Proof of Proposition~\ref{riders on the storm}]
Let $G$ be a random non-crossing geodesic of the form $G_\alpha^{cw}$ or $G_\alpha^{ccw}$. Recall that the Busemann function of $G(y)$, for all $x,y,z\in\Z^2$, is defined as the limit
$$
B_{G(y)}(x,z)=\lim_{k\to\infty}\big[T(x,y_k)-T(z,y_k)\big],
$$
where $(y,y_1,\ldots)$ in an enumeration of the sites of $G(y)$. That the limit exists, almost surely and in $L^1$, follows from Lemma~\ref{rioja}. Moreover, $B_{G(y)}(x,z)$ is additive and hence completely determined by the configuration $\theta^y=(\theta^y(x))_{x\in\Z^2}$ where
$$
\theta^y(x):=\big(B_{G(y)}(x,x+{\bf e}_1),B_{G(y)}(x,x+{\bf e}_2)\big).
$$

Next, recall the sequence of equivalence relations on $\Z^2$ introduced in Section~\ref{cola}: Let $\{\xi_z\}_{z\in\Z^2}$ be independent $[0,1]$-uniform random variables, and let $S_i=\{z\in\Z^2:\xi_z\le1/4^i\}$. The function $f_i:\Z^2\to S_i$ that maps each point to the point in $S_i$ at least $\ell_1$-distance induces an equivalence relation on $\Z^2$, and we denote the associated family of equivalence classes by $(V_i(x))_{x\in\Z^2}$, for $i=1,2,\ldots$. Based on this (sequence of) equivalence classes we define $\theta_i=(\theta_i(x))_{x\in\Z^2}$ as $\theta_i(x)=\theta^{f_i(x)}(x)$, and note that $\theta_i$ encodes the same Busemann function at $x$ and $y$ if the two points belong to the same equivalence class. Moreover, we consider the sequence $(\geo(x,y_k))_{k\ge1}$ of finite geodesics along $G(f_i(x))$. Either this sequence converges to a limit, which we then denote by $\gamma_x^i$, or there are two subsequential limits (see Figure~\ref{fig:coal3}).
\begin{figure}[htbp]
\begin{center}
\includegraphics{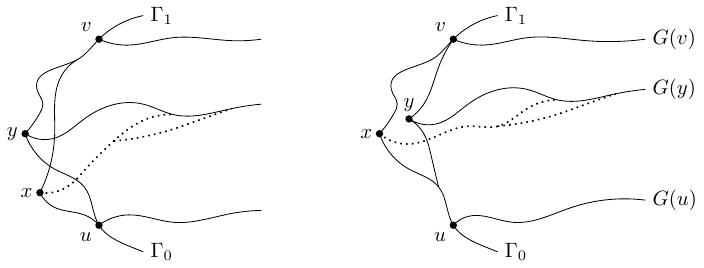}
\end{center}
\caption{In the left picture there is a unique subsequential limit. In the right there are potentially two, one below and another above $G(y)$.}
\label{fig:coal3}
\end{figure}
In the latter case we let $\gamma_x^i$ denote the counterclockwise-most of these limits. Finally we define $\eta\in\{0,1\}^{\Z^2\times\mathcal{E}^2}$ as follows:
\begin{equation*}
\eta(x,e):=\left\{
\begin{aligned}
& 1 && \text{if $x+e$ is used by }\gamma_x^i;\\
& 0 && \text{otherwise}.
\end{aligned}
\right.
\end{equation*}

Let $\Omega_1=[0,\infty)^{\mathcal{E}^2}$, $\Omega_4=[0,1]^{\Z^2}$, $\Omega_3=(\R^2)^{\Z^2}$ and $\Omega_5=\{0,1\}^{\Z^2\times\mathcal{E}^2}$. For each $i$ we may exhibit a measurable map $\Psi_i:\Omega_1\times\Omega_4\to\Omega_1\times\Omega_3\times\Omega_5$ as $(\omega,\xi)\mapsto(\omega,\theta_i,\eta_i)$. The measure $\P\times\leb$ may be pushed forward through the map $\Psi_i$ to give a measure $\nu_i$ on $\Omega_1\times\Omega_3\times\Omega_5$. By tightness, the sequence $(\nu_i)_{i\ge1}$ will have a subsequential limit, which we denote by $\nu$.




We next use $\nu$ to reconstruct a Busemann-like function $\hat B:\Z^2\times\Z^2\to\R$ associated to $G$. For $\theta=(\theta_1,\theta_2)$ in $\Omega_3$ we define for any neighboring pair of vertices $x$ and $y$
\begin{equation*}
b(x,y):=\left\{
\begin{aligned}
& \theta_1(x) && \text{if }y=x+{\bf e}_1,\\
& \theta_2(x) && \text{if }y=x+{\bf e}_2,\\
& -\theta_1(y) && \text{if }y=x-{\bf e}_1,\\
& -\theta_2(y) && \text{if }y=x-{\bf e}_2.
\end{aligned}
\right.
\end{equation*}
Let $\{\pi(x,z):x,z\in\Z^2\}$ be some predefined family of finite paths. For $x,z\in\Z^2$, let $x_0=x,x_1,\ldots,x_m=z$ be the enumeration of the sites of $\pi(x,z)$, and define
\begin{equation*}
\hat B(x,z):=\sum_{i=0}^{m-1}b(x_i,x_{i+1}).
\end{equation*}

We now make two claims. First that with $\nu$-measure 1 the configuration $\eta$ in $\Omega_5$ encodes a unique geodesic in $\tree_x$ for each $x\in\Z^2$, and second that $\hat B$ functions as a Busemann function for this family of geodesics.

\begin{claim}\label{tab}
For $\nu$-almost every $(\omega,\theta,\eta)$ the graph encoded by $(\eta(x,e))_{e\in\mathcal{E}^2}$ represents an infinite geodesic $\gamma_x$ starting at $x$. Moreover, $\nu(\gamma_x=G(x))\ge1/2$ for all $x\in\Z^2$.
\end{claim}

\begin{proof}[Proof of claim]
That $\nu_i$, for each $i\ge1$, is supported on configurations such that the graph encoded by $(\eta(x,e))_{e\in\mathcal{E}^2}$ represents an infinite geodesic originating at $x$ is immediate from construction. Since this event represents a closed subset of the sample space, the first part of the claim follows from the Portmanteau theorem.

For the latter part of the claim we note that for every $y\in\Z^2$ that
$$
1\le\P\big(G(0)\le G(y)\big)+\P\big(G(y)\le G(0)\big)=\P\big(G(0)\le G(y)\big)+\P\big(G(0)\le G(-y)\big).
$$
Consequently, due to symmetry, we have for all $i\ge1$ that
\begin{equation*}
\begin{aligned}
\P\big(G(0)\le G(f_i(0))\big)&=\sum_{y\in\Z^2}\P\big(G(0)\le G(y)\big)\P(f_i(0)=y)\\
&=\frac12\sum_{y\in\Z^2}\Big[\P\big(G(0)\le G(y)\big)+\P\big(G(0)\le G(-y)\big)\Big]\P(f_i(0)=y),
\end{aligned}
\end{equation*}
which by the previous calculation is at least $1/2$. By an analogous argument we also obtain $\P\big(G(0)\ge G(f_i(0))\big)\ge1/2$.

Since random non-crossing geodesics have constant labels we may without restriction, due to Lemma~\ref{ccw-dense}, assume that $G$ is almost surely contained counterclockwise between $\Gamma_0$ and $\Gamma_1$. Since $\Gamma_0$ and $\Gamma_1$ are coalescing, we may due to Proposition~\ref{cycle} find $u\in\Gamma_0(0)\cap\Gamma_0(f_i(0))$ and $v\in\Gamma_1(0)\cap\Gamma_1(f_i(0))$ such that $G(u)$ and $G(v)$ are contained entirely in the cone counterclockwise between $\Gamma_0$ and $\Gamma_1$. Since $G$ is non-crossing and contained between $\Gamma_0$ and $\Gamma_1$ it follows that $G(f_i(0))$ and any subsequential limit of $(\geo(0,y_k))_{k\ge1}$ is sandwiched between $G(u)$ and $G(v)$ (see Figure~\ref{fig:coal3}). Consequently, each subsequential limit has label $\alpha=F(G)$. Moreover, on the event $\{G(0)\le G(f_i(0))\}$ we necessarily have $\gamma_0^i\ge G(0)$, and we similarly have $\gamma_0^i\le G(0)$ on the event $\{G(0)\ge G(f_i(0))\}$. In the case that $G=G_\alpha^{ccw}$, then the former implies that $\gamma_0^i=G(0)$. The same is implied by the latter in the case that $G=G_\alpha^{cw}$. In either case the conclusion is that $\nu_i$ puts mass at least $1/2$ on the event that $\eta$ encodes $G(0)$. Since this represents a closed event, the second statement of the claim follows from the Portmanteau theorem.
\end{proof}

\begin{claim}\label{tac}
The function $\hat B:\Z^2\times\Z^2\to\R$ has the following almost sure properties:
\begin{enumerate}[\quad (a)]
\item $\hat B(x,z)=\hat B(x,y)+\hat B(y,z)$ for all $x,y,z\in\Z^2$;
\item $|\hat B(x,z)|\le T(x,z)$ for all $x,z\in\Z^2$;
\item $\hat B(x,z)=T(x,z)$ for all $x$ and $z\in\gamma(x)$.
\end{enumerate}
\end{claim}

\begin{proof}[Proof of claim]
First recall that the properties in~\emph{(a)} and~\emph{(b)} are satisfied, almost surely, by $B_{G(v)}$ for any $v\in\Z^2$. Second note that, almost surely, $B_{G(v)}(x,z)=T(x,z)$ for all $z$ contained in $\geo(x,v_k)$ for infinitely many $k$, where $v_1,v_2,\ldots$ is an enumeration of the sites in $G(v)$.
Now, fix $x$, $y$ and $z$, and let $E_i$ denote the event that the paths $\pi(x,y)$, $\pi(y,z)$ and $\pi(x,z)$ are contained in the same equivalence class induced by $(V_i(x))_{x\in\Z^2}$. On the event $E_i$ we have that the values of $(\theta_i,\eta_i)$ on sites on $\pi(x,y)$, $\pi(y,z)$ and $\pi(x,z)$ encode the Busemann function and geodesics corresponding to the same representative of $G$, which thus satisfy the properties~\emph{(a)},~\emph{(b)} and~\emph{(c)}. Since $E_i$ occurs with probability tending to 1 as $i\to\infty$, we conclude that the three properties occur with $\nu_i$-probability tending to 1. 
As the three properties correspond to closed events of the sample space, we conclude that they are given probability one by the limiting measure $\nu$.
\end{proof}

It is immediate from construction that the limiting measure $\nu$ is translation invariant, and hence that $\hat B(x,z)$ equals $\hat B(x+y,z+y)$ in distribution, for all $x$, $y$ and $z$. Thus, by the ergodic theorem, for all $z\in\Z^2$ the (possibly random) limit
$$
\hat\rho(z):=\lim_{n\to\infty}\frac1n\hat B(0,nz)=\lim_{n\to\infty}\frac1n\sum_{j=1}^n\hat B((j-1)z,jz)
$$
exists $\nu$-almost surely and in $L^1$. Moreover, by additivity of $\hat B$ and the characterization of $\hat\rho(z)$ as the conditional expectation $\E[\hat B(0,z)|\mathcal{I}]$ of $\hat B(0,z)$ given the sigma-algebra $\mathcal{I}$ generated by all invariant events, it follows that
$$
\hat\rho(z)=\E[\hat B(0,z)|\mathcal{I}]=\big(\E[\hat B(0,\mathbf{e}_1)|\mathcal{I}],\E[\hat B(0,\mathbf{e}_2)|\mathcal{I}]\big)\cdot z.
$$
That is, $\hat\rho$ extends to a (random) linear functional $\hat\rho:\R^2\to\R$ given by $\hat\rho(x)=(\hat\rho({\bf e}_1),\hat\rho({\bf e}_2))\cdot x$. By repeating the argument in the proof of Proposition~\ref{sonoma} we obtain that
\begin{equation}\label{eq:nu-shape}
\nu\Big(\limsup_{|z|\to\infty}\frac{1}{|z|}\big|\hat B(0,z)-\hat\rho(z)\big|=0\Big)=1.
\end{equation}
Moreover, arguing as in the proof of Proposition~\ref{mendoza}, we have by~\eqref{eq:nu-shape}, the shape theorem and the properties of $\hat B$ that for any sequence $(x_n)_{n\ge1}$ such that $|x_n|\to\infty$ and $x_n/|x_n|\to1$ that with $\nu$-probability one
$$
\hat\rho(x)=\lim_{n\to\infty}\frac{1}{|x_n|}\hat B(0,x_n)\le\lim_{n\to\infty}\frac{1}{|x_n|}T(0,x_n)=\mu(x).
$$
Moreover, if $(x_n)_{n\ge1}$ is a subsequence of $\gamma_0$, then equality holds and $\hat\rho(x)=\mu(x)$. It follows that with $\nu$-probability one the (random) line $\{x\in\R^2:\hat\rho(x)=1\}$ is a supporting line for $\partial\ball$ and $\dir(\gamma_0)$ is a subset of $\arc(\hat\rho)$. In particular, the set $\dir(\gamma_0)$ is contained in an arc of $S^1$ of width at most $\pi/2$. By Claim~\ref{tab} we have, with $\nu$-probability at least $1/2$, that $\gamma_0=G(0)$, and thus that $\dir(G)$ is contained in an arc of $S^1$ of width at most $\pi/2$. Since $\dir(G)$ is deterministic, as of Lemma~\ref{rng direction}, the latter occurs with probability one. The arc containing $\dir(G)$ may contain at most three angles of the form $i\pi/4$, and by choosing a set of five consecutive angles among the remaining ones we obtain a half-plane of the form $H_i$. Hence, for some $i=0,1,\ldots,7$ we have $G$ almost surely contained in $H_i$.
\end{proof}

\subsection{The effect of a local modification}

We shall prove that a local modification of the weight configuration may only have a local effect on the image of a random non-crossing geodesic. To formalize the statement, we consider the following local modification coupling (see e.g.~\cite{hagjon06}) of two configurations $(\omega,\omega')$ in $\Omega_1\times\Omega_1$: Let $\mathbf{Q}$ be some probability measure on finite subsets of $\lobby$ that assigns a positive probability to each element.
\begin{enumerate}[\quad (i)]
\item Choose $F$ according to $\mathbf{Q}$ and sample $\omega=\omega'$ on $F^c$ according to $\P$.
\item Conditioned on the outcome in (i), sample $\omega$ and $\omega'$ on $F$ independently from the conditional law $\P(\,\cdot\,|\,\omega_{F^c})$.
\end{enumerate}
Denote the resulting measure on $\Omega_1\times\Omega_1$ by $\P'$. Its marginals of course coincide with $\P$.

\begin{prop}\label{prop:modification}
Let $G$ be of the form $G_\alpha^{cw}$ or $G_\alpha^{ccw}$ for some $\alpha\in\labels_\star$. Then, $\P'$-almost surely, we have
\begin{enumerate}[\quad (a)]
\item $G(v)(\omega)=G(v)(\omega')$ for all but finitely many $v\in G(\omega)$;
\item either $\omega_e>\omega'_e$ for some edge $e$, or $G(v)(\omega)=G(v)(\omega')$ for all $v\in G(\omega)$ such that $\omega_e=\omega'_e$ for all $e\in G(v)(\omega)$.
\end{enumerate}
\end{prop}

The proof of the proposition will roughly amount to showing that: \emph{(i)} the tail of $G(\omega)$ is a geodesic also in $\omega'$, and since it is a geodesic in $\omega'$ it has a label also in $\omega'$; \emph{(ii)} the tail of every geodesic with label $\alpha$ in $\omega$ has label $\alpha$ also in $\omega'$; \emph{(iii)} if $G(\omega)$ was the cw- or ccw-most geodesic with label $\alpha$ in $\omega$, then its tail will be so too in $\omega'$.

To show that the tail of every geodesic in $\tree_0(\omega)$ is a geodesic also in $\omega'$ is straightforward. However, as we shall need a somewhat stronger statement (Lemma~\ref{lma:equal trees} below), a first step in the above outline will be to obtain an analogous statement for random coalescing geodesics.

\begin{lemma}\label{coalescing lakes}
For every random coalescing geodesic $G$ we have that
$$
\P'\big(\text{$G(\omega)$ and $G(\omega')$ coalesce}\big)=1.
$$
\end{lemma}

\begin{proof}
Let $H=H_i$ be a half-plane that $G$ eventually moves into. We define another measure $\P''$ on $\Omega_1\times\Omega_1$ as follows: Sample $(\omega,\omega'')\in\Omega_1\times\Omega_1$ by first sampling $\omega=\omega''$ for all edges with both endpoints in $H$ according to $\P$, and second sample $\omega$ and $\omega''$ independently on $H^c$ according to the conditional measure $\P(\,\cdot\,|\omega_H)$. We first claim that
\begin{equation}\label{half-plane resample}
\P''\big(\text{$G(\omega)$ and $G(\omega'')$ coalesce}\big)=1.
\end{equation}

Assume this is false, so that with positive probability they do not coalesce. $G$ induces a measure on coalescing families of half-plane geodesics via its half-plane restriction $G_H$ defined in~\eqref{eq:GH}. Since $\omega$ and $\omega''$ are equal on $H$, the set of half-plane geodesics coincide in both $\omega$ and $\omega''$. Therefore, assuming that~\eqref{half-plane resample} is false, the events that $G(\omega)$ lies strictly clockwise of $G(\omega'')$ and $G(\omega)$ lies strictly counterclockwise of $G(\omega'')$ each have to occur with positive probability.
By the ergodic theorem we find, almost surely, boundary sites $u$ and $v$ of $H$ such that $G(u)(\omega)$ is clockwise of $G(u)(\omega'')$ and $G(v)(\omega)$ is counterclockwise of $G(u)(\omega'')$. That contradicts that $G$ is almost surely coalescing, proving that~\eqref{half-plane resample} holds.

Let $A(N)$ be the event that $\omega$ and $\omega'$ agree outside $[-N,N]^2$. Since $\mathbf{Q}$ is supported on finite sets, we may for every $\eps>0$ find $N$ large so that $\P'(A(N))>1-\eps$. We then pick $v$ outside $[-N,N]^2$ so that $H+v$ does not intersect $[-N,N]^2$. Since $G$ is coalescing we have for $\P'$-almost every $(\omega,\omega')$ that both $G(\omega)$ and $G(v)(\omega)$, and $G(\omega')$ and $G(v)(\omega')$, coalesce. Write $G_H(v)$ as short for $\sigma_{-v}\circ G_H\circ\sigma_v$. By Lemma~\ref{valpolicella} we have that $G(\omega)$ and $G_H(v)(\omega)$ coalesce, and so does $G(\omega')$ and $G_H(v)(\omega')$, almost surely.
However, by~\eqref{half-plane resample}, $G_H(\omega)$ depends on $\omega$ only through its restriction to $H$. So on the event $A(N)$, which occurs with probability at least $1-\eps$, we have $G_H(v)(\omega)=G_H(v)(\omega')$ almost surely, and $G(\omega)$ and $G(\omega')$ coalesce. Since $\eps>0$ was arbitrary, the lemma follows.
\end{proof}

Let $\rho_i$ denote the linear functional describing the Busemann function associated to $\Gamma_i$. Recall the definition of $\arc(\rho_i)$, and pick two half-planes $H$ and $H'$, both containing the origin as a boundary point, such that $H$ contains $\arc(\rho_0)$ and $H'$ contains $\arc(\rho_1)$ in their interior. If the shape is neither a diamond or a square, then $H$ and $H'$ may be chosen so that both $\arc(\rho_0)$ and $\arc(\rho_1)$ are contained in the interior of both half-planes. If the shape is a diamond or a square, we may then assume that the sector $H\cap H'$ is symmetric around an axis or diagonal and spans an angle of at least $3\pi/4$, and therefore contains points of both $\dir(\Gamma_0)$ and $\dir(\Gamma_1)$ in its intereior. As a consequence, it follows that every geodesic $g\in\tree_0$ that is contained counterclockwise between $\Gamma_0$ and $\Gamma_1$ will visit infinitely many points in $H\cap H'+u$, for all $u\in\Z^2$, almost surely.

\begin{lemma}\label{lma:equal trees}
$\P'$-almost surely, there exists $u\in\Z^2$ such that for every $v\in H\cap H'+u$
\begin{itemize}
\item $\Gamma_0(v)(\omega)=\Gamma_0(v)(\omega')$ and $\Gamma_1(v)(\omega)=\Gamma_1(v)(\omega')$;
\item $\omega$ and $\omega'$ agree on the cone counterclockwise between $\Gamma_0(v)$ and $\Gamma_1(v)$.
\end{itemize}
\end{lemma}

\begin{proof}
Let $G$ be a random coalescing geodesic.
We first claim that for every $\eps>0$ we have, almost surely, that for all but finitely many $z\in\Z^2$ and all $y$ so that $z+y\in G(z)$ either
\begin{equation}\label{extended direction}
|y|\le\eps|z|\quad\text{or}\quad\textup{dist}\big(y/|y|,\arc(\rho_G)\big)\le\eps,
\end{equation}
where $\textup{dist}(x,S):=\inf\{|x-y|:y\in S\}$.

By Lemma~\ref{rioja} we have, with probability one, that $B_G(z,z+y)=T(z,z+y)$ for all $z$ and $y$ such that $z+y\in G(z)$. Proposition~\ref{extended shape} and~\eqref{extended Busemann} together imply that for all but finitely many $z$ and all $y$ such that $z+y\in G(z)$ we have
$$
0\le\mu(y)-\rho_G(y)=\mu(y)-T(z,z+y)+B_G(z,z+y)-\rho_G(y)\le2\eps\max\{|z|,|y|\},
$$
almost surely. So, almost surely, for every $y$ such that $z+y\in G(z)$ we have that either $|y|<\sqrt{\eps}|z|$, or $\mu(y)-\rho_G(y)\le2\sqrt{\eps}|y|$. Since $\eps>0$ was arbitrary, and since the shape is convex, the claim follows.

Fix $\eps$ so that $\arc(\rho_0)$ and $\arc(\rho_1)$ are at distance $2\eps$ from the boundary of $H$ and $H'$, respectively. Since $\mathbf{Q}$ is supported on finite sets, we may pick an almost surely finite $N\ge1$ so that $\omega$ and $\omega'$ agree outside $[-N,N]^2$. We then pick $u$ so that $H\cup H'+u$ does not intersect $[-2N,2N]^2$, and so that~\eqref{extended direction} holds (in both $\omega$ and $\omega'$) for all $z$ in $H\cap H'+u$ and all $y$ such that $z+y\in G(z)$, for both $G=\Gamma_0$ and $G=\Gamma_1$. Then, for no $z$ in $H\cap H'+u$ does $\Gamma_0(z)$ or $\Gamma_1(z)$ visit $[-N,N]^2$. By Lemma~\ref{coalescing lakes} both $\Gamma_0(z)(\omega)$ and $\Gamma_0(z)(\omega')$, and $\Gamma_1(z)(\omega)$ and $\Gamma_1(z)(\omega')$, will coalesce. Due to unique passage times, they coincide.
\end{proof}

We next examine the effect of a local modification on the labeling.
Let $A_1(v)$ denote the event in Lemma~\ref{lma:equal trees}, and note that on $A_1(v)$ the environments counterclockwise between $\Gamma_0(v)$ and $\Gamma_1(v)$ agree, and consequently will the restrictions of $\tree_v$ to the same region. Since every geodesic in $\tree_0(\omega)$ counterclockwise of $\Gamma_0(\omega)$ and $\Gamma_1(\omega)$ visits $H\cap H'+u$, for every $u\in\Z^2$, it follows that their tails are geodesics also in $\omega'$. In fact, for every infinite geodesic in $\omega$ we obtain a geodesic in $\omega'$ by chopping off a finite segment. For a geodesic $g\in\tree_0(\omega)$ we write $F_{\omega'}(g)$ for the label (in $\omega'$) of the portion of $g$ that is a geodesic also in $\omega'$.

\begin{lemma}\label{new lakes}
Let $G$ be of the form $G_\alpha^{cw}$ or $G_\alpha^{ccw}$ for some $\alpha\in\labels_\star$. Then,
$$
\P'\big(F_{\omega'}(G(\omega))=\alpha\big)=1.
$$
\end{lemma}

\begin{proof}
Fix $\alpha\in\labels_\star$. Since random non-crossing geodesics have constant labels we may without restriction, due to Lemma~\ref{ccw-dense}, assume that $\Gamma_0\le G_\alpha^{cw}\le G_\alpha^{ccw}\le\Gamma_1$. Since $G_\alpha^{cw}$ and $G_\alpha^{ccw}$ have label $\alpha$, it follows that every geodesic in $\tree_0$ with label $\alpha$ is contained counterclockwise between $\Gamma_0$ and $\Gamma_1$ almost surely. Moreover, since $\Gamma_0(\omega)$ and $\Gamma_0(\omega')$, and $\Gamma_1(\omega)$ and $\Gamma_1(\omega')$, coalesce, the tail of every geodesic in $\tree_0(\omega)$ is a geodesic in $\omega'$ with label (in $\omega'$) bounded between $F(\Gamma_0)$ and $F(\Gamma_1)$.

By current assumptions, either $\Gamma_0=G_\alpha^{cw}$ or there is $\alpha_-\in\labels_\star$ such that $\alpha>\alpha_-\ge F(\Gamma_0)$. In the former case we let $G^-=G_\alpha^{cw}$ and in the latter we set $G^-=G_{\alpha_-}^{ccw}$. Similarly, either $\Gamma_1=G_\alpha^{ccw}$ or there is $\alpha_+\in\labels_\star$ such that $\alpha<\alpha_+\le F(\Gamma_1)$. Let $G^+=G_\alpha^{ccw}$ in the former case and set $G^+=G_{\alpha_+}^{cw}$ otherwise. Set $\mathcal{F}=\{G^-,G_\alpha^{cw},G_\alpha^{ccw},G^+\}$.

Let $A_1(v)$ denote the event in Lemma~\ref{lma:equal trees}.
For $G\in\mathcal{F}$, let $G(v,n)$ denote the first $n$ steps of $G(v)$. Define $\pre(G,v,n,N)$ to be the maximum likelihood estimate of $G(v,n)$ based on the restriction of the edge configuration to $v+[-N,N]^2$. Let $A_2(\mathcal{F},v,n,N)$ denote the event that in both $\omega$ and $\omega'$
\begin{itemize}
\item $G^-(v,n)$ and $G_\alpha^{cw}(v,n)$ are distinct (assuming $G^-\neq G_\alpha^{cw}$);
\item $G_\alpha^{ccw}(v,n)$ and $G^+(v,n)$ are distinct (assuming $G^+\neq G_\alpha^{ccw}$);
\item $\pre(G,v,n,N)=G(v,n)$ for each $G\in\mathcal{F}$.
\end{itemize}
By first selecting $n$ large so that both $G^-$ and $G_\alpha^{cw}$, and $G_\alpha^{ccw}$ and $G^+$, diverge within $n$ steps (assuming they do not coincide), and later selecting $N\ge n$ so the maximum likelihood estimates are all accurate, we can make the probability of $A_2(\mathcal{F},v,n,N)$ arbitrarily close to 1, uniformly in $v$.

Fix $n$ and $N\ge n$ so that $\P'(A_2(\mathcal{F},v,n,N))\ge1/2$ uniformly in $v$. Pick $N'\ge N$ so that $\omega$ and $\omega'$ agree outside $[-N',N']^2$, and $M\ge N'$ so that the segments of $G_\alpha^{cw}(\omega)$ and $G_\alpha^{ccw}(\omega)$ outside $[-M,M]^2$ are geodesics also in $\omega'$. Next pick $u$ so that $H\cup H'+u$ does not intersect $[-2M,2M]^2$ and so that $A_1(v)$ occurs for all $v\in H\cap H'+u$. Finally pick $v\in H\cap H'+u$ so that $A_2(\mathcal{F},v,n,N)$ occurs (see Figure~\ref{fig:coal0,5})

\begin{figure}[htbp]
\begin{center}
\includegraphics{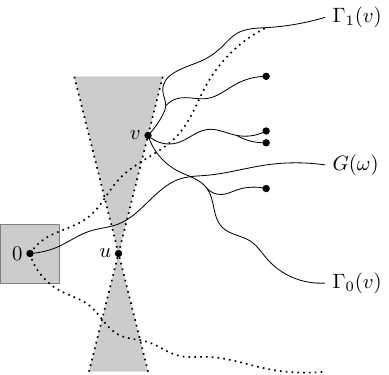}
\end{center}
\caption{The two configurations agree outside the shaded box. For $v$ suitably chosen, $G(\omega)$ will be comparable to the stubs.}
\label{fig:coal0,5}
\end{figure}

We first argue that $F_{\omega'}(G_\alpha^{cw}(\omega))\ge\alpha$ almost surely. In case $\Gamma_0$ has label $\alpha$, then there is nothing to prove, so suppose that $F(\Gamma_0)\le\alpha_-<\alpha$.
Since $A_1(v)$ occurs we have that $\Gamma_0(v)$ and $\Gamma_1(v)$ coincide in both $\omega$ and $\omega'$, and so does the geodesic structure in the cone counterclockwise between the two. Moreover, the predictions are accurate and do only depend on the edge configuration outside $[-N,N]^2$, where $\omega$ and $\omega'$ coincide. It follows that $G(v,n)(\omega)=G(v,n)(\omega')$ for all $G\in\mathcal{F}$ and, since $G^-\neq G_\alpha^{cw}$, the stubs $G^-(v,n)$ and $G_\alpha^{cw}(v,n)$ are distinct.

We claim that $F_{\omega'}(G_\alpha^{cw}(\omega))>\alpha_-$. To see this we first note that every geodesic in $\tree_v$ containing $G^-(v,n)$ has label (in both $\omega$ and $\omega'$) strictly smaller than $\alpha$. Consequently $G_\alpha^{cw}(\omega)$ must lie counterclockwise of each of them, and in particular $G_\alpha^{cw}(\omega)>G^-(v)(\omega')$. If $G_\alpha^{cw}(\omega)$ and $\Gamma_0(v)$ cross (see Figure~\ref{fig:coal0,5}), this implies that the label of $G_\alpha^{cw}(\omega)$ in $\omega'$ is strictly larger than $\alpha_-$. If, on the other hand, $G_\alpha^{cw}(\omega)$ and $\Gamma_1(v)$ intersect, then $G_\alpha^{cw}(\omega)$ cannot lie clockwise of all geodesics in $\tree_v$ that contain $G_\alpha^{cw}(v,n)$, since it is non-crossing. In particular, since all these geodesics have label strictly larger than $\alpha_-$ (in both $\omega$ and $\omega'$), so will $G_\alpha^{cw}(\omega)$. 

Since $\alpha_-<\alpha$ was arbitrary, we conclude that $\P'$-almost surely $F_{\omega'}(G_\alpha^{cw}(\omega))\ge\alpha$. An analogous argument shows that $\P'$-almost surely $F_{\omega'}(G_\alpha^{ccw}(\omega))\le\alpha$, which completes the proof.
\end{proof}

\begin{lemma}\label{rng limits}
For every $\alpha\in\labels_\star$ we have, almost surely, that for every $v\in\Z^2$ and $g=(v_1,v_2,\ldots)$ in $\tree_v$ with label $\alpha$ that
\begin{itemize}
\item if $G_\alpha^{cw}\ge g$, then $\lim_{k\to\infty}\geo(0,v_k)=G_\alpha^{cw}$;
\item if $G_\alpha^{ccw}\le g$, then $\lim_{k\to\infty}\geo(0,v_k)=G_\alpha^{ccw}$.
\end{itemize}
\end{lemma}

\begin{proof}
Since the two cases are similar, we only prove the first. We may assume that $\Gamma_0\le G_\alpha^{cw}\le\Gamma_1$ almost surely. So, suppose that $g=(v_1,v_2,\ldots)$ is a geodesic with label $\alpha$ and satisfies $g\le G_\alpha^{cw}$. We then have $\Gamma_0\le g\le G_\alpha^{cw}$, and there is no restriction in assuming that $g$ is entirely contained in the cone counterclockwise between $\Gamma_0$ and $G_\alpha^{cw}$. Due to Proposition~\ref{cycle} we may thus pick $u$ on $\Gamma_0(v)$ such that $G_\alpha^{cw}(u)$ is clockwise of $G_\alpha^{cw}(v)$. Moreover, we may assume that $u\in\Gamma_0$, in which case $G_\alpha^{cw}(u)$ is also clockwise of $G_\alpha^{cw}$.

Let $g'$ denote any subsequential limit of the sequence $(\geo(0,v_k))_{k\ge1}$. It then follows that $G_\alpha^{cw}(u)\le g'\le G_\alpha^{cw}$. Then $g'$ has label $\alpha$, and therefore coincide with $G_\alpha^{cw}$, as required.
\end{proof}

\begin{proof}[Proof of Proposition~\ref{prop:modification}]
To complete the proof of the proposition, fix $\alpha\in\labels_\star$ and let $G$ denote either of $G_\alpha^{cw}$ and $G_\alpha^{ccw}$. For the first statement, since $G$ visits infinitely many points in $H\cap H'+u$, for every $u$ almost surely, it follows by Lemma~\ref{lma:equal trees} that, almost surely, we find $v\in G(\omega)$ so that $A_1(v)$ occurs. On the event $A_1(v)$, $G(v)(\omega)$ is a geodesic in both $\omega$ and $\omega'$, and the restriction of $\tree_v$ to the cone counterclockwise between $\Gamma_0(v)$ and $\Gamma_1(v)$ coincide in $\omega$ and $\omega'$. By Lemma~\ref{new lakes} it has label $\alpha$ also in $\omega'$. Assume that $G(v)(\omega)$ is the cw-most geodesic with label $\alpha$ in $\tree_v(\omega)$, and that there is another geodesic $g\in\tree_v(\omega')$ clockwise of $G(v)(\omega)$ with label $\alpha$. Then $g$ is a geodesic also in $\tree_v(\omega)$, and by Lemma~\ref{new lakes} has label $\alpha$, contradicting that $G(v)(\omega)$ were the cw-most one. An analogous argument shows that if $G(v)(\omega)$ instead were the ccw-most geodesic in $\tree_v(\omega)$ with label $\alpha$, then it will be the ccw-most also in $\tree_v(\omega')$. In particular $G(v)(\omega)=G(v)(\omega')$ for some $v\in G(\omega)$, and thus for all subsequent sites along the path, almost surely, thus proving the first statement.

We proceed with the second claim of the proposition. On the event that $\omega\le\omega'$ coordinate wise and the two coincide along $G(\omega)$, it follows that $G(\omega)$ is also a geodesic in $\omega'$. By Lemma~\ref{new lakes} it has label $\alpha$. Suppose that $G(\omega)\neq G(\omega')$. In case $G$ denotes $G_\alpha^{cw}$ this means that $G(\omega')<G(\omega)$, and in case $G$ denotes $G_\alpha^{ccw}$ this means that $G(\omega')>G(\omega)$. In either case the tail of $G(\omega')$ is a geodesic with label $\alpha$ also in $\omega$. By Lemma~\ref{rng limits}, taking limits (in $\omega$) along $G(\omega')$ gives $G(\omega)$. Since $\omega\le\omega'$ coordinate wise, this means that there exists a faster path to far out vertices on $G(\omega')$ than using $G(\omega')$ itself, contradicting the assumption that $G(\omega)\neq G(\omega')$.
\end{proof}

\subsection{Half-plane geodesics are coalescing}

We next show that random non-crossing geodesics that eventually move into some half-plane are coalescing.

\begin{prop} \label{picnic table}
Every geodesic of the form $G_\alpha^{cw}$ or $G_\alpha^{ccw}$, for some $\alpha\in\labels_\star$, is coalescing.
\end{prop}

Our proof of Proposition~\ref{picnic table} will be adapted from an argument due to Licea and Newman~\cite{newman95,licnew96}; see also~\cite{damhan14}.
The core of the proof is a local modification argument that shows that if $G$ is not coalescing, then there exists a coalescence class with a least element according to some ordering of the elements in $\Z^2$. Our first lemma says that this cannot happen.

\begin{lemma} \label{photo id}
Let $G$ be a random non-crossing geodesic and let $R$ be some shift invariant order on $\Z^2$. Then no coalescence class for $G$ has a least element for $R$, almost surely.
\end{lemma}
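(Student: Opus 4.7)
The plan is to derive a contradiction via a density-versus-boundary counting argument. Suppose to the contrary that with positive probability some coalescence class of $G$ has an $R$-least element. Writing $A_v$ for the event that $v$ is the $R$-least element of its own coalescence class under $G$, our assumption reads $\P\bigl(\bigcup_{v\in\Z^2} A_v\bigr) > 0$, so by countable subadditivity $\P(A_w) > 0$ for some $w\in\Z^2$. Because $R$ is shift-invariant and the law of the weights is shift-invariant, $A_v$ is the translate of $A_0$ under $\sigma_{-v}$, and therefore $p:=\P(A_0) > 0$.

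The indicator process $(\mathbf{1}_{A_v})_{v\in\Z^2}$ is stationary under the ergodic $\Z^2$-action on $\Omega$, so the multidimensional ergodic theorem yields
\[
\frac{1}{|B_n|}\sum_{v\in B_n}\mathbf{1}_{A_v}\longrightarrow p\quad\text{almost surely,}
\]
where $B_n:=[-n,n]^2\cap\Z^2$. Letting $M_n$ denote the number of $R$-least elements in $B_n$, it follows that $M_n\ge\tfrac{p}{2}|B_n|\gtrsim n^2$ for all sufficiently large $n$, almost surely.

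For the matching upper bound, observe that two distinct $R$-least elements $u\ne v$ in $B_n$ necessarily represent distinct coalescence classes, since each class carries at most one $R$-least element. Consequently $G(u)$ and $G(v)$ do not coalesce, and as $G$ is non-crossing they must in fact be vertex-disjoint. Each $G(v)$ is an infinite self-avoiding path originating in $B_n$, so must exit $B_n$ at a first vertex lying in the outer boundary $\partial^+ B_n:=\{w\notin B_n:\|u-w\|_1=1\text{ for some }u\in B_n\}$; disjointness of the paths forces these exit vertices to be pairwise distinct. Hence $M_n\le|\partial^+B_n|=O(n)$.

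Combining the two estimates gives $pn^2\lesssim n$ for all large $n$, which forces $p=0$ and contradicts the assumption. The only step that demands genuine care is the upgrade from non-coalescence to vertex-disjointness, which is precisely the content of the non-crossing hypothesis; once that is in hand the proof reduces to comparing $\Theta(n^2)$ pairwise-disjoint infinite geodesics with the $O(n)$-sized boundary of $B_n$.
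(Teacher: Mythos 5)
Your proof is correct and takes a genuinely different route from the paper's. The paper applies the mass transport principle: set $m(x,y)=1$ when $y$ is the $R$-least element of the coalescence class of $x$. Each vertex sends out at most one unit of mass, yet if $y$ is $R$-least it receives infinitely much, since every $x$ on $G(y)$ has $G(x)$ meeting $G(y)$ at $x$ and hence, by non-crossing, coalescing with it, so $x$ lies in the class of $y$; translation invariance then yields the contradiction. Your argument replaces mass transport with the pointwise ergodic theorem and a boundary count: a positive density $p$ of $R$-least vertices in $B_n$ supplies $\Theta(n^2)$ pairwise-disjoint infinite geodesics, each forced to exit $B_n$ at a distinct vertex of the $O(n)$-sized outer boundary, which is impossible. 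Both arguments pivot on the non-crossing dichotomy but apply it in opposite directions: the paper's proof uses intersection~$\Rightarrow$~coalescence to make the $R$-least class infinite, whereas yours uses non-coalescence~$\Rightarrow$~disjointness to separate the exit vertices. Mass transport is slightly more robust here, needing only stationarity (not ergodicity) and no volume-versus-boundary estimate, while your version is more concrete and elementary at the cost of invoking ergodicity, which the standing hypotheses do provide.
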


\begin{proof}
This lemma is an application of the mass-transport principle, see~\cite[Theorem~5.3]{hagjon06}. We define a function $m(x,y)$ which is one if $y$ is the least element in the coalescence class of $x$, and otherwise zero.
If $y$ is the least element of its coalescence class then it gets infinite mass as all the points on $G(y)$ have $m(x,y)=1$. If the probability that $y$ is the least element for some class is positive, then the expected amount of mass into $y$ ($\E \sum_{x}m(x,y)$) is infinite while the expected amount of mass out of $y$
($\E \sum_{x}m(y,x)$) is at most one. This is a contradiction to the mass-transport principle, which holds these quantities equal. Thus the probability that $y$ is the least element of its coalescence class is zero, so by the union bound the probability that there exists a coalescence class which has a least element is zero.
\end{proof}




\begin{proof}[Proof of Proposition~\ref{picnic table}]
Fix $\alpha\in\labels_\star$ and let $G$ denote either $G_\alpha^{cw}$ or $G_\alpha^{ccw}$. By Proposition~\ref{riders on the storm} it follows that $G$ eventually moves into some half-plane $H_i$.
For notational simplicity we shall assume that $G$ eventually moves into the right half-plane, and note that any other case can be treated in an analogous manner.

We will argue by contradiction and assume that $G$ is not coalescing with positive probability. Given $i,j\in\Z$, let $A(i,j)$ denote the event that $G(i{\bf e}_2)$ and $G(j{\bf e}_2)$ are entirely contained in the right half-plane and belong to different coalescence classes. Since $G$ eventually moves into the right half-plane and is assumed to be non-coalescing (with positive probability), the event $A(i,j)$ has positive probability to occur for some $i<j$. An application of the ergodic theorem assures that $A(i,i+k)$ will occur for some $k\ge1$ and infinitely many $i\in\Z$ with probability one. Consequently, there are infinitely many coalescence classes for $G$ with probability one.

The remainder of the proof will be divided into two different cases, depending on whether the support of the weight distribution is unbounded or not. We start with the simpler case of unbounded support.\\

\emph{Case 1: Unbounded support.}
Given $m\ge1$, let $R_m$ denote the rectangle $\{(i,j)\in\Z^2:i\in\{0,-1\},0\le j\le m\}$. We partition the set of edges with both their endpoints in $R_m$ into \emph{interior} edges $\textup{int}(R_m)$, connecting $(-1,j)$ to $(0,j)$ for some $0<j<m$, and \emph{boundary} edges $\partial R_m$, being the remaining ones. For $m\ge1$, $t>0$ and $0<j_1<j_2<j_3<m$, let $A=A(m,t,j_1,j_2,j_3)$ denote the event that (see Figure~\ref{fig:coal1})
\begin{itemize}
\item $G(j_1{\bf e}_2)$, $G(j_2{\bf e}_2)$ and $G(j_3{\bf e}_2)$ belong to different coalescence classes;
\item $G(j_1{\bf e}_2)$, $G(j_2{\bf e}_2)$ and $G(j_3{\bf e}_2)$ are entirely contained in the right half-plane;
\item every boundary edge in $R_m$ has weight at most $t$.
\end{itemize}
\begin{figure}[htbp]
\begin{center}
\includegraphics{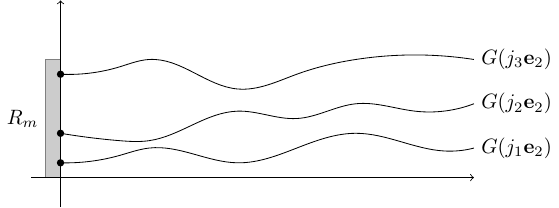}
\end{center}
\caption{The rectangle $R_m$ (shaded area) see three coalescence classes represented.}
\label{fig:coal1}
\end{figure}
It is clear that $A$ has positive probability for some choice of $m$, $j_1$, $j_2$ and $j_3$, and all $t$ large enough, since for all $m$ sufficiently large three coalescence classes will intersect $R_m$, and by making $m$ larger, since $G$ eventually moves into the right half-plane, some representative of each class will visit the vertical axis only inside $R_m$. Then choose $t$ large.

Let $A'$ denote the event that $A$ occurs and that all interior edges in $R_m$ have weight larger than $2mt$. We claim that also $\P(A')>0$ for this choice of parameters. To see this, note that any geodesic that avoids the interior edges of $R_m$ will remain a geodesic after increasing the weight on interior edges of $R_m$. Moreover, it follows from Proposition~\ref{prop:modification} that the images of $G$ at the points $j_1{\bf e}_2$, $j_2{\bf e}_2$ and $j_3{\bf e}_2$ do not change as we increase the weight of the interior edges. In terms of the local modification coupling, the last statement can be formalized as
$$
\P'\Big(\omega\in A,\omega_e\le s<\omega'_e\text{ for }e\in F,F=\textup{int}(R_m)\Big)\le\P'\Big(\omega'\in A,\omega'_e>s:e\in\textup{int}(R_m)\Big)
$$
for all $s>0$. The left-hand side in the above expression can be decomposed as
$$
\E'\Big[\P'\big(\omega\in A,\omega_e\le s\text{ for }e\in F\,\big|\,\omega_{F^c}\big)\P'\big(\omega'_e>s\text{ for }e\in F\,\big|\,\omega_{F^c}\big){\bf 1}_{\{F=\textup{int}(R_m)\}}\Big].
$$
Since $\mathbf{Q}$ has full support, the last expression is strictly positive for all $s$ sufficiently large, due to the upward finite energy property. So, by monotonicity in $s$, we obtain that
\begin{equation}\label{A primula}
\P(A')=\P\big(\omega\in A,\omega_e>2mt\text{ for }e\in\textup{int}(R_m)\big)>0.
\end{equation}

However, we note that on the event $A'$ there are three coalescence classes of $G$ represented in $R_m$, but that no vertex in the left half-plane belongs to the middle of the three. This is a contradiction to Lemma~\ref{photo id}, which shows that our initial assumption, that $G$ has positive probability not to be coalescing, must have been wrong.\\

\emph{Case 2: Bounded support.}
We proceed with the case that the edge weight distribution has bounded support, that is, that
$$
t_1:=\sup\big\{t\ge0:\P(\omega_e>t)>0\big\}
$$
is finite. We will in this case need to replace the rectangle $R_m$ by a larger randomly selected region, delimited by two vertical lines and two finite geodesics obtained from $G$.



Fix $\eps=(t_1-\E[\omega_e])/6$. Given $m\ge1$, $0<j_1<j_2<j_3<\eps m/t_1$, and $-m<j_1'<j_3'<m$ with $j_3'-j_1'<\eps m/t_1$, let $A=A(m,j_1,j_2,j_3,j_1',j_3')$ denote the event that (see Figure~\ref{fig:coal2})
\begin{itemize}
\item $G(j_1{\bf e}_2)$, $G(j_2{\bf e}_2)$ and $G(j_3{\bf e}_2)$ belong to different coalescence classes;
\item $G(j_1{\bf e}_2)$, $G(j_2{\bf e}_2)$ and $G(j_3{\bf e}_2)$ are entirely contained in the right half-plane;
\item the top- respectively bottom-most intersections of $G(j_1{\bf e}_2)$ and $G(j_3{\bf e}_2)$ with the vertical line $\{(m,j):j\in\Z\}$ occur at $(m,j_1')$ and $(m,j_3')$;
\item $T\big((0,j_1),(m,j_1')\big)\le(\E[\omega_e]+\eps)\|(m,j_1'-j_1)\|_1$.
\end{itemize}
\begin{figure}[htbp]
\begin{center}
\includegraphics{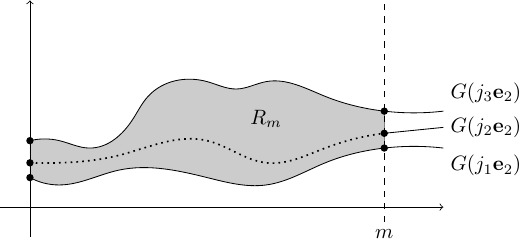}
\end{center}
\caption{Three coalescence classes represented on the vertical axis, and touch the vertical line $\{(x,y):y=m\}$ within distance $\eps m/t_1$ from each other. The shaded region is the one to be resampled.}
\label{fig:coal2}
\end{figure}

We argue that $A$ has positive probability to occur for some set of parameters. First, observe that by making $m$ large we can make sure to find three coalescence classes represented on the vertical axis within $0$ and $\eps m/t_1$, and that some representative in each class is entirely contained in the right half-plane. Second, using the shape theorem\footnote{Note that the time constant satisfies $\mu(z)\le\E[\omega_e]\|z\|_1$.} we increase $m$ so that the fourth condition is met. At last, we note that since $\dir(G)$ is almost surely constant, by Lemma~\ref{rng direction}, there are infinitely many $n$ for which $G(j_1{\bf e}_2)$ and $G(j_3{\bf e}_2)$ intersect $H_0+n{\bf e}_1$ within distance $\eps n/t_1$ of each other. By increasing $m$ further we conclude that $A$ has positive probability to occur for some set of parameters.

Denote by $R_m$ the region confined by the two vertical segments connecting $(0,j_1)$ with $(0,j_3)$ and $(m,j_1')$ with $(m,j_3')$, and the two segments of $G(j_1{\bf e}_2)$ and $G(j_3{\bf e}_2)$ connecting $(0,j_1)$ and $(0,j_3)$ with $(m,j_1')$ and $(m,j_3')$. While the event $A$ defined above is not increasing with respect to the set of interior edges in $R_m$, we may easily modify $A$ slightly to become increasing. Let $A'=A'(m,j_1,j_3,j_1',j_2',j_3')$, where $j_1'<j_2'<j_3'$, be the event that (recall Figure~\ref{fig:coal2})
\begin{itemize}
\item $G(j_1{\bf e}_2)$, $G((m,j_2'))$ and $G(j_3{\bf e}_2)$ belong to different coalescence classes;
\item $G(j_1{\bf e}_2)$ and $G(j_3{\bf e}_2)$ are entirely contained in the right half-plane, while $G((m,j_2'))$ is entirely contained in the half-plane $H_0+m{\bf e}_1$;
\item the top- respectively bottom-most intersections of $G(j_1{\bf e}_2)$ and $G(j_3{\bf e}_2)$ with the vertical line $\{(m,j):j\in\Z\}$ occur at $(m,j_1')$ and $(m,j_3')$;
\item $T\big((0,j_1),(m,j_1')\big)\le(\E[\omega_e]+\eps)\|(m,j_1'-j_1)\|_1$.
\end{itemize}

It is clear that also $A'$ has positive probability to occur for some set of parameters.
Let $A''$ denote the event that $A'$ occurs and that the weight of each interior edge in $R_m$ has weight at least $t_1-\eps$. Using Proposition~\ref{prop:modification}, an argument analogous to the one leading to~\eqref{A primula} shows that $\P(A'')>0$. (Although $R_m$ is random, we may fix some set $R$ such that $R_m=R$ with positive probability.)

On the event $A''$ there are three coalescence classes represented on the vertical segment $\{(m,j):|j|<m\}$, so our argument will be complete if we only can show that no vertex in the left half-plane is contained in the middle of the three, as that would be a contradiction to Lemma~\ref{photo id}. To see this, note that on $A'$ we have that every path $\pi$ between a point $(0,j)$ to a point $(m,j')$, where $j_1<j<j_3$ and $j_1'<j'<j_3'$, that does not touch neither $G(j_1{\bf e}_2)$ nor $G(j_3{\bf e}_2)$, has to pick up at least $\|(m,j'-j)\|_1$ weights of size at least $t_1-\eps$. Since $|j'-j|\ge|j_1'-j_1|-\eps m/t_1$, this amounts to a total weight $T(\pi)$ satisfying
$$
T(\pi)\,\ge\,(t_1-\eps)\|(m,j'-j)\|_1\,\ge\,(t_1-2\eps)\|(m,j_1'-j_1)\|_1.
$$
However, the path obtained by moving vertically from $(0,j)$ to $(0,j_1)$, then follow $G(j_1{\bf e}_2)$ to $(m,j_1')$, and finally move vertically to $(m,j')$, has weight at most
\begin{equation*}
T\big((0,j_1),(m,j_1')\big)+2\eps m\,\le\,(\E[\omega_e]+3\eps)\|(m,j_1'-j_1)\|_1\,=\,(t_1-3\eps)\|(m,j_1'-j_1)\|_1.
\end{equation*}
Hence, $\pi$ cannot be a geodesic, and the middle coalescence class will thus not contain any element in the left half-plane.
\end{proof}

\subsection{Proof of Theorem~\ref{thm:coalescence}}

We first argue that $G_\alpha^{cw}$ and $G_\alpha^{ccw}$ are random coalescing geodesics for each $\alpha\in\labels_\star$. By Theorem~\ref{thm:labels} they are random non-crossing geodesics. By Proposition~\ref{riders on the storm} there exists, for each of them, a half-plane $H_i$ such that it eventually moves into $H_i$, so by Proposition~\ref{picnic table} they are almost surely coalescing.

We now take a random coalescing geodesic $G$. $F(G)$ is an almost sure constant; let's call it $\alpha$. It follows by Lemma~\ref{ccw-dense} that either $G=G_\alpha^{cw}$ or $G=G_\alpha^{ccw}$ with probability one.

Next we fix $\alpha\in\labels_\star$ and argue that $\{G_\alpha^{cw}=G_\alpha^{ccw}\}$ is a 0-1 event. Suppose that the two differ with positive probability, so that by the ergodic theorem there is a density of points for which they do, almost surely. Since both are almost surely coalescing, we must have $G_\alpha^{cw}(v)\neq G_\alpha^{ccw}(v)$ for all $v\in\Z^2$, since the contrary would contradict the coalescence property. This proves~\emph{(a)}.

For~\emph{(b)} we note that $G_\alpha^{cw}$ is a random coalescing geodesic, which by Theorem~\ref{thm:labels} has label $\alpha$ almost surely. By Lemma~\ref{lma:multiplicity} it follows that, with probability one, there are at most two ccw-isolated geodesics in $\tree_0$ with label $\alpha$. Since the existence of three geodesics labeled $\alpha$ would imply the existence of (at least) two ccw-isolated geodesics,~\emph{(b)} follows.




\subsection{Shift-invariant measures on non-crossing geodesics}

In preparation for addressing the midpoint problem below we shall next investigate properties of `sparse' non-crossing geodesics. A measurable map $G:\Omega_1\to\Omega_2$ is a {\bf sparse random non-crossing geodesic} if for almost every $\omega\in\Omega_1$ either $G(\omega)\in\tree_0(\omega)$ or $G(\omega)\equiv0$, and for every $u,v\in\Z^2$ we have that
\begin{itemize}
\item if $G(u)\in\tree_u(\omega)$ and $G(v)\in\tree_v(\omega)$, then they are non-crossing;
\item if $u\in G(v)$ for some $v\in\Z^2$, then $G(u)\in\tree_u(\omega)$;
\end{itemize}
and $\P(G\in\tree_0)>0$. In particular, also a random non-crossing geodesic is a sparse random non-crossing geodesic. (As before we write $G(v)$ as short for $\sigma_{-v}\circ G\circ\sigma_v$.)

A sparse random non-crossing geodesic $G$ induces an equivalence relation on the set $V_G:=\{v\in\Z^2:G(v)\in\tree_v\}$ by declaring $u\sim v$ if $G(u)$ and $G(v)$ coalesce. We refer to the equivalence classes of $G$ as coalescence classes and call $G$ coalescing if there is almost surely a unique coalescence class.

\begin{prop}\label{p:sparse}
A sparse random non-crossing geodesic has at most four coalescence classes almost surely.
A sparse random non-crossing geodesic which is almost surely contained between $\Gamma_0$ and $\Gamma_1$ is coalescing.
\end{prop}

\begin{proof}
First, assume that $G$ is a sparse random non-crossing geodesic which is almost surely contained between $\Gamma_0$ and $\Gamma_1$. We first show that there exists $\alpha\in[0,1]$ such that either $G\equiv0$ or $F(G)=\alpha$. We argue as in the proof of Lemma~\ref{constant}: Suppose the contrary, that there exist disjoint intervals $[a,b]$ and $[c,d]$, where $b<c$, such that $F(G)$ has positive probability to be in either. Let $A(u)$ denote the event that $G(u)\in\tree_u$ and $F(G(u))\in[c,d]$, and let $A'(u)$ be the event that $G(u)\in\tree_u$ and $F(G(u))\in[a,b]$. By Proposition~\ref{cycle} we obtain, almost surely, $u\in\Gamma_0$ and $v\in\Gamma_1$ such that $A(u)$ and $A'(v)$ occur. However, this means that $G(u)$ and $G(v)$ must cross, which is a contradiction. We conclude that the label of $G$ is almost surely constant.
Second, we observe that if $\P(G=G_\alpha^{ccw})>0$, then $\P(G\equiv0\text{ or }G=G_\alpha^{ccw})=1$, since the contrary would contradict $G$ being non-crossing. Conversely, if $\P(G=G_\alpha^{cw})>0$, then $\P(G\equiv0\text{ or }G=G_\alpha^{cw})=1$. In either case it follows, by Theorem~\ref{thm:coalescence}, that $G$ has a unique coalescence class, as claimed.

Next, let $G$ be any sparse random non-crossing geodesic and let $I\subset\tree_0$ be of the form $\{g\in\tree_0:\Gamma_i\le g<\Gamma_{i+1}\}$ for some $i=0,1,2,3$. There are four sets of this form and together they form a partition of $\tree_0$. For each $I$ of this form we define $G_I:\Omega_1\to\Omega_2$ as
\begin{equation*}
G_I(\omega):=\left\{
\begin{aligned}
& G(\omega) & \text{if }G(\omega)\in I,\\
&0 & \text{otherwise}.
\end{aligned}
\right.
\end{equation*}
In case $\P(G\in I)>0$, then $G_I$ is also a sparse random non-crossing geodesic. Then $G_I$ is coalescing by the first part of the proposition. Hence, there may be at most four coalescence classes almost surely.
\end{proof}

A similar statement holds for shift-invariant measures on families of non-crossing geodesics. Let $\nu$ be a probability measure on $\Omega_1\times\Omega_2'$ whose restriction to $\Omega_1$ coincides with $\P$ and whose restriction to $\Omega_2'$ does not put all mass on the all-zero configuration. We shall call $\nu$ a {\bf shift-invariant measure on non-crossing geodesics} if for $\nu$-almost every $(\omega,\eta)$ the graph encoded by $\eta$ has the following properties:
\begin{itemize}
\item each site has either degree zero or out-degree 1;
\item for each site with out-degree 1 the unique infinite forward-path is a geodesic;
\item any two forward-paths are either disjoint or coalesce.
\end{itemize}


\begin{prop}\label{p:sparse measure}
Every shift-invariant measure on non-crossing geodesics is supported on families of geodesics with at most four coalescence classes.
\end{prop}

\begin{proof}
First we decompose $\nu$ into a linear combination of four measures $\nu_0$, $\nu_1$, $\nu_2$ and $\nu_3$ so that $\nu_i$ is supported on geodesics in $\{g\in\tree_0:\Gamma_i\le g<\Gamma_{i+1}\}$. As in Proposition~\ref{p:sparse}, we first show that for $\nu_i$-almost every $(\omega,\eta)$ all geodesics encoded in $\eta$ have equal labels.

Suppose the contrary, that there exists $v$ and intervals $[a,b]$ and $[c,d]$, where $b<c$ such that with positive probability the forward-paths at the origin and $v$ have labels in $[a,b]$ and $[c,d]$, respectively. As in Proposition~\ref{cycle} we then obtain, with positive probability, the existence of $u\in\Gamma_0$ and $v\in\Gamma_1$ such that the forward-path at $u$ has label in $[c,d]$ and the forward-path at $v$ has label in $[a,b]$. However, due to the consistency of the ordering and the labeling, Proposition~\ref{label props}, the two paths must cross, contradicting that $\nu$ is supported on non-crossing families.

Since $\nu_i$ is supported on configurations of geodesics whose labels coincide, $\nu_i$ induces a measure $\pi_i$ on $\Omega_1\times[0,1]$. Through conditional expectation we obtain a measure $\hat\pi_i(\omega)$ on $[0,1]$ which is invariant under the shift maps, i.e.\ $\hat\pi_i(\omega)=\hat\pi_i(\shiftv\omega)$ for almost every $\omega\in\Omega_1$ and every $v\in\Z^2$. By ergodicity of the shift maps we conclude that there exists a measure $\bar\pi_i$ such that $\hat\pi_i(\omega)=\bar\pi_i$ for almost every $\omega\in\Omega_1$, and thus that $\pi_i=\P\times\bar\pi_i$.

Let $M$ denote the set of points $(\omega,\alpha)\in\Omega_1\times[0,1]$ for which
there are at least three geodesics in $\tree_0(\omega)$ with label $\alpha$. Let $C$ denote the set of points $(\omega,\alpha)$ for which there exists $u$ and $v$ in $\Z^2$ such that $G_\alpha^{cw}(u)(\omega)$ and $G_\alpha^{cw}(v)(\omega)$, or $G_\alpha^{ccw}(u)(\omega)$ and $G_\alpha^{ccw}(v)(\omega)$, do not coalesce. Due to the product structure of $\pi_i$ it follows from Fubini's theorem and Theorem~\ref{thm:coalescence} that $\pi_i(M\cup C)=0$. 

Let $A$ denote the event that there exists $\alpha\in\labels_\star$ such that for every $v\in\Z^2$ the geodesic encoded by $\eta$ and starting at $v$ (if there is one) coincides with either $G_\alpha^{cw}(\omega)$ or $G_\alpha^{ccw}(\omega)$. Since $\pi_i(M)=0$ we conclude that $\nu_i(A)=1$. Since $\pi_i(C)=0$, $\nu_i(A)=1$ and $\nu_i$ is supported on non-crossing families of geodesics, we conclude that, with $\nu_i$-probability one, there exists $G\in\rcg$ such that for every $v\in\Z^2$ the geodesic encoded by $\eta$ and starting at $v$ (if there is one) coincides with $G(\omega)$. Finally, since $\pi_i(C)=0$, it follows that $\nu_i$ is supported on coalescing families of geodesics.
\end{proof}

\section{An ergodic theory for infinite geodesics}
\label{other}

In this section we investigate some of the consequences of the theory that we have built in the previous sections. We shall discuss cardinality of infinite geodesics, existence of bigeodesics, asymptotic directions and Busemann functions of infinite geodesics. Together the results of this section will provide a detailed description of the asymptotic properties of infinite geodesics that prove, and in several aspects go beyond, what has been announced in Theorems~\ref{versailles}--\ref{unique support}.

We saw in Section~\ref{sec:coalescence} that the class of random coalescing geodesics coincides with the family of random geodesics of the form $G_\alpha^{cw}$ and $G_\alpha^{ccw}$ for $\alpha\in\labels_\star$. Moreover, we saw in Section~\ref{label of sections} that the set of labels we observe almost surely coincides with $\labels_\star$, and in Section~\ref{cola} that the probability of observing more than two geodesics with a given label is zero. Hence, if $\labels_\star$ is at most countable, then there are almost surely no geodesics in $\tree_0$ that are not the image of a random coalescing geodesic, and the results from Section~\ref{properties} give a precise description of the asymptotic properties of $\tree_0$.

However, the usual predictions (such as strict convexity of $\ball$) suggest that $\labels_\star$ is uncountable in the i.i.d.\ setting. This would imply that there may (and will, recall footnote~\ref{prediction}) be `exceptional' labels attributed to more geodesics than expected. In this case we will have to work harder in order to cover these `exceptional' geodesics. Indeed, we will show, almost surely, that (i) every geodesic has an asymptotically linear Busemann function; (ii) the corresponding set of linear functionals is a deterministic closed set; and (iii) for any given functional supporting to $\ball$ there is at most one geodesic with Busemann function asymptotically linear to that functional. That is, the `exceptional' geodesics are exceptional as they appear randomly in $\tree_0$.

Recall, from Theorem~\ref{thm:coalescence}, that the occurrence of multiple geodesics with a given label is a 0-1 event. This motivates the notation
$$
\double_\star:=\big\{\alpha\in[0,1]:\P(\exists\text{ at least two geodesics in $\tree_0$ with label }\alpha)=1\big\}.
$$

\subsection{Cardinality of infinite geodesics}

As a first consequence of the theory we have built we address the question of the number of topological ends of the geodesic tree, i.e., the graph encoded by $\tree_0$. The following is a reformulation of Theorem~\ref{geo cardinality}.

\begin{theorem}
For almost every $\omega\in\Omega_1$ the cardinality of $\tree_0(\omega)$ equals that of $\labels_\star$.
\end{theorem}

\begin{proof}
By Theorem~\ref{thm:labels} we have $\labels=\labels_\star$ almost surely, so the labeling provides an injection from $\labels_\star$ to $\tree_0(\omega)$ for almost every $\omega\in\Omega_1$. By Theorem~\ref{thm:coalescence}, for any $\alpha\in\labels_\star$ there are almost surely at most two geodesics with label $\alpha$, and the existence of two geodesics labeled $\alpha$ is a 0-1 event. Consequently, if $\labels_\star$ is at most countable, then there is almost surely a one-to-one correspondence between $\tree_0$ and the disjoint union of $\labels_\star$ and $\double_\star$. Of course, $\double_\star\subseteq\labels_\star$ always, and by combining Lemmas~\ref{lma:multiplicity} and~\ref{continuity_labels} it follows that $\double_\star$ is empty whenever $\labels_\star$ is finite. Hence, the statement follows.
\end{proof}

\subsection{Neighboring geodesics}

A way to describe the density of random coalescing geodesics in the space of infinite geodesics is in terms of neighboring geodesics. Two random coalescing geodesics $G$ and $G'$ satisfying $G<G'$ are said to be {\bf neighboring} if
$$
\P\big(\exists g\in\tree_0:G<g<G'\big)=0.
$$

\begin{lemma}\label{neighboring}
Let $G<G'$ be random coalescing geodesics. If $G$ and $G'$ are not neighboring, then there exists a random coalescing geodesic $G''$ such that $G<G''<G'$.
\end{lemma}

\begin{proof}
Assume that $\P\big(\exists g\in\tree_0:G<g<G'\big)>0$. Let $\alpha=F(G)$ and observe that $\alpha'=F(G')>\alpha$, since the contrary would give three geodesics labeled $\alpha$ with positive probability -- a contradiction to Theorem~\ref{thm:coalescence}.

There are three cases to consider: That either, with positive probability, there exists a geodesic with label in $(\alpha,\alpha')$, with label $\alpha$ or with label $\alpha'$. Assume first that there exists a geodesic with label in $(\alpha,\alpha')$ with positive probability. In that case, according to Lemma~\ref{lma:exists_label}, there exists $\beta\in(\alpha,\alpha')$ such that $\beta\in\labels_\star$. By Theorem~\ref{thm:coalescence} the geodesic $G_\beta^{ccw}$ is a random coalescing geodesic and lies strictly between $G$ and $G'$.
Assume instead that with positive probability there exists a geodesic $g\in\tree_0$, strictly larger than $G$, which has label $\alpha$. Then $\alpha\in\double_\star$, by Theorem~\ref{thm:coalescence}, and $G$ must coincide with the clockwise-most geodesic with label $\alpha$. Hence, $G_\alpha^{ccw}$ lies strictly between $G$ and $G'$.
The final case, when with positive probability there exists a geodesic with label $\alpha'$, is similar.
\end{proof}

\begin{remark}\label{rem:multiplicity}
We observe that if $\alpha\in\double_\star$, then $G_\alpha^{cw}$ and $G_\alpha^{ccw}$ are neighbouring. Since their Busemann functions are linear to distinct functionals supporting $\ball$, and since for each functional $\rho$ tangent to $\ball$ there is a unique random coalescing geodesic tangent to $\rho$, we may conclude that elements of $\double_\star$ are associated to corners of $\ball$.
\end{remark}

\subsection{Directions of differentiability}

Outside of the corners of the asymptotic shape we rule out the existence of multiple geodesics and bigeodesics. The following is a restatement of Theorem~\ref{versailles}.

\begin{theorem}\label{re-versailles}
Let $v\in S^1$ be a direction of differentiability of $\partial\ball$. Then,
\begin{enumerate}[\quad (a)]
\item $\P\big(\exists \text{ two disjoint geodesics with $v$ as a direction}\big)=0$;
\item $\P\big(\exists \text{ a bigeodesic with $v$ as a direction}\big)=0$.
\end{enumerate}
\end{theorem} 

\begin{proof}
Since $v\in S^1$ be a direction of differentiability there is a unique linear functional $\rho$ supporting to $\ball$ that contains $v$. So, by Proposition~\ref{bordeaux} there is a unique random coalescing geodesic $G$ for which $v$ is contained in $\arc(\rho_G)=\{x\in S^1:\mu(x)=\rho_G(x)\}$. $\dir(G)$ may or may not contain $v$, but for every other random coalescing geodesic $G'$ we have $v\not\in\dir(G')$ almost surely.

Assume for a contradiction that there are two geodesics in $\tree_0$ with $v$ in its set of directions with positive probability. We may then find $\delta>0$ and $m\ge1$ such that
\begin{equation}\label{eq:two geos}
\P\big(\exists\text{ two geodesics in $\tree_0$ with $v$ as a direction that diverge within $m$ steps}\big)>\delta.
\end{equation}
We define two random coalescing geodesics $G^-$ and $G^+$ as follows: If $G$ is cw-isolated, then let $G^-$ be its clockwise neighbor. If $G$ is cw-dense, then pick $G^-<G$ such that
\begin{equation}\label{eq:three geos}
\P\big(\text{$G$ and $G^-$ diverge within $m$ steps}\big)<\delta/4.
\end{equation}
Define $G^+>G$ similarly. $G^-$ and $G^+$ correspond to arcs of $S^1$ clockwise and counterclockwise of $\arc(\rho_G)$. Consequently, neither of the two may contain $v$ in its set of directions, and with probability 1 no geodesic in $\tree_0$ outside of the cone counterclockwise between $G^-$ and $G^+$ may so either. The assumption in~\eqref{eq:two geos} thus implies that
$$
\P\big(\exists\text{ two geodesics ccw between $G^-$ and $G^+$ that diverge within $m$ steps}\big)>\delta.
$$
Then, with probability $\delta/2$ there must be a geodesic counterclockwise between either $G^-$ and $G$, or $G$ and $G^+$, that diverges within $m$ steps. Assume the former. In this case $G^-$ and $G$ cannot be neighboring, and we have reached a contradiction to~\eqref{eq:three geos}.

We have showed that no geodesic in $\tree_0$ apart from $G$ may contain $v$ as a direction, almost surely. Similarly, for each $u\in\Z^2$ no geodesic apart from $G(u)$ may contain $v$ as a direction, almost surely. Since $G$ is coalescing, it follows that almost surely there are no two disjoint geodesics for which $v$ is a direction. This proves part~\emph{(a)} of the theorem. Part~\emph{(b)} follows from part~\emph{(a)} together with Proposition~\ref{temecula}.
\end{proof}

\subsection{Ordering of Busemann functions and linear functionals}

The remainder of this section will to a large extend aim to compare Busemann functions and linear functionals to which Busemann functions are linear. We shall start this work with a few of lemmas. 

\begin{lemma}\label{another lemma}
Let $G$ and $G'$ be random coalescing geodesics such that $G<G'$.
Then, with probability one, for all $y\in\Z^2$ satisfying
\begin{itemize}
\item $y$ is not in the cone ccw between $G(0)$ and $G'(0)$;
\item $0$ is not in the cone ccw between $G(y)$ and $G'(y)$;
 and  
 \item $G(y)$ intersects $G'(0)$,
\end{itemize}
and for all $g,g'\in\tree_0$ such that $G\le g\le g'\le G'$, we have
$$
B_G(0,y)\,\le\,B_g(0,y)\,\le\,B_{g'}(0,y)\,\le\,B_{G'}(0,y).
$$
Similarly, for $y\in\Z^2$ such that $G'(y)$ intersects $G(0)$ all inequalities are reversed. 
\end{lemma}

\begin{proof}
It follows from Lemma~\ref{rioja} that for almost every realization $\omega\in\Omega_1$ the Busemann function is well-defined for every $g\in\tree_0(\omega)$. Let $g=(0,v_1,v_2,\ldots)$ and $g'=(0,v'_1,v'_2,\ldots)$ be geodesics satisfying $G\le g\le g'\le G'$. For $y$ satisfying the assumptions of the lemma the geodesic between $y$ and $v_k$ will have to intersect $g'$, see Figure~\ref{fig:conseq2}. Let $u_k$ denote the first point on $g'$ visited by $\geo(y,v_k)$. Since $T(0,v_k)\le T(0,u_k)+T(u_k,v_k)$ it follows that
$$
B_g(0,y)\,=\,\lim_{k\to\infty}\big[T(0,v_k)-T(y,v_k)\big]\,\le\,\lim_{k\to\infty}\big[T(0,u_k)-T(y,u_k)\big]\,\le\, B_{g'}(0,y),
$$
where the final inequality follows since $B_{g'}(0,y)$ is the limit of an increasing sequence, while $(u_k)_{k\ge1}$ may not diverge.

The remaining statement is proved analogously.
\end{proof}

\begin{figure}[htbp]
\begin{center}
\includegraphics{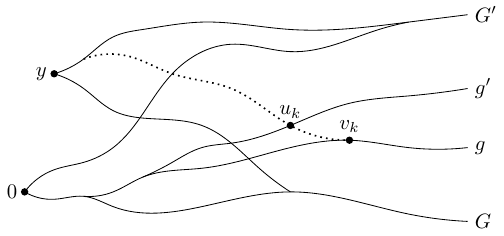}
\end{center}
\caption{Ordering of geodesics.}
\label{fig:conseq2}
\end{figure}

Recall that we by $\support$ denote the set of linear functionals $\rho:\R^2\to\R$ supporting to $\ball$. $\support$ is naturally parametrized by $S^1$, via their gradients, which induces a topology on $\support$. Let $\rho_\star=\rho_{\Gamma_\star}$ be the linear functional associated to the reference geodesic $\Gamma_\star$. We impose an ordering on $\support$ by saying that $\rho\le\rho'$ if $\rho'$ can be reached from $\rho$ via a counterclockwise motion without crossing $\rho_\star$. Our next lemma says that this ordering is consistent with the counterclockwise ordering of random coalescing geodesics.

\begin{lemma}\label{weak ordering}
Let $G$ and $G'$ be random coalescing geodesics. If $G<G'$, then $\rho_G<\rho_{G'}$.
\end{lemma}

\begin{proof}
If there are exactly four random coalescing geodesics, then the lemma follows by rotational symmetry. Assume instead that there are at least eight, and pick eight such geodesics $G^1<G^2<\ldots<G^8$ so that the set of directions of each consecutive pair is contained in an arc of length $\pi/2$. It will suffice to prove the lemma for $G<G'$ counterclockwise between some pair $G^i$ and $G^{i+1}$.

Assuming this is the case, there exists a half-plane $H_i$ such that $G$ and $G'$ eventually move into $H_i$. We will assume that $H_i$ is the right half-plane. We then pick an increasing sequence $(y_k)_{k\ge1}$ of points along the vertical axis for which $G$ is entirely contained in the right half-plane. For all large $k$ we then have that $G(y_k)$ intersects $G'(0)$. Hence, by Lemma~\ref{another lemma} and Proposition~\ref{sonoma}, we conclude that, almost surely,
$$
\rho_G({\bf e}_2)\,=\,\lim_{k\to\infty}\frac{1}{|y_k|}B_G(0,y_k)\,\le\,\lim_{k\to\infty}\frac{1}{|y_k|}B_{G'}(0,y_k)\,=\,\rho_{G'}({\bf e}_2).
$$
By Proposition~\ref{bordeaux} the inequality must be strict, i.e.\ $\rho_G({\bf e}_2)<\rho_{G'}({\bf e}_2)$.

There are now three cases to consider. Either one of $\rho_G({\bf e}_2)$ and $\rho_{G'}({\bf e}_2)$ is zero, they have different signs, or both have the same sign. In the case $\rho_G({\bf e}_2)=0$, then the gradient of $\rho_G$ is parallel with the horizontal axis. If $\rho_G({\bf e}_2)>0$, then the line $\{x\in\R^2:\rho_G(x)=1\}$ intersects the positive half of the vertical axis, so its gradient is directed in the first quadrant. These observations settle the former two cases. In the case that $\rho_G({\bf e}_2)$ and $\rho_{G'}({\bf e}_2)$ have the same sign, then note that the line $\{x\in\R^2:\rho_G(x)=1\}$ intersects the vertical axis above the line $\{x\in\R^2:\rho_{G'}(x)=1\}$. So, also in this case we have $\rho_G<\rho_{G'}$.
\end{proof}

We next show that geodesics close to some random coalescing geodesic $G$ will have its Busemann function approximated by $\rho_G$. Below we shall by $\|\cdot\|$ denote arc-length on $S^1$.

\begin{lemma} \label{promo}
There exists $K\ge1$ such that for every $\eps>0$ and pair of random coalescing geodesics $G$ and $G'$ such that $\|\rho_G-\rho_{G'}\|<\eps$ and $\dir(G)\cup \dir(G')$ is contained in an arc of length at most $\eps$, we have
$$
\P\Big(\forall g\in\tree_0: G\le g\le G'\text{ we have }\big|B_g(0,y)-\rho_G(y)\big|<K\eps|y|\text{ for large }|y|\Big)=1.
$$
\end{lemma}

\begin{proof}
We first choose an almost surely finite $N_1\ge1$ such that for all $|y|\ge N_1$ we have
$$
|T(y,y+z)-\mu(z)|<\eps\max\{|y|,|z|\}\quad\text{for all }z\in\Z^2.
$$
This can be done as of Proposition~\ref{extended shape}. Second, we pick $N_2\ge N_1$ such that for all $|y|\ge N_2$
$$
|B_G(0,y)-\rho_G(y)|<\eps|y|\quad\text{and}\quad|B_{G'}(0,y)-\rho_{G'}(y)|<\eps|y|.
$$
Third, pick $v_1,v_2,\ldots,v_m$ in $\Z^2$ so that for every $y\in\Z^2$ we have $|y-nv_k|<2\eps|y|$ for some $n$ and $k$; write $v_y$ for the point of the form $nv_k$ minimizing $|y-nv_k|$. By assumption of the lemma, we may further assume that neither of the $v_k$ is directed in the arc obtained as the convex hull of $\dir(G)$ and $\dir(G')$, nor in its rotation by an angle $\pi$. That is, we may pick $N_3\ge N_2$ such that neither $v_y$ in contained in the cone ccw between $G(0)$ and $G'(0)$, nor is $0$ contained in the cone ccw between $G(v_y)$ and $G'(v_y)$, for all $|y|\ge N_3$.

The above choices of $v_1,v_2,\ldots,v_m$ assures that for some $K\ge 1$ we have
\begin{equation}\label{eq:linear1}
\big|B_g(0,y)-B_g(0,v_y)\big|\,\le\,|B_g(y,v_y)|\,\le\,T(y,v_y)\,\le\,\eps K|y|
\end{equation}
for all $g\in\tree_0$ and $|y|\ge N_1$. By Lemma~\ref{another lemma} we conclude that for $|y|\ge N_3$ we have
\begin{equation}\label{eq:linear2}
\big|B_g(0,v_y)-B_G(0,v_y)\big|\,\le\,\big|B_G(0,v_y)-B_{G'}(0,v_y)\big|\,\le\,\big|\rho_G(v_y)-\rho_{G'}(v_y)\big|+\eps|v_y|
\end{equation}
when $G\le g\le G'$, which by assumption is bounded above by $4\eps|y|$.
Hence, combining~\eqref{eq:linear1} and~\eqref{eq:linear2} we conclude that for every $g\in\tree_0$ such that $G\le g\le G'$ and every $y\ge N_3$ we have
$$
\big|B_g(0,y)-\rho_G(y)\big|<(K+4)\eps|y|,
$$
as required.
\end{proof}

\subsection{Asymptotic directions and Busemann functions}

Propositions~\ref{sonoma}--\ref{bordeaux} describe the asymptotic properties of individual random coalescing geodesics and their Busemann functions. We shall in what follows give a simultaneous description of the asymptotic properties of all infinite geodesics. A first step in this direction will be to understand the topology of the set of linear functionals $\rho\in\support$ for which there is a random coalescing geodesic with Busemann function linear to $\rho$.

\begin{theorem}\label{deportivo}
The set $\functionals_\star\subseteq\support$ of functionals of the form $\rho_G$ for some random coalescing geodesic $G$ is closed as a subset of $S^1$ and contains every functional tangent to $\ball$.
\end{theorem}

\begin{proof}
That each linear functional tangent to $\ball$ is present in $\functionals_\star$ is a consequence of Theorem~\ref{yahoo}. Let $\rho$ be a limit point of $\functionals_\star$. Then there exists a monotone sequence $(\rho_k)_{k\ge1}$ in $\functionals_\star$ converging to $\rho$. By Lemma~\ref{weak ordering} we find that the sequence $(G_k)_{k\ge1}$ of random coalescing geodesics corresponding to the sequence $(\rho_k)_{k\ge1}$ is again monotone. By Theorem~\ref{thm:labels} the limit $G:=\lim_{k\to\infty}G_k$ exists, and by Theorem~\ref{thm:coalescence} it is a random coalescing geodesic; it is the (ccw- or cw-most) geodesic with label $\lim_{k\to\infty}F(G_k)$. To end the proof it will suffice to prove that $\rho_G=\rho$.
However, by definition of $\rho_G$ we have for every $z\in\Z^2$ that
$$
\big|\rho_G(z)-\rho_k(z)\big|\,=\,\big|\E\big[B_G(0,z)-B_{G_k}(0,z)\big]\big|.
$$
For large $k$ the geodesics $G(0)$ and $G(z)$ will with probability close to one coalesce before $G$ and $G_k$ diverges. Consequently, we find that $\rho_G(z)=\lim_{k\to\infty}\rho_k(z)=\rho(z)$, so $\rho_G=\rho$.
\end{proof}

We next show that every geodesic has its Busemann function asymptotically linear to some element of $\functionals_\star$, and that this linear functional describes its set of directions. The following is a slight strengthening of Theorem~\ref{all geos}.

\begin{theorem} \label{promotion}
For almost every realization $\omega\in\Omega_1$ there exists for every $g\in\tree_0(\omega)$ a linear functional $\rho\in\functionals_\star$ such that the Busemann function of $g$ is asymptotically linear to $\rho$ and $\dir(g)$ is a subset of $\arc(\rho)=\{x\in S^1:\rho(x)=\mu(x)\}$.
\end{theorem}

\begin{proof}
Let $K\ge1$ be the constant declared in Lemma~\ref{promo}. Our first aim will be to find a nested sequence $(\mathcal{F}_n)_{n\ge1}$ of finite families of random coalescing geodesics such that for each $n\ge1$ and each consecutive pair $G$ and $G'$ in $\mathcal{F}_n$ (in the counterclockwise ordering) we have
\begin{equation}\label{eq:main aim}
\P\Big(\forall g\in\tree_0: G<g<G'\text{ we have }\big|B_g(0,y)-\rho_G(y)\big|<(K+1)|y|/n\text{ for large }|y|\Big)=1.
\end{equation}

We define the nested sequence $(\mathcal{F}_n)_{n\ge1}$ as follows. From Theorem~\ref{deportivo} we recall that $\functionals_\star$ is a closed as a subset of $S^1$, so its complement is a countable union of open intervals $I$. We then define $\mathcal{F}_n$ inductively to contain each element of $\mathcal{F}_{n-1}$ and, in addition,
\begin{itemize}
\item every $G$ for which $\arc(\rho_G)$ has width at least $1/n$;
\item every $G$ that corresponds to an endpoint of some interval $I$ of width at least $1/n$;
\item some additional finite number of random coalescing geodesics so that the union of $\arc(\rho_G)$ over $G\in\mathcal{F}_n$ is $1/n$-dense in $S^1$, and so that the set of linear functionals corresponding to the elements of $\mathcal{F}_n$ is $1/n$-dense in $\functionals_\star$.
\end{itemize}

Assume that~\eqref{eq:main aim} fails for some $n\ge1$ and some consecutive pair $G$ and $G'$. We may then find $\delta>0$ and $m\ge1$ so that with probability at least $\delta$ there exists $g\in\tree_0$ with $G<g<G'$ that diverges from both $G$ and $G'$ within $m$ steps and such that for infinitely many $y$
\begin{equation}\label{eq:first_error}
\big|B_g(0,y)-\rho_G(y)\big|\ge (K+1)|y|/n.
\end{equation}
For this to happen the pair $G$ and $G'$ cannot be neighboring. By Lemma~\ref{neighboring} there exists a random coalescing geodesic $G''$ such that $G<G''<G'$. By Lemma~\ref{weak ordering} it follows that $\rho_G<\rho_{G''}<\rho_{G'}$. Consequently, $G$ and $G'$ cannot correspond to boundary points of some interval $I$ of width as large as $1/n$, and hence $\|\rho_G-\rho_{G'}\|<1/n$. In case $\dir(G)\cup\dir(G')$ is contained in an arc of length at most $1/n$ then Lemma~\ref{promo} gives a contradiction. Otherwise, we let $G^\ast>G$ be the counterclockwise neighbor of $G$ if is ccw-isolated, or so that the two has probability at most $\delta/4$ to diverge within $m$ steps. Pick $G^{\ast\ast}<G'$ similarly. In either case we conclude that with probability at least $\delta/2$ there exists a geodesic $g\in\tree_0$ with $G^\ast<g<G^{\ast\ast}$ such that~\eqref{eq:first_error}, and hence that
$$
\big|B_g(0,y)-\rho_{G^\ast}(y)\big|\ge K|y|/n,
$$
holds for some arbitrarily large $y$. However, $\dir(G^\ast)\cup\dir(G^{\ast\ast})$ is contained in an arc of length no larger than $1/n$, so this contradicts Lemma~\ref{promo}. Hence, we have proved~\eqref{eq:main aim}.

We proceed and show how the theorem follows from~\eqref{eq:main aim}. For each $n\ge1$ and $g\in\tree_0$ let
$$
\rho_{g,n}:=\max\{\rho_G:G\in\mathcal{F}_n,G\le g\}.
$$
For almost every $\omega\in\Omega_1$ and every $g\in\tree_0$ the sequence $(\rho_{g,n})_{n\ge1}$ is monotone and thus convergent. Denote by $\rho_g$ the limit $\lim_{n\to\infty}\rho_{g,n}$. Since $\functionals_\star$ is closed the set $\{\rho_g:g\in\tree_0(\omega)\}$ is a subset of $\functionals_\star$ for almost every $\omega\in\Omega_1$. Since $\mathcal{F}_n$ is $1/n$-dense in $\functionals_\star$ it follows that $\|\rho_g-\rho_{g,n}\|<1/n$ uniformly over $g\in\tree_0$, almost surely. From~\eqref{eq:main aim} it then follows that
\begin{equation}\label{eq:main consequence}
\P\Big(\forall g\in\tree_0\text{ we have }\big|B_g(0,y)-\rho_g(y)\big|<(K+2)|y|/n\text{ for all large }|y|\Big)=1.
\end{equation}
By the union bound it follows that with probability one every $g\in\tree_0$ has a Busemann function linear to some element in $\functionals_\star$. Finally, by combining~\eqref{eq:main consequence}, Lemma~\ref{rioja} and the shape theorem, we observe that, outside of a null set, we have for any $g\in\tree_0$ and any $x\in S^1$ for which there is a subsequence $(v_k)_{k\ge1}$ of $g$ such that $v_k/|v_k|\to x$, that
$$
\rho_g(x)=\lim_{k\to\infty}\frac{B_g(0,v_k)}{|v_k|}=\lim_{k\to\infty}\frac{T(0,v_k)}{|v_k|}=\mu(x).
$$
That is, with probability one we have $\dir(g)\subseteq\arc(\rho_g)$ for all $g\in\tree_0$, as claimed.
\end{proof}

We have seen that for almost every realization, every infinite geodesic has a well-defined Busemann function that is asymptotically linear. For $g\in\tree_0$, let $\rho_g$ denote the element in $\functionals_\star$ to which the Busemann function of $g$ is linear. With a slight abuse of notation we shall write $\rho_{G(\omega)}$ for the element in $\functionals_\star$ to which the Busemann function of the image of $G$ is linear in the realization $\omega$. The following theorem says that the ordering on $\support$ is consistent with the counterclockwise ordering of geodesics in $\tree_0$, and that the set of linear functionals associated to Busemann functions is an almost surely deterministic set.
Recall that $\rcg$ denotes the family of random geodesics of the form $G_\alpha^{cw}$ and $G_\alpha^{ccw}$ for $\alpha\in\labels_\star$.

\begin{theorem}\label{Busemann consistency}
For almost every realization $\omega\in\Omega_1$ we have
\begin{enumerate}[\quad (a)]
\item for all $g\le g'$ in $\tree_0(\omega)$ that $\rho_g\le\rho_{g'}$;
\item for every $G\in\rcg$ that $\rho_{G(\omega)}=\rho_G$;
\item the set $\functionals(\omega)=\{\rho\in\support:\rho=\rho_g\text{ for some }g\in\tree_0(\omega)\}$ equals $\functionals_\star$.
\end{enumerate}
\end{theorem}

\begin{proof}
We begin with part~\emph{(a)}. If there are exactly four random coalescing geodesics, then there is nothing more to prove. Assume instead there are at least eight, and pick $G^1<G^2<\ldots<G^8$ such that each consecutive pair spans an arc of $S^1$ of length at most $\pi/2$. It will suffice to prove the statement for the restriction of $\tree_0$ to each consecutive pair $G^i$ and $G^{i+1}$.

Let $G<G'$ be such a consecutive pair. Since the cone counterclockwise between $G$ and $G'$ is asymptotically contained in an arc of length $\pi/2$ there exists a half-plane $H_i$ such that $G$ and $G'$ eventually move into $H_i$. We will assume that $H_i$ is the right half-plane. We then pick an increasing sequence $(y_k)_{k\ge1}$ of points along the vertical axis for which $G$ is entirely contained in the right half-plane. For all large $k$ we have that $G(y_k)$ intersects $G'(0)$. Hence, by Lemma~\ref{another lemma} and Theorem~\ref{promotion}, we conclude that, almost surely, for all $g\le g'$ in $\tree_0(\omega)$ we have
\begin{equation}\label{eq:rho-g}
\rho_g({\bf e}_2)\,=\,\lim_{k\to\infty}\frac{1}{|y_k|}B_g(0,y_k)\,\le\,\lim_{k\to\infty}\frac{1}{|y_k|}B_{g'}(0,y_k)\,=\,\rho_{g'}({\bf e}_2).
\end{equation}
We note that~\eqref{eq:rho-g}, just as in the proof of Lemma~\ref{weak ordering}, implies that $\rho_g\le\rho_{g'}$, as required.

We continue with the proof of part~\emph{(b)}. Let $(\mathcal{F}_n)_{n\ge1}$ be the nested sequence of finite families of random coalescing geodesics defined in the proof of Theorem~\ref{promotion}. For every consecutive pair $G<G'$ in $\mathcal{F}_n$ we have that $\|\rho_G-\rho_{G'}\|\le2/n$. Moreover, since $\mathcal{F}_n$ is finite it follows that $\rho_{G(\omega)}=\rho_G$ for every $G\in\mathcal{F}$ almost surely.

Given a consecutive pair $G<G'$ and let $\rcg^\ast=\{G^\ast\in\rcg:G\le G^\ast\le G'\}$. Since the labeling of geodesics is consistent with the counterclockwise ordering (Proposition~\ref{label props}) it follows that for almost every $\omega$ we have $G(\omega)\le G^\ast(\omega)\le G'(\omega)$ for all $G^\ast\in\rcg^\ast$. By part~\emph{(a)} it follows that $\rho_{G(\omega)}\le\rho_{G^\ast(\omega)}\le\rho_{G'(\omega)}$ for all $G^\ast\in\rcg^\ast$ and almost every $\omega$. We conclude that for every $n\ge1$ we have
$$
\P\big(\text{for all $G\in\rcg$ we have }\|\rho_{G(\omega)}-\rho_G\|\le2/n\big)=1.
$$
Part~\emph{(b)} now follows from a union bound.

Finally, we observe that $\functionals(\omega)\subseteq\functionals_\star$ almost surely as a consequence of Theorem~\ref{promotion}, whereas the reverse inclusion $\functionals(\omega)\supseteq\functionals_\star$ holds with probability one due to part~\emph{(b)}.
\end{proof}

We next address uniqueness of linear functionals. Recall that no functional outside $\functionals_\star$ may appear as the limit of a Busemann function associated to geodesics in $\tree_0$. The next result, together with Theorems~\ref{deportivo} and~\ref{Busemann consistency}, proves Theorem~\ref{unique support}.

\begin{theorem}\label{again unique}
For every functional $\rho\in\functionals_\star$ we have
$$
\P\big(\exists\text{ two disjoint geodesics with Busemann function linear to }\rho\big)=0.
$$
\end{theorem}

\begin{proof}
No two random coalescing geodesics are associated to the same linear functional due to Proposition~\ref{bordeaux}.
Let $G$ be the random coalescing geodesic for which $\rho_G=\rho$. Assume, for a contradiction, that for some $\delta>0$
\begin{equation}\label{eq:two linear}
\P\big(\exists\text{ two geodesics in $\tree_0$ with Busemann function linear to }\rho\big)>\delta.
\end{equation}
In that case there exists $m\ge1$ such that with probability at least $\delta$ there are two geodesics that diverge within $m$ steps that both have Busemann function linear to $\rho$. We then choose $G^-<G<G^+$ as in the proof of Theorem~\ref{re-versailles}, that is, either being neighboring to $G$ or such that $G^-$ and $G$, respectively $G$ an $G^+$, diverge within $m$ steps with probability less than $\delta/4$. Together these choices and~\eqref{eq:two linear} imply that there is probability at least $\delta/2$ to exist a geodesic outside of the cone counterclockwise between $G^-$ and $G^+$ that has Busemann function linear to $\rho$. This is a contradiction to Theorem~\ref{Busemann consistency}.

We conclude that $G$ is almost surely the unique geodesic in $\tree_0$ with Busemann function asymptotically linear to $\rho$. Similarly, for each $v\in\Z^2$ we have that $G(v)$ is almost surely the unique geodesic in $\tree_v$ that has Busemann function linear to $\rho$. Since $G$ is coalescing we conclude that almost surely there are no two disjoint geodesics with Busemann function linear to $\rho$.
%
%
\end{proof}


We end this section by providing a strengthening of part~\emph{(a)} of Theorem~\ref{Busemann consistency} by relating labels to supporting functionals.

\begin{theorem}\label{strong consistency}
For almost every $\omega\in\Omega_1$ and for all $u,v\in\Z^2$ we have for all $g\in\tree_u(\omega)$ and $g'\in\tree_v(\omega)$ that if $g\le g'$, then $\rho_g\le \rho_{g'}$.
\end{theorem}

\begin{proof}
We begin by establishing a 1-1 correspondence between $\functionals_\star$ and the disjoint union of $\labels_\star$ and $\double_\star$. For $\alpha\in\double_\star$, let $\rho_\alpha^{cw}$ and $\rho_\alpha^{ccw}$ denote the linear functionals associated to $G_\alpha^{cw}$ and $G_\alpha^{ccw}$, respectively. For $\alpha\in\labels_\star\setminus\double_\star$, let $\rho_\alpha$ denote the linear functional associated to $G_\alpha^{cw}$. Since $G_\alpha^{cw}$ and $G_\alpha^{ccw}$, for $\alpha\in\labels_\star$ are random coalescing geodesics, this defines a map from the disjoint union of $\labels_\star$ and $\double_\star$ to $\functionals_\star$ which is 1-1 by Proposition~\ref{bordeaux} and onto due to Theorem~\ref{thm:coalescence}.

We next claim that $\double_\star$ is at most countably infinite. One way to see this is to first note that for every $\alpha\in\double_\star$ the pair $G_\alpha^{cw}$ and $G_\alpha^{ccw}$ are neighbouring, due to Theorem~\ref{thm:coalescence}. It then follows by Lemma~\ref{weak ordering} that $\rho_\alpha^{cw}<\rho_\alpha^{ccw}$ and $(\rho_\alpha^{cw},\rho_\alpha^{ccw})\cap\functionals_\star=\emptyset$. Hence, $\double_\star$ can be at most countable.

Since $\double_\star$ is at most countable it follows from Theorem~\ref{thm:coalescence} that, almost surely, for every $\alpha\in\double_\star$ and $v\in\Z^2$ there are exactly two geodesics in $\tree_v$ with label $\alpha$, of which the clockwise-most  form a coalescing family with Busemann function asymptotically linear to $\rho_\alpha^{cw}$ and the counterclockwise-most form a coalescing family with Busemann function asymptotically linear to $\rho_\alpha^{ccw}$. Moreover, by Theorem~\ref{Busemann consistency}, almost surely, for every $\alpha\in\labels_\star\setminus\double_\star$ and $v\in\Z^2$ we have that every $g\in\tree_v$ with label $\alpha$ has Busemann function asymptotically linear to $\rho_\alpha$. The theorem now follows from the consistency of the labeling and the global ordering of geodesics.
\end{proof}

\begin{remark}
As a corollary to Theorems~\ref{Busemann consistency} and~\ref{again unique} one may show that for almost every $\omega$ we have for every monotone sequence $(g_k)_{k\ge1}$ in $\tree_0(\omega)$ that $\lim_{k\to\infty}\rho_{g_k}=\rho_{g_\infty}$, where $g_\infty=\lim_{k\to\infty}g_k$.
However, the converse statement is not generally true, as there may be exceptional elements $\rho\in\functionals_\star$ for which there are two geodesics with Busemann function asymptotically linear to $\rho$. Again, recall footnote~\ref{prediction}.
\end{remark}

\section{The midpoint problem}
\label{midpoint}

In this section we apply the theory constructed around random coalescing geodesics to prove Theorem~\ref{hall of mirrors}, which answers a question raised in the work of Benjamini, Kalai and Schramm~\cite{benkalsch03}.
We begin by outlining our strategy in the case that $v_k=(k,0)$ and $u_k=-v_k$. We argue by contradiction. If there does exist a $\delta>0$ such that, uniformly in $k$,
\begin{equation}  
\P\big((0,0) \in \geo((-k,0),(k,0))\big) >\delta,
\label{sentencing day}
\end{equation}  
then we will construct a shift-invariant measure on (sparse) families of non-crossing geodesics which has infinitely many coalescence classes. This violates Proposition~\ref{p:sparse measure}.\footnote{We could alternatively use this measure to construct a (sparse) random non-crossing geodesic with infinitely many coalescence classes to obtain a contradiction to Proposition~\ref{p:sparse}.}

To construct a non-crossing geodesic we proceed as follows: For each $k$ we form a set
$$
I_k=\big\{i\in\Z: (0,i) \in \geo((-k,i),(k,i))\big\}.
$$
By the ergodic theorem and~\eqref{sentencing day} this set $I_k$ has density at least $\delta$. Given $i,j\in I_k$ we then note, crucially, that if $\geo((-k,i),(0,i))$ and $\geo((-k,j),(0,j))$ touch, then $\geo((0,i),(k,i))$ and $\geo((0,j),(k,j))$ are unlikely to touch too. If they did, then both geodesics would have to visit both $(0,i)$ and $(0,j)$ not to contradict unique passage times (see Figure~\ref{fig:mid1}). As this is unlikely to happen for $i$ and $j$ far apart we next use this observation to thin $I_k$ to get a subset $I_k^+ \subset I_k$ such that for all $i,j \in I_k^+$
$$
\geo\big((0,i),(k,i)\big) \cap \geo\big((0,j),(k,j)\big)=\emptyset.
$$
This gives us a family $\{\geo((0,i),(k,i)):i\in I_k^+\}$ of finite non-crossing geodesics that do not coalesce. Importantly, because of the above observation, the geodesics in this family do not become `more coalescing' as $k$ increases. That is, also $I_k^+$ will have density bounded away from zero.
\begin{figure}[htbp]
\begin{center}
\includegraphics{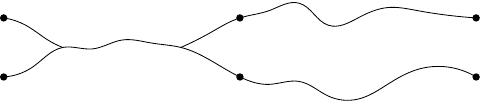}
\end{center}
\caption{The intersection of two geodesics is a continuous path.}
\label{fig:mid1}
\end{figure}

We may now use the Damron-Hanson strategy to form a sequence of measures on finite geodesics. This sequence will have a sub-sequential limit. Since $I_k^+$ has positive density, each sub-sequential limit will be supported on (families of) non-crossing and non-coalescing geodesics. Then, we use this to construct a measure on (sparse) families of non-crossing geodesic which has infinitely many coalescence classes. This is a contradiction to Proposition~\ref{p:sparse measure}.

The rest of this section will be dedicated to making the above outline rigorous. In the case when the asymptotic shape has sufficiently many sides\footnote{Recall that the number of sides equals $n$ if $\ball$ is an $n$-gon and $\infty$ otherwise.} (at least 40), then this will be mostly straightforward based on the above outline. In the case when $\ball$ is a polygon with a small number of sides (less than 40), this will require a much more careful analysis. In either case, we will part from the following assumption, that there exist $\delta>0$ and sequences $(u_k)_{k\ge1}$ and $(v_k)_{k\ge1}$ in $\Z^2$ such that $|u_k|,|v_k|\to\infty$ and
\begin{equation}  
\P\big(0 \in \geo(u_k,v_k)\big) >\delta. 
\label{sentencing}
\end{equation}
By restricting to a further subsequence, we may assume that $u_k/|u_k|\to u$ and $v_k/|v_k|\to v$ for some $u,v\in S^1$.

Before we start we remark that the main complication in the proof of Theorem~\ref{hall of mirrors} comes from the possible existence of `corners', i.e.\ points of non-differentiability, of the asymptotic shape. Under the additional assumption that $\ball$ has no corners it is possible to give a significantly shorter argument. However, we are interested in the full result and we will therefore have to work accordingly.

It seems plausible that in order to maximize the probability in~\eqref{sentencing} one should consider sequences $(u_k)_{k\ge1}$ and $(v_k)_{k\ge1}$ moving in opposite directions. Although it is easier to rule out~\eqref{sentencing} in the case when $v\neq -u$ we see no gain in presenting such an argument separately. An argument of this kind will instead appear somewhat implicitly in what follows.

\subsection{The central argument} \label{ray charles}

We start off by defining an event central for the construction of a family of finite geodesics. This event will involve a family of random coalescing geodesics to be used to control the directions of the finite geodesics. Given $x\in\Z^2$ and eight random coalescing geodesics $\{G^i:i=1,2,\ldots,8\}$, define $\good_k(x)=\good_k(x,G^1,G^2,\ldots,G^8)$ to be the event that the following all occur (see Figure~\ref{fig:mid2}):
\begin{itemize}
\item $x \in \geo(x+u_k,x+v_k)$;
\item $\geo(x,x+v_k)$ is counterclockwise between $G^1(x)$ and $G^2(x)$;
\item $\geo(x+u_k,x)$ is counterclockwise between $G^3(x)$ and $G^4(x)$;
\item $\geo(x+u_k,x)$ is counterclockwise between $G^5(x+u_k)$ and $G^6(x+u_k)$; and
\item $\geo(x,x+v_k)$ is counterclockwise between $G^7(x+v_k)$ and $G^8(x+v_k)$.
\end{itemize}
\begin{figure}[htbp]
\begin{center}
\includegraphics{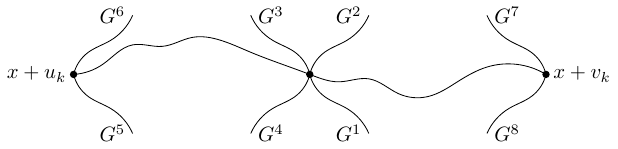}
\end{center}
\caption{The good event.}
\label{fig:mid2}
\end{figure}

Naturally, the $G^i$'s can only help to control $\geo(x+u_k,x+v_k)$ if they can be chosen suitably, while $\good_k(x)$ occurs with positive probability. Recall that $H_i$, $i=0,1,\ldots,7$, denote the eight half-planes in directions $(\pm1,0)$, $(0,\pm1)$ and $(\pm1,\pm1)$.
About half of the effort in proving Theorem~\ref{hall of mirrors} will aim at establishing the following lemma.

\begin{lemma} \label{cunningham}
Assume that~\eqref{sentencing} holds. Then there exists $\delta'>0$, a half-plane $H_i$ and eight random coalescing geodesics $G^1,G^2,\ldots,G^8$ such that $G^1, G^2$ and $G^5,G^6$ eventually move into $H_i$, $G^3,G^4$ and $G^7,G^8$ eventually move into $H_i^c$, and such that for all large $k$ we have
$$
\P\big(\good_k(0)\big)>\delta'.
$$
\end{lemma}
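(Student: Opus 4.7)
The plan is to choose the eight random coalescing geodesics so that each of the four sandwich conditions in $\good_k(0)$ holds with probability tending to $1$ as $k \to \infty$, and then combine this with the assumed lower bound $\delta$ on the midpoint event $M_k := \{0 \in \geo(-v_k, v_k)\}$ via a union bound, yielding $\rio = \delta/2$ for all $k$ large.

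First I would pass to a subsequence so that $v_k/|v_k| \to v \in S^1$, and pick $H_i$ with $v \in \mathrm{int}(H_i)$; such $i$ exists since $v$ lies on the boundary of at most two of the eight half-planes. Next I would select two supporting linear functionals $\rho^\pm \in \support$ tangent to $\ball$ whose arcs $\arc(\rho^\pm) = \{x \in S^1 : \mu(x) = \rho^\pm(x)\}$ are contained in $\mathrm{int}(H_i)$ and strictly bracket $v$ cyclically ($\arc(\rho^-)$ clockwise of $v$, $\arc(\rho^+)$ counter-clockwise of $v$). This is possible because supporting functionals vary continuously with the boundary angle near any contact point of $v$. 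Theorem~\ref{yahoo} then supplies random coalescing geodesics $G^1, G^2$ with $\dir(G^j) \subseteq \arc(\rho^\mp)$, and Proposition~\ref{mendoza} ensures they eventually move into $H_i$. Repeating the construction for direction $-v$ produces $G^3, G^4$ eventually in $H_i^c$, and I would set $G^5 := G^1$, $G^6 := G^2$, $G^7 := G^3$, $G^8 := G^4$. Shift-invariance of $\P$ by $\pm v_k$ then reduces the conditions $E_3, E_4$ defining $\good_k(0)$ to the conditions $E_1, E_2$.

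The main work is to show $\P(E_1) \to 1$, where $E_1 := \{\geo(0, v_k) \text{ is ccw between } G^1(0), G^2(0)\}$; the bound $\P(E_2) \to 1$ follows by the same argument applied to direction $-v$. I would fix $\eps > 0$ so small that the $\eps$-neighbourhood of $v$ in $S^1$ is disjoint from $\arc(\rho^\pm)$. By the shape theorem applied to $G^1, G^2$ and the extended shape theorem (Proposition~\ref{extended shape}) at the endpoints of $\geo(0, v_k)$, with probability $\to 1$ the path $G^1(0)$ at Euclidean distance $r$ from the origin has angular argument at most $\theta(v) - \eps$, $G^2(0)$ has angular argument at least $\theta(v) + \eps$, and $\geo(0, v_k)$ stays in an angular cone of width $o(1)$ around $v$ at distances of order $|v_k|$. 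Planarity of $\Z^2$ (two nearest-neighbour paths can only cross at a shared vertex) together with uniqueness of geodesics (a shared vertex between two geodesics from $0$ forces a common initial segment, after which they cannot re-cross) then imply that $\geo(0, v_k)$ is confined to the closed region ccw of $G^1(0)$; combining with the analogous bound against $G^2(0)$ places $\geo(0, v_k)$ inside the sandwich.

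The hard part will be making this sandwiching argument precise, in particular handling the behaviour of $\geo(0, v_k)$ near the origin (where all three paths emanate from a common vertex) and verifying that $v_k$ genuinely lies in the closed region ccw of $G^1(0)$ at its distance — both follow from the shape theorem but require some care with the planar topology of semi-infinite self-avoiding paths. Once the four sandwich events are controlled, the union bound
\begin{equation*}
\P(\good_k(0)) \ge \P(M_k) - \sum_{j=1}^{4} \P(E_j^c) > \delta - o(1)
\end{equation*}
completes the proof with $\rio := \delta/2$ and $k$ sufficiently large.
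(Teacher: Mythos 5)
Your proposal reproduces the paper's ``easy case'' essentially verbatim, but it omits the case that makes this lemma genuinely difficult: when $\ball$ is a polygon with few sides. The assertion that ``supporting functionals vary continuously with the boundary angle near any contact point of $v$'' is the gap. Tangent functionals correspond to sides of $\ball$, so if $\ball$ is a polygon (say a square, a diamond, or an octagon) these form a discrete, possibly very small, set, and the arcs $\{x\in S^1:\mu(x)=\rho(x)\}$ can span an angle as large as $\pi/2$. In that situation there need not exist tangent functionals $\rho^\pm$ whose arcs are contained in $\mathrm{int}(H_i)$ and strictly bracket $v$ while avoiding $v$: if $\ball$ is a diamond and $v$ sits at a corner, the two tangent functionals adjacent to $v$ produce geodesics whose direction sets abut on $v$ and together span $\pi$, so no half-plane $H_i$ and no choice of bracketing arcs yields $\P(E_1)\to 1$. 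Even when the arcs do not touch $v$, the sandwich only forces $\geo(0,v_k)$ between $G^1(0)$ and $G^2(0)$ with vanishing error if those two geodesics have $v$ strictly outside the convex hull of their directions, which Theorem~\ref{yahoo} alone cannot guarantee for a polygonal shape.

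The paper's actual proof spends most of Section~\ref{sec:cunningham} on precisely this polygonal case, and it is not a refinement of your argument but a different route. After establishing via Lemma~\ref{higgs boson} and Claim~\ref{v is a corner} that hypothesis~\eqref{sentencing} forces $\ball$ to have a corner at $v$, it constructs an explicit finite family $\mathcal{F}_n$ of random coalescing geodesics, including geodesics $G'_n$, $G''_n$ directed \emph{into} the corner and produced via the labelling machinery and Lemma~\ref{neighboring} rather than via Theorem~\ref{yahoo}. It then partitions the sector around $v$ into a ``main gap'' and four ``minor gaps'', defines modified good events $\stool_{k,n}$, $\stool^\pm_{k,n}$ relative to these gaps, and runs a pigeonhole argument (Claim~\ref{futurama}, $36<37$) combined with coalescence of the $\mathcal{F}_n$-geodesics and uniqueness of passage times to obtain the positive-probability conclusion. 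None of that machinery appears in your proposal, so even granting the sandwich argument near the origin in the many-sided case, the proposal does not close the lemma.
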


We postpone a full proof of Lemma~\ref{cunningham} to Section~\ref{sec:cunningham} below. However, before we proceed we give a rough outline of the proof in the case when the shape has sufficiently many sides (40 will suffice): Let $\mathcal{F}$ be a set of random coalescing geodesics with the property that if $G$ is in $\mathcal{F}$ then so is the random coalescing geodesic obtained from $G$ by rotation $\pi/2$. These geodesics split the plane into slices. It seems reasonable that if $v_k$ lies in one of these slices and $\geo(u_k,v_k)$ goes through the origin, then $u_k$ has to lie in the opposite slice, or at least some slice close to that one. This is indeed true and a consequence of Lemma~\ref{lma:slices} below. In particular $u$ cannot be too far from $-v$. We may then take random coalescing geodesics $G^1$ and $G^2$ which together span an angle at most $\pi/2$ and contain $v$ between them. Similarly we pick $G^3$ and $G^4$ around $u$. If there are sufficiently many sides to the shape then we can 
pick a set $\mathcal{F}$ with enough candidates to choose from, and thus guarantee that for some half-plane $H_i$ we will have $G^1$ and $G^2$ moving into $H_i$ while $G^3$ and $G^4$ move into $H_i^c$.

This rough sketch can and will be made rigorous below. The proof in the remaining case, when $\ball$ is a polygon with few sides, is considerably more involved, and we postpone further details to Section~\ref{sec:cunningham} below.

Now we proceed to show how to use Lemma~\ref{cunningham} to construct a family of finite non-crossing and non-coalescing geodesics. For ease of notation we further assume that $H_i=H_0$, i.e.\ the right half-plane; the remaining cases are treated verbatim. This assumption, in particular, implies that the projections of $u$ and $v$ along the first coordinate axis are strictly negative respectively positive.

First, let
\begin{equation*}
\begin{aligned}
\well_\ell^+(x)&:=\big\{G^i(x) \cap (x+H_0^c) \subseteq x+[-\ell/3,\ell/3]^2: i=1,2,5,6\big\},\\
\well_\ell^-(x)&:=\big\{G^i(x) \cap (x+H_0)     \subseteq x+[-\ell/3,\ell/3]^2: i=3,4,7,8\big\},
\end{aligned}
\end{equation*}
and $\well_\ell(x):=\well_\ell^+(x)\cap\well_\ell^-(x)$. Next, we set
$$
\good_{k,\ell}(x):=\good_k(x)\cap\well_\ell^+(x+u_k)\cap\well_\ell(x)\cap\well_\ell^-(x+v_k).
$$
Since the geodesics, by assumption, eventually move into the half-plane $H_0$ or its complement we can make the probability of both $\well_\ell^+(x)$ and $\well_\ell^-(x)$ arbitrarily close to 1 by increasing $\ell$. Hence, for some $\ell\ge1$ and all sufficiently large $k$ we have that
\begin{equation}\label{goodie}
\P\big(\good_{k,\ell}(x)\big)>\delta'/2.
\end{equation}

\begin{lemma} \label{frosted}
On the event that $\good_{k,\ell}(i{\bf e}_2)$ and $\good_{k,\ell}(j{\bf e}_2)$ occur, where $|i-j|>\ell$, then at least one of the following occurs:
\begin{enumerate}[\quad (a)]
\item $\geo(i{\bf e}_2+u_k,i{\bf e}_2) \cap \geo(j{\bf e}_2+u_k,j{\bf e}_2)=\emptyset$;
\item $\geo(i{\bf e}_2,i{\bf e}_2+v_k) \cap \geo(j{\bf e}_2,j{\bf e}_2+v_k)=\emptyset$. 
\end{enumerate}
\end{lemma}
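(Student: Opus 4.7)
The plan is to argue by contradiction. Suppose neither (a) nor (b) holds; then we may pick $p\in\gamma_i^L\cap\gamma_j^L$ and $q\in\gamma_i^R\cap\gamma_j^R$, where $\gamma_i^L=\geo(i{\bf e}_2-v_k,i{\bf e}_2)$ and $\gamma_i^R=\geo(i{\bf e}_2,i{\bf e}_2+v_k)$, and analogously for $\gamma_j$; set $\gamma_i:=\gamma_i^L\cup\gamma_i^R$, which by $\good_k(i{\bf e}_2)$ passes through $i{\bf e}_2$, and similarly for $\gamma_j$. The aim will be to deduce a contradiction with $\good_{k,\ell}(i{\bf e}_2)$; the symmetric variant will handle the swapped case.

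The first step is to record the standard consequence of unique passage times illustrated in Figure~\ref{fig:mid1}: the intersection of two geodesics is always a single connected arc in the plane. Indeed, if $a,b$ are both common points then, by the uniqueness of the geodesic between them, $\geo(a,b)$ lies in both $\gamma_i$ and $\gamma_j$, so $a$ and $b$ belong to the same connected component of $\gamma_i\cap\gamma_j$. Applied to $p,q\in\gamma_i\cap\gamma_j$, this shows that $\gamma_i\cap\gamma_j$ is a single arc containing both $i{\bf e}_2$ (which sits between $p$ and $q$ along $\gamma_i$) and $j{\bf e}_2$ (which sits between $p$ and $q$ along $\gamma_j$). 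Because both geodesics traverse the arc from the $p$-side toward the $q$-side, the midpoints appear along it in a definite planar order; after relabelling $i\leftrightarrow j$ if necessary, I may assume $j{\bf e}_2$ is encountered first, which forces $j{\bf e}_2\in\gamma_i^L$.

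The central step is to rule out $j{\bf e}_2\in\gamma_i^L$ using the combined directional and locality control in $\good_{k,\ell}(i{\bf e}_2)$. By $\good_k(i{\bf e}_2)$, the path $\gamma_i^L$ lies in the open region $R$ of the plane cut out by $\{i{\bf e}_2\}\cup G^3(i{\bf e}_2)\cup G^4(i{\bf e}_2)$ on the counterclockwise side containing direction $-v$. Since $G^3(i{\bf e}_2)$ and $G^4(i{\bf e}_2)$ eventually enter $i{\bf e}_2+H_0^c$, any point sufficiently far from $i{\bf e}_2$ in direction $+v$ inside $i{\bf e}_2+H_0$ lies in $R^c$. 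On the other hand, $\well_\ell(i{\bf e}_2)\subseteq\well_\ell^-(i{\bf e}_2)$ pins $G^3(i{\bf e}_2)\cup G^4(i{\bf e}_2)$ inside the box $B:=i{\bf e}_2+[-\ell/3,\ell/3]^2$ whenever it visits $i{\bf e}_2+H_0$. Combining these, the connected open set $(i{\bf e}_2+H_0)\setminus B$ is disjoint from $\partial R$ yet meets $R^c$, hence lies entirely in $R^c$.

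Since $|i-j|>\ell$, the point $j{\bf e}_2$ satisfies $j{\bf e}_2-i{\bf e}_2\in H_0$ (first coordinate zero) and $j{\bf e}_2\notin B$ (second coordinate of absolute value exceeding $\ell/3$), so $j{\bf e}_2\in(i{\bf e}_2+H_0)\setminus B\subseteq R^c$; this contradicts $j{\bf e}_2\in\gamma_i^L\subseteq R\cup\{i{\bf e}_2\}$ and completes the argument. The step I anticipate to require most care is the topological claim $R\cap(i{\bf e}_2+H_0)\subseteq B$: it simultaneously needs the asymptotic direction of $G^3,G^4$ (guaranteed by $\good_k$) and their spatial confinement inside $i{\bf e}_2+H_0$ (guaranteed by $\well_\ell$), which is precisely why both pieces of information are bundled together into the event $\good_{k,\ell}$.
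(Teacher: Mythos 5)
Your proof is correct and takes essentially the same route as the paper's: both rest on the connectivity of the intersection of two geodesics (a consequence of unique passage times) together with the observation that on $\good_{k,\ell}(i{\bf e}_2)\cap\good_{k,\ell}(j{\bf e}_2)$ neither midpoint can lie on the other's full geodesic, which then forces the intersection to sit on one side only. You additionally supply a detailed topological justification for the key assertion $j{\bf e}_2\notin\gamma_i$ (the paper merely states it), using the confinement of $G^3(i{\bf e}_2),G^4(i{\bf e}_2)$ to the box $B$ inside $i{\bf e}_2+H_0$ together with the fact that the cone $R$ faces direction $-v$; the only small imprecisions are calling $(i{\bf e}_2+H_0)\setminus B$ "open" (it is merely connected, which is all that is needed) and asserting rather than arguing that points deep in direction $v$ lie in $R^c$, but both are easily repaired and do not affect the argument.
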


\begin{proof}
First we note that the conditions of $\good_k$ and $\well_\ell$ imply that $i{\bf e}_2 \not \in \geo(j{\bf e}_2+u_k,j{\bf e}_2+v_k)$ and $j{\bf e}_2 \not \in \geo(i{\bf e}_2+u_k,i{\bf e}_2+v_k)$. Then we note that for any two finite geodesics the set of points in their intersection is connected, since otherwise would contradict the assumption on unique passage times. Thus the intersection occurs on one side of the midpoint or the other, but not both; recall Figure~\ref{fig:mid1}.
\end{proof}

\begin{lemma} \label{vashti}
Suppose that $\good_{k,\ell}(0)$, $\good_{k,\ell}(i{\bf e}_2)$ and $\good_{k,\ell}(j{\bf e}_2)$ occur for $i,j-i>\ell$.
\begin{enumerate}[\quad (a)]
\item If $\geo(u_k,0) \cap \geo(i{\bf e}_2+u_k,i{\bf e}_2)=\emptyset$, then $\geo(u_k,0) \cap \geo(j{\bf e}_2+u_k,j{\bf e}_2)=\emptyset$.
\item If $\geo(0,v_k) \cap \geo(i{\bf e}_2,i{\bf e}_2+v_k)=\emptyset$, then $\geo(0,v_k) \cap \geo(j{\bf e}_2,j{\bf e}_2+v_k)=\emptyset$.
\end{enumerate}
\end{lemma}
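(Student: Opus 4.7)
The plan is a purely planar argument that exploits the connectedness of pairwise intersections of finite geodesics. Under either A1 or A2(iv), if two finite geodesics share two distinct vertices then unique passage times force them to coincide on the subpath in between, so $\pi \cap \pi'$ is a (possibly empty) connected subpath for any two finite geodesics $\pi, \pi'$. This ``strong non-crossing'' property, combined with vertical ordering of endpoints, will impose a total order on the three paths in question.

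Write $P_m := \geo(m{\bf e}_2 - v_k, m{\bf e}_2)$ for $m \in \{0, i, j\}$. Since $0 < i < j$, the endpoints of these three paths lie on two common vertical lines (at $x = -v_k \cdot {\bf e}_1$ and $x = 0$), with $y$-coordinates strictly ordered as $P_0 < P_i < P_j$ at each of the two lines. The key planar fact I would prove is the following: if $\pi_1, \pi_2$ are finite self-avoiding lattice paths with endpoints on two common vertical lines, the endpoints of $\pi_2$ lying strictly above those of $\pi_1$ at each line, and if $\pi_1 \cap \pi_2$ is a (possibly empty) connected subpath, then $\pi_2$ lies weakly above $\pi_1$ in the plane, with strict inequality whenever $\pi_1 \cap \pi_2 = \emptyset$. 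In the disjoint case this is the Jordan curve theorem applied to the simple closed curve formed by $\pi_1$, $\pi_2$, and the two vertical segments joining their common pairs of endpoints (after, if necessary, a small horizontal perturbation in $\R^2$ to disentangle the closing segments from the paths' interiors). When $\pi_1 \cap \pi_2$ is a nonempty subpath, split both paths at the endpoints of the shared subpath and apply the disjoint case separately to each of the two resulting ``wings.''

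Granted this planar fact, the argument is short. Applied to $P_0$ and $P_i$, which are disjoint by hypothesis, it gives that $P_i$ lies strictly above $P_0$. Applied to $P_i$ and $P_j$, whose intersection is a connected subpath by unique passage times and whose endpoints are ordered correctly, it gives that $P_j$ lies weakly above $P_i$. Composing these two relations, $P_j$ lies strictly above $P_0$, and hence $P_0 \cap P_j = \emptyset$, proving part~(a). Part~(b) is entirely symmetric and follows by applying exactly the same reasoning to the forwards halves $\geo(\,\cdot\,,\cdot + v_k)$ in place of the backwards halves, with the roles of the two vertical lines exchanged.

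The main obstacle I anticipate is making the planar fact rigorous in the presence of shared subpaths: geodesics may coincide on arbitrarily long common pieces, and one must verify that the Jordan-curve argument survives such coincidences and that the ordering propagates from the ``wings'' through the shared middle piece. This is the purpose of the decomposition into wings described above; a secondary nuisance is that the closing vertical segments may clip the paths themselves, but this is easily handled by perturbing the endpoints horizontally in $\R^2$ (the combinatorial conclusion does not depend on such perturbations). I do not expect the $\good_{k,\ell}$ hypothesis to enter directly in the proof beyond ensuring that the three half-geodesics $P_0, P_i, P_j$ are the genuine left halves of longer geodesics through $0, i{\bf e}_2, j{\bf e}_2$.
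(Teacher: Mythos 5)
Your high-level strategy — read off the separation of $P_0$ from $P_j$ by the middle path $P_i$, using connectedness of pairwise intersections of geodesics — is the same as the paper's, which simply cites the layering in its Figure~9. However, the key ``planar fact'' you state is false, and the gap is exactly where you say you do not expect $\good_{k,\ell}$ to enter. Having endpoints vertically ordered on two common vertical lines and pairwise intersections connected is \emph{not} enough to conclude any above/below ordering, because the paths may wind arbitrarily far outside the strip. Here is a concrete violation of your composition step. Take the left line $x=-10$ and right line $x=0$, with
\begin{align*}
P_0 &:\ (-10,0)\to(-10,-5)\to(5,-5)\to(5,0)\to(0,0),\\
P_i &:\ (-10,2)\to(-11,2)\to(-11,20)\to(3,20)\to(3,2)\to(0,2),\\
P_j &:\ (-10,4)\to(-9,4)\to(-9,-5)\to(-8,-5)\to(-8,-4)\to(-1,-4)\to(-1,4)\to(0,4).
\end{align*}
All three are self-avoiding, the endpoints are ordered $0<2<4$ on both vertical lines, $P_0\cap P_i=\emptyset$, $P_i\cap P_j=\emptyset$, and $P_0\cap P_j$ is the single edge from $(-9,-5)$ to $(-8,-5)$, hence connected. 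Your argument would ``compose'' $P_i$ strictly above $P_0$ and $P_j$ strictly above $P_i$ to conclude $P_0\cap P_j=\emptyset$, which is false here. The failure mode is that $P_i$ balloons upward while $P_j$ dives below $P_0$ without touching $P_i$; neither path is confined to the strip, and disjointness plus endpoint ordering does not propagate transitively.

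What rescues the lemma in the paper is precisely the geometric content of $\good_{k,\ell}$ that you set aside. The bounding random coalescing geodesics $G^i$ in the definition of $\good_k$, together with the $\well_\ell$ events, pin $\geo(m{\bf e}_2-v_k, m{\bf e}_2)$ into cones emanating from boxes of side $\approx 2\ell/3$ around $m{\bf e}_2-v_k$ and $m{\bf e}_2$; since $i,\,j-i>\ell$ these boxes are disjoint, and the geodesics $P_0, P_i, P_j$ genuinely interleave (this is also why the paper's Lemma~\thref{frosted} can assert that $i{\bf e}_2\notin\geo(j{\bf e}_2-v_k,j{\bf e}_2+v_k)$, etc.). Only with this confinement does the Jordan-curve separation of $P_0$ and $P_j$ by $P_i$ go through. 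So you should not expect the hypothesis to be decorative; you need to feed the cone constraints from $\good_{k,\ell}$ and $\well_\ell$ into the topological argument before composing the pairwise orderings.
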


\begin{proof}
Our hypothesis imply that $\geo(u_k,0)$ and $\geo(j{\bf e}_2+u_k,j{\bf e}_2)$ can only intersect if both intersect $\geo(i{\bf e}_2+u_k,i{\bf e}_2)$; see Figure~\ref{fig:mid3}. Hence, if $\geo(u_k,0)$ does not intersect $\geo(i{\bf e}_2+u_k,i{\bf e}_2)$, then it cannot intersect $\geo(j{\bf e}_2+u_k,j{\bf e}_2)$ either. The other case is identical.
\end{proof}
\begin{figure}[htbp]
\begin{center}
\includegraphics{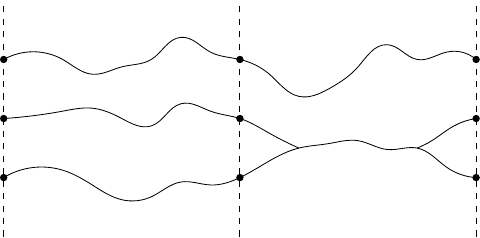}
\end{center}
\caption{Layering of finite geodesics in $I_{k,\ell}$.}
\label{fig:mid3}
\end{figure}

Next define $I_{k,\ell}(x):=\{i\in\ell\Z:\good_{k,\ell}(x+i{\bf e}_2)\text{ occurs}\}$, and $I_{k,\ell}^-(x)$ and $I_{k,\ell}^+(x)$ as
\begin{equation*}
\begin{aligned}
&\big\{j\in I_{k,\ell}:\geo(x+j{\bf e}_2+u_k,x+j{\bf e}_2)\cap \geo(x+i{\bf e}_2+u_k,x+i{\bf e}_2)=\emptyset\text{ for all }i\in I_{k,\ell}, i<j\big\},\\
&\big\{j\in I_{k,\ell}:\geo(x+j{\bf e}_2,x+j{\bf e}_2+v_k)\cap \geo(x+i{\bf e}_2,x+i{\bf e}_2+v_k)=\emptyset\text{ for all }i\in I_{k,\ell}, i<j\big\}.
\end{aligned}
\end{equation*}
It is immediate from the definition that for any $x\in\Z^2$ and $i,j\in I_{k,\ell}^+(x)$ we have
$$
\geo(x+i{\bf e}_2,x+i{\bf e}_2+v_k)\cap \geo(x+j{\bf e}_2,x+j{\bf e}_2+v_k)=\emptyset.
$$
Hence, the set $\net_{k,\ell}(x):=\{\geo(x+i{\bf e}_2,x+i{\bf e}_2+v_k):i\in I_{k,\ell}^+(x)\}$ defines a family of disjoint finite geodesics.

We shall require some knowledge about the density of geodesics in $\net_{k,\ell}(x)$. By Lemmas~\ref{frosted} and~\ref{vashti} it follows that $I_{k,\ell}(x)=I_{k,\ell}^-(x)\cup I_{k,\ell}^+(x)$. Combined with~\eqref{goodie} we conclude that for some $\ell$ each large enough $k$ we have that either $\P\big(0\in I_{k,\ell}^-(x)\big)$ or $\P\big(0\in I_{k,\ell}^+(x)\big)$ exceeds $\delta'/4$. For one of the two this occurs for infinitely many $k$. Since the remainder of the argument is identical in both cases, we proceed assuming, possibly after restricting to a subsequence, that there exists $\ell$ such that for all large $k$ we have
\begin{equation}\label{I-density}
\P\big(0\in I_{k,\ell}^+(x)\big)>\delta'/4.
\end{equation}

We introduce a variable that counts the number of geodesics in $\net_{k,\ell}(x)$ that cross a vertical line segment at $y$ as follows: For $m\ge1$ and $y\in x+[0,v_k\cdot{\bf e}_1]{\bf e}_1$ let
$$
X_{k,\ell}^m(y):=\#\big\{i\in I_{k,\ell}^+(x):\geo(x+i{\bf e}_2,x+i{\bf e}_2+v_k)\cap(y+(-m\ell,m\ell]{\bf e}_2)\neq\emptyset\big\}.
$$
Importantly, the following estimate holds uniformly in $k$ and $m$, and thus guarantees that the family $\net_{k,\ell}(x)$ does not become sparser as $k$ increases.

\begin{lemma}\label{density claim}
Assume~\eqref{I-density}. Then, there exists $\ell\ge1$ so for all large $k$ and all $m$ we have
$$
\P\big(X_{k,\ell}^m(y)>\delta' m/8\big)>(\delta')^2/256\ell^2.
$$
\end{lemma}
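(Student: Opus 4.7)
The plan is to combine the shift invariance of $\P$ along ${\bf e}_2$ with a reverse Markov estimate, exploiting the directional control furnished by the good event. The key observation is that indices in $I_{k,\ell}^+(x)$ correspond to pairwise disjoint finite geodesics whose starting points sit on the vertical line $x+\ell\Z\cdot{\bf e}_2$, and those with starting height sufficiently close to $y$ are forced to cross the column $y+(-m\ell,m\ell]{\bf e}_2$ under the constraints coming from $\good_{k,\ell}$.

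First I would verify, using the stationarity of $\P$, that for every $j\in\ell\Z$,
$$
\P\big(j\in I_{k,\ell}^+(x)\big)=\P\big(0\in I_{k,\ell}^+(x)\big)>\delta/8,
$$
the last inequality being the hypothesis~\eqref{I-density}. Indeed, under the translation $\sigma_{-j{\bf e}_2}$ the event $\{j\in I_{k,\ell}^+(x)\}$ corresponds to $\{0\in I_{k,\ell}^+(x-j{\bf e}_2)\}$: the good event at $x+j{\bf e}_2$ becomes the good event at $x$ for the shifted configuration, and the strict ordering condition on $i<j$ in the definition is preserved (it simply becomes a condition on $i'<0$). Stationarity then gives that this probability is independent of $x$, proving the claim.

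Next I would identify the range of indices $j$ that contribute to $X_{k,\ell}^m(y)$. By construction each $j\in I_{k,\ell}^+(x)$ satisfies the good event, so the corresponding finite geodesic is squeezed between the random coalescing geodesics $G^1,G^2$ at its starting vertex and $G^7,G^8$ at its terminal vertex; by Lemma~\ref{cunningham} these have asymptotic directions close to $\pm v$, and the $\well_\ell$ conditions confine the excursions into the wrong half-plane to boxes of side $\ell/3$ around each endpoint. Together these constraints give directional control on the entire geodesic, so that whenever $j$ lies in an explicit interval of length $2m\ell$ in $\ell\Z$ (depending on $y$ and $v_k$), the geodesic $\geo(x+j{\bf e}_2,x+j{\bf e}_2+v_k)$ does cross the column $y+(-m\ell,m\ell]{\bf e}_2$. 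Linearity of expectation then yields
$$
\E\bigl[X_{k,\ell}^m(y)\bigr]\,\ge\,2m\cdot\frac{\delta}{8}\,=\,\frac{\delta m}{4}.
$$

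Finally I would apply reverse Markov. Since at most one geodesic is counted per element of $\ell\Z$ in an interval of length $2m\ell$, we have $X_{k,\ell}^m(y)\le 2m$ deterministically, and hence
$$
\P\Bigl(X_{k,\ell}^m(y)>\tfrac{\delta m}{16}\Bigr)\,\ge\,\frac{\E[X_{k,\ell}^m(y)]-\delta m/16}{2m}\,\ge\,\frac{3\delta}{32},
$$
which comfortably exceeds $\delta^2/(1024\ell^2)$ for $\delta\le 1$ and $\ell\ge 1$. The main obstacle is the geometric directional control in the second step: making precise that indices in a suitable interval of $\ell\Z$ really yield geodesics crossing the prescribed column. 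This hinges on the fact that the random coalescing geodesics delivered by Lemma~\ref{cunningham} straddle the direction $\pm v$ strictly from both sides, so that the shape theorem together with the $\well_\ell$ confinement yields a uniform linear approximation of the finite geodesic by its starting point plus a straight segment in direction $v$.
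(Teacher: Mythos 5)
Your first step (stationarity along ${\bf e}_2$) is fine, and the use of reverse Markov at the end is a legitimate alternative to the paper's Paley--Zygmund; the numerical slack is large enough that either would close the estimate. But the central geometric step is not justified and I believe it is actually false. You claim that once $j$ lies in a window of length $2m\ell$ around the ``right'' height, the geodesic $\geo(x+j{\bf e}_2,\,x+j{\bf e}_2+v_k)$ is forced to cross the fixed column $y+(-m\ell,m\ell]{\bf e}_2$. The control supplied by $\good_{k,\ell}$ and $\well_\ell$, however, only pins the geodesic down inside boxes of side $\ell/3$ around its two endpoints; it says nothing about the transversal displacement in the middle of the path, where the crossing with the vertical line through $y$ actually occurs. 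That displacement grows without bound as $k\to\infty$ (this is exactly the wandering exponent phenomenon), so for $k$ large it dwarfs the window size $m\ell$, which is fixed. Consequently there is no interval of $j$'s of length $2m\ell$ whose geodesics are all guaranteed to hit the column, and your bound $\E[X_{k,\ell}^m(y)]\ge \delta m/4$ is unsupported. The same uncontrolled wandering also invalidates your deterministic upper bound $X_{k,\ell}^m(y)\le 2m$: far-away indices $i$ can contribute, and the only safe a priori bound is $X_{k,\ell}^m(y)\le 2m\ell$ (the column contains $2m\ell$ vertices and the geodesics in $\net_{k,\ell}(x)$ are pairwise disjoint).

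The paper sidesteps the lack of positional control with an ergodic argument that uses only that each finite geodesic crosses the vertical line through $y$ \emph{somewhere}. Averaging $X_{k,\ell}^m$ over the translates $y+j2m\ell{\bf e}_2$ for $j=-M,\dots,M$ and noting that these small columns tile a long vertical segment, one gets
$$
\frac{1}{2M+1}\sum_{j=-M}^M X_{k,\ell}^m(y+j2m\ell{\bf e}_2)\ \ge\ \frac{X_{k,\ell}^{(2M+1)m}(y)}{2M+1},
$$
and the right side tends (ergodic theorem, together with the fact that all geodesics cross) to $m\,\P(0\in I_{k,\ell}^+(x))>\delta m/8$, while the left side tends to $\E[X_{k,\ell}^m(y)]$. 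This gives $\E[X_{k,\ell}^m(y)]>\delta m/8$ with no need to say \emph{which} indices contribute, and then Paley--Zygmund with the correct bound $X_{k,\ell}^m(y)\le 2m\ell$ produces $\delta^2/(1024\ell^2)$. You should replace your second step with this averaging argument and correct the upper bound on $X$; the rest of your write-up can stay.
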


\begin{proof}
By invariance with respect to shifts along the vector $\ell{\bf e}_2$ it follows from the ergodic theorem that, almost surely,
$$
\lim_{M\to\infty}\frac{1}{2M+1}\sum_{j=-M}^MX_{k,\ell}^m(y+j2m\ell{\bf e}_2)\,=\,\E[X_{k,\ell}^m(y)].
$$
However, since every path from a site $x+i{\bf e}_2$ to $x+i{\bf e}_2+v_k$ has to cross the vertical line $y+\Z{\bf e}_2$ we also have, as $M\to\infty$, that
$$
\frac{1}{2M+1}\sum_{j=-M}^MX_{k,\ell}^m(y+j2m\ell{\bf e}_2)\,\ge\,\frac{1}{2M+1}X_{k,\ell}^{(2M+1)m}(y)\,\to\, m\,\P\big(0\in I_{k,\ell}^+(x)\big).
$$
Hence $\E[X_{k,\ell}^m(y)]>\delta' m/4$. Since $X_{k,\ell}^m(y)$ is bounded above by $2m\ell$, it follows from the Paley-Zygmund inequality that
$$
\P\big(X_{k,\ell}^m(y)>\delta' m/8\big)\,\ge\,\frac{1}{4}\frac{\E[X_{k,\ell}^m(y)]^2}{\E[X_{k,\ell}^m(y)^2]}\,>\,\frac14\frac{(\delta' m/4)^2}{(2m\ell)^2}\,=\,\frac{(\delta')^2}{256\ell^2},
$$
as required.
\end{proof}

\subsection{Constructing a measure on non-coalescing geodesics}

We will in this section take the set of finite and disjoint geodesics $\net_{k,\ell}(x):=\{\geo(x+i{\bf e}_2,x+i{\bf e}_2+v_k):i\in I_{k,\ell}^+(x)\}$ and construct a measure on non-crossing geodesic with infinitely many coalescence classes.

Let $J_{k,\ell}=\{-3r_k/4,\ldots,-r_k/4\}\times\{0,\ldots,\ell-1\}$, where $r_k=\lfloor v_k\cdot {\bf e}_1\rfloor$. Note that $r_k$ will tend to infinity with $k$ by assumption. Denote by $\bar\Ec^2$ the set of oriented edges of the $\Z^2$ lattice. We encode the family of geodesics $\net_{k,\ell}(x)$ as follows: Let $\eta_{k,\ell}(x)=(\eta_{k,\ell}(x,e))_{e\in\bar\Ec^2}$ be defined as
\begin{equation*}
\eta_{k,\ell}(x,e):=\left\{
\begin{aligned}
& 1 && \text{if $e=(y,z)$ is crossed from $y$ to $z$ by some }g\in\net_{k,\ell}(x),\\
& 0 && \text{otherwise}.
\end{aligned}
\right.
\end{equation*}
\begin{figure}[htbp]
\begin{center}
\includegraphics{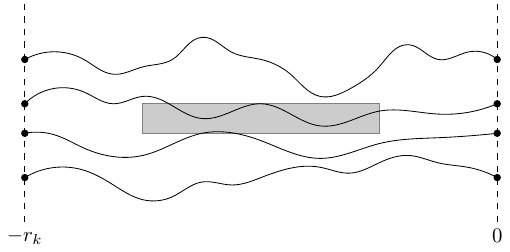}
\end{center}
\caption{Encoding of finite geodesics. Averaging over positions inside $J_{k,\ell}$, the shaded region in this figure.}
\label{fig:mid4}
\end{figure}

We exhibit a measurable map $\Psi_{k,\ell}(x):\Omega_1\to\Omega_1\times\Omega_2'$ via $\omega\mapsto(\omega,\eta_{k,\ell}(x))$. We obtain a measure $\nu_{k,\ell}(x)$ as the push-forward of $\P$ through the mapping $\Psi_{k,\ell}(x)$. Averaging over $x$ in $J_{k,\ell}$ we obtain
$$
\nu_{k,\ell}^\ast:=\frac{1}{|J_{k,\ell}|}\sum_{x\in J_{k,\ell}}\nu_{k,\ell}(x).
$$
By compactness the sequence $(\nu_{k,\ell}^\ast)_{k\ge1}$ has a convergent subsequence. Let $\nu$ be the limiting measure of some convergent subsequence. See Figure~\ref{fig:mid4}.

\begin{lemma}\label{have a cigar}
Assume that~\eqref{I-density} holds. Then, every sub-sequential limit $\nu$ is invariant with respect to translations, and for $\nu$-almost every $(\omega,\eta)\in\Omega_1\times\Omega_2'$ we have that
\begin{enumerate}[\quad (a)]
\item every finite directed path in the graph encoded by $\eta$ is a geodesic;
\item every $z\in\Z^2$ has both in- and out-degree either 0 or 1 in $\eta$;
\item there are no cycles in the (undirected) graph encoded by $\eta$.
\end{enumerate}
\end{lemma}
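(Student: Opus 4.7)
The plan is to verify the four assertions --- translation invariance and properties (a), (b), (c) --- separately, exploiting the fact that each prelimit measure $\nu_{k,\ell}^\ast$ is by construction supported on configurations $(\omega,\eta)$ in which $\eta$ encodes a family of pairwise disjoint finite simple geodesics in $\omega$. The basic mechanism is Portmanteau: in each case I would identify the bad event as open (or clopen) in the product topology on $\Omega_1 \times \Omega_2$, observe that it has $\nu_{k,\ell}^\ast$-probability zero by construction, and conclude that the limit $\nu$ gives it probability zero as well.

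For translation invariance, I would first note that the push-forward measure transforms equivariantly: $\nu_{k,\ell}(x) \circ \tilde\sigma_v = \nu_{k,\ell}(x-v)$ for every $v \in \Z^2$, because shifting the weight configuration by $v$ also shifts every finite geodesic, and hence the encoding $\eta_{k,\ell}$, by $v$. Crucially, since $I_{k,\ell}^+(x)\subseteq \ell\Z$, one has the exact periodicity $\eta_{k,\ell}(x+\ell{\bf e}_2) = \eta_{k,\ell}(x)$. Averaging over $x \in J_{k,\ell}$, whose vertical extent is exactly $\ell$, therefore already yields full vertical-shift invariance, while averaging over its horizontal extent of order $r_k$ gives horizontal-shift invariance up to boundary error of order $|v|/r_k$; since $r_k\to\infty$, this error vanishes in the limit, leaving $\nu \circ \tilde\sigma_v = \nu$ for every $v$.

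Properties (b) and (c) involve only the $\eta$-coordinate through finitely many edges, so the event ``some vertex $z$ has in- or out-degree at least two'' and the event ``a specified finite cycle $C$ is present'' are cylinder events; each has zero $\nu_{k,\ell}^\ast$-probability because the encoded family consists of pairwise disjoint simple paths, and clopen events pass to weak limits, so countable unions over $z$ and over finite cycles complete the proof. Property (a) is the main obstacle, because whether a path is a geodesic couples $\omega$ and $\eta$ through the passage time, which is only upper semicontinuous in $\omega$. I would circumvent this by writing, for each finite directed path $\pi$ with endpoints $v_0,v_n$,
$$
\{(\omega,\eta) : \pi \subseteq \eta,\ \pi \text{ is not a geodesic}\} \;=\; \bigcup_{\pi'} \Big\{(\omega,\eta) : \pi \subseteq \eta,\ \sum_{e \in \pi'}\omega_e < \sum_{e \in \pi}\omega_e \Big\},
$$
where $\pi'$ ranges over finite paths from $v_0$ to $v_n$; existence of such a strictly shorter alternative is equivalent to $\pi$ not being a geodesic, since geodesics between fixed endpoints exist and are finite. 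Each event in the union is open (a cylinder condition on $\eta$ combined with a strict inequality between continuous linear functionals of $\omega$), so the union is open; by construction $\nu_{k,\ell}^\ast$ assigns it mass zero, and Portmanteau transfers this vanishing to $\nu$. Taking a further countable union over all finite directed paths $\pi$ then yields (a).
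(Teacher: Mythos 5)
Your proof is correct and takes essentially the same approach as the paper: exact $\ell{\bf e}_2$-periodicity plus vanishing horizontal boundary error for translation invariance, and Portmanteau applied to cylinder/topological events for parts (a), (b), (c). Your treatment of (a) is a bit more explicit than the paper's, which simply asserts that $\{\gamma\text{ is a geodesic}\}$ is closed; you instead exhibit the complement as an open union of cylinders intersected with strict-inequality sets, but this is the dual formulation of the same observation.
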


\begin{proof}
We first aim to show that $\nu\circ\tilde\sigma_z=\nu$ for every $z\in\Z^2$. Hence, fix some sequence $(k_j)_{j\ge1}$ such that $\nu_{k_j,\ell}^\ast\to\nu$ weakly. Then, by continuity of $\tilde\sigma_z$, also $\nu_{k_j,\ell}^\ast\circ\tilde\sigma_z\to\nu\circ\tilde\sigma_z$. It follows from the periodicity of the set $I_{k,\ell}^+$ that $\nu_{k,\ell}(x)\circ\tilde\sigma_{\ell{\bf e}_2}=\nu_{k,\ell}(x+\ell{\bf e}_2)=\nu_{k,\ell}(x)$. Hence, for any bounded continuous function $f:\Omega_1\times\Omega_2'\to\R$ we have that
$$
\big|\nu_{k,\ell}^\ast\circ\tilde\sigma_z(f)-\nu_{k,\ell}^\ast(f)\big|=\frac{1}{|J_{k,\ell}|}\Big|\sum_{x\in J_{k,\ell}}[\nu_{k,\ell}(x+z_1{\bf e}_1)](f)-[\nu_{k,\ell}(x)](f)\Big|\le\frac{2|z_1|\ell}{|J_{k,\ell}|}\max|f|,
$$
which tends to zero as $k\to\infty$. It follows that $\nu\circ\tilde\sigma_z=\nu$.

We proceed with the proof of properties~\emph{(a)-(c)}. Let $\gamma$ be a finite directed path in $\Z^2$, and denote by $A_\gamma$ the event that $\gamma$ is a path in the (directed) graph encoded by $\eta$. $A_\gamma$ is defined in terms of the state of finitely many coordinates in $\eta$ and is thus both open and closed. So is its complement, and also the event $B_\gamma=\{(\omega,\eta):\gamma\text{ is a geodesic}\}$ is closed. Since each path in $\net_{k,\ell}(x)$ is a geodesic, and each pair in $\net_{k,\ell}(x)$ is non-crossing, it follows that $\nu_{k,\ell}^\ast\big(A_\gamma^c\cup(A_\gamma\cap B_\gamma)\big)=1$ for each $k$. By the Portmanteau theorem it follows that also $\nu\big(A_\gamma^c\cup(A_\gamma\cap B_\gamma)\big)=1$. Since the number of finite directed paths is countable, this proves part~\emph{(a)}.

For part~\emph{(b)}, let $C_d(z)$ denote the event that $z$ has both in- and out-degree $d$ in the directed graph encoded by $\eta$. $C_d(z)$ are defined in terms of edges adjacent to $z$ and is thus closed. For fixed $z\in\Z^2$ and $k$ large enough we have $\nu_{k,\ell}^\ast(C_0(z)\cup C_1(z))=1$, since the contrary would imply that with positive probability $z$ would either be an endpoint to some path in $\net_{k,\ell}(x)$ or a point of intersection for two paths in $\eta_{k,\ell}(x)$, for some large value of $|x|$. Hence, also $\nu(C_0(z)\cup C_1(z))=1$, and since $\Z^2$ is countable, part~\emph{(b)} follows.

Finally, let $\gamma$ be a finite undirected cycle and let $D_\gamma$ denote the event that $\gamma$ is a cycle in the (undirected) graph encoded by $\eta$. Again, $D_\gamma$ is both open and closed, and so is its complement. Moreover, $\nu_{k,\ell}^\ast(D_\gamma)=0$ since for each $k$ each path in $\net_{k,\ell}(x)$ has almost surely no loops or crossing paths. Consequently, $\nu(D_\gamma)=0$, and since the set of finite cycles in $\Z^2$ is countable, this proves part~\emph{(c)}.
\end{proof}

It follows by Lemma~\ref{have a cigar} that each site in the graph encoded by $\eta$ has $\nu$-almost surely either out-degree 0 or a unique infinite forwards-path.\footnote{There would also have to be a unique infinite backwards-path, and hence a bigeodesic, but this observation will not be important in what follows.} In the case that $z$ has out-degree 1 in $\eta$, let $\gamma_z(\eta)$ denote the infinite forwards-path starting at $z$. In the case that $z$ has out-degree 0, then set $\gamma_z(\eta)=\{z\}$.

We next anticipate the proof of Lemma~\ref{cunningham} and show how to deduce Theorem~\ref{hall of mirrors}.

\begin{proof}[Proof of Theorem~\ref{hall of mirrors}]
If~\eqref{sentencing} holds, then we obtain~\eqref{goodie} via Lemma~\ref{cunningham}, and may without loss of generality assume that~\eqref{I-density} holds. By Lemma~\ref{have a cigar} each subsequential limit $\nu$ of the sequence $(\nu_{k,\ell}^\ast)_{k\ge1}$ is invariant with respect to translations. Moreover, the measures $\nu_{k,\ell}^\ast$ are invariant with respect to vertical shifts. Hence, for large enough $k$ it follows via~\eqref{I-density} that
$$
\nu_{k,\ell}^\ast\big(0\text{ has out-degree at least }1\big)\,\ge\,\frac{1}{\ell}\P\big(0\in I_{k,\ell}^+(0)\big)\,>\,\frac{\delta'}{4\ell}.
$$
So, by the Portmanteau theorem,
$\nu$ does not put all mass on the all-zero configuration.

With probability one, by part~\emph{(b)} of Lemma~\ref{have a cigar} each site in the graph encoded by $\eta$ has either degree zero or out-degree 1, and by part~\emph{(a)} each infinite forward-path is a geodesic. Since out-degrees are at most 1, it follows that any two forward-paths that intersect must coalesce, so $\nu$ is indeed an shift-invariant measure on non-crossing geodesics.

Given integers $M>m\ge1$ and $n\ge1$ let $A_n^{m,M}$ denote the event that there are at least $n$ points in $V_m:=\{z\in\Z^2:\gamma_z(\eta)\neq\{z\}\}\cap[-m,m]{\bf e}_2$ whose forward-paths remain pairwise disjoint inside $[-M,M]^2$. By Lemma~\ref{density claim} we may for every $n\ge1$ find an $m\ge1$ such that for all $M>m$ we have $\nu_{k,\ell}(A_n^{m,M})>(\delta')^2/256\ell^2$. Since $A_n^{m,M}$ is defined in terms of finitely many edges it is closed, so by the Portmanteau theorem also $\nu(A_n^{m,M})\ge(\delta')^2/256\ell^2$. By continuity of measure, since $A_n^{m,M}$ is decreasing in $M$, we conclude that
$$
\nu\big(V_m\text{ contains at least $n$ coalescence classes}\big)\,=\,\lim_{M\to\infty}\nu(A_n^{m,M})\,\ge\,\frac{(\delta')^2}{256\ell^2}.
$$
Since $n\ge1$ was arbitrary, we conclude that $\nu$ puts positive mass on configurations with infinitely many coalescence classes. 
This is a contradiction to Proposition~\ref{p:sparse measure}, and hence proves Theorem~\ref{hall of mirrors}. 
\end{proof}

The remainder of this section is devoted to prove Lemma~\ref{cunningham}, and hence complete the proof of Theorem~\ref{hall of mirrors}.

\subsection{Interlude on finite geodesics}\label{fred rogers}

Before we continue we shall examine finite geodesics in a few lemmas.
Recall the definition of neighboring geodesics.

\begin{lemma}\label{neighboring 2}
Let $G<G'$ be neighboring random coalescing geodesics.
Then for every $m\ge1$ we have
$$
\limsup_{|z|\to\infty}\P\big(\geo(0,z)\text{ is ccw between $G$ and $G'$ and diverges within $m$ steps}\big)=0.
$$
\end{lemma}

\begin{proof}
We argue by contradiction. Suppose there are $\eps>0$, $m\ge1$ and a sequence $(z_k)_{k\ge1}$ such that $|z_k|\to\infty$ for which
$$
\sup_{k\ge1}\P\big(\geo(0,z_k)\text{ ccw between $G$ and $G'$ and diverges in at most $m$ steps}\big)>\eps.
$$
In that case there exists an infinite geodesic which coincides with either of $G$ and $G'$ for at most $m$ steps with probability at least $\eps$. This contradicts the assumption that $G$ and $G'$ are neighboring.
\end{proof}

We counter the above lemma by showing that if $\geo(y,z)$ goes through the origin, then it cannot coincide with a given random coalescing geodesics for very long.

\begin{lemma}\label{higgs boson}
Let $\mathcal{F}$ be a finite family of random coalescing geodesics. For every $\eps>0$ there exists $m\ge1$ such that for all large $|y|$ and $|z|$
$$
\P\big(0\in\geo(y,z)\text{ and $\geo(y,z)$ coincides with some $G\in\mathcal{F}$ for at least $m$ steps}\big)<\eps.
$$
\end{lemma}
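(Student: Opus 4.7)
The plan is to argue by contradiction, constructing on a set of positive probability a bigeodesic whose forward half is the random coalescing geodesic $G\in\mathcal{F}$; such a bigeodesic directly contradicts the backwards finiteness of $G$ established in Proposition~\ref{temecula}. Suppose the lemma fails. By the pigeonhole principle over the finite family $\mathcal{F}$ and passing to a subsequence, I fix one $G=(v_0=0,v_1,v_2,\ldots)\in\mathcal{F}$ and sequences $m_k\to\infty$ and $|z_k|\to\infty$ along which the event
\[
A_k=\bigl\{0\in\geo(-z_k,z_k)\text{ and }\geo(-z_k,z_k)\text{ shares at least }m_k\text{ consecutive edges with }G\bigr\}
\]
has probability at least some fixed $\eps'>0$. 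The first key step is to observe that the shared segment must extend back to the origin: if the maximal shared segment were $(v_j,\ldots,v_{j+M})$ with $j>0$, Lemma~\ref{rioja}(c) identifies $(v_0,\ldots,v_j)$ as the unique geodesic $\geo(0,v_j)$, which by uniqueness must lie on $\geo(-z_k,z_k)$ as well (with a short simplicity check ruling out the remaining alternative traversal). Hence on $A_k$, either the first $m_k$ edges of $\geo(0,z_k)$ (Case~A) or those of $\geo(0,-z_k)$ (Case~B) agree with the first $m_k$ edges of $G$; using the $z\mapsto -z$ symmetry of $\P$, I may restrict to Case~A after further subsequencing, at the cost of at most a factor of $1/2$ in $\eps'$.

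Next, I would construct a bigeodesic via subsequential limits. By the reverse Fatou lemma, $\P(\limsup_k A_k)>0$; fix $\omega$ in this set. Along a sub-sequence for which $A_k(\omega)$ holds, a diagonal argument over balls $B(0,R)$, $R\in\N$, produces a further subsequence along which $\geo(-z_k,0)(\omega)$ stabilizes on every bounded set to a semi-infinite nearest-neighbor path $g_-(\omega)$ starting at $0$. The path $g_-$ is infinite because $|z_k|\to\infty$, and every finite initial segment is a geodesic, being a prefix of some $\geo(0,-z_k)$. Simultaneously, since $m_k\to\infty$, the forward halves $\geo(0,z_k)$ stabilize to $G$ on bounded sets. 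For any $y\in g_-$ and $v\in G$, uniqueness of geodesics identifies the sub-path of $g_-\cup G$ between $y$ and $v$ with $\geo(y,v)$; hence $g_-\cup G$ is a bigeodesic through the origin on a set of positive probability.

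The final step is to derive a contradiction via Proposition~\ref{temecula}. For each $y\in g_-$, the forward tail $P_y$ of the bigeodesic at $y$ is a semi-infinite geodesic from $y$ visiting $0$ and thereafter coinciding with $G$; in particular it coalesces with $G=G(0)$, so by Proposition~\ref{label props}(b) we have $F(P_y)=F(G)=:\alpha$. Lemma~\ref{letter}(a) then restricts $P_y$ to at most two candidates in $\tree_y$, namely $G_\alpha^{ccw}(y)$ and $G_\alpha^{cw}(y)$ (which coincide with $G(y)$ when $\alpha\notin\double_\star$). When $\alpha\in\double_\star$, Theorem~\ref{thm:coalescence} exhibits $\{G_\alpha^{ccw}(u)\}_u$ and $\{G_\alpha^{cw}(u)\}_u$ as two disjoint coalescing families; since $P_y$ coalesces with $G$, it must lie in the same family as $G$, and again $P_y=G(y)$. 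Consequently $0\in G(y)$ for every $y\in g_-$, producing with positive probability infinitely many $y\in\Z^2$ with $0\in G(y)$, in direct contradiction to the backwards finiteness of $G$. I expect the main obstacle to be this last identification $P_y=G(y)$ in the multiplicity-two case, which relies on carefully separating the ccw- and cw-families via Theorem~\ref{thm:coalescence}; the remainder is a fairly clean combination of uniqueness of geodesics, a subsequential-limit construction, and the previously established backwards finiteness of random coalescing geodesics.
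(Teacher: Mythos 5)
Your proposal is correct, and it takes a genuinely different route from the paper. The paper's proof is direct: it introduces $V_G$ (vertices $z\notin G(0)$ from which $G(0)$ can be extended backward as a geodesic) and $U_G$ (vertices on $G(0)$ reachable backward from beyond $V_G$), deduces from Proposition~\ref{temecula} that both are a.s.\ finite, and then observes that the event in the lemma forces either $\pm z\in V_G$ or the first $m$ vertices of $\geo(0,\pm z)$ to land in $U_G$ for some $G\in\mathcal{F}$ --- which is unlikely when $m$ and $|z|$ are large. You instead argue by contradiction: after pigeonholing onto a single $G$ and one of the two sides, a diagonal subsequential-limit argument on a positive-measure set assembles a bigeodesic whose forward half is $G(0)$; every vertex $y$ on its backward ray then satisfies $0\in G(y)$, contradicting the backward finiteness in Proposition~\ref{temecula}. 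Both arguments are anchored in Proposition~\ref{temecula}: the paper's is shorter (though it states the finiteness of $U_G$ somewhat tersely), while yours is longer but makes the role of bigeodesics explicit and self-contained. One simplification worth noting: the identification $P_y=G(y)$ does not require the ccw/cw splitting of Theorem~\ref{thm:coalescence} that you flag as the main obstacle. Once $P_y$ coalesces with $G(0)$, and $G(0)$ coalesces with $G(y)$ since $G$ is coalescing, the paths $P_y$ and $G(y)$ are two rays in the tree $\tree_y$ from the same root with cofinite intersection --- distinct rays in a tree intersect finitely --- hence they are equal; the multiplicity-two case never enters. (Two smaller points: your appeal to the $z\mapsto-z$ symmetry is really just a pigeonhole over the two sides, and the fact that $(v_0,\ldots,v_j)=\geo(0,v_j)$ is simply that $G$ is a geodesic rather than an application of Lemma~\ref{rioja}(c).)
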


\begin{proof}
Let $G$ be a random coalescing geodesic and set
\begin{equation*}
\begin{aligned}
V_G&:=\big\{z\not\in G(0):\exists g\in\tree_z\text{ such that }G(0)\subseteq g\big\},\\
U_G&:=\big\{z\in G(0):\exists y\not\in V_G\text{ such that }\geo(0,z)\subseteq\geo(y,z)\big\}.
\end{aligned}
\end{equation*}
In words, $V_G$ is the set of vertices to which the geodesic $G(0)$ can be extended backwards, and $U_G$ is the set of vertices $z$ on $G(0)$ for which the finite segment $\geo(0,z)$ can be extended backwards beyond $V_G$.

By Proposition~\ref{temecula}, $V_G$ is almost surely finite. Consequently, the outer boundary of $V_G$ is almost surely finite, and so is $U_G$. Next we note that if the event in the lemma occurs, then either $y$ or $z$ is contained in $V_G$ for some $G\in\mathcal{F}$, or the first $m$ points (counting from the origin) on either $\geo(0,z)$ or $\geo(y,0)$ belong to $U_G$ for some $G\in\mathcal{F}$. That this would happen is increasingly unlikely if $|y|$, $|z|$ and $m$ are all large.
\end{proof}

\begin{lemma}\label{lma:slices}
Let $G$ be a random coalescing geodesic and let $\bar G$ denote the random coalescing geodesic obtained from $G$ via rotation by $\pi$. If $G'$ is a random coalescing geodesic such that $G<G'<\bar G$, then
$$
\limsup_{|y|,|z|\to\infty}\P\big(0\in\geo(y,z)\text{ and $y$, $z$ are both ccw between $G$ and $G'$}\big)=0.
$$
\end{lemma}

\begin{proof}
Since $G<G'<\bar G$ the intersection of the two sets $\{x\in\Z^2:\rho_G(x)\ge0\}$ and $\{x\in\Z^2:\rho_{G'}(x)\ge0\}$ forms an (infinite) sector of which all but finitely many points lie counterclockwise between $G$ and $G'$, almost surely. Since the Busemann functions of $G$ and $G'$ are asymptotically linear to $\rho_G$ and $\rho_{G'}$ respectively (Proposition~\ref{sonoma}), it follows that
$$
\big\{x\in\Z^2:x\text{ lies ccw between $G$ and $G'$, $B_G(0,x)\ge0$ and }B_{G'}(0,x)\ge0\big\}
$$
contains infinitely many points almost surely. Hence, we can find $x\in G$ and $x'\in G'$ for which $\geo(x',x)$ does not go through the origin, but is strictly contained in the region counterclockwise between $G$ and $G'$ (apart from its endpoints); see Figure~\ref{fig:mid4,5}.
\begin{figure}[htbp]
\begin{center}
\includegraphics{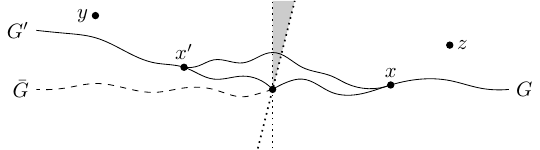}
\end{center}
\caption{The geodesic between $x'$ and $x$ does not visit the origin, and prevents the geodesic between $y$ and $z$ from doing the same. The shaded regions consists of all points for which both $\rho_G$ and $\rho_{G'}$ are non-negative.}
\label{fig:mid4,5}
\end{figure}
There is an almost surely finite $K\ge1$ so that this path is fully contained in $[-K,K]^2$. Finally we observe that for $y$ and $z$ counterclockwise between $G$ and $G'$, but outside $[-K,K]^2$, the geodesic $\geo(y,z)$ cannot go through the origin.
\end{proof}

\subsection{Proof of Lemma~\ref{cunningham}}\label{sec:cunningham}

We are now prepared to prove Lemma~\ref{cunningham} and hence complete the solution to the midpoint problem. We shall divide the proof into two cases, depending on whether the asymptotic shape has many sides or few, in which case the shape is a polygon. We begin with the easier case when it has at least 40 sides.\\

\emph{Case 1: The shape has many sides.}
Assume that $\ball$ has at least 40 sides. From the work of Damron and Hanson~\cite{damhan14} we obtain in Theorem~\ref{yahoo} the existence of (at least) 40 random coalescing geodesics, all of which are asymptotically directed into distinct sides of $\ball$. We shall let $\mathcal{F}$ denote such a set of 40 random coalescing geodesics with the property that if $G$ is in $\mathcal{F}$ then so is the geodesic obtained from $G$ by rotation $\pi/2$. Among these 40 geodesics at most two may contain $v$ in its set of asymptotic directions, and any consecutive sequence of ten geodesics in $\mathcal{F}$ can span an angle at most $\pi/2$.

Let $G^{+1},G^{+2},\ldots,G^{+40}$ be a counterclockwise enumeration of the geodesics in $\mathcal{F}$, starting with the counterclockwise-most geodesic that contains $v$ is its set of directions (should such a geodesic exist, otherwise start with the first geodesic counterclockwise of $v$). Similarly we let $G^{-1},G^{-2},\ldots,G^{-40}$ be a clockwise enumeration of the geodesics starting with the clockwise-most geodesic containing $v$.
With probability tending to one $v_k$ will line counterclockwise between $G^{-2}$ and $G^{+2}$. By Lemma~\ref{lma:slices}, with probability tending to zero, we have $0\in\geo(u_k,v_k)$ while $u_k$ lie counterclockwise between $\bar G^{+3}$ and $\bar G^{-3}$, where again $\bar G^{\pm i}$ is the geodesic obtained from $G^{\pm i}$ by rotation $\pi$. In particular, both $v$ and $-u$ has to lie strictly counterclockwise between $G^{-5}$ and $G^{+5}$. Since these geodesics cannot span an angle larger than $\pi/2$ it is possible to find a half-plane $H_i$ which $G^{-5}$ and $G^{+5}$ eventually move into. These geodesics and the two obtained via rotation $\pi$ together give a set of random coalescing geodesics for which Lemma~\ref{cunningham} holds.\\


\emph{Case 2: The shape has few sides.}
We shall, for the remainder of this section, assume that~\eqref{sentencing} holds and that $\ball$ is a polygon (possibly with few sides). The argument will in this case be extensive and will have to accommodate a range of possibilities, such as whether the geodesics we consider are clockwise isolated or clockwise dense.

Fix $\eps\in(0,\delta/100)$. If $v$ is a direction of differentiability of $\ball$, then let $G^+=G^-$ denote the (unique) random coalescing geodesic associated with the tangent line in direction $v$. If instead $\ball$ has a corner at $v$, then let $G^-$ denote the random coalescing geodesic associated with the side clockwise of $v$ and define $G^+$ idem counterclockwise of $v$; see Figure~\ref{fig:mid5}. Next we choose an increasing sequence of geodesics $(G_n^-)_{n\ge1}$ as follows: If $G^-$ is cw-isolated, then let $G_n^-$ denote its clockwise neighbor. If $G^-$ is cw-dense, then pick $G_n^-<G^-$ so that
\begin{equation}\label{eq:Gn-}
\P\big(\text{$G_n^-$ and $G^-$ diverge within $n$ steps}\big)<\eps.
\end{equation}
By Proposition~\ref{bordeaux} there is a unique geodesic for each tangent line of $\ball$, so there is no restriction to assume that each geodesic in the sequence $(G_n^-)_{n\ge1}$ has the same set of limiting directions; either they are all directed in the corner clockwise of the side associated to $G^-$, or they all coincide with the geodesic associated with the side clockwise of that corner. We choose similarly a decreasing sequence $(G_n^+)_{n\ge1}$ counterclockwise of $G^+$.
\begin{figure}[htbp]
\begin{center}
\includegraphics{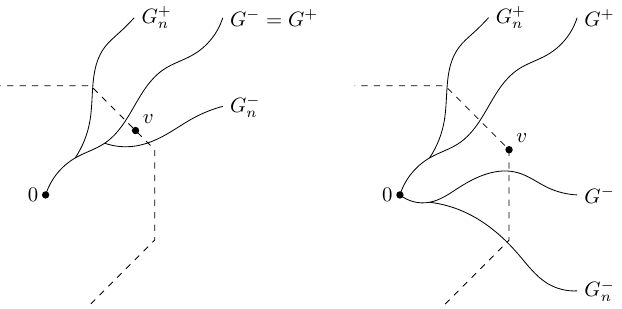}
\end{center}
\caption{The selected geodesics. The dashed curve illustrates the asymptotic shape, which here is an octagon.}
\label{fig:mid5}
\end{figure}

The geodesics specified so far satisfy the following properties.

\begin{claim}\label{G-claim1}
For every $n\ge1$ we have
\begin{enumerate}[\quad (a)]
\item $\displaystyle\lim_{k\to\infty}\P\big(\geo(0,v_k)\text{ is ccw between $G_n^+$ and $G_n^-$}\big)=0$;
\item $\displaystyle\limsup_{|z|\to\infty}\P\big(\geo(0,z)\text{ is ccw between $G_n^-$ and $G^-$ and diverges within $n$ steps}\big)<\eps$;
\item $\displaystyle\limsup_{|z|\to\infty}\P\big(\geo(0,z)\text{ is ccw between $G^+$ and $G_n^+$ and diverges within $n$ steps}\big)<\eps$.
\end{enumerate}
\end{claim}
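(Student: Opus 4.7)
The plan is to analyze the three parts separately: parts (b) and (c) split into cases depending on whether $G^-$ (respectively $G^+$) is counterclockwise isolated or a counterclockwise limit, while part (a) is the more delicate geometric statement, handled by combining Proposition~\ref{mendoza}, the extended shape theorem (Proposition~\ref{extended shape}), and the contiguous-intersection property of geodesics. The key unifying observation used in (a) and in the second case of (b) and (c) is that under unique passage times the intersection of any two geodesics sharing a vertex is a contiguous subpath of each, because the sub-path of any geodesic between two of its vertices is the unique geodesic between them.

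For part (a): By Proposition~\ref{mendoza} applied to the random coalescing geodesics $G_n^\pm$, together with Proposition~\ref{bordeaux} and the construction of the approximating sequences, the arcs $\arc(G_n^+)$ and $\arc(G_n^-)$ are closed subsets of $S^1$ lying strictly counterclockwise and strictly clockwise of $v$ respectively, and neither contains $v$. Hence there is an open arc $U \subseteq S^1$ around $v$ disjoint from $\arc(G_n^+) \cup \arc(G_n^-)$. Combining this with Proposition~\ref{extended shape} yields that almost surely there is a random finite $K$ such that every $z \in \Z^2$ with $|z| \ge K$ and $z/|z| \in U$ lies in the wedge bounded by $G_n^+$ and $G_n^-$ that contains direction $v$, which I call the \emph{good wedge}. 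Since $v_k/|v_k| \to v \in U$ and $|v_k| \to \infty$, the point $v_k$ lies in the good wedge for all sufficiently large $k$ almost surely, so $\P(v_k \text{ not in good wedge}) \to 0$. I then invoke the contiguous-intersection property: the intersections $\geo(0,v_k) \cap G_n^\pm$ are contiguous initial subpaths of $\geo(0,v_k)$, so once $\geo(0,v_k)$ has left both $G_n^+$ and $G_n^-$ it enters exactly one of the two wedges and cannot cross either boundary again. Consequently, $v_k$ being in the good wedge forces $\geo(0,v_k)$ into the good wedge, ruling out that it lies ccw between $G_n^+$ and $G_n^-$ in the complementary arc.

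For parts (b) and (c), which are symmetric, consider (b). If $G^-$ is counterclockwise isolated, then $G_n^-$ is by construction its clockwise neighbor, so $G_n^-$ and $G^-$ are neighboring random coalescing geodesics and Lemma~\ref{neighboring}(b) immediately yields $\limsup_{|z|\to\infty} \P(\cdots) = 0 < \eps$. If $G^-$ is a counterclockwise limit, then by~\eqref{eq:Gn-} we have $\P(G_n^- \text{ and } G^- \text{ diverge within } n \text{ steps}) < \eps$, and on the complementary event the first $n$ edges of $G_n^-$ and $G^-$ coincide. The same contiguous-intersection argument then shows that any $\geo(0,z)$ which is ccw between $G_n^-$ and $G^-$ must agree with this common initial segment, hence coincides with both $G_n^-$ and $G^-$ for at least $n$ steps and does not diverge within $n$ steps. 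Thus the event in (b) is contained in $\{G_n^- \text{ and } G^- \text{ diverge within } n \text{ steps}\}$, of probability less than $\eps$. Part (c) is identical with $(G^+, G_n^+)$ in place of $(G^-, G_n^-)$.

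The main obstacle is in part (a), namely establishing that $v_k$ lies in the good wedge with probability tending to one. This is the step where I must convert the deterministic directional information provided by Proposition~\ref{mendoza} about the arcs $\arc(G_n^\pm)$ into quantitative control on the spatial location of the vertices of $G_n^\pm$ at large scale. Proposition~\ref{extended shape} is the bridge: it shows that far-away points with direction in the neighborhood $U$ of $v$ must avoid a neighborhood of both infinite geodesics, and then the cyclic ordering of $\arc(G_n^+)$ and $\arc(G_n^-)$ around $S^1$ identifies on which side of each they lie, placing them in the good wedge.
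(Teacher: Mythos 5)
Your proof is correct and follows essentially the same route as the paper: part~(a) is deduced from the fact that $v$ is not a limiting direction of $G_n^{\pm}$, and parts~(b) and~(c) split into the ccw-isolated case (Lemma~\ref{neighboring}(b)) and the ccw-limit case (containment of the event in $\{G_n^- \text{ and } G^- \text{ diverge within } n \text{ steps}\}$, which has probability $<\eps$ by~\eqref{eq:Gn-}). The only minor imprecision is the appeal to Proposition~\ref{extended shape} in part~(a): the relevant fact is simply that $\dir(G_n^{\pm})$ is a deterministic closed set not containing $v$, so that the vertices of $G_n^{\pm}$ eventually avoid a directional cone around $v$, from which the ``good wedge'' statement follows by the tree structure of $\tree_0$; Proposition~\ref{extended shape} concerns passage-time control rather than geodesic directions and is not really the tool doing the work here.
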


\begin{proof}[Proof of claim]
Part~\emph{(a)} is an immediate consequence of $v$ not being a limiting direction of neither $G_n^-$ nor $G_n^+$. For part~\emph{(b)}, note that if $G^-$ is cw-dense, then this follows from~\eqref{eq:Gn-}. If $G^-$ is cw-isolated, then it follows from Lemma~\ref{neighboring 2}. The proof of part~\emph{(c)} is identical.
\end{proof}

Combining Lemma~\ref{higgs boson} and Claim~\ref{G-claim1} we see that $\geo(0,v_k)$ has nowhere to go but between $G^-$ and $G^+$, and which of course implies that $\ball$ must have a corner at $v$.

\begin{claim}\label{v is a corner}
If~\eqref{sentencing} holds, then $\ball$ has a corner in direction $v$. Moreover, there exists $m\ge1$ such that for all sufficiently large $k$ we have
$$
\P\big(0\in\geo(u_k,v_k)\text{ and $\geo(0,v_k)$ ccw between $G^-$, $G^+$ and diverges in $m$ steps}\big)>\delta-6\eps.
$$
\end{claim}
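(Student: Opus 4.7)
The plan is to argue in two parts: first, rule out the possibility that $v$ is a direction of differentiability by deriving a contradiction with~\eqref{sentencing}, which forces $\ball$ to have a corner at $v$; second, with $G^-<G^+$ in hand, a case analysis on the position of $\geo(0,v_k)$ relative to the geodesics $G_n^-,G^-,G^+,G_n^+$, combined with Claim~\ref{G-claim1} and Lemma~\ref{higgs boson}, yields the quantitative bound.

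For the corner part, suppose $G^-=G^+=:G$ and apply Lemma~\ref{higgs boson} to the finite family $\mathcal{F}:=\{G,G_n^-,G_n^+\}$ to choose $m=n$ large enough that, for all sufficiently large $k$, the event ``$0\in\geo(-v_k,v_k)$ and $\geo(-v_k,v_k)$ coincides with some member of $\mathcal{F}$ for at least $m$ steps'' has probability less than $\eps$; since $\geo(0,v_k)\subseteq\geo(-v_k,v_k)$ on $A_k:=\{0\in\geo(-v_k,v_k)\}$, the same bound controls the coincidence event for the subpath $\geo(0,v_k)$. On $A_k$ stripped of this coincidence event, $\geo(0,v_k)$ must lie in one of three remaining positions: (i) ccw between $G_n^+$ and $G_n^-$ (the sector not containing $v$); (ii) strictly ccw between $G_n^-$ and $G$ while diverging from both within $n$ steps; or (iii) strictly ccw between $G$ and $G_n^+$ while diverging from both within $n$ steps---the strict sector between $G^-$ and $G^+$ being empty in this case. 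Claim~\ref{G-claim1}(a),(b),(c) bound each of these by $\eps$ for large $k$, so $\P(A_k)\leq 4\eps$; for $\eps<\delta/100$ this contradicts~\eqref{sentencing}. Hence $\ball$ must have a corner at $v$.

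For the quantitative bound, enlarge $\mathcal{F}$ to $\{G^-,G^+,G_n^-,G_n^+\}$ and re-select $m=n$ so that the Lemma~\ref{higgs boson} estimate for this larger family again yields probability less than $\eps$. The same enumeration of cases applies, except that the strict ccw sector between $G^-$ and $G^+$ is now nontrivial, and intersected with the constraint that $\geo(0,v_k)$ coincides with neither $G^-$ nor $G^+$ for as many as $m$ steps it is precisely the good event appearing in the statement. The complement of this good event within $A_k$ is covered by (i) the wrong sector, bounded by $\eps$ via~(a); (ii) the divergence event strictly ccw between $G_n^-$ and $G^-$, bounded by $\eps$ via~(b); (iii) the analogous divergence event strictly ccw between $G^+$ and $G_n^+$, bounded by $\eps$ via~(c); and (iv) the $\mathcal{F}$-coincidence event, which absorbs every residual boundary-hugging subcase---hugging $G_n^\pm$ inside the intermediate sectors, or hugging $G^\pm$ inside the good sector for $\ge m$ steps---and is bounded by $\eps$. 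Summing, the bad probability is at most $4\eps<6\eps$, so the good event has probability at least $\delta-6\eps$, as required.

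The principal obstacle is parameter coherence: the threshold $m$ supplied by Lemma~\ref{higgs boson} depends on the chosen family $\mathcal{F}$, while Claim~\ref{G-claim1}(b),(c) only provide divergence estimates for a given $n$ (on which the sequences $(G_n^\pm)_{n\ge1}$ were themselves built). Taking $n=m$ and enlarging $\mathcal{F}$ to include both $G^\pm$ and $G_n^\pm$ reconciles the two constraints, and ensures that every residual boundary-hugging scenario is absorbed into a single application of Lemma~\ref{higgs boson}.
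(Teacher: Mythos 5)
Your case decomposition and use of Claim~\ref{G-claim1} together with Lemma~\ref{higgs boson} match the paper's strategy in outline. However, the "parameter coherence" obstacle you identify is not actually resolved by what you propose: Lemma~\ref{higgs boson} produces a threshold $m=m(\mathcal{F},\eps)$ only after a \emph{fixed} finite family $\mathcal{F}$ has been specified, and there is no a priori control of $m(\{G^-,G^+,G_n^-,G_n^+\},\eps)$ as $n$ varies. Declaring $n=m$ while letting $\mathcal{F}$ contain $G_n^\pm$ is a circular specification, and nothing in the proof of Lemma~\ref{higgs boson} supplies the fixed-point you would need (the sets $V_{G_n^\pm}$, $U_{G_n^\pm}$ on which that lemma's $m$ depends can in principle grow with $n$).

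The paper sidesteps this cleanly: it keeps $\mathcal{F}=\{G_1^-,G^-,G^+,G_1^+\}$ fixed, obtains $m$ from Lemma~\ref{higgs boson} for that fixed family, and then controls coincidence with $G_m^\pm$ \emph{not} via Lemma~\ref{higgs boson} but via~\eqref{eq:Gn-}: on the complement of the event $\{\text{$G_m^-$ and $G^-$ diverge within $m$ steps}\}$ (probability $<\eps$), coinciding with $G_m^-$ for $\ge m$ steps is the same event as coinciding with $G^-$ for $\ge m$ steps, which \emph{is} covered by the Lemma~\ref{higgs boson} bound for the fixed family. Running this for both $G_m^-$ and $G_m^+$ costs an additional $2\eps$ on top of the single $\eps$ from Lemma~\ref{higgs boson}, which is exactly why the paper arrives at $3\eps$ for the coincidence bucket and $6\eps$ overall rather than your claimed $4\eps$. (Your initial contradiction-first pass assuming $G^-=G^+$ is also unnecessary: the paper runs the five-case analysis once, not assuming a corner, and observes that the only surviving case --- a geodesic strictly ccw between $G^-$ and $G^+$ diverging from both early --- can have positive probability only if $G^-\neq G^+$.) Once you replace the $n=m$ family inclusion by the fixed-family-plus-\eqref{eq:Gn-} argument, the rest of your case count goes through and gives exactly the $\delta-6\eps$ claimed.
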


\begin{proof}[Proof of claim]
Let $\mathcal{F}=\{G_1^-,G^-,G^+,G_1^+\}$. Recall Lemma~\ref{higgs boson}, and pick $m$ such that
$$
\P\big(0\in\geo(u_k,v_k)\text{ and $\geo(u_k,v_k)$ coincides with some $G\in\mathcal{F}$ for at least $m$ steps}\big)<\eps
$$
for all large enough $k$.

Now we observe that on the event that $0\in\geo(u_k,v_k)$ there are five possibilities for $\geo(0,v_k)$: Either $\geo(0,v_k)$ is (i) ccw between $G_m^+$ and $G_m^-$; (ii) ccw between $G_m^-$ and $G^-$ and diverges within $m$ steps; (iii) ccw between $G^+$ and $G_m^+$ and diverges within $m$ steps; (iv) coincides with either of $G_m^-,G^-,G^+,G_m^+$ for at least $m$ steps; or (v) $\geo(0,v_k)$ lies ccw between $G^-$ and $G^+$ and splits from both within $m$ steps. The first three each have probability at most $\eps$ to occur by Claim~\ref{G-claim1}, while the fourth has probability at most $3\eps$ to occur by the choice of $m$ and~\eqref{eq:Gn-}. The only other possibility is that $\geo(0,v_k)$ lies counterclockwise between $G^-$ and $G^+$ and splits from both within $m$ steps. However, this can only happen if $G^-\neq G^+$, in which case $\ball$ must have a corner in direction $v$.
\end{proof}

Combining Lemma~\ref{neighboring} and Claim~\ref{v is a corner} we conclude that $G^-$ and $G^+$ are not neighboring, and that there are random coalescing geodesics between $G^-$ and $G^+$. We now choose a decreasing sequence $(G'_n)_{n\ge1}$ as follows: If $G^-$ is ccw-isolated, then let $G'_n$ denote its counterclockwise neighbor. If $G^-$ is ccw-dense, then pick $G'_n$ strictly ccw between $G^-$ and $G^+$ such that
\begin{equation}\label{eq:G'n}
\P\big(\text{$G^-$ and $G'_n$ diverge within $n$ steps}\big)<\eps.
\end{equation}
Choose an increasing sequence $(G''_n)_{n\ge1}$ analogously, with the additional condition that $G''_1\ge G'_1$. All of these geodesics $(G'_n)_{n\ge1}$ and $(G''_n)_{n\ge1}$ are directed in the direction of non-differentiability $v$.

\begin{claim}\label{G-claim2}
For every $n\ge1$ we have
\begin{enumerate}[\quad (a)]
\item $\displaystyle\limsup_{|z|\to\infty}\P\big(\geo(0,z)\text{ is ccw between $G^-$ and $G'_n$ and diverges within $n$ steps}\big)<\eps$;
\item $\displaystyle\limsup_{|z|\to\infty}\P\big(\geo(0,z)\text{ is ccw between $G''_n$ and $G^+$ and diverges within $n$ steps}\big)<\eps$.
\end{enumerate}
\end{claim}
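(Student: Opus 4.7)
The proof of Claim~\ref{G-claim2} mirrors the proof of parts \emph{(b)}--\emph{(c)} of Claim~\ref{G-claim1}, only applied to the new pair of sequences. Part \emph{(b)} is symmetric to part \emph{(a)} (swap $G^-,G'_n,$ clockwise with $G^+,G''_n,$ counterclockwise), so I plan to treat part \emph{(a)} and let part \emph{(b)} follow verbatim. The argument splits according to the dichotomy built into the definition of $(G'_n)_{n\ge 1}$.

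\textbf{Case 1: $G^-$ is clockwise isolated.} By construction $G'_n$ is the counterclockwise neighbor of $G^-$, so $G^-$ and $G'_n$ are neighboring random coalescing geodesics in the sense of Section~\ref{fred rogers}. Part~\emph{(b)} of Lemma~\ref{neighboring}, applied with these two geodesics and $m=n$, gives
\[
\limsup_{|z|\to\infty}\P\bigl(\geo(0,z)\text{ ccw between $G^-$ and $G'_n$ and diverges within $n$ steps}\bigr)=0<\eps.
\]

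\textbf{Case 2: $G^-$ is a clockwise limit.} By~\eqref{eq:G'n}, $\P\bigl(G^-(0)\text{ and }G'_n(0)\text{ diverge within $n$ steps}\bigr)<\eps$. I claim that the event of interest is contained in this event, which gives the bound uniformly in $z$. Indeed, on the complement, $G^-(0)$ and $G'_n(0)$ share a common prefix $(0,u_1,\dots,u_k)$ of length $k\ge n$. This prefix is itself the geodesic $\geo(0,u_k)$ by uniqueness of passage times. If $\geo(0,z)$ lies counterclockwise between $G^-$ and $G'_n$, then a standard planar sandwich argument forces $\geo(0,z)$ to share the same prefix (were its first deviation from the prefix to occur at some step $j\le k$, then $\geo(0,z)$ would step to a vertex that is either strictly counterclockwise of $u_j$ relative to $u_{j-1}$, or strictly clockwise, either way contradicting $G^-\le\geo(0,z)\le G'_n$). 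Consequently $\geo(0,z)$ cannot diverge within $n$ steps unless $G^-(0)$ and $G'_n(0)$ have already diverged within $n$ steps.

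The main technical point is the planar-sandwich observation used in Case 2 -- that two geodesics from $0$ whose initial segments coincide force any intermediate (in the ccw order) geodesic from $0$ to share those initial edges. This is the same planar reasoning used implicitly throughout Section~\ref{cola} and Section~\ref{geometric}, and under the standing assumption of unique passage times it is unambiguous: if the prefix is a geodesic, any competing first step from $0$ that stays between $G^-$ and $G'_n$ would force a later return to $u_k$ along a path of equal weight, contradicting unique passage times. Once this is in hand, Case 1 invokes the neighboring-geodesic lemma and Case 2 invokes~\eqref{eq:G'n} directly, and the claim follows immediately.
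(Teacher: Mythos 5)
Your proof is correct and takes essentially the same approach as the paper's: the paper's proof also splits on whether $G^-$ is clockwise isolated (invoking Lemma~\ref{neighboring}(b)) or a clockwise limit (invoking~\eqref{eq:G'n}), and handles part~\emph{(b)} by symmetry. One small point in your favor: the paper's own proof text says ``counterclockwise'' in both places, which is a typo for ``clockwise'' (matching the dichotomy used to define $(G'_n)_{n\ge 1}$), and you have correctly aligned the cases with the actual definition; you also make explicit the planar-sandwich step that the paper leaves implicit in the clockwise-limit case.
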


\begin{proof}[Proof of claim]
Note that if $G^-$ is ccw-dense, then this follows from~\eqref{eq:G'n}. If $G^-$ is ccw-isolated, then it follows from Lemma~\ref{neighboring 2}. The remaining case is similar.
\end{proof}

\begin{figure}[htbp]
\begin{center}
\includegraphics{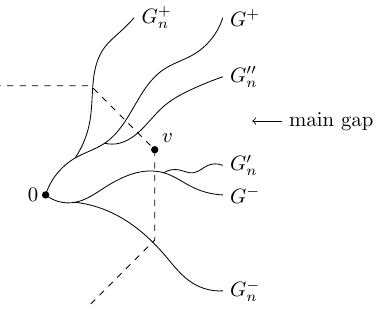}
\end{center}
\caption{The family $\mathcal{F}_n$ of geodesics. The main gap is indicated. The remaining four gaps are minor.}
\label{fig:mid6}
\end{figure}

Given a random coalescing geodesic $G$, let $\bar G$ denote the random coalescing geodesic obtained by rotation by $\pi$. We have now defined the geodesics with which we will work. Set
\begin{equation*}
\begin{aligned}
\mathcal{F}_n&:=\{G_n^-,G^-,G'_n,G''_n,G^+,G^+_n\},\\
\bar{\mathcal{F}}_n&:=\{\bar G_n^-,\bar G^-,\bar G'_n,\bar G''_n,\bar G^+,\bar G^+_n\}.
\end{aligned}
\end{equation*}

We will refer to the counterclockwise gap between $G'_n(x)$ and $G''_n(x)$ as the {\bf main gap} of $\mathcal{F}_n$ at $x$, and say that $\geo(x,x+v_k)$ {\bf moves into the main gap} of $\mathcal{F}_n$ at $x$ if $\geo(x,x+v_k)$ is counterclockwise between $G'_n(x)$ and $G''_n(x)$. The gaps between $G_n^-(x)$ and $G^-(x)$, between $G^-(x)$ and $G'_n(x)$, between $G''_n(x)$ and $G^+(x)$, and between $G^+(x)$ and $G^+_n(x)$ will be referred to as {\bf minor gaps}. Define $\dining_{k,n}(x)$ to be the event that following three occur:
\begin{itemize}
\item $x \in \geo(x+u_k,x+v_k)$;
\item $\geo(x,x+v_k)$ moves into the main gap of $\mathcal{F}_n$ at $x$;
\item $\geo(x+u_k,x)$ moves into the main gap of $\bar{\mathcal{F}}_n$ at $x$.
\end{itemize}
Clearly, $\dining_{k,n}$ can only occur for large $k$ if $u=-v$.

\begin{claim}\label{wall or walrus}
Assume that~\eqref{sentencing} holds. Then $u=-v$ and there exists $n_0\ge1$ such that for all $n\ge n_0$ and sufficiently large $k$ we have
$$
\P\big(\dining_{k,n}(0)\big)>\delta/2.
$$
\end{claim}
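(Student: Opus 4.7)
The plan is to deduce Claim \ref{wall or walrus} from the hypothesis $\P(0\in\geo(-v_k,v_k))>\delta$ of \eqref{sentencing} by peeling off the probabilities of the various ways in which $\dining_{k,n}(0)$ can fail, extending the strategy used to establish Claim \ref{v is a corner}. By the symmetry $z\mapsto -z$ of the construction, it suffices to bound the probability that $\geo(0,v_k)$ fails to lie in the main gap of $\mathcal{F}_n$ at the origin; the same analysis applied to $\bar{\mathcal{F}}_n$ then yields a symmetric bound for $\geo(-v_k,0)$.

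On the event $\{0\in\geo(-v_k,v_k)\}$, the failure of $\geo(0,v_k)$ to be ccw between $G'_n$ and $G''_n$ decomposes into three families of configurations: (i) $\geo(0,v_k)$ exits the envelope bounded by $G_n^-$ and $G_n^+$; (ii) $\geo(0,v_k)$ lies strictly in one of the four minor gaps of $\mathcal{F}_n$ and diverges from both of its bounding geodesics within $n$ steps; and (iii) $\geo(0,v_k)$ shadows one of the six geodesics in $\mathcal{F}_n$ for at least $n$ consecutive steps, which catches the residual possibility that the geodesic lingers along the boundary of a minor gap rather than diverging quickly. Type (i) is controlled by Claim \ref{G-claim1}(a), giving $o(1)$ as $k\to\infty$ for each fixed $n$. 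Type (ii) is controlled by Claims \ref{G-claim1}(b)(c) and \ref{G-claim2}(a)(b), each contributing at most $\eps$ for $|v_k|$ large.

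For type (iii), I would apply Lemma \ref{higgs boson} to the finite family $\mathcal{F}_n\cup\bar{\mathcal{F}}_n$ to obtain a threshold $m=m(n)$ such that the probability of $\{0\in\geo(-v_k,v_k)\}$ together with $\geo(-v_k,v_k)$ sharing at least $m$ consecutive steps with some geodesic in $\mathcal{F}_n\cup\bar{\mathcal{F}}_n$ is $<\eps$ for all large $k$. One then chooses $n_0$ large enough that $m(n_0)\le n_0$, and likewise for each $n\ge n_0$; this is possible because $m$ depends only on $\eps$ and the fixed finite family, and the constructions of $G'_n$, $G''_n$ and $G_n^\pm$ remain valid (indeed with strictly tighter divergence properties) for arbitrarily large values of $n$.

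Combining the three types of bounds and accounting for the symmetric contribution from $\geo(-v_k,0)$, the total probability of $\dining_{k,n}(0)^c\cap\{0\in\geo(-v_k,v_k)\}$ is at most $2(o(1)+4\eps+\eps)=o(1)+10\eps$. Since $\eps<\delta/100$, this is strictly less than $\delta/2$ for $n\ge n_0$ and $k$ sufficiently large, yielding $\P(\dining_{k,n}(0))>\delta-\delta/2=\delta/2$, as required. The main obstacle is the bookkeeping of the numerous failure modes and the slightly self-referential matching of the parameter $n$ indexing the family $\mathcal{F}_n$ with the Lemma \ref{higgs boson} threshold; both become routine once the enlarged family $\mathcal{F}_n$ has been set up with its carefully tuned divergence properties.
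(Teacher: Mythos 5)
Your high-level decomposition -- envelope escape, minor-gap entry with quick divergence, and prolonged shadowing, combined with a symmetric pass for $\geo(-v_k,0)$ -- matches the paper's seven-possibility argument, and the arithmetic $\delta-O(\eps)>\delta/2$ is right. However, there is a genuine circularity in your handling of type (iii). You apply Lemma~\ref{higgs boson} to the family $\mathcal{F}_n\cup\bar{\mathcal{F}}_n$, obtaining a threshold $m(n)$, and then assert that one can pick $n_0$ with $m(n_0)\le n_0$ ``and likewise for each $n\ge n_0$'' on the grounds that ``$m$ depends only on $\eps$ and the fixed finite family.'' But $\mathcal{F}_n$ is \emph{not} a fixed family: when $G^-$ is a ccw-limit (resp.\ $G^+$ a cw-limit, $G^\pm$ a cw/ccw-limit), the geodesics $G_n^-,G_n^+,G'_n,G''_n$ genuinely change with $n$, and the proof of Lemma~\ref{higgs boson} controls $m$ through the random sets $U_G$, which depend on $G$. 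There is no a priori reason that $m(n)$ stays bounded, let alone satisfies $m(n)\le n$ for all large $n$; so the self-referential choice of $n_0$ is not justified as written.

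The paper avoids this by applying Lemma~\ref{higgs boson} once to the \emph{fixed} family $\mathcal{F}_1$, obtaining a single $m$, and then transferring the bound to $\mathcal{F}_n$ for $n\ge m$ via the tuned agreement properties~\eqref{eq:Gn-} and~\eqref{eq:G'n}: if $\geo(0,v_k)$ shadows, say, $G_n^-$ for at least $n$ steps, then on the event (of probability $>1-\eps$) that $G_n^-$ and $G^-$ agree for their first $n$ steps, $\geo(0,v_k)$ shadows $G^-\in\mathcal{F}_1$ for at least $n\ge m$ steps, which is the event already controlled. This removes all $n$-dependence from the Lemma~\ref{higgs boson} input and replaces your circular parameter-matching with a one-directional implication. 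If you restate your step (iii) this way -- apply Lemma~\ref{higgs boson} to $\mathcal{F}_1\cup\bar{\mathcal{F}}_1$, take $n_0=m$, and reduce shadowing of the $n$-dependent geodesics to shadowing of $G^\pm,\bar G^\pm$ using~\eqref{eq:Gn-} and~\eqref{eq:G'n} -- the argument closes.
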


\begin{proof}[Proof of claim]
Via Lemma~\ref{higgs boson} we know that for some $m\ge1$ we have for all large $k$ that
$$
\P\big(0\in\geo(u_k,v_k)\text{ and $\geo(0,v_k)$ coincides with some $G\in\mathcal{F}_1$ for at least $m$ steps}\big)<\eps.
$$
As in the proof of Claim~\ref{v is a corner}, but this time with seven possibilities for $\geo(0,v_k)$, we reach the conclusion that for all $n\ge m$ we have
$$
\lim_{k\to\infty}\P\big(0\in\geo(u_k,v_k)\text{ and $\geo(0,v_k)$ ccw between $G'_n$ and $G''_n$}\big)>\delta-10\eps.
$$
In particular, by Lemma~\ref{neighboring}, the geodesics $G'_n$ and $G''_n$ cannot be neighboring.

In order to get also $\geo(u_k,0)$ between $\bar G'_n$ and $\bar G''_n$ we shall iterate the above argument and apply Lemma~\ref{lma:slices}.
We pick a decreasing sequence $(G^\ast_n)_{n\ge1}$ as follows: If $G^-$ was ccw-dense, then let $G^\ast_n=G'_n$. Otherwise, if $G'_1$ is ccw-isolated, let $G^\ast_n$ be its counterclockwise neighbor, and if not, then let $(G^\ast_n)_{n\ge1}$ be a decreasing sequence such that $G'_1$ and $G^\ast_n$ diverge within $n$ steps with probability less than $\eps$. Pick similarly an increasing sequence $(G^{\ast\ast}_n)_{n\ge1}$ such that $G^\ast_1\le G^{\ast\ast}_1$. All these geodesics are directed in the corner at $v$.

Let $\mathcal{F}'_1=\mathcal{F}_1\cup\{G^\ast_1,G^{\ast\ast}_1\}$ and for $n\ge2$ let $\mathcal{F}'_n=\mathcal{F}_n\cup\{G^\ast_{n-1},G^{\ast\ast}_{n-1}\}$. By Lemma~\ref{higgs boson} we find $m'\ge m$ such that for all large $k$ we have
$$
\P\big(0\in\geo(u_k,v_k)\text{ and $\geo(0,v_k)$ coincides with some $G\in\mathcal{F}'_1$ for at least $m'$ steps}\big)<\eps.
$$
As in the proof of Claim~\ref{v is a corner}, but this time with nine possibilities for $\geo(0,v_k)$, based on the geodesics in $\mathcal{F}'_n$, we conclude that for all $n\ge m'$ we have
$$
\lim_{k\to\infty}\P\big(0\in\geo(u_k,v_k)\text{ and $\geo(0,v_k)$ ccw between $G^\ast_n$ and $G^{\ast\ast}_n$}\big)>\delta-14\eps.
$$
Let $n_0=m'$. Again by Lemma~\ref{higgs boson} we find $m''\ge m'$ such that for all large $k$ we have
$$
\P\big(0\in\geo(u_k,v_k)\text{ and $\geo(u_k,0)$ coincides with some $G\in\bar{\mathcal{F}}'_{n_0}$ for at least $m''$ steps}\big)<\eps.
$$
Hence, for large values of $k$, on the event that $0\in\geo(u_k,v_k)$, $\geo(0,v_k)$ is ccw between $G^\ast_{n_0}$ and $G^{\ast\ast}_{n_0}$, and $\geo(u_k,0)$ diverges from the geodesics in $\bar{\mathcal{F}}'_{n_0}$ within $m''$ steps, then Lemma~\ref{lma:slices} says that $\geo(u_k,0)$ has to go between $\bar G'_{n_0}$ and $\bar G''_{n_0}$. More precisely, we conclude that for all large enough $k$ we have
$$
\P\big(\dining_{k,n_0}(0)\big)>\delta-16\eps.
$$
In particular $u=-v$, and since $\dining_{k,n}(0)$ is monotone in $n$, the claim follows.
\end{proof}

Define $\stool_{k,n}(x)$ to be the event that
\begin{itemize}
\item $\dining_{k,n}(x)$ occurs;
\item $\geo(x+u_k,x)$ moves into the main gap of $\mathcal{F}_n$ at $x+u_k$;
\item $\geo(x,x+v_k)$ moves into the main gap of $\bar{\mathcal{F}}_n$ at $x+v_k$.
\end{itemize}
Note that $\stool_{k,n}(x)$ in nothing but a version of the event $\good_k(x)$ for a specific set of geodesics $G^i$ moving into direction either $v$ or $u=-v$.

We further define $\stool_{k,n}^-(x)$ to be the event that
\begin{itemize}
\item $\dining_{k,n}(x)$ occurs;
\item $\geo(x+u_k,x)$ moves into the main gap of $\mathcal{F}_n$ at $x+u_k$;
\item $\geo(x,x+v_k)$ moves into a minor gap of $\bar{\mathcal{F}}_n$ at $x+v_k$,
\end{itemize}
and define $\stool_{k,n}^+(x)$ analogously, interchanging the location of `the main' and `a minor'.

\begin{claim}\label{futurama}
If~\eqref{sentencing} holds, then there exist $\delta'>0$ and $n\ge1$ such that for all large $k$
$$
\P\big(\stool_{k,n}(0)\cup\stool_{k,n}^-(0)\cup\stool_{k,n}^+(0)\big)>\delta'.
$$
\end{claim}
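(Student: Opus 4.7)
The plan is to leverage Claim~\ref{wall or walrus}, which provides $\P(\dining_{k,n}(0)) > \delta/2$ for $n \ge n_0$ and sufficiently large $k$, and then to show that most of this event already satisfies the stronger condition $\stool_{k,n}(0) \cup \stool_{k,n}^-(0) \cup \stool_{k,n}^+(0)$. Let $\bad_{k,n}$ denote the sub-event of $\dining_{k,n}(0)$ on which both $\geo(-v_k,0)$ at $-v_k$ and $\geo(0,v_k)$ at $v_k$ move into minor gaps of $\mathcal{F}_n$ and $\bar{\mathcal{F}}_n$ respectively. Unwinding the definitions yields $\dining_{k,n}(0) \setminus \bigl(\stool_{k,n}(0) \cup \stool_{k,n}^-(0) \cup \stool_{k,n}^+(0)\bigr) = \bad_{k,n}$, so it suffices to show $\P(\bad_{k,n}) < \delta/4$.

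To bound $\P(\bad_{k,n})$ we drop one of the two endpoint conditions and bound $\P(\dining_{k,n}(0) \cap \{\geo(-v_k,0) \text{ lies in a minor gap of } \mathcal{F}_n \text{ at } -v_k\})$. For each of the four minor gaps this probability splits according to whether $\geo(-v_k,0)$ diverges from both bounding geodesics within $n$ steps or coincides with one of them for at least $n$ steps starting from $-v_k$. By stationarity, the divergence contribution is bounded by $4\eps$ via Claims~\ref{G-claim1}(b,c) and~\ref{G-claim2}(a,b). The coincidence contribution is controlled by Lemma~\ref{higgs boson}: using $0 \in \geo(-v_k,v_k)$ from $\dining_{k,n}(0)$, any coincidence of $\geo(-v_k,0)$ with some translate $G(-v_k)$, $G \in \mathcal{F}_n$, for $n$ steps extends to a coincidence of $\geo(-v_k,v_k)$ with that same infinite geodesic, and up to translation this falls within the scope of Lemma~\ref{higgs boson} applied to the finite family $\mathcal{F}_n$. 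We then pick $n$ large enough that the coincidence contribution is at most $\eps$ for all sufficiently large $k$.

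Combining these estimates gives $\P(\bad_{k,n}) < 5\eps$, so choosing $\eps < \delta/20$ (well within the tolerance $\eps < \delta/100$ already fixed at the outset of Section~\ref{sec:cunningham}) yields $\P\bigl(\stool_{k,n}(0) \cup \stool_{k,n}^-(0) \cup \stool_{k,n}^+(0)\bigr) > \delta/2 - 5\eps > \delta/4 =: \delta'$. The main technical subtlety is ensuring that a single choice of $n$ simultaneously satisfies the thresholds coming from Claim~\ref{wall or walrus}, from Claims~\ref{G-claim1} and~\ref{G-claim2}, and from Lemma~\ref{higgs boson}; since the `coincides for $n$ steps' event is monotone decreasing in $n$, this is possible by taking $n$ sufficiently large. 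A secondary point of care is the translation of the coincidence condition from the endpoint $-v_k$ to the midpoint $0$, so that Lemma~\ref{higgs boson} becomes applicable; this is precisely where the restriction to $\dining_{k,n}(0)$ (which implies $0 \in \geo(-v_k,v_k)$) is indispensable, as it converts the endpoint-coincidence of the half-geodesic into a midpoint-coincidence of $\geo(-v_k,v_k)$.
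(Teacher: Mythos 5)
There is a genuine gap, and it sits exactly where the paper's proof has to work hardest: the control of the coincidence term at the far endpoints $\pm v_k$.

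A first, lesser problem is a false set identity. The complement of $\stool_{k,n}(0)\cup\stool_{k,n}^-(0)\cup\stool_{k,n}^+(0)$ inside $\dining_{k,n}(0)$ is strictly larger than the event $\bad_{k,n}$ in which both ends lie in minor gaps: the three stool variants cover the endpoint configurations $(\text{main},\text{main})$, $(\text{main},\text{minor})$ and $(\text{minor},\text{main})$, but the complement also contains every configuration in which at least one end lies in the far region ccw between $G_n^+$ and $G_n^-$. The far--region contribution is small (it is essentially Claim~\ref{G-claim1}(a), listed as step (v) of the paper's proof), so this omission alone would not be fatal, but the decomposition you give is not correct as stated.

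The fatal issue is the appeal to Lemma~\ref{higgs boson} to control coincidence. That lemma concerns a finite geodesic $\geo(-z,z)$ passing through $0$ in its \emph{interior} and coinciding with $G(0)$ near $0$; the proof works because such a coincidence forces a long backward extension of $G(0)$, and Proposition~\ref{temecula} rules this out. After the translation you describe (bringing $-v_k$ to the origin) the relevant finite geodesic is $\geo(0,2v_k)$, which \emph{starts} at the origin rather than passing through it, and the geodesic with which coincidence is asserted is $G(0)$, rooted at that same starting vertex. In that configuration there is nothing to extend backward: the hypothesis $0\in\geo(-z,z)$ of Lemma~\ref{higgs boson} is not met under your translation. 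Moreover the estimate genuinely fails. By translation invariance, $\P\bigl(\geo(-v_k,0)\text{ coincides with }G^-(-v_k)\text{ for }m\text{ steps from }-v_k\bigr)$ equals $\P\bigl(\geo(0,v_k)\text{ coincides with }G^-(0)\text{ for }m\text{ steps from }0\bigr)$, and $G^-(0)$ is a coalescing geodesic rooted at the same point and headed roughly in the direction $v$ of $v_k$; nothing in the paper controls this quantity, and one should not expect it to be small uniformly in $k$. When such a coincidence occurs and the finite geodesic subsequently diverges into an adjacent minor gap, the end is in a minor gap at $\pm v_k$; your bound $\P(\bad_{k,n})<5\eps$ therefore cannot be established this way.

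This is precisely why the paper's argument for Claim~\ref{futurama} is a density-plus-pigeonhole argument rather than a per-point estimate. Using the ergodic theorem and a second-moment argument (as in Lemma~\ref{density claim}), the paper produces at least $37$ indices $i$ at which a strengthened version $A_{k,\ell,n}(i{\bf e}_2)$ of $\dining_{k,n}(i{\bf e}_2)$ occurs and the relevant members of $\mathcal{F}_n$, $\bar{\mathcal{F}}_n$ coalesce within $m$ steps. It then observes that for any fixed pair $(G,G')\in\mathcal{F}_n\times\bar{\mathcal{F}}_n$ at most one of these $37$ finite geodesics $\geo(i{\bf e}_2-v_k,i{\bf e}_2+v_k)$ can coincide with $G$ at the left end and with $G'$ at the right end for $m$ steps each: two of them would share two well-separated vertices (one on the coalesced part of $G$, one on the coalesced part of $G'$) and hence would coincide in between by uniqueness of passage times, contradicting that they pass through distinct $i{\bf e}_2$. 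Since there are only $36=6\times6$ such pairs, one of the $37$ points must have an end that does not coincide with any bounding geodesic for $m$ steps, hence diverges within $m$ steps, and the parameters were chosen so that such a divergence lands in the main gap. It is this pigeonhole step that converts the \emph{non}-smallness of the single-point coincidence probability into a positive lower bound; it has no counterpart in your proposal, and without it the argument does not close.
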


\begin{proof}[Proof of claim]
We may without restriction assume that $v$ is strictly contained in $H_0$, the right half-plane. Denote by $A_{k,\ell,n}(x)$ the event that $\dining_{k,n}(x)$ and the following occur:
\begin{itemize}
\item $\big(G'_n(x)\cup G''_n(x)\big) \cap (x+H_0^c) \subseteq x+[-\ell/3,\ell/3]^2$;
\item $\big(\bar G'_n(x)\cup \bar G''_n(x)\big) \cap (x+H_0) \subseteq x+[-\ell/3,\ell/3]^2$.
\end{itemize}
Since $G'_n$ and $G''_n$ have direction $v$ they eventually move into $H_0$. Hence, due to Claim~\ref{wall or walrus} we can make the probability of $A_{k,\ell,n}$ as close to $\delta/2$ as we like by increasing $\ell$.
Let $\Z_M=\{0,1,\ldots,M-1\}$ and $S_{k,\ell,n}^M:=\{i\in\ell\Z_M: A_{k,\ell,n}(i{\bf e}_2)\text{ occurs}\}$. We shall next fix a whole slew of parameters as follows:
\begin{enumerate}[\quad (i)]
\item Fix $\ell$, $n_0$ and $k_0$ so that $\P\big(A_{k,\ell,n}(x)\big)>\delta/4$ for all $n\ge n_0$ and $k\ge k_0$.
\item Mimicking the argument of Lemma~\ref{density claim} we find $M$ large so that $\P\big(\#S_{k,\ell,n}^M>36\big)>1/2$ for all $n\ge n_0$ and $k\ge k_0$.
\item Since the geodesics in $\mathcal{F}_n$ are coalescing, we fix $m$ and $n_1\ge n_0$ so that
$$
\P\big(\text{$G(i{\bf e}_2)$ and $G(j{\bf e}_2)$ coalesce in $m$ steps $\forall i,j\in\ell\Z_M, G\in\mathcal{F}_n, n\ge n_1$}\big)
$$
is at least $1-\eps$. Note that the bound is uniform in $n\ge n_1$ since if $G^-(x)$ and $G^-(y)$ coalesce within $m$ steps and $G^-_{n_1}(x)$ and $G^-_{n_1}(y)$ coincide with $G^-(x)$ and $G^-(y)$ for at least $m$ steps, then $G^-_n(x)$ and $G^-_n(y)$ coalesce for all $n\ge n_1$.
\item Pick $n_2\ge n_1$ and $k_1\ge k_0$ so that for all $k\ge k_1$ the probabilities that for some $i\in\ell\Z_M$ the path $\geo(i{\bf e}_2+u_k,i{\bf e}_2)$ moves into a minor gap of $\mathcal{F}_{n_2}$ at $i{\bf e}_2+u_k$ within $m$ steps, and that for some $i\in\ell\Z_M$ the path $\geo(i{\bf e}_2,i{\bf e}_2+v_k)$ moves into a minor gap of $\bar{\mathcal{F}}_{n_2}$ at $i{\bf e}_2+v_k$ within $m$ steps, are at most $\eps$.
\item Finally, select a $k_2\ge k_1$ large enough so that for all $k\ge k_2$ the probabilities that there exists $i\in\ell\Z_M$ such that $\geo(i{\bf e}_2+u_k,i{\bf e}_2)$ is ccw between $G_{n_2}^+(i{\bf e}_2+u_k)$ and $G_{n_2}^-(i{\bf e}_2+u_k)$, and that there exists $i\in\ell\Z_M$ such that $\geo(i{\bf e}_2,i{\bf e}_2+v_k)$ is ccw between $\bar G_{n_2}^+(i{\bf e}_2+v_k)$ and $\bar G_{n_2}^-(i{\bf e}_2+v_k)$, are at most $\eps$.
\end{enumerate}

We continue with an argument similar to that in Lemma~\ref{frosted}. Consider the event that $\# S_{k,\ell,n}^M\ge37$ and that for all $i,j\in S_{k,\ell,n}^M$ we have that
\begin{itemize}
\item $\geo(i{\bf e}_2+u_k,i{\bf e}_2)$ is not ccw between $G^+_n(i{\bf e}_2+u_k)$ and $G^-_n(i{\bf e}_2+u_k)$, nor is $\geo(i{\bf e}_2,i{\bf e}_2+v_k)$ ccw between $\bar G^+_n(i{\bf e}_2+v_k)$ and $\bar G^-_n(i{\bf e}_2+v_k)$;
\item $\geo(i{\bf e}_2+u_k,i{\bf e}_2)$ does not move into a minor gap in $\mathcal{F}_n$ at $i{\bf e}_2+u_k$, nor does $\geo(i{\bf e}_2,i{\bf e}_2+v_k)$ move into a minor gap of $\bar{\mathcal{F}}_n$ at $i{\bf e}_2+v_k$, in less than $m$ steps;
\item $G(i{\bf e}_2+u_k)$ and $G(j{\bf e}_2+u_k)$ coalesce within $m$ steps for all $G\in\mathcal{F}_n$;
\item $\bar G(i{\bf e}_2+v_k)$ and $\bar G(j{\bf e}_2+v_k)$ coalesce within $m$ steps for all $\bar G\in\bar{\mathcal{F}}_n$.
\end{itemize}
For parameters $\ell$ and $M$ specified as above, and for all $n\ge n_2$ and $k\ge k_2$, the above event occurs with probability at least $1/2-6\eps$. We further note that on the above event, then for no two $i\in S_{k,\ell,n}^M$ and no pair $G\in\mathcal{F}_n$, $G'\in\bar{\mathcal{F}}_n$ may $\geo(i{\bf e}_2+u_k,i{\bf e}_2+v_k)$ coincide with both $G(i{\bf e}_2+u_k)$ and $G'(i{\bf e}_2+v_k)$ for as long as $m$ steps. Indeed, the contrary would contradict the assumption of unique passage times; see Figure~\ref{fig:mid7}.
\begin{figure}[htbp]
\begin{center}
\includegraphics{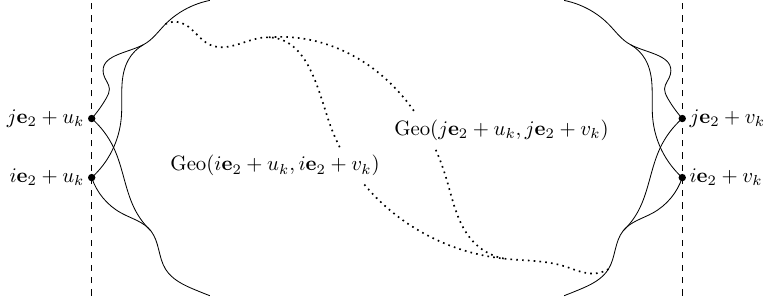}
\end{center}
\caption{Two geodesics that coincide with some pair $(G,G')$ for too long contradicts unique passage times.}
\label{fig:mid7}
\end{figure}
Since there are 36 such combinations of elements from $\mathcal{F}_n$ and $\bar{\mathcal{F}}_n$, but $S_{k,\ell,n}^M$ contains at least 37 elements, then the only possibility is that for some $i\in S_{k,\ell,n}^M$, one of the two ends of $\geo(i{\bf e}_2+u_k,i{\bf e}_2+v_k)$ moves into the main gap, while the other either moves into the main or one of the minor gaps. That is, for all $k\ge k_2$
$$
\P\big(\exists i\in\ell\Z_M:
\stool_{k,n_2}(i{\bf e}_2)\cup\stool_{k,n_2}^-(i{\bf e}_2)\cup\stool_{k,n_2}^+(i{\bf e}_2)\big)>1/4,
$$
from which the conclusion of the claim follows.
\end{proof}

We may now complete the proof of Lemma~\ref{cunningham}.

By Lemma~\ref{futurama}, possibly after restriction to a subsequence, either of the three events $\stool_{k,n}(0)$, $\stool_{k,n}^-(0)$ or $\stool_{k,n}^+(0)$ will occur with probability at least $\delta'/3$ for all large $k$. Recall that $\stool_{k,n}(0)$ is a version of the event $\good_k(0)$ for which the $G^i$'s move into direction $v$ and $-v$. So, if $\P\big(\stool_{k,n}(0)\big)>\delta'/3$ for all large $k$, then there is nothing more to prove.

In the remaining cases we need to verify that there exists a half-plane $H_i$ such that the random coalescing geodesics involved eventually move into either $H_i$ or $H_i^c$. The two cases are symmetric to each other, so we only consider the case of $\stool_{k,n}^-(0)$.

By restricting to a further subsequence we can specify which of the four minor gaps that the event $\stool_{k,n}^-(0)$ occurs for. By Claim~\ref{v is a corner} we have that $\ball$ has a corner at $v$. By assumption, $G^-$ and $G^+$ are directed in flat regions clockwise and counterclockwise of $v$. The geodesics $G'_n$ and $G''_n$ both move in direction $v$. Whether $G_n^-$ and $G_n^+$ are directed in corners or flat pieces may (possibly) depend on the number of sides of the shape.

Consider first the case that $\ball$ has four sides, i.e.\ that $\ball$ is either a square or a diamond. By symmetry, there are then geodesics directed in all four corners, and both $G_n^-$ and $G_n^+$ are thus directed in (opposite) corners. Each of the minor gaps therefore spans an angle of directions at most $\pi/2$. For each minor gap we can thus easily find a half-plane $H_i$ such that $v$ is strictly contained in $H_i$ and the geodesics constituting the minor gap eventually move into $H_i$.

Consider next the remaining case, that $\ball$ has at least eight sides. Each pair of consecutive sides may span an angle of at most $\pi/2$ due to symmetry. Hence, regardless of whether $G_n^-$ and $G_n^+$ are directed in corners or not, each minor gap cannot span an angle of directions greater than $\pi/2$. So, also in this case we may find a half-plane $H_i$ such that $v$ is strictly contained in $H_i$ and which the geodesics constituting the minor gap eventually move into.\\

This completes the proof of Lemma~\ref{cunningham}, and hence the proof of Theorem~\ref{hall of mirrors}.

\section{Open problems}\label{sec: open}

We conclude this work with some open problems. As we mentioned in Section~\ref{bolt}, the shape theorem and the results in~\cite{hoffman08} and~\cite{damhan14} are in some ways best possible for stationary and ergodic first-passage percolation.
There are various ways to test the optimality of the results presented in this paper. Our first question is closely related to the fact that we have very limited understanding for what goes on in corners of the asymptotic shape.

\begin{question} Does there exist a model of first-passage percolation satisfying condition {\bf A1} or {\bf A2} and a direction $v \in S^1$ such that there are two geodesics in direction $v$ almost surely? Can there be infinitely many geodesics in a given direction? Is it possible for $\functionals_\star$ to contain supporting functionals that are not tangent to $\ball$?
\end{question}

\begin{remark}
This question has been partly answered by Alexander and Berger~\cite{aleber18}. Note that in the case such a model exists, $\ball$ has to have a sharp corner in direction $v$. Alexander and Berger provide a model of first-passage percolation for which $\ball$ is an octagon with corners in the coordinate directions, and where all geodesics (at least eight in total) are directed in the coordinate directions.
\end{remark}

The question of corners is interesting in part as it relates to the existence of bigeodesics.

\begin{question} 
Can we use the theory developed here to rule out bigeodesics in a given direction $v\in S^1$, 
even if $\partial\ball$ has a sharp corner in the direction $v$?
\end{question}

An easy consequence of the results of this paper is that every model of first-passage percolation for which $\partial\ball$ is everywhere differentiable, i.e.\ has no corners, then every random coalescing geodesic is almost surely both cw- and ccw-dense. In the case when $\tree_0$ is almost surely finite, then every random coalescing geodesic is cw- and ccw-isolated. It would be interesting to know whether there are models that admit the existence of both. One way to address this question is in terms of labels.

\begin{question} Does there exist a model of first-passage percolation satisfying condition {\bf A1} or {\bf A2} and a label $\alpha$ such that there are two geodesics with label $\alpha$ almost surely?
\end{question}

We have in this paper proved that every geodesic has an asymptotic direction and an asymptotically linear Busemann function. Moreover, for no linear functional $\rho\in\support$ may there be two geodesics with Busemann function asymptotically linear to $\rho$ with positive probability. However, one would expect to be able to find exceptional directions in which this happens. The following question asks whether this description is optimal.

\begin{question}
Is it true that, with probability one, there is no $\rho\in\support$ for which there are three geodesics with Busemann function asymptotically linear to $\rho$? Equivalently, may there exist three geodesics with the same label with positive probability?
\end{question}

An analogous statement is known to hold for the related model of last-passage percolation in the exactly solvable setting, see~\cite{coupier11}.

Our theory of coalescing geodesics uses the fact that we are working with first-passage percolation on $\Z^2$ in a very strong way. We also use the uniqueness of geodesics, but in a much weaker way.
The next two questions ask whether we can eliminate these conditions.

\begin{question} 
Can one extend Theorem~\ref{hall of mirrors} to distributions with atoms in the following manner:
Define $\geo(u,v)$ to be the union of all geodesics between $u$ and $v$.
For any two diverging sequences $(u_k)_{k\ge1}$ and $(v_k)_{k\ge1}$ in $\Z^2$ is it still true that
$$
\P\big(0 \in \geo(u_k,v_k)\big) \to 0?
$$
\end{question}

\begin{question} 
Can we develop a version of this theory for $d>2$? In particular, do random coalescing geodesics exist for $d>2$?
\end{question}

\bibliographystyle{amsalpha}
\bibliography{ahlberg}

\end{document}